\documentclass[11pt, reqno]{amsart}
\usepackage{amssymb,latexsym,amsmath,amsfonts,amsthm}

\voffset = -18pt
\hoffset = -27pt
\textwidth = 5.6in
\textheight = 8.7in
\numberwithin{equation}{section}

\usepackage[mathscr]{eucal}

\usepackage[T1]{fontenc}
\usepackage{txfonts}
\usepackage[pdftex,bookmarks=true]{hyperref}
\usepackage{graphicx}
\usepackage{enumerate}

\usepackage[all]{xy}
\usepackage{graphicx}
\usepackage{enumerate}

\theoremstyle{plain}
\newtheorem{theorem}{Theorem}[section]

\newtheorem{lemma}[theorem]{Lemma}
\newtheorem{prop}[theorem]{Proposition}

\theoremstyle{remark}
\newtheorem{rmk}[theorem]{Remark}

\newtheorem{exa}[theorem]{Example}

\theoremstyle{definition}
\newtheorem{dfn}[theorem]{Definition}

\newcommand{\real}{\mathbb{R}}

\begin{document}
\title[Distance measure spaces and Riemann surface laminations]{Compactness theorems for the spaces of distance measure spaces and Riemann surface laminations}
\author{Divakaran Divakaran and Siddhartha Gadgil}

\begin{abstract}
In this paper, we give a generalisation of Gromov's compactness theorem for metric spaces, more precisely, we give a compactness theorem for the space of distance measure spaces equipped with a \emph{generalised Gromov-Hausdorff-Levi-Prokhorov distance}.  Using this result we prove that the Deligne-Mumford compactification is the completion of the moduli space of Riemann surfaces under the generalised Gromov-Hausdorff-Levi-Prokhorov distance.  Further we prove a compactness theorem for the space of Riemann surface laminations.
\end{abstract}

\maketitle

\section{Introduction}

Since the landmark work of Gromov, a very fruitful technique in Riemannian geometry is to consider limits of Riemannian manifolds that are of a more general nature, such as metric spaces. The limits are taken in the sense of the \emph{Gromov-Hausdorff} distance, which can be defined for compact metric spaces or for metric spaces with given base points. As Riemannian manifolds are equipped with canonical measures, namely volumes, it has also turned out to be fruitful to consider limits of these as metric measure spaces. 

However, one case of fundamental importance in mathematics, the Deligne-Mumford compactification of the Moduli space of Riemann surfaces, or equivalently hyperbolic surfaces, cannot be naturally viewed (at least directly) in this setting. This is because the limit of hyperbolic surfaces is not compact, and indeed has in general several non-compact components. However, a limit using basepoints will only have one limiting non-compact component. This work is motivated by the desire to generalize convergence of metric measure spaces to include this case in a natural way, and furthermore allow us to study limits of surface laminations. 

Specifically, we study \emph{distance measure spaces}, where a distance is like a metric except that distances between the points may be infinite. A significant part of this work is devoted to defining an appropriate distance in this setting, and establishing all the desired properties of this distance.

One of the attractive features of our approach is that the Margulis lemma, which has a pivotal role in studying limits of hyperbolic metrics, has an elementary reformulation that is meaningful in contexts other that surfaces. Namely, for hyperbolic surfaces of a fixed genus $g$, given $\epsilon> 0$ there is a real number $M>0$ so that each hyperbolic surface $F$ of genus $g$ has a finite set of at most $M$ points whose $\epsilon$-neighbourhood has complement in $F$ with measure less than $\epsilon$. In other words, there are $\epsilon$-almost $\epsilon$-dense finite sets with uniformly bounded cardinality. As a consequence, the Deligne-Mumford compactification turns out to have a very natural description, namely it is the \emph{completion} of the space of hyperbolic metrics (with respect to our metric).

We also study the limits of hyperbolic surface laminations in the same sense. Indeed, the above allows us to also state a natural analogue of Margulis lemma in the case of hyperbolic surface laminations. However it turns out that this analogous statement is false. Hence our result needs as hypothesis something equivalent to the Margulis lemma.

In the rest of the introduction, we elaborate on the above remarks.

\subsection{Gromov's compactness theorem for metric spaces}
Gromov's notion of the Gromov-Hausdorff distance, an intrinsic version of the Hausdorff distance, was a major turning
point in the history of global Riemannian geometry.  The Gromov-Hausdorff distance is a distance on the space of 
compact metric spaces.  Introduced in connection with the study of almost flat manifolds, it was used by Gromov to 
prove that a finitely generated group has polynomial growth if and only if it has a nilpotent subgroup that is of 
finite index.  Later, it became an important tool in geometric group theory.   We recall the definition.

\begin{dfn}[$\varepsilon$--neighbourhood]
Given a metric space $(X,d)$, a set $S\subset X$ and $\varepsilon >0$, the
$\varepsilon$-neighbourhood of $S$ is given by
$$S^{\varepsilon}= \{x \in X | \ \exists s\in S, d(x,s)< \varepsilon \rbrace=
\cup_{s\in
S} \{x\in X| \ d(x,s)<\varepsilon\rbrace$$ 
\end{dfn}

\begin{dfn}[Hausdorff distance]
Let $X$ and $Y$ be two non-empty subsets of a metric space $(M, d)$. We define
the Hausdorff distance $d_H(X, Y)$ as
$$d_H(X,Y) = \inf\{\varepsilon > 0 | \ X \subseteq Y^\varepsilon \ \mbox{and}\ Y
\subseteq X^\varepsilon\rbrace$$
\end{dfn}

\subsubsection*{Some properties:}

\begin{itemize}
\item In general, $d_H(X,Y)$ may be infinite. For example, take $(M,d) = (\mathbb{R}, d(x,y) = \vert x-y \vert)$ and 
$X = [0,1]$ and $Y = \mathbb{R}$.  If both $X$ and $Y$ are bounded, then $d_H(X,Y)$ is guaranteed to be finite.  
\item We have $d_H(X,Y) = 0$ if and only if $X$ and $Y$ have the same closure.  
\item On the set of all non-empty compact subsets of $M$, $d_H$ is a metric.
\end{itemize}

\begin{dfn}[Gromov-Hausdorff distance]
Let $(X_i,d_i), i=1,2$, be compact metric spaces.  Consider pairs of isometric
embeddings $\iota_i :X_i \to Z$, $i=1,2$ of $X_i$ into a metric space $Z$.  For
such embeddings we can consider the Hausdorff distance $d_H$ between
$\iota_i(X_i)$ as subsets of $Z$.  The Gromov-Hausdorff distance is the infimum
over all such Hausdorff distances, i.e.,
$$d_{GH}(X_1,X_2) = \inf\{d_H\left(\iota_1(X_1),\iota_2(X_2)\right)| \ \iota_i:X_i\to Z\
\mathrm{ 
isometric\  embeddings}\rbrace.$$ 
\end{dfn}

Gromov proved a compactness theorem for the space of metric spaces equipped with this metric.

\begin{theorem}[Gromov compactness theorem]
A collection of compact metric spaces is pre-compact under the Gromov-Hausdorff
distance if and only if the following conditions are satisfied.
\begin{itemize}
\item There exists $C > 0$, such that every element in the collection has
diameter less than or equal to $C$.
\item Given $\varepsilon > 0$, there exists a natural number $N(\varepsilon)$, such that every element
of the collection admits an $\varepsilon$-net containing no more than
$N(\varepsilon)$ points.
\end{itemize}
\end{theorem}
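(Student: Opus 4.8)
The plan is to prove both implications, organising everything around the standard fact that the space of isometry classes of compact metric spaces is \emph{complete} under $d_{GH}$. Granting this, pre-compactness of a collection $\mathcal{F}$ is equivalent to $\mathcal{F}$ being totally bounded, which in turn is equivalent to the statement that every sequence in $\mathcal{F}$ admits a $d_{GH}$-Cauchy subsequence. I would therefore reduce the whole theorem to a subsequence-extraction problem.

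For the necessity direction, suppose $\mathcal{F}$ is pre-compact, so its closure $\overline{\mathcal{F}}$ is compact. The diameter bound follows from the Lipschitz estimate $|\mathrm{diam}(X)-\mathrm{diam}(Y)|\le 2\,d_{GH}(X,Y)$, which makes the diameter continuous and hence bounded on $\overline{\mathcal{F}}$. For uniform total boundedness, fix $\varepsilon>0$ and cover $\overline{\mathcal{F}}$ by finitely many $d_{GH}$-balls of radius $\varepsilon/3$ centred at compact spaces $Y_1,\dots,Y_k$. Each $Y_j$ admits a finite $(\varepsilon/3)$-net; taking $N(\varepsilon)$ to be the largest of these cardinalities, any $X\in\mathcal{F}$ lies within $\varepsilon/3$ of some $Y_j$, and transferring the net of $Y_j$ across an almost-isometric correspondence yields an $\varepsilon$-net in $X$ of at most $N(\varepsilon)$ points.

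For the sufficiency direction, which is the main content, I would take a sequence $(X_n)$ in $\mathcal{F}$. For each $m$ every $X_n$ carries a $(1/m)$-net of at most $N(1/m)$ points; collecting these over all $m$ and enumerating, I obtain in each $X_n$ a countable dense sequence $(x^n_i)_{i\ge 1}$ arranged so that an initial segment of controlled length $K_m$ is a $(1/m)$-net. All pairwise distances $d_{X_n}(x^n_i,x^n_j)$ lie in the compact interval $[0,C]$, so a diagonal argument extracts a subsequence along which $d_\infty(i,j):=\lim_n d_{X_n}(x^n_i,x^n_j)$ exists for every pair. These limits define a pseudo-metric on $\mathbb{N}$; passing to the metric quotient and completing produces a metric space $X_\infty$. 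Since the net property passes to the limit, $X_\infty$ is totally bounded, hence (being complete) compact. Finally I would show the subsequence is Cauchy using the correspondence characterisation $d_{GH}(X,Y)=\tfrac12\inf_R \mathrm{dis}(R)$: for fixed small $1/m$ and large $n,n'$ the distance matrices on the first $K_m$ net points agree to within some $\delta$, and matching corresponding net points while sending each remaining point to its nearest net point gives a correspondence of distortion of order $1/m+\delta$, so that $d_{GH}(X_n,X_{n'})<\varepsilon$.

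The hard part, I expect, will be the bookkeeping in the diagonal extraction together with verifying that the limiting distance data genuinely defines a \emph{compact} metric space realising the Gromov--Hausdorff limit — specifically, checking that the net property survives passage to the limit (so that $X_\infty$ is totally bounded) and controlling the distortion of the constructed correspondences uniformly in the tail index. A secondary technical point is the completeness of the space of compact metric spaces under $d_{GH}$, which is assumed above and, if not already available, would need a separate but standard Cauchy-completion argument.
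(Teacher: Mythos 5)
Your argument is correct: it is the standard diagonal/net proof of Gromov's theorem (necessity via the Lipschitz estimate for the diameter and transfer of nets along low-distortion correspondences; sufficiency via uniform nets of size $K_m$, diagonal extraction of the limiting distance matrix, metric quotient and completion, and a correspondence estimate of order $1/m+\delta$), and the difficulties you flag are the right ones and all routine. Note, however, that the paper does not prove this statement at all — it quotes it as classical background — and the route the paper takes for its own generalisation, the pre-compactness theorem for distance measure spaces (Theorem \ref{main}), is genuinely different from yours: there each space is approximated to within $\varepsilon$ by a \emph{finite} distance measure space (Theorem \ref{approxdmsbyfinitedms}); the family $\mathcal{F}(N,M)$ of finite spaces with at most $N$ points and total measure at most $M$ is shown compact by extracting convergent subsequences of the finitely many distance and measure coordinates (Theorem \ref{spacefdmscompact}); total boundedness of the whole collection then follows by covering $\mathcal{F}(N,M)$ by finitely many small balls; and pre-compactness is obtained from a separately proved completeness theorem (Theorem \ref{completenessunderrho}). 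The finite-approximation route cleanly decouples total boundedness from completeness and is what survives the passage to measures, where one cannot enumerate dense sequences of points with uniformly controlled distance matrices in the same way; your diagonal construction, by contrast, produces the limit space $X_\infty$ explicitly, which is more informative in the purely metric setting. One economy available to you: having built the compact $X_\infty$, the same correspondence estimate you use for the Cauchy property shows directly that $d_{GH}(X_{n_k},X_\infty)\to 0$, which eliminates the dependence on completeness of the Gromov--Hausdorff space that you flag as a secondary technical point.
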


This theorem has various applications including study of manifolds with Ricci curvature bounded below, Einstein 
metrics etc.  

\subsection{Compactification of the moduli space of Riemann surface and its
applications} 

The moduli space of Riemann surfaces is the space of all Riemann surfaces of a
fixed topological type up to biholomorphism. This spaces is not compact, as
sequences of such surfaces can degenerate. However there is a very elegant
compactification of moduli space due to Bers, Mumford and Deligne.

The Deligne-Mumford compactification of the moduli space of Riemann surfaces
and the closely related Gromov compactness theorem for $J$-holomorphic curves
in symplectic manifolds (in particular curves in an algebraic varieties) are
important results for many areas of mathematics. A major reason for this is
that the additional strata added in the compactification have co-dimension at
least $2$. This means that the moduli spaces have fundamental classes, and can
be treated as compact manifolds from the point of view of intersection theory.
This allows the construction of so called Gromov invariants, counting the
number of such curves.

Furthermore, the explicit nature of the compactification allows us to
understand, and sometimes rule out, degenerate curves. Such an approach allowed
Gromov to prove many important results in symplectic geometry.

As the strata of the compactification are well understood, one can use their
rich underlying structure to define and show algebraic properties of
extensions of intersection numbers, giving Gromov-Witten theory and quantum
cohomology. Such structures were used by Kontsevich to answer classical
questions in enumerative geometry.

\subsection{Motivation}
A sequence of compact hyperbolic surfaces may converge to a 
non-compact hyperbolic surface.  Moreover, the 
limit space in the Deligne-Mumford compactification is not a metric space; the distance between two points can be 
$\infty$.  Such spaces are called distance spaces.

\begin{center}
\includegraphics[scale=.5]{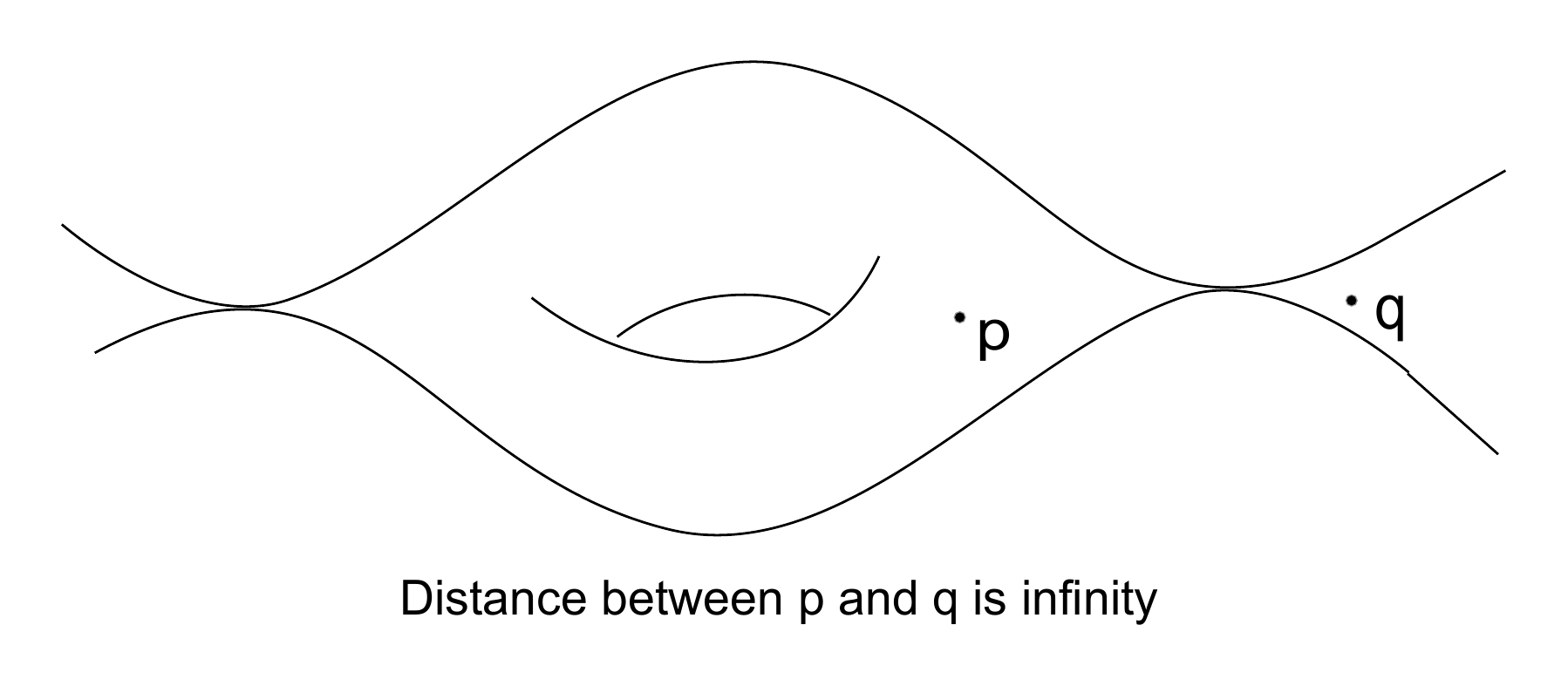}
\end{center}

While studying non-compact spaces, one usually uses pointed Gromov-Hausdorff convergence rather than
Gromov-Hausdorff convergence.  We recall the definition.

\begin{dfn}[Pointed Gromov-Hausdorff convergence]
A sequence of pointed spaces $(X_n,x_n)$ is said to converge to a pointed space $(X,x)$ if, for every $r>0$, the 
closed $r$ ball $\overline{B(x_n,r)}$ converges to the closed $r$ ball $\overline{B(x,r)}$ in the Gromov-Hausdorff 
sense.
\end{dfn} 

This notion is not very useful to study distance spaces as, by pointed
Gromov-Hausdorff convergence the maximum we can hope to retrieve is the information of the component in which the 
point $x$ lies, that is, all points which are at a finite distance from $x$.  For the same reason, the use of pointed
Gromov-Hausdorff convergence in the study of moduli space of Riemann surfaces is limited.

\begin{center}
\includegraphics[scale=.5]{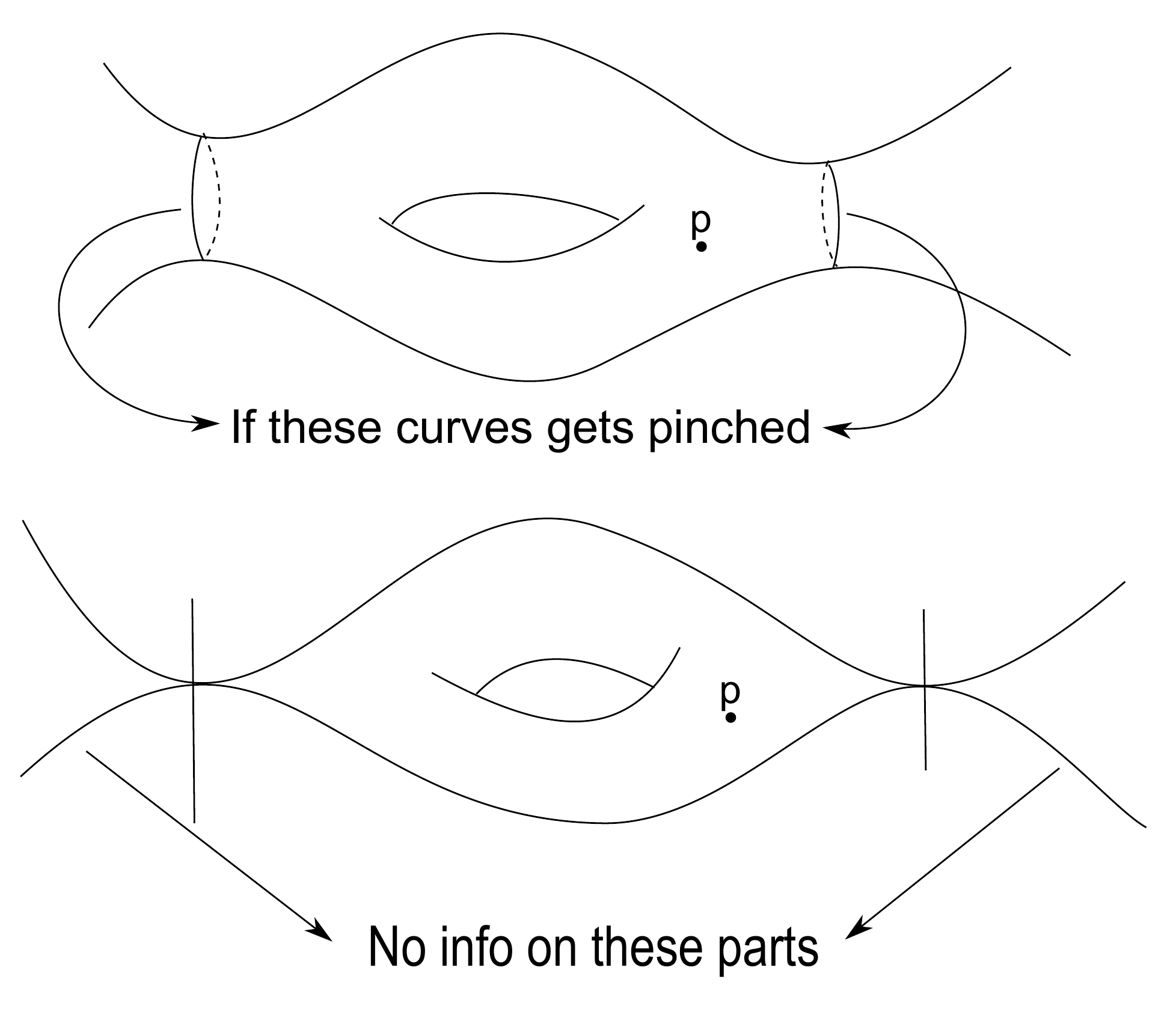}
\end{center}

One way to overcome this is to take several base points but, there is no natural way to do so.  It is also very messy.
We have instead used the approach of choosing a
finite measure and using a generalised Gromov-Hausdorff-Levy-Prokhorov(GHLP) distance.  This can be
viewed as a generalisation of the pointed Gromov-Hausdorff convergence.  The fact that many metric spaces, especially
Riemannian manifolds, come equipped naturally with a measure is further motivation for the use of measure instead of 
multiple points.  Moreover, it turns out that the Deligne-Mumford compactification has a very natural description using this metric, namely it is the \emph{completion} of the space of hyperbolic surfaces.
 
While Gromov's compactness theorem for J-holomorphic curves in symplectic manifolds, is an important tool in 
symplectic topology, its applicability is limited by the lack of general methods to construct
pseudo-holomorphic curves. One hopes that considering a more general class of objects in place of pseudo-holomorphic 
curves will be useful.  Generalising the domain of pseudo-holomorphic curves from Riemann surfaces to Riemann surface
laminations is a natural choice.

Riemann surface laminations are generalisations of classical Riemann surfaces.  More precisely, a Riemann surface 
lamination is a locally compact, separable, metrisable space $M$ with an open cover by flow boxes $\{U_i\rbrace_{i\in I}$ 
and homeomorphisms $\phi_i:U_i \to D_i \times T_i$
with $D_i$ an open set in $\mathbb{C}$ such that the coordinate changes in $U_i \cap U_j$ are of the form 
$$\phi_j \circ \phi_i^{-1}(z,t) = (\lambda_{ji}(z,t),\tau_{ji}(t))$$
where the map $z\to \lambda_{ji}(z,t)$ is holomorphic for each $t$.  Riemann surface laminations, coming from the 
qualitative study of ordinary differential equations in complex domain, have become central to the theory of 
holomorphic dynamical systems.  They occur naturally in many geometric and dynamical situations. 

Theorems such as the uniformisation theorem for surface laminations due to Alberto Candel (which is a partial 
generalisation of the uniformisation theorem for surfaces), generalisations of the Gauss-Bonnet theorem proved for 
some special cases, and topological classification of ``almost all" leaves using harmonic measures tell us that 
Riemann surface laminations exhibit many properties similar to Riemann surfaces.  Further, the success of essential 
laminations, as generalised incompressible surfaces, in the study of 3-manifolds suggests that a similar approach may
be useful in symplectic topology.

Compactness of the moduli space of Riemann surfaces was central in the use of pseudo-holomorphic curves to study 
symplectic 4-manifolds.  So, a compactness theorem for the space of Riemann surface laminations, analogous to the 
Deligne-Mumford compactification would be very useful.  In this paper we prove such a result.  However, as it turns out that there is no result analogous to the Margulis lemma, our result needs as hypothesis something equivalent to the Margulis lemma.  

\section{Metric geometry}
\subsection{Distance spaces}

A distance space is a generalisation of a metric space where, we allow the
distance between two points to be infinity. More precisely, 

\begin{dfn}\label{distance}
A tuple $(X,d)$ is called a distance space if  $X$ is a set and $d:X\times X \to
\mathbb{R}^+\cup \{\infty\rbrace$ ($\mathbb{R}^+$ denotes the set of all non-negative real numbers) is such that
\begin{itemize}
\item $d(x,y)=0 \Leftrightarrow x=y$.
\item $d(x,y) = d(y,x)$.
\item For all $z$ in $X$, $d(x,y)\leq d(x,z)+d(z,y)$ . 
\end{itemize}
\end{dfn}

Associated with a distance space there is the natural topology, generated by
balls of radius $r$, $B(x_0,r) = \{x\in X \ | \ d(x,x_0)< r\rbrace$, as in the case of metric space. We begin with a few examples.

\begin{exa}
A very simple but very useful example of a distance space is a finite set $X$
with a distance, i.e., a function to $[0, \infty]$ satisfying the conditions of
Definition~\ref{distance}.    
\end{exa}

\begin{exa}
Let $X := \{x \in \real: x \ge 0\rbrace$ and define the distance $d$ as 
\begin{itemize}
\item $d(0,0) = 0$.
\item If $x>0$, $d(x,0) = \infty$.
\item If $x,y \neq 0$ then, $d(x,y) = \vert \log(x) -\log(y) \vert$.
\end{itemize}
This example is related to hyperbolic geometry.  As
distance element, we have used $\frac{1}{x}dx$ instead of $dx$.    
\end{exa}
 
We will show that a distance space is a disjoint union of metric spaces in
a canonical manner.  Thus, many properties of distance spaces follow from the
analogous properties of metric spaces, by applying them to each component metric space.
Fix a distance space $X = (X, d)$.

\begin{dfn}
Let $\sim$ be the relation on $X$ given by $x \sim y$ if and only if $d(x,y)$ is
finite.
\end{dfn}

\begin{theorem}
The relation $\sim$ is an equivalence relation.
\end{theorem}

\begin{proof}
The fact that $\sim$ is reflexive and symmetric is obvious.  Transitivity
follows from the triangle inequality.  More precisely, $x \sim y$ and $y \sim z$ means that $d(x,y)$ and $d(y,z)$ are
finite.  As $d(x,z) \leq d(x,y) + d(y,z)$, $d(x,z)$ is finite.  Hence $x \sim z$.
\end{proof}

Clearly, each equivalence class, under the equivalence relation $\sim$, is a
metric space.  Thus, we have represented the distance space $X$
as a disjoint union of metric spaces.  We will use this fact to prove
some results, some of which will be used later. 

\begin{theorem}
Each equivalence class of $X$ under $\sim$ is open. 
\end{theorem}

\begin{proof}
Let $Y\subset X$ be an equivalence class.  Given $x_0\in Y$, $B(x_0,1)\subset Y$ as, if $x\in B(x_0, 1)$, then 
$d(x,x_0)<1<\infty$, so $x\sim x_0$, i.e., $x\in Y$.  As $x_0\in Y$ was arbitrary, $Y$ is open.  
\end{proof}

\begin{theorem}
Each equivalence class of $X$ under $\sim$ is closed.
\label{simclosed}
\end{theorem}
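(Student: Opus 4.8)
The plan is to deduce this directly from the preceding theorem, which establishes that each equivalence class is open. First I would fix an equivalence class $Y \subset X$ and observe that, since the classes of $\sim$ partition $X$, the complement $X \setminus Y$ is precisely the union of all the \emph{other} equivalence classes. Each of those classes is open by the preceding theorem, so $X \setminus Y$ is a union of open sets, hence open. Therefore $Y$ is closed. This is the shortest route, and I would lead with it.

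As a self-contained alternative that reuses the same ball estimate, I would argue directly with limit points. Take any $x$ in the closure of $Y$. Then the ball $B(x,1)$ must meet $Y$, so there is a point $y \in Y$ with $d(x,y) < 1 < \infty$. Finiteness of $d(x,y)$ gives $x \sim y$, and since $y \in Y$ and $Y$ is an equivalence class, $x \in Y$ as well. Thus $Y$ contains all of its limit points and is closed. The key quantitative input, exactly as in the open case, is that any two points at distance less than one lie in the same class.

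I do not expect a genuine obstacle here. The only point worth flagging is the conceptual one: ``being at finite distance'' is simultaneously an open and a closed condition, which is what makes each class clopen and ultimately what realises the distance space as a disjoint union of metric spaces in the topological sense. For the write-up I would present the complement argument as the proof and, if desired, mention the direct limit-point computation in a single sentence.
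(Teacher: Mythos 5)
Your proposal is correct and its lead argument — the complement of $Y$ is a union of the other equivalence classes, each open by the preceding theorem, hence $Y$ is closed — is exactly the paper's proof. The alternative limit-point argument via $B(x,1)$ is also sound, but it adds nothing essential beyond the first.
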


\begin{proof}
Note that the complement of an equivalence class $Y$ is a union of equivalence
classes, each of which is open. Hence $X\setminus Y$ is open, i.e., $Y$ is
closed.
\end{proof}

\begin{theorem}
A distance space (or a metric space) is separable if and only if it is second
countable.
\label{sep=sc}
\end{theorem}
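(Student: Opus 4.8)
The plan is to prove the two implications separately, noting that one direction is purely topological while the other uses the distance function. Throughout I will use the standard characterisations: \emph{separable} means there is a countable dense subset, and \emph{second countable} means the topology admits a countable base.

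First, for the implication ``second countable $\Rightarrow$ separable'', I would not use the distance at all. Given a countable base $\{B_n\}$ for the topology, I would discard the empty members and choose a point $x_n \in B_n$ from each remaining basis element. The resulting countable set $D = \{x_n\}$ is dense: any nonempty open set $U$ contains some point $p$, and by the base property there is a basis element $B_n$ with $p \in B_n \subseteq U$, whence $x_n \in U$. Thus $D$ meets every nonempty open set and is dense. This argument is valid verbatim for distance spaces since it never refers to $d$.

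For the converse, ``separable $\Rightarrow$ second countable'', I would start with a countable dense set $D$ and form the countable family $\mathcal{B} = \{B(x,q) : x \in D,\ q \in \mathbb{Q},\ q>0\}$ of balls of rational radius centred at points of $D$. The claim is that $\mathcal{B}$ is a base. Given an open set $U$ and a point $p \in U$, choose $r>0$ with $B(p,r)\subseteq U$. By density, the nonempty open ball $B(p,r/2)$ (which contains $p$) meets $D$, so there is $x \in D$ with $d(p,x) < r/2$; in particular $d(p,x)$ is finite, and I can pick a rational $q$ with $d(p,x) < q < r/2$. Then $p \in B(x,q)$ by symmetry of $d$, and for any $y \in B(x,q)$ the triangle inequality gives $d(p,y) \le d(p,x) + d(x,y) < r/2 + r/2 = r$, so $B(x,q) \subseteq B(p,r) \subseteq U$. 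Hence every open set is a union of members of $\mathcal{B}$, i.e.\ $\mathcal{B}$ is a countable base.

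The only point needing care in the distance-space setting, and the one I would flag as the main (mild) obstacle, is the presence of infinite distances; but this never interferes, because the argument manipulates only balls of finite radius, and the point $x \in D$ produced by density automatically lies at finite distance from $p$. Consequently all distances entering the triangle-inequality estimate are finite and the standard metric proof goes through unchanged. Alternatively, one could reduce to the classical metric statement by using the earlier decomposition of $X$ into its clopen equivalence classes: separability (respectively second countability) of $X$ is equivalent to there being countably many nonempty classes, each of which is separable (respectively second countable), after which the equivalence is applied componentwise.
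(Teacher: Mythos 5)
Your proof is correct and follows essentially the same route as the paper: the paper also builds the countable base from balls centred at a countable dense set with a countable family of radii (radii $\frac{1}{n}$ there, rational $q$ here, with the same triangle-inequality estimate), and proves the converse by selecting one point from each basis element, an argument that never uses the distance. Your explicit remark that the dense point lies at finite distance from $p$, so infinite distances never enter the estimate, is exactly the (implicit) reason the paper's metric-style argument also goes through for distance spaces.
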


\begin{proof}
\emph{Separable implies second countable:}  As $X$ is separable, there is a countable dense set, say $S$.  Thus,
given any point $x\in X$, there exists a sequence $\langle s_k \rangle_k \subset S$, such that $s_k$ converges to $x$.
Let $U$ be an open set containing $x$.  Note that $d(x,U^c) > 0$.  Choose $n_0\in \mathbb{N}$ such that 
$\frac{2}{n_0} < d(x,U^c)$.  The open sets $B\left(s_k,\frac{1}{n_0}\right), \forall k$  will eventually contain $x$ as $s_k$ 
converges to $x$.  Let $k_0$ be large enough that, $x\in B\left(s_{k_0},\frac{1}{n_0}\right) $.  If $y \in B\left(s_{k_0}, \frac{1}{n_0}\right)$
then, 
$$d(x,y) \leq d(x,s_k) + d(y,s_k) \leq \frac{2}{n_0} < d(x,U^c).$$  
So, $B\left(s_{k_0}, \frac{1}{n_0}\right) \subset U$.  Thus, the
countable collection of open sets $\left\lbrace B_{\frac{1}{n}}(s)\right\rbrace_{s\in S, n\in
\mathbb{N}}$ forms a basis.

\emph{Second countable implies separable:}  Let $\{U_i\rbrace_{i\in \mathbb{N}}$ be a
basis.
Consider the countable set $\{x_i\rbrace_{i\in \mathbb{N}}$ where $x_i$ is an
arbitrary point in $U_i$.  We claim that $\{x_i\rbrace_{i\in \mathbb{N}}$ is a
dense subset. 
To prove this, consider an arbitrary point $x$.  As $\{U_i\rbrace_{i\in \mathbb{N}}$
is
a basis, for every $n\in \mathbb{N}$ we can find an element $U_{i_n}$ such that
$U_{i_n}\subset B_{\frac{1}{n}}(x)$.  Thus, $d\left(x_{i_n},x\right) \leq \frac{1}{n}$, i.e., $x_{i_n}$ converges to $x$.  
\end{proof}\

\begin{lemma}
If a distance space (or a metric space) $X$ is second countable, then $Y\subset X$ is
second countable under the subspace topology.
\label{subsetsc}
\end{lemma}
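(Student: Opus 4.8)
The plan is to restrict a countable basis of $X$ to $Y$ and to verify that the restricted collection is a basis for the subspace topology. Since second countability is a purely topological property, it does not matter whether $X$ is a distance space or a metric space: in both cases all that is used is the existence of a countable basis for the topology, so the argument is identical. In particular I will make no use of the distance function itself, only of the associated topology described after Definition~\ref{distance}.

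First I would let $\{U_i\}_{i\in\mathbb{N}}$ be a countable basis for the topology of $X$, which exists by hypothesis. I would then form the countable collection $\{U_i\cap Y\}_{i\in\mathbb{N}}$ of subsets of $Y$, each of which is open in the subspace topology by the very definition of that topology. The substantive (though routine) step is to show that this collection is a basis for the subspace topology on $Y$. To do this, take an arbitrary set $V\subset Y$ that is open in the subspace topology. By definition $V=W\cap Y$ for some open $W\subset X$, and since $\{U_i\}_{i\in\mathbb{N}}$ is a basis for $X$ we may write $W=\bigcup_{i\in J}U_i$ for some index set $J\subset\mathbb{N}$. Intersecting with $Y$ and using that intersection with a fixed set commutes with unions, we get $V=W\cap Y=\bigcup_{i\in J}(U_i\cap Y)$, which exhibits $V$ as a union of members of the collection. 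Hence $\{U_i\cap Y\}_{i\in\mathbb{N}}$ is a basis.

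Since this basis is countable, $Y$ is second countable, completing the proof. I do not expect any genuine obstacle here: the only point demanding care is the correct unpacking of the definition of the subspace topology together with the elementary fact that $\left(\bigcup_{i\in J}U_i\right)\cap Y=\bigcup_{i\in J}(U_i\cap Y)$. As a remark, one could alternatively deduce the statement from Theorem~\ref{sep=sc} by arguing that a subspace of a separable distance space is separable, but the direct basis-restriction argument above is shorter and avoids having to re-examine density in the subspace.
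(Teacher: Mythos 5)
Your proof is correct and follows essentially the same approach as the paper: both restrict the countable basis $\{U_i\}_{i\in\mathbb{N}}$ of $X$ to the collection $\{U_i\cap Y\}_{i\in\mathbb{N}}$ and verify it is a basis for the subspace topology. The only cosmetic difference is that you verify the basis property via the union characterization while the paper uses the equivalent pointwise criterion (for each $x\in V$ find a basic set with $x\in U_{i}\cap Y\subset V$).
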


\begin{proof}
Let $\{U_i\rbrace_{i\in
\mathbb{N}}$  be a countable basis for $X$, i.e., given any point $x\in X$ and an open set $U$ containing $x$, we can
find a $U_{i(x,U)}$ such that $x \in U_{i(x,U)} \subset U$.  Given a point $x \in Y$ and a set $V$ open in $Y$ with 
$x\in V \subset Y$, choose a $U$ open in $X$ such that $V = U \cap Y$.  The existence of such a set $U$ follows from 
the definition of subspace topology.  Now, there exists a $U_{i(x,U)}$ such that $x \in U_{i(x,U)} \subset U$.  Thus,
$x \in U_{i(x,U)} \cap Y \subset U \cap Y = V$.  Hence, $\{U_i\cap Y\rbrace_{i\in \mathbb{N}}$ is a
basis for $Y$.    Thus, $Y$ is second countable
\end{proof}

\begin{lemma}
If a distance space (or a metric space) $X$ is separable, then $Y\subset X$ is
separable under the subspace topology.
\label{subsetseparable}  
\end{lemma}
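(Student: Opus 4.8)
The plan is to deduce this directly by chaining the two immediately preceding results, namely Theorem~\ref{sep=sc} (separable $\Leftrightarrow$ second countable) and Lemma~\ref{subsetsc} (second countability passes to subspaces). First I would note that $Y$, being a subset of the distance space $X$, is itself a distance space: the restriction of $d$ to $Y \times Y$ satisfies all three conditions of Definition~\ref{distance} automatically, since they are universally quantified statements that hold on all of $X$. Moreover, the topology this restricted distance generates on $Y$ coincides with the subspace topology inherited from $X$, because the balls of $Y$ are exactly the intersections $B(x_0,r)\cap Y$ for $x_0 \in Y$. This identification is the one point that must be recorded carefully, so that the results quoted below, which are phrased for distance spaces, genuinely apply to $Y$.

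Next I would apply Theorem~\ref{sep=sc} to $X$: since $X$ is separable, it is second countable. Then Lemma~\ref{subsetsc} applies verbatim to give that $Y$ is second countable under the subspace topology.

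Finally, I would apply Theorem~\ref{sep=sc} in the reverse direction, this time to the distance space $Y$ itself: since $Y$ is second countable, it is separable. By the observation in the first step, ``separable in the distance topology of the restricted $d$'' is the same as ``separable in the subspace topology,'' which is exactly the desired conclusion.

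I do not expect any genuine obstacle here, as the argument is a two-step composition of results already established. The only place demanding a moment of attention is the verification that the subspace topology on $Y$ agrees with the topology induced by the restricted distance, since this is what licenses the two applications of Theorem~\ref{sep=sc} to a set $Y$ that is a priori only given as a topological subspace. Everything else is a direct invocation of the preceding lemma and theorem.
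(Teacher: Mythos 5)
Your proposal is correct and follows exactly the paper's own argument: apply Theorem~\ref{sep=sc} to get second countability of $X$, pass it to $Y$ via Lemma~\ref{subsetsc}, then apply Theorem~\ref{sep=sc} again to conclude $Y$ is separable. Your extra remark verifying that the subspace topology on $Y$ agrees with the topology of the restricted distance is a point the paper leaves implicit, and it is a worthwhile addition since it is what legitimizes applying Theorem~\ref{sep=sc} to $Y$.
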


\begin{proof}
As $X$ is separable it is second countable, by Theorem \ref{sep=sc}.  Thus $Y$ is second countable, by Theorem \ref{subsetsc} and the fact that $X$ is 
second countable.  Hence, $Y$ is separable by Theorem \ref{sep=sc}.
\end{proof} 

\begin{theorem}
A distance space is separable if and only if the following conditions are
satisfied: 
\begin{itemize}
\item There are only a countable number of equivalence classes under $\sim$.
\item Each equivalence class under $\sim$ is separable.
\end{itemize} 
\end{theorem}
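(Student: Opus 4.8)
The plan is to prove both directions of the biconditional, handling each of the two conditions in turn.

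First I would prove that separability implies the two conditions. Suppose the distance space $X$ is separable, with a countable dense set $S$. For the second condition, observe that each equivalence class $Y$ under $\sim$ is a subset of $X$; by Lemma~\ref{subsetseparable}, $Y$ is separable under the subspace topology. So it remains only to establish the first condition, that there are at most countably many equivalence classes. Here the key observation is that each equivalence class is \emph{open} (by the theorem asserting openness of classes). Since $S$ is dense, every nonempty open set must contain a point of $S$; in particular each equivalence class $Y$ contains some point $s_Y \in S$. Moreover, distinct equivalence classes are disjoint, so the assignment $Y \mapsto s_Y$ is an injection from the set of equivalence classes into $S$. As $S$ is countable, there can be only countably many equivalence classes.

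For the converse, I would assume the two conditions and construct a countable dense set directly. Enumerate the equivalence classes as $\{Y_n\}_{n \in \mathbb{N}}$ (possible by the first condition), and for each $n$ choose a countable dense set $S_n \subset Y_n$ (possible by the second condition). Let $S = \bigcup_{n} S_n$, which is countable as a countable union of countable sets. To see that $S$ is dense in $X$, take any point $x \in X$ and any open set $U$ containing $x$. The point $x$ lies in exactly one equivalence class, say $Y_m$. Since each $Y_m$ is open, $U \cap Y_m$ is an open neighbourhood of $x$ in $X$ and hence an open neighbourhood of $x$ in the subspace topology on $Y_m$. As $S_m$ is dense in $Y_m$, there is a point of $S_m \subset S$ inside $U \cap Y_m \subset U$. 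Thus every open neighbourhood of every point meets $S$, so $S$ is dense and $X$ is separable.

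I expect the proof to be essentially routine, with the only subtle point being the correct use of the openness of equivalence classes, which is what makes both directions work: in the forward direction it forces each class to meet the dense set (bounding the number of classes), and in the reverse direction it guarantees that the dense sets of the individual classes glue together to a dense set of the whole space without interference between classes. The facts that classes are both open (used critically here) and closed have already been established in the excerpt, so no additional machinery is needed.
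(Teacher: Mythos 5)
Your proof is correct, but it takes a more elementary route than the paper's. The paper proves both directions by passing through second countability via Theorem~\ref{sep=sc}: for the converse it converts each class's separability into a countable basis, unites these into a countable basis for $X$, and then converts back; for the forward direction it uses second countability of $X$ together with the fact that each open equivalence class must contain an entire basis element, so the disjointness of classes bounds their number by the cardinality of the basis. You instead work directly with dense sets throughout: in the forward direction you pick a point of the countable dense set inside each (open, nonempty) class and use disjointness to get an injection into $S$, and in the converse you simply glue the countable dense subsets $S_n$ of the classes into a single countable dense set. Both arguments are sound; yours avoids invoking the separable-iff-second-countable equivalence entirely, at the cost of nothing, while the paper's version has the mild advantage of reusing machinery it has already set up (and produces an explicit countable basis along the way). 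One small correction to your closing remark: openness of the classes is genuinely needed only in the forward direction. In the converse, $U \cap Y_m$ is open in the subspace topology on $Y_m$ for \emph{any} subset $Y_m \subset X$, so density of $S_m$ in $Y_m$ already forces $S_m$ to meet $U$; indeed a union of dense subsets of the pieces of an arbitrary partition is dense in the whole space. So the openness hypothesis is doing critical work in bounding the number of classes, but is harmless decoration in the gluing step.
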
 

\begin{proof}
To prove the if part we use Theorem \ref{sep=sc}.  As each equivalence class is
separable, it is second countable.  So, we get a countable basis for
each equivalence class.  There are only a countable number of equivalence
classes and the countable union of countable sets is countable.  So, we get a
countable basis for the distance space.  Thus the distance space is second
countable.  Use Theorem \ref{sep=sc} again to get the result.  

As $X$ is separable it is second countable, by Theorem \ref{sep=sc}. The equivalence classes are open and 
disjoint.  So for each equivalence class there exists at least one basis element. As $X$ is second
countable, there are at most
countably many equivalence classes.  Each equivalence class is separable by Lemma \ref{subsetseparable}.     
\end{proof}

\begin{theorem}
A subset, $S$, of a distance space, $X$, is compact if and only if the following
conditions are satisfied:
\begin{itemize}
\item $S$ intersects only finitely many equivalence classes under $\sim$.
\item The intersection of $S$ with each equivalence class is compact.
\end{itemize}
\end{theorem}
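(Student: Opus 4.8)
The plan is to exploit the structural decomposition already established: the equivalence classes under $\sim$ partition $X$, and by the theorem that each class is open together with Theorem~\ref{simclosed} each class is simultaneously open and closed. Once this clopen partition is in hand, both directions of the equivalence reduce to standard facts about compact spaces, so I would organise the proof as two implications and handle each condition separately.

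First I would prove the forward direction, assuming $S$ is compact. For the second condition, I would fix an equivalence class $Y$ and observe that, since $Y$ is closed (Theorem~\ref{simclosed}), the set $S\cap Y$ is closed in the subspace topology on $S$; a closed subset of a compact space is compact, so $S\cap Y$ is compact. For the first condition, I would note that the equivalence classes cover $X$ and hence cover $S$, and that each is open, so they constitute an open cover of the compact set $S$. Extracting a finite subcover leaves only finitely many classes, and I would then argue that \emph{every} class meeting $S$ must already appear in this subcover: if $p\in S\cap Y$, then by disjointness of the partition $p$ lies in no class other than $Y$, so $Y$ is forced into any subcover of $S$. Consequently at most finitely many classes intersect $S$.

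For the reverse direction, I would assume both conditions and let $Y_1,\dots,Y_n$ be the finitely many classes meeting $S$. Then $S=\bigcup_{i=1}^n (S\cap Y_i)$, each $S\cap Y_i$ is compact by the second hypothesis, and a finite union of compact sets is compact, so $S$ is compact.

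The step I expect to require the most care is the disjointness argument in the forward direction for the first condition: compactness only yields \emph{some} finite subcover, and one must use that the classes are pairwise disjoint to conclude that this finite subcover accounts for \emph{all} classes meeting $S$, rather than merely covering $S$ by possibly fewer classes. Everything else is routine once the clopen partition is invoked, so I would keep those parts brief and concentrate the exposition on making the disjointness observation explicit.
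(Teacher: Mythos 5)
Your proof is correct, and two of its three steps coincide with the paper's own argument: the reverse direction via a finite union of compact sets, and the compactness of each $S\cap Y$ via Theorem~\ref{simclosed} together with the fact that a closed subset of a compact set is compact. Where you genuinely diverge is in showing that $S$ meets only finitely many classes. The paper argues by sequences: picking one point from each of infinitely many classes meeting $S$ produces a sequence with no convergent subsequence, contradicting compactness. That argument tacitly uses that compact subsets of a distance space are sequentially compact (true here, since the classes are clopen metric components, but it is an unstated extra ingredient). You instead work directly from the covering definition: the equivalence classes are open and cover $S$, compactness yields a finite subcover, and pairwise disjointness forces every class meeting $S$ to appear in that subcover. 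Your disjointness step --- the point you rightly flag as the one needing care, since a finite subcover a priori only covers $S$ rather than enumerating all classes meeting it --- is handled correctly: any $p\in S\cap Y$ must be covered by some member of the subcover, and disjointness leaves $Y$ as the only candidate. On balance your route is the more elementary one, needing only the openness of the classes and the open-cover definition of compactness, whereas the paper's sequence argument is shorter to state but leans on metrizability-type facts it does not spell out.
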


\begin{proof}
The if part follows from the fact that the finite union of compacts sets is
compact.  On the other hand, suppose $S$ is compact.  It cannot intersect
infinitely many equivalence classes as, taking an arbitrary point from each of
them gives a sequence which will have no convergent subsequence. Further, if 
$Y$
is an equivalence class under $\sim$,  then $Y\cap S$ is compact by
Theorem~\ref{simclosed} and the fact that a closed subset of a compact set is
compact. 
\end{proof}

\subsection{Metric measure spaces and Distance measure spaces}

\begin{dfn}
A metric measure space is a triple $(X,d,\mu)$, where $(X,d)$ forms a metric
space and $\mu$ is a finite regular Borel measure.  
\end{dfn}

\begin{dfn}
A distance measure space is a triple $(X,d,\mu)$, where $(X,d)$ forms a distance
space and $\mu$ is a finite regular Borel measure.  
\end{dfn}

\noindent An important class of 
examples is $(X,d,\mu)$ where, $X$ is a finite set.  We will in fact show that
any distance measure space, with finite measure, can be approximated with such a
space.

\subsection{Gromov-Hausdorff-Levy-Prokhorov distance}
In this section we will recall the definition of the
Gromov-Hausdorff-Levy-Prokhorov distance.  For this part we closely follow the
approach in~\cite{gadgil}, but we use a different-but-equivalent definition of the
Levy-Prokhorov metric.  Here we assume the measure $\mu$ to be
finite but, not necessarily a probability measure.  The two definitions  coincide
when we use probability measures.  A good general reference for the subject is \cite{gromov}.  For some recent 
developments see \cite{MetMeasSpref}.  We will later generalise the Gromov-Hausdorff-Levy-Prokhorov distance to 
distance measure
spaces. 

\begin{dfn}[Borel Measure]
Let $X$ be a locally compact Hausdorff space, and let Borel $\sigma$-algebra, $\mathcal{B}(X)$, be the smallest 
$\sigma$-algebra that contains the open sets of $X$.  Any measure $\mu$ defined on the Borel $\sigma$-algebra is 
called a Borel measure.   
\end{dfn}

\begin{dfn}[Levy-Prokhorov metric]
Let $\mathcal{B}(M)$ represent the Borel $\sigma$-algebra on $M$.  The
Levy-Prokhorov  metric $d_{\pi}$ between two finite Borel measures $\mu,\nu$ is defined as
$$d_{\pi} (\mu, \nu) := \inf \left\{ \varepsilon > 0 ~|~ \mu(A) \leq \nu
\left(A^{\varepsilon}\right) + \varepsilon \ \text{and} \ \nu (A) \leq \mu
(A^{\varepsilon}) + \varepsilon \ \text{for all} \ A \in \mathcal{B}(M)
\right\rbrace.  
$$ 
\end{dfn}

\begin{dfn}
If $c\in \mathbb{R}$, we define scaling of a measure $c\mu$ as $(c\mu)(E):=
c.\mu(E)$.
\end{dfn}

\begin{lemma}
If $c\leq 1$ then $d_{\pi}(c\mu,c\nu)\leq d(\mu,\nu)$.
\label{mscal<1}
\end{lemma}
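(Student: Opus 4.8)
The plan is to show that every $\varepsilon$ that is \emph{admissible} for the pair $(\mu,\nu)$ in the defining infimum for $d_\pi$ is also admissible for the scaled pair $(c\mu,c\nu)$. Since the Levy-Prokhorov distance is an infimum over exactly such admissible $\varepsilon$, enlarging (or at least not shrinking) the set of admissible values can only decrease the infimum, which yields the desired inequality $d_\pi(c\mu,c\nu)\le d_\pi(\mu,\nu)$.

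Concretely, I would first fix an arbitrary $\varepsilon>0$ satisfying the two conditions in the definition for $\mu$ and $\nu$, namely $\mu(A)\le \nu(A^\varepsilon)+\varepsilon$ and $\nu(A)\le \mu(A^\varepsilon)+\varepsilon$ for all $A\in\mathcal{B}(M)$. Multiplying the first inequality through by $c$ is legitimate and preserves its direction because $c\ge 0$ (this is implicitly required for $c\mu$ to be a measure at all), giving $c\mu(A)\le c\nu(A^\varepsilon)+c\varepsilon$. The decisive step is then the hypothesis $c\le 1$, which forces $c\varepsilon\le\varepsilon$ and hence $c\mu(A)\le c\nu(A^\varepsilon)+\varepsilon$; the symmetric argument applied to the second inequality gives $c\nu(A)\le c\mu(A^\varepsilon)+\varepsilon$. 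Note that we keep the \emph{same} neighbourhood exponent $\varepsilon$ in $A^\varepsilon$ and only absorb the extra slack into the additive term, so no change to the $\varepsilon$-neighbourhoods is needed.

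Thus the same $\varepsilon$ is admissible for $(c\mu,c\nu)$, so the infimum defining $d_\pi(c\mu,c\nu)$ is taken over a set containing every admissible $\varepsilon$ for $(\mu,\nu)$, and therefore $d_\pi(c\mu,c\nu)\le d_\pi(\mu,\nu)$. The only point requiring care is the direction of the two inequalities under the two operations: scaling by $c$ must not reverse them (handled by $c\ge 0$), and the constant $c\varepsilon$ must be bounded by $\varepsilon$ (handled by $c\le 1$). Both hypotheses on $c$ enter precisely here, and I do not expect any genuine obstacle beyond tracking these signs; the result is essentially a one-line monotonicity observation once the admissible-set reformulation of the infimum is in place.
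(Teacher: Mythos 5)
Your proof is correct and follows essentially the same argument as the paper: scale the two admissibility inequalities by $c$, use $c\varepsilon\le\varepsilon$ (from $c\le 1$) to keep the same $\varepsilon$ admissible for $(c\mu,c\nu)$, and conclude by monotonicity of the infimum. Your explicit remark that $c\ge 0$ is needed (and implicit in $c\mu$ being a measure) is a small point the paper glosses over, but it does not constitute a different route.
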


\begin{proof}
For all $\varepsilon > 0$, such that $\mu(E)\leq \nu (E^{\varepsilon})+\varepsilon$, we have
$$c\mu(E)\leq c\nu (E^{\varepsilon})+c\varepsilon \leq c\nu (E^{\varepsilon})+\varepsilon. $$
Similarly, for all $\varepsilon > 0$, such that $\nu(E)\leq \mu (E^{\varepsilon})+\varepsilon$ we have
$c\nu(E)\leq c\mu (E^{\varepsilon})+\varepsilon$.  The result follows.  
\end{proof}

\begin{lemma}
If $c\geq 1$ then $d_{\pi}(c\mu,c\nu)\leq c.d_{\pi}(\mu,\nu)$
\label{mscal>1}
\end{lemma}

\begin{proof}
For all $\varepsilon > 0$, such that $\mu(E)\leq \nu (E^{\varepsilon})+\varepsilon$ we have
$$c\mu(E)\leq c\nu (E^{\varepsilon})+c\varepsilon \leq c\nu
(E^{c\varepsilon})+c\varepsilon. $$
Similarly, for all $\varepsilon > 0$, such that $\nu(E)\leq \mu (E^{\varepsilon})+\varepsilon$ we have
$c\nu(E)\leq c\mu (E^{c\varepsilon})+c\varepsilon$.  The result follows.
\end{proof}

\begin{dfn}
Given measurable spaces $(X_1,\Sigma_1)$ and $(X_2,\Sigma_2)$, a measurable mapping $f: X_1 \to X_2$ and a measure 
$\mu: \Sigma_1 \to [0, \infty]$, the push-forward measure of $\mu$ is defined to be the measure 
$f_*(\mu): \Sigma_2 \to [0,\infty]$ given by
$$(f_*(\mu))(B) = \mu(f^{-1}(B)) \ \text{for } B \in \Sigma_2$$
\end{dfn}

\begin{lemma}
Let $(X,d)$ and $(X',d')$ be two distance spaces and let $f: (X,d)
\hookrightarrow (X',d') $ be an isometric embedding.  If $\mu_1,\mu_2$ are two
measures on $(X,d)$ then,
$$d_{\pi}^{(X,d)}(\mu_1,\mu_2) = d_{\pi}^{(X',d')}(f_*(\mu_1),f_*(\mu_2))$$ 
\label{piinclusionnicebehaviour} 
\end{lemma}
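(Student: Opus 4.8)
The plan is to prove the two inequalities separately, reducing each to elementary set-theoretic identities relating $\varepsilon$-neighbourhoods in $X$ and $X'$ under the isometric embedding $f$. Throughout I will use two facts that follow directly from $f$ being distance-preserving and injective. First, $f$ is continuous and hence Borel measurable, so $f^{-1}(B)\in\mathcal{B}(X)$ for every $B\in\mathcal{B}(X')$. Second, for any $S\subseteq X$ and any $\varepsilon>0$ one has the neighbourhood identities $f^{-1}\big((f(S))^{\varepsilon}\big)=S^{\varepsilon}$ and $(f^{-1}(B))^{\varepsilon}\subseteq f^{-1}(B^{\varepsilon})$, both immediate from $d'(f(x),f(y))=d(x,y)$ together with injectivity. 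In particular $f(S^{\varepsilon})=(f(S))^{\varepsilon}\cap f(X)$, so $f_*\mu\big((f(S))^{\varepsilon}\big)=\mu(S^{\varepsilon})$.

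For the inequality $d_{\pi}^{(X',d')}(f_*\mu_1,f_*\mu_2)\le d_{\pi}^{(X,d)}(\mu_1,\mu_2)$ I would take any $\varepsilon$ admissible for the problem on $X$, fix an arbitrary $B\in\mathcal{B}(X')$, and set $A=f^{-1}(B)\in\mathcal{B}(X)$. Then $f_*\mu_1(B)=\mu_1(A)\le \mu_2(A^{\varepsilon})+\varepsilon$, and using $A^{\varepsilon}\subseteq f^{-1}(B^{\varepsilon})$ with monotonicity of $\mu_2$ gives $\mu_2(A^{\varepsilon})\le \mu_2(f^{-1}(B^{\varepsilon}))=f_*\mu_2(B^{\varepsilon})$, hence $f_*\mu_1(B)\le f_*\mu_2(B^{\varepsilon})+\varepsilon$. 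The symmetric estimate is obtained identically by exchanging $\mu_1$ and $\mu_2$. Thus every $\varepsilon$ admissible on $X$ is admissible for the push-forward measures on $X'$, and taking the infimum yields the inequality.

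The reverse inequality is the delicate one. Here I am given $\varepsilon$ admissible on $X'$ and an arbitrary $A\in\mathcal{B}(X)$, and I must produce a Borel test set in $X'$. The naive choice $B=f(A)$ fails, because the image of a Borel set under an isometric embedding need not be Borel in $X'$. I would circumvent this by testing only against compact sets and invoking inner regularity: as $\mu_1$ is a finite regular Borel measure it is inner regular by compacta, so $\mu_1(A)=\sup\{\mu_1(K): K\subseteq A,\ K\ \text{compact}\}$. For each such $K$ the image $f(K)$ is compact, hence closed in the Hausdorff space $X'$, hence Borel; injectivity gives $f^{-1}(f(K))=K$, and the neighbourhood identity gives $f^{-1}\big((f(K))^{\varepsilon}\big)=K^{\varepsilon}$. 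Applying admissibility of $\varepsilon$ on $X'$ to $B=f(K)$ then yields $\mu_1(K)=f_*\mu_1(f(K))\le f_*\mu_2\big((f(K))^{\varepsilon}\big)+\varepsilon=\mu_2(K^{\varepsilon})+\varepsilon\le \mu_2(A^{\varepsilon})+\varepsilon$. Taking the supremum over $K$ gives $\mu_1(A)\le \mu_2(A^{\varepsilon})+\varepsilon$, and the symmetric bound follows from the same argument applied to $\mu_2$. Hence $\varepsilon$ is admissible on $X$, and the infimum gives $d_{\pi}^{(X,d)}(\mu_1,\mu_2)\le d_{\pi}^{(X',d')}(f_*\mu_1,f_*\mu_2)$.

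The main obstacle, as indicated, is precisely the passage from ambient Borel sets of $X'$ back to Borel sets of $X$ in this second inequality: one cannot simply transport $A$ to $f(A)$. The clean resolution is to restrict attention to compact subsets, whose images are automatically Borel, and to recover $\mu_1(A)$ by inner regularity, which is available because the measures are assumed finite and regular (equivalently, tight on these separable spaces). I expect no further difficulty; the neighbourhood identities and the monotonicity and infimum manipulations are routine.
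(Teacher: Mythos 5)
Your proof is correct, and it diverges from the paper's precisely at the delicate point you flag. The paper proves both inequalities simultaneously through a single correspondence of test sets: it reduces without loss of generality to $B \subseteq f(X)$, sets $A = f^{-1}(B)$, and uses the identities $f_*(\mu_i)(B) = \mu_i(A)$ and $f_*(\mu_i)(B^{\varepsilon}) = \mu_i(A^{\varepsilon})$ (the latter via $f(A^{\varepsilon}) = B^{\varepsilon}\cap f(X)$) to transfer admissible $\varepsilon$'s in both directions at once. In doing so it works with arbitrary subsets and never verifies that $B\cap f(X)$, or the image of a Borel set $A\subseteq X$, is Borel in $X'$ --- exactly the obstruction you isolate. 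Your first inequality is the same pullback computation as the paper's, but your second is genuinely different: by testing only against compact $K\subseteq A$, whose images are closed and hence Borel, and recovering $\mu_1(A)$ by inner regularity, you get an argument that is rigorous where the paper's is merely formal. The cost is the tightness hypothesis: ``regular'' must be read as inner regular by compacta, or else one needs completeness and separability to invoke Ulam's theorem --- assumptions the paper imposes in its applications but not in the lemma's statement. It is worth noting that you can dispense with regularity entirely by testing against the closed set $\overline{f(A)}$ instead of $f(K)$: since $A\subseteq f^{-1}\left(\overline{f(A)}\right)$, since $\left(\overline{f(A)}\right)^{\varepsilon} = (f(A))^{\varepsilon}$ (distance to a set equals distance to its closure), and since $f^{-1}\left((f(A))^{\varepsilon}\right) = A^{\varepsilon}$ by your neighbourhood identity, the same chain yields $\mu_1(A) \leq \mu_2(A^{\varepsilon}) + \varepsilon$ for every Borel $A$ with no appeal to inner regularity, giving the lemma under exactly the stated hypotheses.
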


\begin{proof}
Let $B$ be an arbitrary subset of $X'$.  Note that $f_*(\mu_i)(B) =
f_*(\mu_i)(B\cap f(X))$.  Thus, while calculating $d_{\pi}$ we have to consider only
subsets of $f(X)$.  So without loss of generality assume $B\subset f(X)$. Let $A = f^{-1}(B)$.  By definition, 
$\mu_i(A) = (f_*(\mu_i))(B)$.   As $f$ is injective, for all $S \subset X$, $f^{-1}(f(S)) = S$.  Thus, 
$(f_*(\mu_i))(f(A^{\varepsilon})) = \mu_i(f^{-1}(f(A^{\varepsilon}))) = \mu_i(A^{\varepsilon})$.  But,
$f(A^{\varepsilon}) = B^{\varepsilon}\cap f(X)$ as, $f$ is an isomorphism.  So, 
$\mu_i(A^{\varepsilon}) = (f_*(\mu_i))(f(A^{\varepsilon})) = f_*(\mu_i)\left(B^{\varepsilon}\right)$.  

For all 
$\varepsilon > d_{\pi}^{(X,d)}(\mu_1,\mu_2)$, we have $\mu_1(A) \leq \mu_2
(A^{\varepsilon}) + \varepsilon \ \text{and} \ \mu_2 (A) \leq \mu_1
(A^{\varepsilon}) + \varepsilon$.  Thus, $f_*(\mu_1)(B) \leq f_*(\mu_2)
(B^{\varepsilon}) + \varepsilon \ \text{and} \ f_*(\mu_2) (B) \leq f_*(\mu_1)
(A^{\varepsilon}) + \varepsilon$.   Hence $d_{\pi}^{(X,d)}(\mu_1,\mu_2)$ is less than or equal to 
$d_{\pi}^{(X',d')}(f_*(\mu_1),f_*(\mu_2))$.  Similarly, for all $\varepsilon > d_{\pi}^{(X',d')}(f_*(\mu_1),f_*(\mu_2))$
we have, $f_*(\mu_1)(B) \leq f_*(\mu_2)
(B^{\varepsilon}) + \varepsilon \ \text{and} \ f_*(\mu_2) (B) \leq f_*(\mu_1)
(A^{\varepsilon}) + \varepsilon$. Thus, we have $\mu_1(A) \leq \mu_2
(A^{\varepsilon}) + \varepsilon \ \text{and} \ \mu_2 (A) \leq \mu_1
(A^{\varepsilon}) + \varepsilon$.  Hence $d_{\pi}^{(X,d)}(\mu_1,\mu_2)$ is greater than or equal to 
$d_{\pi}^{(X',d')}(f_*(\mu_1),f_*(\mu_2))$.  Combining the two we have the result.
\end{proof}

\begin{dfn}[Gromov-Hausdorff-Levy-Prokhorov Distance]
We now define the Gromov-Hausdorff-Levy-Prokhorov distance (GHLP distance,
$d_{GHLP}$) between a pair of metric measure spaces $(X_i,d_i,\mu_i), i
= 1, 2$, with the underlying metric space $X_i$ assumed to be compact.  Consider
isometric embeddings $\iota_i : X_i \to Z, i = 1, 2$ of the spaces $X_i$ into a
metric space $Z$. These give rise to push forward probability measures
$(\iota_i)_*(\mu_i)$ on $Z$, given by $(\iota_i)_*(\mu_i)(E)=
\mu_i(\iota_i^{-1}(E))$. The distance between the metric measure spaces is the
infimum of the distance between the push forward measures over all
isometric embeddings, i.e.,
   $$d_{GHLP} (X_1 , X_2 ) = inf\{d_{\pi}((\iota_1)_*(\mu_1), (\iota_2)_*(\mu_2)) \ | \
\iota_i : X_i \to Z \ \text{isometric embeddings} \rbrace$$
where $Z$ in the infimum varies over all metric spaces.  
\end{dfn}

We can identify the
space $X_i$ with its image in $Z$.  Further we can assume that, under these
assumptions $Z=X_1\cup X_2$.  We shall often make such identifications and
identify the measures $\mu_i$ with the push forward measures on $Z$.

\begin{theorem}
GHLP distance satisfies  the triangle inequality.
\end{theorem}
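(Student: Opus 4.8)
The plan is to reduce the statement to the ordinary triangle inequality for the Levy--Prokhorov metric $d_{\pi}$ inside a single ambient space, built by gluing two near-optimal ambient spaces along a shared copy of the middle space. Lemma~\ref{piinclusionnicebehaviour} is what makes this reduction legitimate, since it guarantees that $d_{\pi}$ is unchanged under isometric embedding, so the distances computed in the two original ambient spaces persist in the glued one.

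Concretely, let $(X_i, d_i, \mu_i)$, $i = 1, 2, 3$, be compact metric measure spaces and fix $\varepsilon > 0$. By the definition of $d_{GHLP}$, I would first choose a metric space $Z_{12}$ with isometric embeddings $\iota_1 \colon X_1 \to Z_{12}$ and $\iota_2 \colon X_2 \to Z_{12}$ such that $d_{\pi}\big((\iota_1)_*\mu_1, (\iota_2)_*\mu_2\big) < d_{GHLP}(X_1, X_2) + \varepsilon$, and similarly a space $Z_{23}$ with isometric embeddings $\kappa_2 \colon X_2 \to Z_{23}$ and $\kappa_3 \colon X_3 \to Z_{23}$ such that $d_{\pi}\big((\kappa_2)_*\mu_2, (\kappa_3)_*\mu_3\big) < d_{GHLP}(X_2, X_3) + \varepsilon$.

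The key step is to amalgamate these along $X_2$. I would form $Z = Z_{12} \sqcup Z_{23} / \!\sim$, where $\sim$ identifies $\iota_2(x)$ with $\kappa_2(x)$ for each $x \in X_2$, and equip $Z$ with the gluing distance: the original distances on $Z_{12}$ and $Z_{23}$ are retained, while for $a \in Z_{12}$ and $c \in Z_{23}$ one sets $d(a,c) = \inf_{x \in X_2}\big( d_{12}(a, \iota_2(x)) + d_{23}(\kappa_2(x), c)\big)$. The three spaces then embed isometrically into the single space $Z$, and by Lemma~\ref{piinclusionnicebehaviour} the two inequalities above continue to hold for the pushforwards of $\mu_1, \mu_2, \mu_3$ to $Z$.

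Finally, since $\mu_1, \mu_2, \mu_3$ now live on one metric space $Z$, I would apply the ordinary triangle inequality for $d_{\pi}$ to get $d_{\pi}(\mu_1, \mu_3) \le d_{\pi}(\mu_1, \mu_2) + d_{\pi}(\mu_2, \mu_3) < d_{GHLP}(X_1, X_2) + d_{GHLP}(X_2, X_3) + 2\varepsilon$. As $\iota_1$ and $\kappa_3$ are isometric embeddings of $X_1$ and $X_3$ into the common space $Z$, the defining infimum gives $d_{GHLP}(X_1, X_3) \le d_{\pi}(\mu_1, \mu_3)$, and letting $\varepsilon \to 0$ yields the claim. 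The main obstacle is verifying that the gluing distance is genuinely a metric, in particular the triangle inequality in the mixed cases where the three chosen points are split between $Z_{12}$ and $Z_{23}$; each such case is handled by inserting an intermediate point of $X_2$ and using that $\iota_2$ and $\kappa_2$ are isometric, with compactness of $X_2$ ensuring the defining infimum is attained. Should the construction only yield a pseudometric, one passes to the metric quotient, under which the embeddings and pushforward measures are unaffected; one should also confirm that $d_{\pi}$ itself obeys the triangle inequality, which is the standard fact that the Levy--Prokhorov metric is a metric.
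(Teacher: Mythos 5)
Your proof is correct and is essentially the argument the paper invokes: the paper's proof simply cites \cite{gadgil}, and the gluing-along-the-middle-space construction you give (amalgamating $Z_{12}$ and $Z_{23}$ over $X_2$, checking the glued distance is a metric, then applying Lemma~\ref{piinclusionnicebehaviour} and the triangle inequality for $d_{\pi}$ in the common ambient space) is exactly the route taken there, and is also the argument the paper itself spells out for the generalised distance $d_{\rho}$ in Theorem~\ref{triangleinequality}, where the gluing is phrased via the maximal (quotient) metric of Section~\ref{maxmet} rather than your explicit infimum formula --- the two constructions coincide here.
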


\begin{proof}
The proof is the same as that of the corresponding result in~\cite{gadgil}.
\end{proof}

As the GHLP distance ignores sets of measure zero, we cannot expect an isometry between the two spaces $(X_1,d_1,\mu_1)$ and $(X_2,d_2,\mu_2)$ given that the distance between two spaces $d_{GHLP}((X_1,d_1,\mu_1),(X_2,d_2,\mu_2)) = 0$, but only that this is true up to ignoring an
appropriate class of sets with measure zero. 

\begin{lemma}
If $d_{GHLP}((X_1,d_1,\mu_1),(X_2,d_2,\mu_2))=0$, then $\mu_1(X_1)=\mu_2(X_2)$.
\label{equameas} 
\end{lemma}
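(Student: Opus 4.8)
The plan is to extract the equality of total masses directly from the Levy--Prokhorov comparison applied to the single largest admissible Borel set, namely the ambient space itself. The hypothesis $d_{GHLP}((X_1,d_1,\mu_1),(X_2,d_2,\mu_2))=0$ means that for every $\delta>0$ there exist a metric space $Z$ and isometric embeddings $\iota_i:X_i\to Z$ with $d_{\pi}((\iota_1)_*\mu_1,(\iota_2)_*\mu_2)<\delta$. As remarked after the definition of the GHLP distance, I may identify each $X_i$ with its image and take $Z=\iota_1(X_1)\cup\iota_2(X_2)$, writing $\mu_i$ also for the corresponding pushforward measure on $Z$.

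First I would record that pushing forward preserves total mass: since $\iota_i^{-1}(Z)=X_i$, the definition of the pushforward gives $(\iota_i)_*\mu_i(Z)=\mu_i(\iota_i^{-1}(Z))=\mu_i(X_i)$. Thus it suffices to compare the two total masses $\mu_1(Z)$ and $\mu_2(Z)$ of the pushforward measures living on the common space $Z$.

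The key step is to feed the whole space $A=Z$ into the defining inequalities of the Levy--Prokhorov metric. Because $Z^{\varepsilon}=Z$ for every $\varepsilon>0$, the two conditions $\mu_1(A)\le\mu_2(A^{\varepsilon})+\varepsilon$ and $\mu_2(A)\le\mu_1(A^{\varepsilon})+\varepsilon$ collapse to $\mu_1(Z)\le\mu_2(Z)+\varepsilon$ and $\mu_2(Z)\le\mu_1(Z)+\varepsilon$. Hence for every $\varepsilon>d_{\pi}(\mu_1,\mu_2)$ we obtain $|\mu_1(Z)-\mu_2(Z)|\le\varepsilon$, and therefore $|\mu_1(X_1)-\mu_2(X_2)|=|\mu_1(Z)-\mu_2(Z)|\le d_{\pi}((\iota_1)_*\mu_1,(\iota_2)_*\mu_2)<\delta$.

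Finally, since $\delta>0$ was arbitrary, letting $\delta\to 0$ forces $|\mu_1(X_1)-\mu_2(X_2)|=0$, i.e. $\mu_1(X_1)=\mu_2(X_2)$. There is no serious obstacle here: the only points requiring care are the bookkeeping that the pushforward preserves total mass and the observation that the largest admissible Borel set $A=Z$ is precisely the one that isolates the total-mass comparison from the geometry of the embedding, the $\varepsilon$-thickening being vacuous when $A=Z$.
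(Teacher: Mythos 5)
Your proposal is correct and follows essentially the same route as the paper: take embeddings with small Levy--Prokhorov distance, note that pushforwards preserve total mass, and deduce $|\mu_1(X_1)-\mu_2(X_2)|\leq\varepsilon$ for arbitrary $\varepsilon$. The only difference is that you make explicit the step the paper leaves implicit, namely that plugging the whole space $A=Z$ (with $Z^{\varepsilon}=Z$) into the defining inequalities of $d_{\pi}$ yields the total-mass comparison.
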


\begin{proof}
For every $\varepsilon > 0$, there exists a metric space
$(Z(\varepsilon),d(\varepsilon))$ and isometries
$$\iota_i:(X_i,d_i)\to
(Z(\varepsilon),d(\varepsilon))$$
such that the push forward measures $\nu_i=
(\iota_i)_*(\mu_i)$ are at a distance $\varepsilon$ from each other.  But,
$\nu_i(Z(\varepsilon))=\mu_i(X_i)$.  Thus, 
$$ |\mu_1(X_1)-\mu_2(X_2)|=|\nu_1(Z(\varepsilon))-
\nu_2(Z(\varepsilon))|\leq\varepsilon.$$
As $\varepsilon$ was arbitrary, we get $\mu_1(X_1)=\mu_2(X_2)$.       
\end{proof}
 
\begin{theorem}
For two metric measure spaces $(X_1,d_1,\mu_1)$ and $(X_2,d_2,\mu_2)$, 
$$d_{GHLP}((X_1,d_1,\mu_1),(X_2,d_2,\mu_2))=0$$
if and only if there are open sets
$U_1$ and $U_2$ of measure zero such that there is a measure preserving isometry
between $(X_1 \setminus U_1)$ and $(X_2 \setminus U_2)$.
\end{theorem}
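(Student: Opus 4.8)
The plan is to prove the two implications separately, using the supports of the measures as the canonical choice of the open null sets $U_i$.

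\emph{The easy direction (``if'').} Suppose $\phi\colon X_1\setminus U_1\to X_2\setminus U_2$ is a measure-preserving isometry with each $U_i$ open and $\mu_i(U_i)=0$. Since $U_i$ is open, $X_i\setminus U_i$ is closed, hence compact. I would glue $X_1$ and $X_2$ along $\phi$: let $Z$ be the quotient of $X_1\sqcup X_2$ by the identification $x\sim\phi(x)$ for $x\in X_1\setminus U_1$, equipped with the distance that restricts to $d_i$ on each piece and, for a remaining cross pair $x\in U_1,\ y\in U_2$, equals $\inf_{z\in X_1\setminus U_1}\bigl(d_1(x,z)+d_2(\phi(z),y)\bigr)$. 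Because $\phi$ is an isometry of the identified (closed) set, a detour through $X_2$ never shortens an $X_1$-distance and vice versa, so the inclusions $\iota_i\colon X_i\hookrightarrow Z$ are genuine isometric embeddings. As $\mu_i(U_i)=0$, each $\mu_i$ is concentrated on $X_i\setminus U_i$, and since $\phi$ is measure preserving the push-forwards $(\iota_1)_*\mu_1$ and $(\iota_2)_*\mu_2$ literally coincide on $Z$; hence $d_\pi=0$ in $Z$, so $d_{GHLP}=0$.

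\emph{Setup of the hard direction (``only if'').} Assume $d_{GHLP}=0$. For each $n$, choose (as permitted by the remark after the definition) a metric $d_n$ on $Z_n:=X_1\cup X_2$ restricting to $d_1,d_2$ with $d_\pi^{Z_n}(\mu_1,\mu_2)<1/n$. Write $S_i:=\mathrm{supp}(\mu_i)$, a compact subset of $X_i$ with $\mu_i(X_i\setminus S_i)=0$, and put $U_i:=X_i\setminus S_i$, which is open and null; these will be the required sets. The goal is to produce a measure-preserving isometry $S_1\to S_2$.

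\emph{The key compactness step (main obstacle).} The crucial lemma is a \emph{matching/boundedness} estimate: for $a\in S_1$ and any $r>0$ one has $\mu_1(B(a,r))>0$, so the Levy-Prokhorov inequality applied to $A=B(a,r)$ forces $\mu_2\bigl(B(a,r+1/n)\bigr)\ge\mu_1(B(a,r))-1/n>0$ for large $n$; thus some point of $S_2$ lies within $r+1/n$ of $a$ in $Z_n$. This simultaneously bounds the cross distances $d_n(a,b)$ (for $a\in S_1,\ b\in S_2$) uniformly in $n$ and shows that in any limit the images of $S_1$ and $S_2$ will have the \emph{same} support. Fixing countable dense sets in $S_1$ and $S_2$ and using that each $d_n$ is $1$-Lipschitz in each variable, an Arzel\`a--Ascoli/diagonal argument yields a subsequence along which all cross distances converge; the limit defines a pseudometric $d_\infty$ on $S_1\sqcup S_2$ whose quotient is a compact metric space $Z_\infty$ into which $S_1$ and $S_2$ embed isometrically. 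I expect the genuine obstacle to be \emph{passing the Levy-Prokhorov bounds to this limit}: one must verify, using the uniform convergence of the metrics on the compact supports, that $\mu_1(A)\le\mu_2(A^\delta)$ holds in $Z_\infty$ for every Borel $A$ and every $\delta>0$, and symmetrically. Since $d_\pi$ is a genuine metric on finite Borel measures, this forces the two push-forward measures on $Z_\infty$ to be \emph{equal}.

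\emph{Conclusion.} Call this common measure $\mu$ on $Z_\infty$. For an isometric embedding the image of the support is the support of the push-forward, so $\iota_1(S_1)=\mathrm{supp}(\mu)=\iota_2(S_2)=:S$. Thus $\iota_1\colon S_1\to S$ and $\iota_2\colon S_2\to S$ are isometric \emph{bijections}, and $\phi:=\iota_2^{-1}\circ\iota_1\colon S_1\to S_2$ is an isometry carrying $\mu_1$ to $\mu_2$, as both push forward to $\mu$. Taking $U_i=X_i\setminus S_i$ then gives the required open null sets and measure-preserving isometry $X_1\setminus U_1=S_1\xrightarrow{\ \phi\ }S_2=X_2\setminus U_2$, completing the proof. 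The degenerate case $\mu_1=\mu_2=0$ is handled by taking $U_i=X_i$.
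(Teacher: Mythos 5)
Your proposal is correct, but it takes a genuinely different route from the paper's. The paper handles the hard direction by reduction: Lemma \ref{equameas} gives $\mu_1(X_1)=\mu_2(X_2)$, the scaling Lemmas \ref{mscal<1} and \ref{mscal>1} show that multiplying both measures by the common constant $1/\mu_i(X_i)$ preserves the vanishing of the GHLP distance, and then the known result for \emph{probability} metric measure spaces is cited from \cite{gadgil}; its ``if'' direction simply takes $Z=X_2$. You instead prove everything from scratch: ambient metrics $d_n$ on $X_1\cup X_2$ with $d_\pi<1/n$, a positivity-of-balls argument on the supports $S_i=\mathrm{supp}(\mu_i)$ to bound cross distances, an Arzel\`a--Ascoli diagonal limit to a pseudometric on $S_1\sqcup S_2$, transfer of the Levy-Prokhorov bounds to the quotient $Z_\infty$, and identification of the two supports there. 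What each buys: the paper's argument is three lines but outsources all the analytic content to \cite{gadgil}, while yours is self-contained, makes the null open sets canonical (complements of supports), and exhibits the limiting ambient space explicitly; your ``if'' direction via gluing along $\phi$ (in effect the paper's Theorem \ref{inclmaxmet} with $\delta=0$) is actually more careful than the paper's choice $Z=X_2$, which tacitly requires the partial isometry $\phi$ to embed all of $X_1$ into $X_2$. Two points you should tighten when writing this up: since the images $\iota_i(X_i)\subset Z$ may overlap, your $d_n$ on the disjoint union is in general only a pseudometric (cross distances can vanish), which your quotient construction absorbs but should be acknowledged; and the transfer of the LP inequalities to $Z_\infty$, which you flag but only sketch, should be carried out for closed $A$ using the uniform convergence $\sup_{S_1\times S_2}\vert d_n-d_\infty\vert\to 0$, after which $d_\pi(\nu_1,\nu_2)=0$ together with regularity of finite Borel measures on the compact space $Z_\infty$ yields $\nu_1=\nu_2$ as you claim.
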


\begin{proof}
If there is a measure preserving isometry $\phi$ from $(X_1 \setminus U_1)$ to $(X_2 \setminus U_2)$,
then let $Z$ be equal to $X_2$.  Then we get that
$$d_{GHLP}((X_1,d_1,\mu_1),(X_1,d_2,\mu_2))\leq d_{\pi}(\phi_*(\mu_1),\mu_2)=0.$$
On the other hand assume $d_{GHLP}((X_1,d_1,\mu_1),(X_2,d_2,\mu_2))=0$.  We
have, by Lemma \ref{equameas} and Lemma \ref{mscal<1}, that
$$d_{GHLP}\left(\left(X_1,d_1,\frac{1}{\mu_1(X_1)}\mu_1\right),\left(X_2,d_2,\frac{1}{\mu_2(X_2)}
\mu_2\right)\right)=0.$$  Then by the corresponding result for probability measures (see
\cite{gadgil},for example) we get the required result.   
\end{proof}

\section{Distance measure spaces}

\subsection{Distance on the space of distance measure spaces}\label{SEC:defnrho}       

We will generalise the idea of the GHLP distance to get a distance on the space of distance measure spaces.  We first
consider some illustrative examples.

\begin{exa}
Let $X = \{x,y\rbrace$, $d_1 = d_2 = d$ where $d$ is defined as follows:

\begin{tabular}{l r}
\begin{minipage}{0.5\textwidth}
\begin{itemize}
\item $d(x,x) = 0 = d(y,y)$.
\item $d(x,y) = d$ for some fixed $d\in \real^+$.
\end{itemize}
\end{minipage}
&
\begin{minipage}{0.5\textwidth}
\includegraphics[scale=.4]{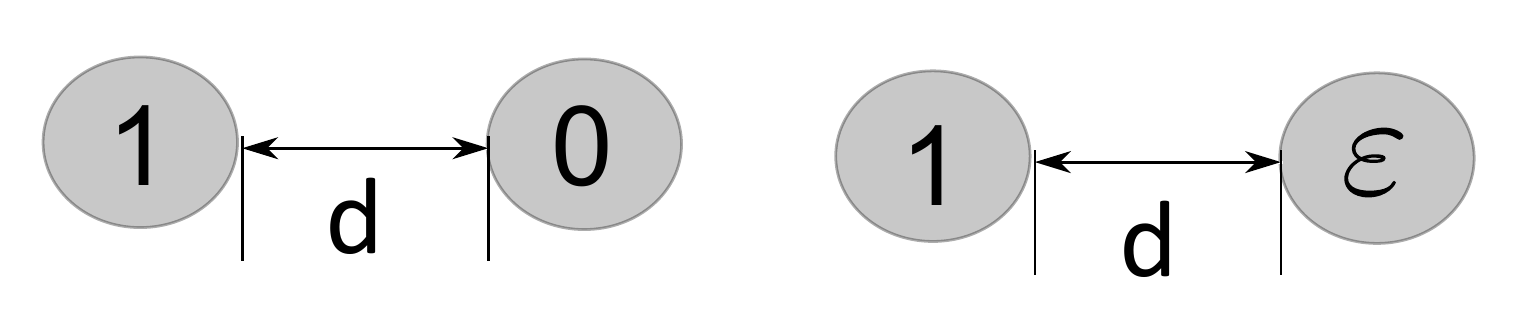}
\end{minipage}
\end{tabular}
\\

\noindent and let $\mu_1(x) = 1$, $\mu_1(y) = 0$ and $\mu_2(x) = 1$ and $\mu_2(y)
=\varepsilon$ be two measures on $X$.  In the GHLP distance, the
two metric measure spaces, $(X,d,\mu_1)$ and $(X,d,\mu_2)$, are close to each other if $\varepsilon$ is small.  We do
want to preserve this property when we generalise.   
\end{exa}  

\begin{exa}
Let $X = \{x,y\rbrace$, $\mu_1 = \mu_2$ be defined as $\mu_i(x)= 1 = \mu_i(y)$ and
$d_i$ for $i = 1,2$ is defined as follows:\\

\begin{tabular}{l r}
\begin{minipage}{0.5\textwidth}
\begin{itemize}
\item $d_i(x,x) = 0 = d_i(y,y)$.
\item $d_i(x,y) = l_i$ for some fixed $l\in \real$.
\end{itemize}
\end{minipage}
&
\begin{minipage}{0.5\textwidth}
\includegraphics[scale=.4]{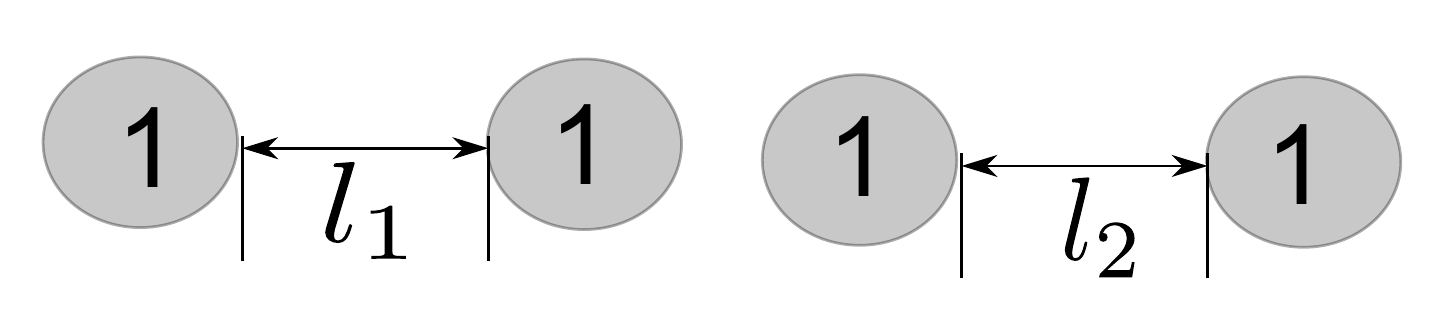}
\end{minipage}
\end{tabular}
\\

In the GHLP distance, these two metric measure spaces,
$(X,d,\mu_1)$ and $(X,d,\mu_2)$, are close to each other when $\vert l_1 - l_2
\vert$ is small.  We do want to preserve this property when we generalise.
\end{exa}

\noindent In addition to these two things we want two more crucial ingredients to
understand how to generalise.  Look at the following slightly different variants
of the above two examples.

\begin{exa}
Let $X = \{x,y\rbrace$, $d_1 = d_2 = d$ where $d$ is defined as follows:\\

\begin{tabular}{l r}
\begin{minipage}{0.5\textwidth}
\begin{itemize}
\item $d(x,x) = 0 = d(y,y)$.
\item $d(x,y) = \infty$.
\end{itemize}
\end{minipage}
&
\begin{minipage}{0.5\textwidth}
\includegraphics[scale=.4]{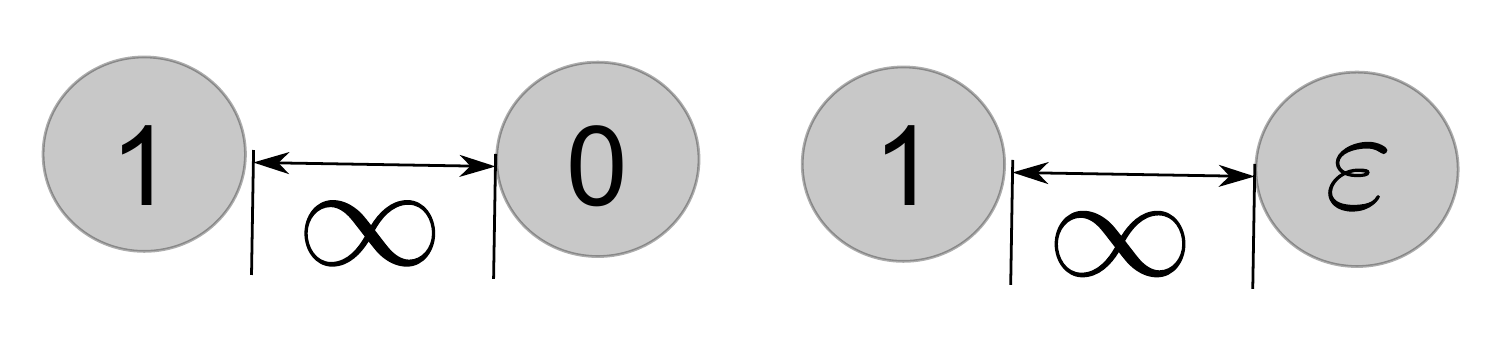}
\end{minipage}
\end{tabular}
\\

Let $\mu_1$ and $\mu_2$ be measures on $X$ given by $\mu_1(x) = 1$, $\mu_1(y) =
0$, $\mu_2(x) = 1$ and $\mu_2(y)
=\varepsilon$.  We want the two distance measure spaces, $(X,d,\mu_1)$ and $(X,d,\mu_2)$, to be
close to each other when $\varepsilon$ is small, in the generalised GHLP
distance.
\end{exa} 

\begin{exa}
Let $X = \{x,y\rbrace$, $\mu_1 = \mu_2$ be defined as $\mu_i(x)= 1 = \mu_i(y)$ and
$d_i$ for $i = 1,2$ is defined as follows:\\

\begin{tabular}{l r}
\begin{minipage}{0.5\textwidth}
\begin{itemize}
\item $d_i(x,x) = 0 = d_i(y,y)$.
\item $d_1(x,y) = L$ for some fixed $L\in \real$.
\item $d_2(x,y) = \infty$.
\end{itemize}
\end{minipage}
&
\begin{minipage}{0.5\textwidth}
\includegraphics[scale=.4]{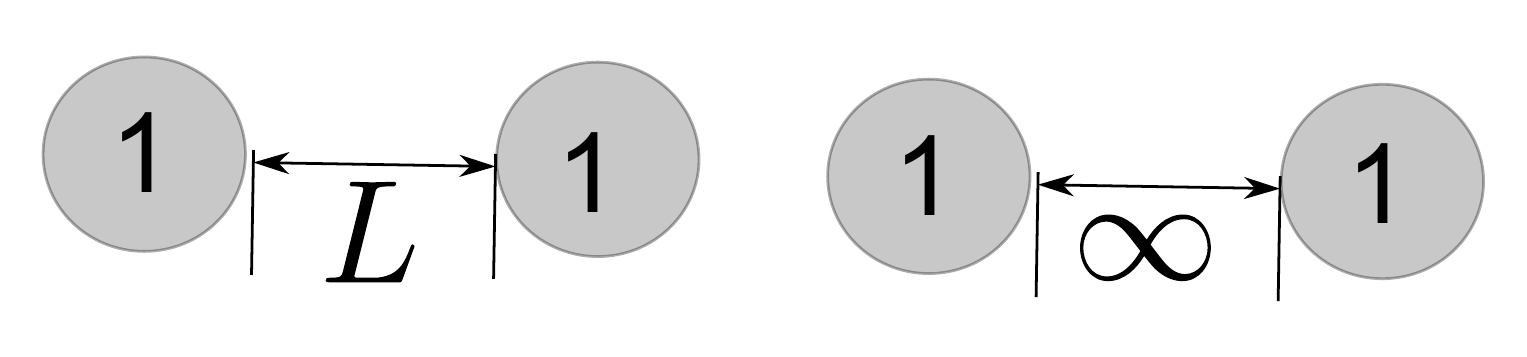}
\end{minipage}
\end{tabular}
\\

We want the two distance measure spaces,
$(X,d,\mu_1)$ and $(X,d,\mu_2)$, to be close to each other when $L$ is large, in the generalised GHLP distance. 
\end{exa}

\noindent Now we give the precise definition of the generalised GHLP distance.

\begin{dfn}[$L$--isometric embedding]\label{Lisometric}
A map $i:(M,d,\mu)\to (N,d',\nu)$ is called an
$L$--isometric embedding if $d'(f(x),f(y))=
d(x,y)$ whenever $\min \{d'(f(x),f(y)), d(x,y) \rbrace< L$.
\end{dfn}

\begin{dfn}[$L$--isometric $\varepsilon$--embedding]\label{Lisomepsemb}
A map from a distance measure space $(M,d,\mu)$ to a distance space $(N,d')$, $i:(M,d,\mu)\to (N,d')$, is called an
$L$--isometric $\varepsilon$--embedding if there exits $E \subset M$ such that
$\mu(E)\leq\varepsilon$ and $i:(M \setminus E)\to N$ is an
$L$--isometric embedding.
\end{dfn}

\begin{dfn}[Generalised GHLP distance, $d_{\rho}$]\label{rhodfn}
Consider $L$-isometric $\varepsilon$-embeddings $\iota_i : X_i \to Z, i = 1, 2$
of the spaces $X_i$ into a distance space $Z$. These give rise to push forward
measures $(\iota_i)_*(\mu_i)$ on $Z$. The distance between the distance measure
spaces is the infimum of an appropriate distance between the push forward 
measures over
all $L$--isometric $\varepsilon$--embeddings, namely,
   $$d_{\rho} = \inf\left\lbrace d_{\pi}((\iota_1)_*(\mu_1), (\iota_2)_*(\mu_2)) + \frac{1}{L} +
\varepsilon \ | \ \text{ $\iota_i : X_i \to Z$ an $L$-isometric 
$\varepsilon$-embedding}
\right\rbrace$$
where $Z$ varies over all metric spaces, $L\in\real^+$ and $\varepsilon \in
\real^+ \cup \{0\rbrace$.  
\end{dfn}

\noindent As remarked after the definition of GHLP distance, we can assume that, $Z=\iota_1(X_1) \cup \iota_2(X_2)$.

\begin{lemma}
If $d_{\rho}((X_1,d_1,\mu_1),(X_2,d_2,\mu_2)) = c$ then, $\vert \mu_1(X_1) - \mu_2(X_2)
\vert \leq c$ 
\label{total-measures-close-when-the-metric-measure-spaces-are}  
\end{lemma}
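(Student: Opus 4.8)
The plan is to mimic the argument of Lemma~\ref{equameas}, unwinding the definition of $d_\rho$ and then extracting the total-mass comparison from the Levy--Prokhorov metric applied to the whole ambient space. Since $d_\rho$ is an infimum over $L$-isometric $\varepsilon$-embeddings, the hypothesis $d_\rho = c$ means that for every $\delta > 0$ I can select a distance space $Z$, real numbers $L > 0$ and $\varepsilon \geq 0$, and $L$-isometric $\varepsilon$-embeddings $\iota_i : X_i \to Z$ such that, writing $\nu_i := (\iota_i)_*(\mu_i)$,
\[
d_\pi(\nu_1, \nu_2) + \frac{1}{L} + \varepsilon < c + \delta .
\]
In particular this forces $d_\pi(\nu_1, \nu_2) < c + \delta$, so the whole argument reduces to comparing the total masses through $d_\pi$.

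The first key point is that the push-forward preserves total mass. Each $\iota_i$ is a map defined on all of $X_i$; the exceptional set $E$ appearing in Definition~\ref{Lisomepsemb} only weakens the isometry requirement and does not affect measurability of the map or the push-forward. Hence $\nu_i(Z) = \mu_i(\iota_i^{-1}(Z)) = \mu_i(X_i)$, and it suffices to bound $\vert \nu_1(Z) - \nu_2(Z) \vert$ by $d_\pi(\nu_1, \nu_2)$.

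For the second step I will apply the defining inequalities of the Levy--Prokhorov metric to the single Borel set $A = Z$. For any $\eta > d_\pi(\nu_1, \nu_2)$ the definition gives $\nu_1(Z) \leq \nu_2(Z^\eta) + \eta$ and $\nu_2(Z) \leq \nu_1(Z^\eta) + \eta$; since $Z^\eta = Z$, these collapse to $\nu_1(Z) \leq \nu_2(Z) + \eta$ and $\nu_2(Z) \leq \nu_1(Z) + \eta$, so that $\vert \nu_1(Z) - \nu_2(Z) \vert \leq \eta$. Letting $\eta \downarrow d_\pi(\nu_1, \nu_2)$ then yields $\vert \nu_1(Z) - \nu_2(Z) \vert \leq d_\pi(\nu_1, \nu_2)$.

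Combining the two steps gives $\vert \mu_1(X_1) - \mu_2(X_2) \vert = \vert \nu_1(Z) - \nu_2(Z) \vert \leq d_\pi(\nu_1, \nu_2) < c + \delta$ for every $\delta > 0$, and letting $\delta \downarrow 0$ delivers the claim. I do not expect a genuine obstacle here, since the structure is identical to Lemma~\ref{equameas}; the only place deserving a moment's care is verifying that the exceptional set of the $\varepsilon$-embedding is irrelevant to total mass, so that $\nu_i$ still carries total mass $\mu_i(X_i)$.
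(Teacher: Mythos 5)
Your proof is correct and takes essentially the same route as the paper's: both reduce the claim to the two facts that push-forward under an $L$-isometric $\varepsilon$-embedding still preserves total mass (the exceptional set only weakens the isometry condition, as you note), and that taking $A = Z$ in the Levy--Prokhorov definition bounds $\vert \nu_1(Z) - \nu_2(Z)\vert$ by $d_{\pi}(\nu_1,\nu_2)$. The paper phrases this as a proof by contradiction while you argue directly via near-optimal embeddings, but the content is identical.
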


\begin{proof}
Assume the contrary.   Then, for every space $Z$ and every $L$-isometric
$\varepsilon$-embedding $\iota_i: X_i \to Z$, if we denote $\nu_i =
(\iota_i)_*(\mu_i)$ then, 
$$\vert \nu_1(Z) - \nu_2(Z) \vert = \vert \mu_1(X_1)- \mu_2(X_2) \vert > c.$$
Thus, $d_{\pi}(\nu_1,\nu_2) > c$ and hence $d_{\rho}((X_1,d_1,\mu_1),(X_2,d_2,\mu_2)) >
c$.  A contradiction. 
\end{proof}

\noindent From the definition of $d_{\rho}$ we have

\begin{lemma}
Let $(X,d)$ and $(X',d')$ be two distance spaces and let $f: (X,d)
\hookrightarrow (X',d') $ be an isometric embedding.  If $\mu$ is a measure on
$(X,d)$  and $\mu'$ is a measure on $X'$, then,
$$d_{\pi}^{(X',d')}(f_*(\mu),\mu') \geq d_{\rho}((X,d,\mu), (X',d',\mu')).$$
\label{pirho}
\end{lemma}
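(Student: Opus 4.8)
The plan is to exhibit a single, very natural pair of embeddings that realizes the pushforward configuration on the right-hand side, and then read off the inequality directly from the infimum defining $d_\rho$. Since $f \colon (X,d) \hookrightarrow (X',d')$ is given as an isometric embedding, I would take the ambient distance space to be $Z = X'$ itself, and set $\iota_1 = f \colon X \to X'$ together with $\iota_2 = \mathrm{id} \colon X' \to X'$. Note that this is consistent with the remark after Definition \ref{rhodfn} that one may assume $Z = \iota_1(X_1) \cup \iota_2(X_2)$, since here $\iota_1(X) = f(X) \subseteq X' = \iota_2(X')$.

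The key observation is that any isometric embedding is an $L$-isometric embedding for \emph{every} $L > 0$: the defining equality $d'(f(x),f(y)) = d(x,y)$ holds for all pairs, and in particular on the set where $\min\{d'(f(x),f(y)),\, d(x,y)\} < L$, as required by Definition \ref{Lisometric}. Taking the exceptional set $E = \emptyset$, which has $\mu(E) = 0$, both $\iota_1 = f$ and $\iota_2 = \mathrm{id}$ are therefore $L$-isometric $0$-embeddings for every $L$, in the sense of Definition \ref{Lisomepsemb}. The induced pushforwards are exactly $(\iota_1)_*(\mu) = f_*(\mu)$ and $(\iota_2)_*(\mu') = \mu'$.

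Feeding this choice into the infimum of Definition \ref{rhodfn}, with $\varepsilon = 0$ and $L$ arbitrary, gives for every $L > 0$ the bound
$$d_{\rho}\big((X,d,\mu),(X',d',\mu')\big) \leq d_{\pi}^{(X',d')}\big(f_*(\mu),\mu'\big) + \frac{1}{L}.$$
Since the infimum defining $d_\rho$ already ranges over all $L \in \real^+$, I would then let $L \to \infty$, so that the penalty term $\frac{1}{L}$ tends to $0$, yielding the claimed inequality $d_{\rho} \leq d_{\pi}^{(X',d')}(f_*(\mu),\mu')$.

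There is essentially no hard step here; the only point to check carefully is that $f$ and $\mathrm{id}$ genuinely qualify as admissible embeddings in Definition \ref{rhodfn}, i.e.\ that an isometric embedding satisfies the $L$-isometry condition with empty exceptional set for all $L$, and that $Z = X'$ (a distance space) is a legitimate target for the embeddings. Granting these two verifications, the result is immediate from the structure of $d_\rho$ as an infimum that already incorporates the limit $\inf_{L} \frac{1}{L} = 0$.
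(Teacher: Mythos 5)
Your proof is correct and takes exactly the route the paper intends: the paper gives no separate argument for Lemma \ref{pirho}, stating only that it follows ``from the definition of $d_{\rho}$,'' and your choice of $Z = X'$, $\iota_1 = f$, $\iota_2 = \mathrm{id}$ as $L$-isometric $0$-embeddings for every $L$, followed by letting $L \to \infty$ to kill the $\frac{1}{L}$ penalty, is precisely that verification made explicit. Your side remark about $Z$ being a distance space rather than a metric space is also well taken, since the paper's own usage (e.g.\ the gluing in the triangle inequality proof) confirms that distance spaces are admissible targets despite the stray word ``metric'' in Definition \ref{rhodfn}.
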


\subsection{Gluing and maximal metric}
A wonderful exposition of the ideas in this section, with more details, can be found in \cite{metricgeometry}.
\subsubsection{Gluing}
Let $(X,d)$ be a metric space and let $R$ be an equivalence relation on $X$.  The quotient
semi-distance (a generalisation of distance, allowing distance between unequal points to be
zero) $d_R$ is defined as
$$d_R(x,y) = \inf \left\lbrace \sum_{i=1}^k d(p_i,q_i) : p_1 = x, q_k = y, k\in \mathbb{N}  \right\rbrace$$
where the infimum is taken over all choices of $\{p_i\}$ and $\{q_i\}$ such that $q_i$ is
$R$-equivalent to $p_{i+1}$ for all $i = 1,...,k-1$.  We associate with the semi-distance
space, $(X,d_R)$, the distance space $(X/d_R, d_R)$ by identifying points with zero $d_R$-distance.
Given two distance spaces $(X,d_X)$ and $(Y,d_Y)$ we can define a distance on the union $X \cup Y$ as follows:
\begin{itemize}
\item For $x,x'$ in $X$, $d_{X\cup Y}(x,x') = d_X(x,x')$.
\item For $y,y'$ in $Y$, $d_{X\cup Y}(y,y') = d_Y(y,y')$.
\item If $x\in X$ and $y\in Y$,$d_{X\cup Y}(x,y) = \infty$.
\end{itemize}
If we have a bijection $I:X' \to Y'$ between subsets $X'$ and $Y'$, consider the
equivalence relation $R$ on $Z = X\cup Y$ generated by the relations $x \sim y$ if 
$y = I(x)$.  The result of gluing $X$ and $Y$ along $I$ by definition is the quotient space
$(Z/d_R,d_R)$.

\subsubsection{Maximal metric}
\label{maxmet}
Let $b: X\times X \to \mathbb{R}^{+}\cup \{\infty\}$ be an arbitrary function.  Consider
the class $D$ of all semi-distances $d$ on $X$ such that $d\leq b$, that is, 
$d(x,y)\leq b(x,y)$ for all $x,y\in X$.  Then $D$ has a unique maximal semi-distance 
$d_m$ such that $d_m\geq d$ for all $d\in D$, given by 
$$d_m(x,y)= sup\{d(x,y):d\in D\}.$$

Thus, given a set $X$ covered by a collection of subsets $\{X_{\alpha}\}$ each carrying a semi-distance $d_{\alpha}$, consider the class of semi-distance $D$ of all semi-distances $d$, such that $d(x,y) \leq d_{\alpha}(x,y)$ whenever $x,y \in X_{\alpha}$.  Then there is a unique maximal semi-distance $d_m \in D$.
 
Theorem 3.1.27 in \cite{metricgeometry} tells us that the distance as a result of gluing $X$ and $Y$ along $I$ is the maximal metric on $(X\cup Y, d_{X\cup Y})$, denoted as $d_I$, such that
\begin{itemize}
\item For all $z,z'\in X\cup Y$, $d(x,x') \leq d_{X\cup Y}(z,z')$.
\item $d(x,I(x)) = 0$.
\end{itemize}
We can generalise this concept a bit.  Given two distance spaces $(X,d_X)$ and $(Y,d_Y)$
and a function $I:X' \to Y'$ between subsets $X'$ and $Y'$, such that $d_X(x,x')<\delta$ if
$I(x) = I(x')$, we can consider the maximal metric, denoted by $d_I^{\delta}$, such that
\begin{itemize}
\item For all $z,z'\in X\cup Y$, $d(x,x') \leq d_{X\cup Y}(z,z')$.
\item $d(x,I(x)) \leq \delta$.
\end{itemize} 

\begin{theorem}
Let $R$ be the equivalence relation generated by the relations $x \sim y$ if 
$y = I(x)$.  The maximal distance $d_I^{\delta} = \inf \left\lbrace \sum_{i=1}^k d_{X\cup Y}(p_i,q_i) + (k-1)\delta : p_1 = x, q_k = y, k\in \mathbb{N}  \right\rbrace =:d_s$
where the infimum is taken over all choices of $\{p_i\}$ and $\{q_i\}$ such that $q_i$ is $R$-equivalent to $p_{i+1}$ for all $i = 1,...,k-1$.
\end{theorem}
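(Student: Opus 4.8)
The goal is to show that the maximal metric $d_I^{\delta}$ equals the explicit formula $d_s$ given by infima over chains. The natural strategy is the standard two-sided argument used for quotient semi-distances: first show $d_s$ is itself an admissible semi-distance satisfying the two constraints (so $d_s \le d_I^{\delta}$ by maximality), and then show that every admissible semi-distance $d$ is dominated by $d_s$ pointwise (giving $d_I^{\delta} \le d_s$). The plan is to carry out these two inclusions separately.

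First I would verify that $d_s$ is a semi-distance. Symmetry is clear by reversing chains. The triangle inequality follows from concatenation: given a chain from $x$ to $y$ and a chain from $y$ to $z$, one splices them, and the extra $\delta$ term bookkeeps correctly because the number of ``jumps'' in the concatenated chain is the sum of the jumps in the two pieces (one checks that $(k_1-1)\delta + (k_2-1)\delta \le (k_1+k_2-1)\delta$, so the cost does not increase upon splicing). I would also check the two admissibility constraints: taking the trivial one-term chain $p_1 = z$, $q_1 = z'$ with $k=1$ gives $d_s(z,z') \le d_{X\cup Y}(z,z')$ with no $\delta$ penalty, which is the first bullet; and taking $p_1 = x$, $q_1 = x$, $p_2 = I(x)$ (so $q_1 = x$ is $R$-equivalent to $p_2 = I(x)$), $q_2 = I(x)$ with $k=2$ gives $d_s(x,I(x)) \le d_{X\cup Y}(x,x) + d_{X\cup Y}(I(x),I(x)) + \delta = \delta$, which is the second bullet. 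Hence $d_s \in D$ and so $d_s \le d_I^{\delta}$.

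For the reverse inequality I would take any admissible semi-distance $d$ and bound $d(x,y)$ along an arbitrary admissible chain $p_1 = x, q_1, p_2, q_2, \ldots, q_k = y$. The key idea is to estimate $d(x,y)$ by the triangle inequality applied along the full path $x = p_1, q_1, p_2, q_2, \ldots, p_k, q_k = y$. Each segment $d(p_i,q_i)$ is bounded by $d_{X\cup Y}(p_i,q_i)$ using the first constraint. Each ``transition'' segment $d(q_i, p_{i+1})$ must be controlled using the second constraint: since $q_i$ is $R$-equivalent to $p_{i+1}$, and $R$ is generated by the identifications $x \sim I(x)$, one has $d(q_i, p_{i+1}) \le \delta$ (this is where one uses that $d(x,I(x)) \le \delta$ together with symmetry, and possibly a further chain-argument if a single $R$-generating step does not directly connect $q_i$ and $p_{i+1}$). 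Summing, $d(x,y) \le \sum_{i=1}^k d_{X\cup Y}(p_i,q_i) + (k-1)\delta$, and taking the infimum over chains yields $d(x,y) \le d_s(x,y)$; maximality then gives $d_I^{\delta} \le d_s$.

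The step I expect to be the main obstacle is the control of the transition terms $d(q_i,p_{i+1}) \le \delta$. The subtlety is that $R$ is the equivalence relation \emph{generated} by $y = I(x)$, so two $R$-equivalent points need not be related by a single generating identification — they may be connected only through a finite chain of such identifications, and the map $I$ need not be injective (the hypothesis only requires $d_X(x,x') < \delta$ when $I(x)=I(x')$). I would handle this by analysing the structure of $R$-equivalence classes: elements identified in a single class are linked by alternating applications of $I$ and $I^{-1}$, and I would argue that by refining the chain (inserting intermediate points) one may assume each transition corresponds to a single generating step, so that each transition costs at most $\delta$. Making this refinement rigorous while confirming the accounting of the total $(k-1)\delta$ penalty is the technical heart of the argument; the hypothesis $d_X(x,x') < \delta$ when $I(x) = I(x')$ is presumably what guarantees the bookkeeping closes without accumulating extra cost.
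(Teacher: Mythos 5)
Your overall two-sided strategy is exactly the paper's: show $d_s \in D$ via the one-term and two-term chains (so $d_s \le d_I^{\delta}$ by maximality), then bound an arbitrary $d \in D$ along any admissible chain by the triangle inequality (so $d_I^{\delta} \le d_s$). Your check that $d_s$ is a semi-distance by splicing chains is fine, and is in fact slightly more careful than the paper, which asserts $d_s \in D$ without verifying the triangle inequality. But the step you yourself flag as the technical heart --- bounding each transition term $d(q_i,p_{i+1})$ by $\delta$ --- is left open, and the mechanism you propose for closing it would fail. Since $I$ is a function, each $x \in X'$ is joined by a generating relation to the single point $I(x)$, so every $R$-equivalence class has the form $\{y\} \cup I^{-1}(y)$ for some $y \in Y'$; hence exactly three nontrivial cases can occur: $p_{i+1} = I(q_i)$, $q_i = I(p_{i+1})$, or $I(q_i) = I(p_{i+1})$. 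The first two are handled by the constraint $d(x,I(x)) \le \delta$. In the third case your plan --- refine the chain so that each transition is a single generating step --- would insert the intermediate point $y = I(q_i) = I(p_{i+1})$ and give $d(q_i,p_{i+1}) \le d(q_i,y) + d(y,p_{i+1}) \le 2\delta$, overshooting the budget of one $\delta$ per transition, so the accounting does not close. The paper instead uses the standing hypothesis directly: $I(q_i) = I(p_{i+1})$ forces $d_X(q_i,p_{i+1}) < \delta$, so the first constraint alone yields $d(q_i,p_{i+1}) \le d_{X\cup Y}(q_i,p_{i+1}) = d_X(q_i,p_{i+1}) \le \delta$. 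You guessed ("presumably") that this hypothesis is what rescues the bookkeeping, but you did not identify how, and the refinement route you sketched leads to a bound strictly worse than $\sum_{i=1}^{k} d_{X\cup Y}(p_i,q_i) + (k-1)\delta$.

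Relatedly, your worry about arbitrarily long chains of generating identifications is unfounded for the same structural reason: in the bipartite graph on $X' \sqcup Y'$ with edges $(x,I(x))$, every vertex of $X'$ has degree one, so each connected component has diameter at most two, and no longer chain between $q_i$ and $p_{i+1}$ can occur. Once that observation is in place, the case analysis is finite and the argument closes exactly as in the paper.
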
   

\begin{proof}
Let $D$ denote the class of semi-distances satisfying
\begin{itemize}
\item For all $z,z'\in X\cup Y$, $d(z,z') \leq d_{X\cup Y}(z,z')$.
\item For all $x\in X'$, $d(x,I(x)) \leq \delta$.
\end{itemize}
As, $d_{X\cup Y}(z,z')$ and $d_{X\cup Y}(x,x) + d_{X\cup Y}(I(x),I(x)) + \delta$ belong to the set over which the infimum is taken, $d_s \in D$.  So, it suffices to prove that $d_s \geq d$ for any semi-distance 
$d\in D$.  Let $x,y \in X\cup Y$ and $\{p_i\}$ and $\{q_i\}$ be as defined.  Then by the triangle inequality
$$d(x,y) \leq \sum_{i=1}^{k}d(p_i,q_i) + \sum_{i=1}^{k-1}d(q_i,p_i+1) $$
Note that, $p_{i+1} = I(q_i)$ or $q_i = I(p_{i+1})$ or $I(q_i) = I(p_{i+1})$.  But, both $d(q_i,I(q_i)) \text{ and } d(I(p_{i+i}),p_{i+1})$ are less than or equal to $\delta$.  And if $I(q_i) = I(p_{i+1})$, 
$$d(q_i,p_{i+1}) \leq d_{X\cup Y}(q_i,p_{i+1})\leq d_X(q_i,p_{i+1}) \leq \delta.$$ Hence,
$$d(x,y) \leq \sum_{i=1}^{k}d_{X\cup Y}(p_i,q_i) + \sum_{i=1}^{k-1}\delta, $$ that is, $d\leq d_s$.
\end{proof}

\begin{theorem}
If $\vert d_X(x,y) - d_Y(I(x),I(y))\vert \leq \delta$ for all $x,y \in X$, then the inclusions $i_X: (X,d_X) \to (X\cup Y,d_I^{\delta})$ and $i_Y(Y,d_Y) \to (X\cup Y, d_I^{\delta})$ are isometries.
\label{inclmaxmet}
\end{theorem}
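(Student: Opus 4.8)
The plan is to supply the two reverse inequalities that the explicit description of the preceding theorem leaves open. Since $d_I^{\delta}=d_s$ and every semi-distance $d\in D$ satisfies $d\leq d_{X\cup Y}$, the maximal one does too, so $d_I^{\delta}\leq d_{X\cup Y}$. Restricting to two points of $X$ and recalling that $d_{X\cup Y}$ restricts to $d_X$ on $X$ (and to $d_Y$ on $Y$), this already gives $d_I^{\delta}(x,x')\leq d_X(x,x')$ for $x,x'\in X$ and the analogous bound on $Y$. Hence the only content is the reverse inequality $d_I^{\delta}(x,x')\geq d_X(x,x')$ for $x,x'\in X$, the statement for $Y$ being symmetric, and I would derive it directly from the infimum formula for $d_s$.

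Fix $x,x'\in X$ and an admissible chain $p_1=x,q_1,p_2,q_2,\dots,p_k,q_k=x'$ with $q_i$ $R$-equivalent to $p_{i+1}$. I may assume its cost $\sum_i d_{X\cup Y}(p_i,q_i)+(k-1)\delta$ is finite, so every pair $(p_i,q_i)$ lies entirely in $X$ or entirely in $Y$, because $d_{X\cup Y}$ is infinite across the two pieces. The structural point is that $R$ is generated only by the identifications $z\sim I(z)$ with $z\in X'$, so each $R$-class meets $Y$ in at most one point, namely $I(z)$. Consequently a transition $q_i\sim_R p_{i+1}$ crossing from $X$ into $Y$ (or back) is forced to be an $I$-identification, i.e.\ $p_{i+1}=I(q_i)$ or $q_i=I(p_{i+1})$, whereas a transition staying inside $X$ forces $q_i$ and $p_{i+1}$ to lie in a common class, so that $d_X(q_i,p_{i+1})\leq\delta$.

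First I would reduce to an alternating chain: whenever two consecutive segments lie in the same piece I merge them, using the triangle inequality in that piece together with the bound $d_X(q_i,p_{i+1})\leq\delta$ on the intermediate transition. Since merging lowers $k$ by one, the $\delta$ freed from $(k-1)\delta$ absorbs the $\delta$ introduced by the triangle inequality, so the cost does not increase. After this reduction the chain alternates $X,Y,X,\dots,X$, beginning and ending in $X$ because $p_1,q_k\in X$. For each $Y$-segment $(p_i,q_i)$ the flanking transitions give $p_i=I(q_{i-1})$ and $q_i=I(p_{i+1})$ with $q_{i-1},p_{i+1}\in X'$, so the hypothesis $\lvert d_X(a,b)-d_Y(I(a),I(b))\rvert\leq\delta$ yields $d_{X\cup Y}(p_i,q_i)=d_Y(p_i,q_i)\geq d_X(q_{i-1},p_{i+1})-\delta$.

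Substituting these bounds, the cost is at least $\sum_{X\text{-segments}}d_X(p_i,q_i)+\sum_{Y\text{-segments}}d_X(q_{i-1},p_{i+1})$ together with a nonnegative multiple of $\delta$, since at most one $\delta$ is subtracted per $Y$-segment and the number of $Y$-segments never exceeds the $k-1$ available $\delta$'s. The listed $X$-distances are precisely the consecutive gaps of the sequence $x=p_1,q_1,p_3,q_3,\dots,q_k=x'$ of points of $X$, so by the triangle inequality in $(X,d_X)$ their sum is at least $d_X(x,x')$. Taking the infimum over chains gives $d_I^{\delta}(x,x')=d_s(x,x')\geq d_X(x,x')$, so $i_X$ is isometric; exchanging the roles of $X$ and $Y$ and reading the hypothesis as a comparison of $X$-segments to $Y$-distances via $I$ handles $i_Y$. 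I expect the main obstacle to be the bookkeeping of the merging step and the $\delta$-accounting, ensuring the subtracted $\delta$'s never exceed the $(k-1)\delta$ budget.
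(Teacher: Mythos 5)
Your proposal is correct and follows essentially the same route as the paper: both rest on the chain formula $d_I^{\delta}=d_s$ from the preceding theorem, discard mixed segments via $d_{X\cup Y}(p_i,q_i)=\infty$, reduce to alternating chains (your merging step is the paper's Cases 2 and 4, done there by induction), and spend one $\delta$ from the $(k-1)\delta$ budget per $Y$-excursion to convert it into an $X$-gap via the hypothesis $\vert d_X(a,b)-d_Y(I(a),I(b))\vert\leq\delta$ before closing with the triangle inequality in $X$ (the paper's Cases 1 and 3). Your only presentational difference is a single global substitute-and-sum accounting in place of the paper's four-case induction, and your explicit observations---that each $R$-class meets $Y$ in at most one point and that within-$X$ transitions force $d_X(q_i,p_{i+1})\leq\delta$---are implicit in the paper's trichotomy $p_{i+1}=I(q_i)$, $q_i=I(p_{i+1})$, or $I(q_i)=I(p_{i+1})$.
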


\begin{proof}
Let $R$ be the equivalence relation generated by the relations $x \sim y$ if 
$y = I(x)$.  Let $\{p_i\}$ and $\{q_i\}$ be such that $q_i$ is $R$-equivalent to $p_{i+1}$.
Using previous theorem, without loss of generality we can assume that, if $p_i$ belongs to
$X$, then $q_i$ belongs to $X$ as well, because, otherwise $d_{X\cup Y}(p_i,q_i) = \infty$.
Similarly, we can assume that, if $p_i$ belongs to $Y$, then $q_i$ belongs to $Y$.
  So, $k$ is odd.  
  
\subsubsection*{Case 1:} Both $x,x' \in X$ (that is, $p_1,q_k\in X$) and $p_2\in Y$. Then, observe that   

\begin{align*}
d_{X\cup Y}(p_1,q_1) + d_{X\cup Y}(p_{2},q_{2}) + \delta &= d_{X\cup Y}(p_1,q_1) + d_{X\cup Y}(I(q_1),I(p_{3})) + \delta\\
&\geq d_{X\cup Y}(p_1,q_1) + d_{X\cup Y}(q_1,p_{3}) - \delta + \delta\\
&= d_{X\cup Y}(p_1,q_1) + d_{X\cup Y}(q_1,p_{3})\\
&\geq d_{X\cup Y}(p_1,p_{3})
\end{align*}

and,

$$d_{X\cup Y}(p_1,p_3) + d_{X\cup Y}(p_3,q_{3}) \geq d_{X\cup Y}(p_1,q_{3}).$$

Therefore, by induction, we have,
\begin{align*}
\sum_{i=1}^k d_{X\cup Y}(p_i,q_i) + (k-1)\delta &\geq d_{X\cup Y}(p_1,q_k) + \frac{(k-1)\delta}{2}\\
&\geq d_{X\cup Y}(p_1,q_k) = d_X(x,x').
\end{align*} 

\subsubsection*{Case 2:} Both $x,x' \in X$, that is, $p_1,q_k\in X$ and, $p_2\in X$. Then,
$$d_{X\cup Y}(p_1,q_1) + d_{X\cup Y}(p_{2},q_{2}) \geq d_{X\cup Y}(p_1,q_2)$$
So, we are in Case 1 or Case 2 again, with lesser number of terms.  Therefore by induction and Case 1 we have the result.  

\subsubsection*{Case 3} Both $y,y'\in Y$, that is $p_1,q_k\in Y$ and $p_2\in X$.  Then, observe that,
\begin{align*}
d_{X\cup Y}(p_1,q_1) + d_{X\cup Y}(p_{2},q_{2}) + \delta &\geq d_{X\cup Y}(p_1,I(p_2)) + d_{X\cup Y}(p_{2},q_{2}) + \delta\\
&\geq d_{X\cup Y}(p_1,I(p_2)) + d_{X\cup Y}(I(p_{2}),I(q_{2})) - \delta + \delta\\
&= d_{X\cup Y}(p_1,I(p_2)) + d_{X\cup Y}(I(p_{2}),I(q_{2})) \\
&\geq d_{X\cup Y}(p_1,I(q_2)) 
\end{align*}

and 
$$d_{X\cup Y}(p_1,I(q_2)) + d_{X\cup Y}(I(q_2),q_{3}) \geq d_{X\cup Y}(p_1,q_3).$$

Therefore, by induction, we have,
\begin{align*}
\sum_{i=1}^k d_{X\cup Y}(p_i,q_i) + (k-1)\delta &\geq d_{X\cup Y}(p_1,q_k) + \frac{(k-1)\delta}{2}\\
&\geq d_{X\cup Y}(p_1,q_k) = d_X(y,y').
\end{align*}

\subsubsection*{Case 4:} Both $y,y'\in Y$, that is $p_1,q_k\in Y$ and $p_2\in Y$.  Then,
$$d_{X\cup Y}(p_1,q_1) + d_{X\cup Y}(p_{2},q_{2}) \geq d_{X\cup Y}(p_1,q_2).$$
So, we are in Case 3 or Case 4 again, with lesser number of terms.  Therefore by induction and Case 3 we have the result. 
  
\end{proof}

We continue to use the same notation as above.
\begin{theorem}
If $I$ is an $L$-isometric embedding then so are the inclusion maps $i_X: (X,d_X) \to \left(X\cup Y,d_I^{\delta}\right)$ and $i_Y:(Y,d_Y) \to \left(X\cup Y, d_I^{\delta}\right)$ for all $\delta \geq 0$.
\label{inclmxmetlisom}
\end{theorem}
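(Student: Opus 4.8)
The plan is to verify the defining condition of Definition~\ref{Lisometric} for $i_X$ directly; the statement for $i_Y$ then follows by the symmetric argument, noting that $I^{-1}\colon Y'\to X'$ is itself $L$-isometric whenever $I$ is. So I must show $d_I^{\delta}(i_X(x),i_X(x'))=d_X(x,x')$ whenever $\min\{d_I^{\delta}(i_X(x),i_X(x')),\,d_X(x,x')\}<L$. One inequality is free: applying the formula $d_I^{\delta}=d_s$ with the trivial chain $k=1$, $p_1=x$, $q_1=x'$ gives $d_I^{\delta}(i_X(x),i_X(x'))\le d_{X\cup Y}(x,x')=d_X(x,x')$, so $i_X$ is non-expanding. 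It therefore remains to prove $d_I^{\delta}(i_X(x),i_X(x'))\ge d_X(x,x')$ under the assumption $d_I^{\delta}(i_X(x),i_X(x'))<L$.

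The one new ingredient, which replaces the uniform hypothesis $|d_X-d_Y\circ I|\le\delta$ of Theorem~\ref{inclmaxmet}, is this: since $d_I^{\delta}(i_X(x),i_X(x'))<L$, I may choose a chain $\{p_i\},\{q_i\}$ (with $q_i$ being $R$-equivalent to $p_{i+1}$) whose total cost $\sum_{i=1}^{k}d_{X\cup Y}(p_i,q_i)+(k-1)\delta$ is $<L$. As all summands are non-negative, \emph{every} step obeys $d_{X\cup Y}(p_i,q_i)<L$; in particular each step is finite, so exactly as in Theorem~\ref{inclmaxmet} I may assume each pair $p_i,q_i$ lies wholly in $X$ or wholly in $Y$, and $k$ is odd. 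Consequently, every time the computation replaces the length of a chain segment by the length of its image or preimage under $I$, that segment has length $<L$; because $I$ is $L$-isometric, Definition~\ref{Lisometric} upgrades each such replacement from the $\delta$-approximate inequality $d_Y(I(a),I(b))\ge d_X(a,b)-\delta$ to the \emph{exact} identity $d_Y(I(a),I(b))=d_X(a,b)$.

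With this upgrade the four-case induction of Theorem~\ref{inclmaxmet} runs unchanged, each transfer across $I$ now being loss-free. Successively fusing two consecutive steps into a single step of the same space collapses the chain and yields $\sum_{i=1}^{k}d_{X\cup Y}(p_i,q_i)\ge d_X(x,x')$; since the remaining terms $(k-1)\delta\ge 0$ only enlarge the cost, the total cost is $\ge d_X(x,x')$ as well. Taking the infimum over all chains of cost $<L$, which compute $d_I^{\delta}(i_X(x),i_X(x'))$ precisely because this value is $<L$, gives $d_I^{\delta}(i_X(x),i_X(x'))\ge d_X(x,x')$, and with the non-expanding bound this is the claimed equality. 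I expect the only real work to be bookkeeping: checking that the points entering each transfer genuinely lie in the domain or range of $I$ (so that $I^{-1}$ is defined, which holds because an $L$-isometric $I$ is automatically injective, as $I(a)=I(b)$ forces $d_X(a,b)=d_Y(I(a),I(b))=0$), and that after each fusion the newly created segment still has length $<L$ so that the $L$-isometric property can be reapplied. Both are consequences of the single observation that in a chain of total cost $<L$ every segment, and hence every transferred distance, is $<L$.
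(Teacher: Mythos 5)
Your proof is correct and takes essentially the same route as the paper: the paper's proof likewise reduces to the four-case chain analysis of Theorem \ref{inclmaxmet}, observing that an $L$-isometric $I$ transfers distances exactly (so the $\delta$-error terms are not needed). In fact your bookkeeping --- restricting to chains of total cost $<L$ so that every transferred segment has length $<L$, using the automatic injectivity of $L$-isometric maps to dispose of the $I(q_i)=I(p_{i+1})$ transitions, and handling the $d_I^{\delta}(x,x')<L$ branch of the min-condition (which also subsumes the $d_X(x,x')<L$ branch, the only one the paper states explicitly) --- supplies precisely the details that the paper's two-line appeal to ``the same arguments'' leaves implicit.
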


\begin{proof}
If $x,x' \in X$ are such that $d_X(x,x') < L$, then $d_Y(I(x),I(y)) = d_X(x,y)$.  Therefore, $\vert d_Y(I(x),I(y)) - d_X(x,y) \vert = 0 < \delta$.  Hence, by the same arguments
as in Theorem \ref{maxmet} we have $d_I^{\delta}(x,x') = d_X(x,x')$.  Similarly, if $y,y'\in Y$ and $d_Y(y,y') < L$, then $d_I^{\delta}(y,y') = d_Y(y,y')$.    
\end{proof}

\subsection{Triangle inequality}
We can now prove
triangle inequality for $d_{\rho}$ by the arguments similar to the one in
\cite{gadgil} for the Gromov-Hausdorff-Prokhorov distance.  Let $(X_1,d_1, \mu_1)$,
$(X_2,d_2,\mu_2)$ and $(Y,d_Y,\nu)$ be distance measure spaces.  Then,

\begin{theorem}[Triangle Inequality] 
$$d_{\rho}((X_1,d_1,\mu_1),(X_2,d_2,\mu_2)) \leq d_{\rho}((X_1,d_1,\mu_1),(Y,d_Y,\nu)) + d_{\rho}((X_2,d_2,\mu_2), (Y,d_Y,\nu))$$
\label{triangleinequality}
\end{theorem}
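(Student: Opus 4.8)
The plan is to adapt the gluing argument used for the Gromov--Hausdorff--Prokhorov distance in \cite{gadgil} to the setting of $L$-isometric $\varepsilon$-embeddings. Fix $\eta>0$ and choose near-optimal witnesses for the two distances on the right-hand side: a distance space $Z_1$, parameters $L_1,\varepsilon_1$, and $L_1$-isometric $\varepsilon_1$-embeddings $f_1\colon X_1\to Z_1$ and $g_1\colon Y\to Z_1$ with
$$d_{\pi}^{Z_1}\big((f_1)_*\mu_1,(g_1)_*\nu\big)+\tfrac{1}{L_1}+\varepsilon_1\le d_{\rho}\big((X_1,d_1,\mu_1),(Y,d_Y,\nu)\big)+\eta,$$
and symmetrically $Z_2,L_2,\varepsilon_2,f_2,g_2$ for the second distance. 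Write $a_i=d_{\pi}^{Z_i}\big((f_i)_*\mu_i,(g_i)_*\nu\big)$, let $F_i\subset Y$ be the exceptional set of $g_i$ (so $\nu(F_i)\le\varepsilon_i$), and set $F=F_1\cup F_2$ and $L=\min\{L_1,L_2\}$.

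First I would build one distance space receiving all three. On $Y\setminus F$ both $g_1$ and $g_2$ are genuine $L$-isometric embeddings, and since an $L$-isometric embedding is injective (if $d(x,y)>0$ then $d'(f(x),f(y))>0$, by Definition~\ref{Lisometric}), the map $I=g_2\circ g_1^{-1}$ is a well-defined $L$-isometric embedding from $g_1(Y\setminus F)\subset Z_1$ onto $g_2(Y\setminus F)\subset Z_2$. Let $W$ be the result of gluing $Z_1$ and $Z_2$ along $I$ via the maximal-metric construction of Section~\ref{maxmet} with $\delta=0$, and let $q$ denote the quotient map. By Theorem~\ref{inclmxmetlisom} the inclusions $Z_i\hookrightarrow W$ are $L$-isometric embeddings, so that $\phi_i:=q\circ f_i$ is an $L$-isometric $\varepsilon_i$-embedding of $X_i$ into $W$ (the composite of an $L_i$-isometric $\varepsilon_i$-embedding with an $L$-isometric embedding, $L\le L_i$, retains the exceptional set of $f_i$ and is $L$-isometric on its complement). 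I glue only along the good part $Y\setminus F$ precisely so that no distance below $L$ is shortened by a shortcut through the uncontrolled images $g_1(F),g_2(F)$.

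Next I would compare the push-forward measures. Since the maximal metric satisfies $d_I^{0}\le d_{Z_1\sqcup Z_2}$, the map $q$ is $1$-Lipschitz, and a short computation (for any $1$-Lipschitz $q$ and $B\subset W$ one has $q^{-1}(B)^{\varepsilon}\subset q^{-1}(B^{\varepsilon})$) shows that $d_{\pi}$ does not increase under a $1$-Lipschitz push-forward; combined with Lemma~\ref{piinclusionnicebehaviour} this gives, writing $\alpha_i=(\phi_i)_*\mu_i$ and $\beta_i=(q\circ g_i)_*\nu$, the bound $d_{\pi}^{W}(\alpha_i,\beta_i)\le a_i$. Because $q\circ g_1$ and $q\circ g_2$ coincide on $Y\setminus F$, the measures $\beta_1,\beta_2$ differ only on the image of $F$, whence $d_{\pi}^{W}(\beta_1,\beta_2)\le\nu(F)\le\varepsilon_1+\varepsilon_2$. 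The triangle inequality for $d_{\pi}$ in $W$ then yields
$$d_{\pi}^{W}(\alpha_1,\alpha_2)\le a_1+a_2+\nu(F).$$
Taking $W$ as the common target, $L$ as the isometry threshold, and a common exceptional scale for $\phi_1,\phi_2$, Definition~\ref{rhodfn} bounds $d_{\rho}((X_1,d_1,\mu_1),(X_2,d_2,\mu_2))$ by $d_{\pi}^{W}(\alpha_1,\alpha_2)+\tfrac1L+\varepsilon^{*}$, and I would finish by letting $\eta\to0$.

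The main obstacle is the final accounting of the error terms, and this is where the argument must be made with care. One must present $\phi_1,\phi_2$ as $L$-isometric $\varepsilon^{*}$-embeddings for a \emph{single} scale $\varepsilon^{*}$ and then show that $d_{\pi}^{W}(\alpha_1,\alpha_2)+\tfrac1L+\varepsilon^{*}$ is dominated by $\big(a_1+\tfrac1{L_1}+\varepsilon_1\big)+\big(a_2+\tfrac1{L_2}+\varepsilon_2\big)$. The delicate point is that the discrepancy $\nu(F)$ between the two images of $\nu$ and the exceptional scale of the glued embeddings compete for the same $\varepsilon$-budget $\varepsilon_1+\varepsilon_2$, while the slack $\tfrac1{L_1}+\tfrac1{L_2}-\tfrac1L$ produced by the choice $L=\min\{L_1,L_2\}$ is what must absorb the remainder. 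Managing this interaction between the two thresholds and the two exceptional sets---so that every piece of measure thrown away is charged exactly once and the combined $L$-isometry property survives the gluing (which is exactly what Theorem~\ref{inclmxmetlisom} secures on $Y\setminus F$)---is the technical heart of the proof.
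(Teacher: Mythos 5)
There is a genuine gap, and it is exactly at the point you yourself defer as ``the technical heart'': with your choice of gluing only along the good part $Y\setminus F$, the final accounting is not merely delicate but cannot be made to close. Quantify your own bounds: taking $W$, $L=\min\{L_1,L_2\}$ and the common exceptional scale $\varepsilon^{*}=\max\{\varepsilon_1,\varepsilon_2\}$, Definition~\ref{rhodfn} gives
$d_{\rho}(X_1,X_2)\le a_1+a_2+\nu(F)+\max\{\tfrac1{L_1},\tfrac1{L_2}\}+\max\{\varepsilon_1,\varepsilon_2\}$,
and for this to be dominated by $\bigl(a_1+\tfrac1{L_1}+\varepsilon_1\bigr)+\bigl(a_2+\tfrac1{L_2}+\varepsilon_2\bigr)+2\eta$ you would need
$\nu(F)\le\min\{\varepsilon_1,\varepsilon_2\}+\min\{\tfrac1{L_1},\tfrac1{L_2}\}+2\eta$,
whereas your construction only yields $\nu(F)\le\varepsilon_1+\varepsilon_2$. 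The $\varepsilon_i$ are fixed features of the near-optimal witnesses: they need not shrink as $\eta\to0$ (they can be as large as $d_{\rho}(X_i,Y)+\eta$), so e.g.\ with $\varepsilon_1=\varepsilon_2=c>0$, $L_i$ large, and disjoint exceptional sets carrying the full mass $c$ each, your argument proves only $d_{\rho}(X_1,X_2)\le d_{\rho}(X_1,Y)+d_{\rho}(X_2,Y)+(\varepsilon_1+\varepsilon_2)$ --- at worst a factor-two inequality, not the theorem. Nor can the shortfall be routed around by a cleverer choice of middle measure: in your $W$ the maps $q\circ g_1$ and $q\circ g_2$ genuinely differ on $F$, so any comparison measure built from $\nu$ re-incurs the mass $\nu(F)$ somewhere in the chain of $d_{\pi}$ triangle inequalities. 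In effect the measure of $F$ is charged twice --- once inside the witnesses' budget $\varepsilon_1+\varepsilon_2$ (which must already cover $\varepsilon^{*}$) and once as a Prokhorov cost.

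The paper closes the budget by gluing differently: $Z$ is obtained from $Z_1\sqcup Z_2$ by identifying $g_1(y)$ with $g_2(y)$ for \emph{all} $y\in Y$, exceptional points included. Then $\iota_1\circ g_1$ and $\iota_2\circ g_2$ coincide as maps $Y\to Z$, so the middle term $d_{\pi}^{Z}\bigl((\iota_1\circ g_1)_*\nu,(\iota_2\circ g_2)_*\nu\bigr)$ costs nothing (the paper bounds it, generously, by $\varepsilon_1+\varepsilon_2$), the $L$-isometric property of the inclusions is supplied by Theorem~\ref{inclmxmetlisom}, and the tally closes at $d_{\rho}(X_1,Y)+d_{\rho}(X_2,Y)+4\varepsilon$. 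Your instinct that identifying across the exceptional sets might create shortcuts is a legitimate concern about the hypothesis of Theorem~\ref{inclmxmetlisom} in that application; but your remedy trades a possible metric defect, which the paper keeps on the metric side of the ledger, for a definite measure-theoretic cost $\nu(F)$ that the $\varepsilon$-budget provably cannot absorb. Everything else in your write-up --- the injectivity of $L$-isometric embeddings, the $1$-Lipschitz monotonicity of $d_{\pi}$ under the quotient map, the use of Lemma~\ref{piinclusionnicebehaviour}, and the assembly of $\phi_i=q\circ f_i$ --- is correct; the proof fails only, but fatally, at the step you left open.
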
   

\begin{proof}
Let $\varepsilon > 0$ be arbitrary.  By definition, for $i=1,2$ we can find
spaces $Z_i$ and maps $f_i$ and $g_i$ such that $f_i: X_i \to Z_i,g_i:Y \to Z_i$
are $L_i$-isometric
$\varepsilon_i$-embeddings and $Z_i = f_i(X_i)\cup g_i(Y)$ and
$d_{\pi}((f_i)_*(\mu_i),(g_i)_*(\nu)) + \frac{1}{L_i} + \varepsilon_i \leq d_{\rho}(X_i,Y) +
\varepsilon$.  Now let $Z$ be the distance space obtained from $Z_1\sqcup Z_2$
by identifying $g_1(y)$ with $g_2(y)$ for all $y \in Y$, and the distance
the maximal distance as defined in Section \ref{maxmet}.  Denote the inclusion from $Z_i$
 to $Z$ by $\iota_i$.  Then, $\iota_i \circ f_i: X_i \to Z$ are 
$min\{L_1,L_2\rbrace$-isometric $max\{\varepsilon_1,\varepsilon_2\rbrace$-embeddings by Theorem \ref{inclmxmetlisom}.
  
\begin{equation*}
d_{\rho}(X_1,X_2) \leq d_{\pi}^Z((\iota_1 \circ f_1)_*(\mu_1),(\iota_2 \circ f_2)_*(\mu_2)) +
max\{\varepsilon_1,\varepsilon_2\rbrace + max\left\lbrace \frac{1}{L_1}, \frac{1}{L_2}\right\rbrace
\end{equation*} 

\noindent By triangle inequality for $d_{\pi}$,
\begin{align*}
d_{\pi}^Z((\iota_1 \circ f_1)_*(\mu_1),(\iota_2 \circ f_2)_*(\mu_2)) &\leq d_{\pi}^Z((\iota_1 \circ f_1)_*(\mu_1),(\iota_1 \circ g_1)_*(\nu)) +
d_{\pi}^Z((\iota_1 \circ g_1)_*(\nu),(\iota_2 \circ f_2)_*(\mu_2))\\
&\leq d_{\pi}^Z((\iota_1 \circ f_1)_*(\mu_1),(\iota_1 \circ g_1)_*(\nu)) + d_{\pi}^Z((\iota_1 \circ g_1)_*(\nu),(\iota_2 \circ g_2)_*(\nu))\\
& \ \ \ \ \ \ \ \  + d_{\pi}^Z((\iota_2 \circ g_2)_*(\nu),(\iota_2 \circ f_2)_*(\mu_2)) \\
&\leq d_{\pi}^{Z_1}((f_1)_*(\mu_1),(g_1)_*(\nu)) + d_{\pi}^Z((\iota_1 \circ g_1)_*(\nu),(\iota_2 \circ g_2)_*(\nu))\\
& \ \ \ \ \ \ \ \  + d_{\pi}^{Z_2}((g_2)_*(\nu),  (f_2)_*(\mu_2))\\
&\leq d_{\pi}^{Z_1}((f_1)_*(\mu_1),(g_1)_*(\nu)) + (\varepsilon_1 + \varepsilon_2) + d_{\pi}^{Z_2}((g_2)_*(\nu),  (f_2)_*(\mu_2))\\
& \ \ \ \ (\text{$g_i$ is an $\varepsilon_i$-embedding.})\\
&\leq d_{\pi}^{Z_1}((f_1)_*(\mu_1),(g_1)_*(\nu)) + 2\varepsilon + d_{\pi}^{Z_2}((g_2)_*(\nu),(f_2)_*(\mu_2)) 
\end{align*}  
We also have $max\{\varepsilon_1,\varepsilon_2\rbrace\leq \varepsilon_1 +
\varepsilon_2$ and $max\left\lbrace \frac{1}{L_1}, \frac{1}{L_2}\right\rbrace \leq \frac{1}{L_1} +
\frac{1}{L_2}$.  Thus by the inequalities 
$$d_{\pi}^{Z_i}((f_i)_*(\mu_i),(g_i)_*(\nu))  + \varepsilon_i +\frac{1}{L_i} \leq d_{\rho}(X_i,Y) +
\varepsilon$$
we have,
\begin{align*}
d_{\pi}^Z((\iota_1 \circ f_1)_*(\mu_1),(\iota_2 \circ f_2)_*(\mu_2)) + max\{\varepsilon_1,\varepsilon_2\rbrace +
max\left\lbrace \frac{1}{L_1}, \frac{1}{L_2}\right\rbrace &\leq d_{\pi}^{Z_1}((f_1)_*(\mu_1),(g_1)_*(\nu)) \\
& + d_{\pi}^{Z_2}((g_2)_*(\nu),(f_2)_*(\mu_2)) \\
& + \varepsilon_1 + \varepsilon_2 + \frac{1}{L_1} + \frac{1}{L_2} + 2\varepsilon\\  
&\leq d_{\rho}(X_1,Y) + d_{\rho}(X_2,Y) + 4\varepsilon
\end{align*}
As $\varepsilon>0$ was arbitrary the result follows.
\end{proof}

\subsection{Quasi-isometry between nearby spaces}

\begin{theorem}
Given spaces $(X,d_X,\mu)$ and $(Y,d_Y,\nu)$ such that $d_{\rho}((X,d_X,\mu),(Y,d_Y,\nu)) < \delta < \frac{1}{\sqrt{2}}$, there exists
\begin{itemize}
\item a subset $\widehat{X} \subset X$ of measure less than $\delta$,
\item a function $f: (X \setminus \widehat{X}) \to Y$ such that $d_Y(f(x),f(x')) < d_X(x,x') + 2\delta$ and 
      $d_X(x,x') < d_Y(f(x),f(x')) + 2\delta$ for all $x,x'$ such that 
      $d_Y(f(x),f(x')) < \frac{1}{\delta} - 2\delta$ or $d_X(x,x') < \frac{1}{\delta} - 2\delta$
\item for all $E \subset Y$, $\mu(f^{-1}(E)) \leq \nu(E^{2\delta}) + 2\delta$ and $\nu(E) \leq \mu(f^{-1}(E)^{2\delta}) + 2\delta$,
\item $\nu(Y)\leq \nu(f(X\setminus \widehat{X})^{2\delta}) + 2\delta$
\end{itemize}
Further we have,
\begin{itemize}
\item $\left( f^{-1}(E) \right)^{2\delta} \subset f^{-1}\left(E^{2\delta}\right)$
\item If $\delta < \frac{1}{2}$, 
$\nu(E) \leq \mu\left(f^{-1}(E^{2\delta})\right) + 2\delta$.
\end{itemize}

\label{quasiisomgivdistsmall}
\end{theorem}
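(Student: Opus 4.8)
The plan is to unpack the definition of $d_\rho$ and push everything through a common ambient space. Since $d_\rho((X,d_X,\mu),(Y,d_Y,\nu))<\delta$, Definition~\ref{rhodfn} furnishes a distance space $Z$, which we take to be $\iota_1(X)\cup\iota_2(Y)$, together with an $L$-isometric $\varepsilon$-embedding $\iota_1\colon X\to Z$ (exceptional set $E_1$, $\mu(E_1)\le\varepsilon$) and an $L$-isometric $\varepsilon$-embedding $\iota_2\colon Y\to Z$ (exceptional set $E_2$, $\nu(E_2)\le\varepsilon$) with $d_\pi(\mu',\nu')+\tfrac1L+\varepsilon<\delta$, where $\mu'=(\iota_1)_*\mu$ and $\nu'=(\iota_2)_*\nu$. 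Write $p=d_\pi(\mu',\nu')$; then $p<\delta$, $\varepsilon<\delta$ and $1/L<\delta$, so $L>1/\delta$ and hence $2\delta<1/\delta<L$ (this is exactly where $\delta<1/\sqrt2$ enters, since it also makes the range $1/\delta-2\delta$ positive). I would then fix an auxiliary radius $\eta$ with $p<\eta<\delta$, so that the Levy--Prokhorov inequalities $\mu'(A)\le\nu'(A^{\eta})+\eta$ and $\nu'(A)\le\mu'(A^{\eta})+\eta$ hold for every Borel $A\subset Z$.

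Next I would construct the exceptional set and the map simultaneously. Put $\widehat X=E_1\cup\iota_1^{-1}\bigl(Z\setminus(\iota_2(Y\setminus E_2))^{\eta}\bigr)$, and for $x\in X\setminus\widehat X$ let $f(x)$ be any point $y\in Y\setminus E_2$ with $d_Z(\iota_1(x),\iota_2(y))<\eta$; such a $y$ exists precisely because $x$ lies outside the second piece of $\widehat X$. The key point in bounding $\mu(\widehat X)$ is the Levy--Prokhorov estimate applied to $A=Z\setminus(\iota_2(Y\setminus E_2))^{\eta}$: every point of $A$ is at distance at least $\eta$ from $\iota_2(Y\setminus E_2)$, so $A^{\eta}$ is disjoint from $\iota_2(Y\setminus E_2)$, whence $\iota_2^{-1}(A^{\eta})\subset E_2$ and $\nu'(A^{\eta})\le\nu(E_2)\le\varepsilon$; thus $\mu'(A)\le\varepsilon+\eta$. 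Combining this with $\mu(E_1)\le\varepsilon$ and the slack in $p+\tfrac1L+\varepsilon<\delta$ is what must pin $\mu(\widehat X)$ below $\delta$, and getting the constants to close here is one of the delicate accounting steps.

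With $f$ in hand, most of the conclusions are triangle-inequality bookkeeping in $Z$ together with the $L$-isometry of $\iota_1$ on $X\setminus E_1$ and of $\iota_2$ on $Y\setminus E_2$. For $x,x'\in X\setminus\widehat X$, the defining $\eta$-closeness gives $\lvert d_Z(\iota_2 f(x),\iota_2 f(x'))-d_Z(\iota_1 x,\iota_1 x')\rvert<2\eta<2\delta$; since $x,x'\notin E_1$ and $f(x),f(x')\notin E_2$ and all the relevant distances are below $L$, the two isometries replace these $Z$-distances by $d_X(x,x')$ and $d_Y(f(x),f(x'))$, giving $d_Y(f(x),f(x'))<d_X(x,x')+2\delta$ and, on the stated range, its reverse. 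The forward measure estimate is similar: $f^{-1}(E)\subset\iota_1^{-1}((\iota_2 E)^{\eta})$, so $\mu(f^{-1}(E))\le\mu'((\iota_2 E)^{\eta})\le\nu'((\iota_2 E)^{2\eta})+\eta$, and because $f$ lands in $Y\setminus E_2$ one may replace $E$ by $E\setminus E_2$ and translate $(\iota_2(E\setminus E_2))^{2\eta}$ back through the honest isometry $\iota_2$ to land in $E^{2\delta}\cup E_2$, yielding $\mu(f^{-1}(E))\le\nu(E^{2\delta})+2\delta$. The first further inclusion $(f^{-1}(E))^{2\delta}\subset f^{-1}(E^{2\delta})$ is again the triangle inequality---if $d_X(x,x')<2\delta$ and $f(x')\in E$ then $d_Y(f(x),f(x'))$ is controlled and $f(x)\in E^{2\delta}$---and the last item then follows by substituting this inclusion into the reverse measure estimate, which is exactly where $\delta<1/2$ is needed so that the distances involved fall inside the range $1/\delta-2\delta$.

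I expect the genuine obstacle to be the two estimates that run from $Y$ back to $X$: the reverse measure bound $\nu(E)\le\mu(f^{-1}(E)^{2\delta})+2\delta$ and the near-surjectivity statement $\nu(Y)\le\nu(f(X\setminus\widehat X)^{2\delta})+2\delta$. Unlike the forward direction, these are not immediate from a single Levy--Prokhorov inequality, because a point of $\iota_2(Y)$ lying near $\iota_1(X\setminus\widehat X)$ in $Z$ is only guaranteed to be near the \emph{image} of $f$, and converting this into membership in an honest $2\delta$-neighbourhood of $f^{-1}(E)$ inside $X$ requires both the isometry of $\iota_2$ off $E_2$ and a separate argument that the part of $Y$ not hit by $f$ carries $\nu$-measure below $2\delta$ (itself another Levy--Prokhorov estimate, using that such points are far from $\iota_1(X\setminus\widehat X)$ while $\mu'$ and $\nu'$ are $\eta$-close and of nearly equal total mass by Lemma~\ref{total-measures-close-when-the-metric-measure-spaces-are}). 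Managing all four exceptional sets at once---$E_1$, $E_2$, the ``far from the good part'' set, and the ``unhit'' set---so that each stays within its budget of $\delta$ or $2\delta$ is the real content of the theorem.
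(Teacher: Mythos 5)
Your overall architecture coincides with the paper's: extract a near-optimal triple $(Z,\iota_1,\iota_2)$ from Definition~\ref{rhodfn}, define $f$ by (near-)nearest-point projection into $Y$, bound the exceptional set by a Levy--Prokhorov estimate on a ``far'' set, and get the distance and measure statements by triangle-inequality bookkeeping in $Z$; your treatment of $\left(f^{-1}(E)\right)^{2\delta}\subset f^{-1}\left(E^{2\delta}\right)$ and of the final item (substitution, using $\delta<\tfrac12$) is exactly the paper's. The genuine gap is at the step you yourself flag as ``delicate accounting,'' and with your particular construction it does not close. You set $\widehat X=E_1\cup\iota_1^{-1}\bigl(Z\setminus(\iota_2(Y\setminus E_2))^{\eta}\bigr)$, so you pay $\mu(E_1)\le\varepsilon$ once and then, in the far-set estimate, $\nu'(A^{\eta})\le\nu(E_2)\le\varepsilon$ a second time, for a total budget of $2\varepsilon+\eta$. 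The only constraint available is $p+\tfrac1L+\varepsilon<\delta$, which permits $\varepsilon$ arbitrarily close to $\delta$ (with $p$ and $\tfrac1L$ tiny); then $2\varepsilon+\eta$ approaches $2\delta$ and no choice of $\eta\in(p,\delta)$ rescues $\mu(\widehat X)<\delta$. The paper avoids the double $\varepsilon$ structurally: it first fixes $\delta_1=\bigl(d_{\rho}(X,Y)+\delta\bigr)/2<\delta$ with $d_{\pi}+\tfrac1L+\varepsilon<\delta_1$, takes $f(x)$ to be a nearest point of the \emph{full} closed image $\psi(Y)$, and puts $\widehat X=\lbrace x : d(\varphi(x),\psi(Y))>\delta\rbrace$; since a small neighbourhood of that far set is disjoint from all of $\psi(Y)$, its $\psi_*(\nu)$-mass is $0$ rather than $\le\varepsilon$, so the accounting costs only $d_{\pi}+\varepsilon<\delta_1<\delta$. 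Your motive for aiming at $\iota_2(Y\setminus E_2)$ only --- ensuring $f$ lands where $\iota_2$ is honestly $L$-isometric --- is legitimate (the paper in fact glosses this point), but it must be achieved without the second $\varepsilon$, e.g.\ by reading the pushforwards as pushforwards of the restrictions to the good sets, which is the convention the paper implicitly uses in its step $\nu(Y)\le\delta+\psi_*(\nu)(Z)$.

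Separately, the two estimates running from $Y$ back to $X$ are predicted rather than proved: you correctly single out $\nu(E)\le\mu\bigl(f^{-1}(E)^{2\delta}\bigr)+2\delta$ and $\nu(Y)\le\nu\bigl(f(X\setminus\widehat X)^{2\delta}\bigr)+2\delta$ as the hard points and sketch a plausible strategy, but the proposal contains no argument for either. In the paper each is a short Levy--Prokhorov chain: for near-surjectivity, $\psi_*(\nu)(Z)$ is split into the part inside $\bigl(\varphi(X\setminus\widehat X)\bigr)^{\delta}$ --- which lies inside $\bigl(\psi(f(X\setminus\widehat X))\bigr)^{2\delta}$ by the defining property of $f$ and is pulled back through $\psi$ using $2\delta<\tfrac1\delta<L$ --- and the complementary part, whose $\delta$-neighbourhood misses $\varphi(X\setminus\widehat X)$ and hence carries essentially no $\varphi_*(\mu)$-mass; the reverse measure bound is the chain $\nu(E)\le\psi_*(\nu)\bigl((\varphi(f^{-1}(E)))^{\delta}\bigr)+\varepsilon\le\varphi_*(\mu)\bigl((\varphi(f^{-1}(E)))^{2\delta}\bigr)+\varepsilon+\delta\le\mu\bigl((f^{-1}(E))^{2\delta}\bigr)+2\delta$, with the last step again using that $\varphi$ is $L$-isometric on the relevant scale. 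Until the $\widehat X$ accounting is repaired and these two chains are actually carried out, the proposal establishes the easy half of the theorem but not the statements that give it its content.
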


\begin{proof}
The proof of this theorem will take nearly the whole of this subsection.  To summarize, we will construct a set $\widehat{X}$ and a function $f: (X \setminus \widehat{X}) \to Y$ and show that it satisfies all the properties.
 
Note that as $d_{\rho}(X,Y) < \delta$, there exists a metric space $(Z,d)$ and $L$-isometric $\varepsilon$-embeddings $\varphi : X \to Z$ and $\psi: Y\to Z$ such that 
$$d_{\pi}(\varphi_*(\mu),\psi_*(\nu)) + \frac{1}{L} + \varepsilon < \delta_1 := \left(\frac{d_{\rho}(X,Y) + \delta}{2}\right)  < \delta.$$
Thus, $\frac{1}{L}$ and $\varepsilon$ are both less than $\delta$.  For all $x\in X, d(\varphi(x),\psi(Y))$ is attained for some point
$y\in Y$ as $\psi(Y)$ is closed.  Define $f(x)$ to be one such $y$.  Note that, except for the set $\widehat{X} = \lbrace x\in X : d(\phi(x),\psi(Y))>\delta\rbrace$, of measure less than $\delta$,
$d(\varphi(x),\psi(f(x))) = d(\varphi(x),\psi(Y)) < \delta$.  So, even though the choice of
 $f(x)$ is not unique, any two choices are not farther than $2\delta$ from each other.
 
\begin{lemma}
If $\delta_2 = \frac{\delta - \delta_1}{2} = \frac{\delta - d_{\rho}(X,Y)}{4}$ then, $\mu\left(\left(\widehat{X} \right)^{\delta_2}\right) \leq \delta_1$.
\label{trickforlimmapbnlimsp} 
\end{lemma}

\begin{proof}
Note that,  as $x\in \left(\widehat{X} \right)^{\delta_2}$, there exists $x'\in \widehat{X}$ such that $d_X(x,x')<\delta_2 < 
\delta < \frac{1}{\delta}$.  Thus, 
\begin{align*}
\delta &< d(\varphi(x'),\psi(Y)) \leq d(\varphi(x'),\varphi(x)) + d(\varphi(x),\psi(Y))\\
&= d_X(x,x') + d(\varphi(x),\psi(Y))\\
&=\delta_2 + d(\varphi(x),\psi(Y)).
\end{align*}
That is, $d(\varphi(x),\psi(Y)) > \delta - \delta_2 = \frac{\delta + \delta_1}{2} > \delta_1$.  As $x\in \left(\widehat{X} \right)^{\delta_2}$
was arbitrary we get, $\left(\widehat{X} \right)^{\delta_2} \subset \lbrace x\in X: d(\varphi(x),\psi(Y))>\delta - \delta_2\rbrace=: S$.
Note that as $d_{\pi}(\varphi_*(\mu),\psi_*(\nu)) + \frac{1}{L} + \varepsilon < \delta_1$, $d_{\pi}(\varphi_*(\mu),\psi_*(\nu)) < \delta_1$.
That is  $\mu\left(\left(\widehat{X} \right)^{\delta_2}\right) \leq \mu(S) = \varphi_*(\mu)\left(\varphi(S)\right) \leq \psi_*(\nu)\left(S^{\delta_1}\right) + \delta_1 =\delta_1$.  
\end{proof}
Lemma \ref{trickforlimmapbnlimsp} will be used in the proof of Proposition \ref{limmapbnlimsp}.

\begin{lemma}
For all $x,x'$ such that $d_Y(f(x),f(x')) < \frac{1}{\delta} - 2\delta$ or $d_X(x,x') < \frac{1}{\delta} - 2\delta$ we have $d_Y(f(x),f(x')) < d_X(x,x') + 2\delta$ and $d_X(x,x') < d_Y(f(x),f(x')) + 2\delta$.
\end{lemma}

\begin{proof}
As $\frac{1}{L} <\delta$, that is, $\frac{1}{\delta} < L$, $d(\varphi(x),\varphi(x')) = d_X(x,x')$ for all 
$x,x' \notin \widehat{X}$ with $d(\varphi(x),\varphi(x'))$ $< \frac{1}{\delta}$ or $d_X(x,x') < \frac{1}{\delta}$.
We have, $\frac{1}{\delta} - 2\delta < L$, therefore, $d_X(x,x') < \frac{1}{\delta} - 2\delta $ implies that 
$d(\phi(x),\phi(x')) = d_X(x,x')$.  Thus, $d(\phi(x),\phi(x')) \leq \frac{1}{\delta} - 2\delta$.  So, 
$$d(\psi(f(x)),\psi(f(x'))) \leq d(\psi(f(x)),\varphi(x)) + d(\varphi(x),\varphi(x')) + d(\varphi(x'),\psi(f(x'))) \leq \frac{1}{\delta}$$
Similarly we can prove that if $d_Y(f(x),f(x'))<\frac{1}{\delta} - 2\delta$ then, $d(\varphi(x),\varphi(x'))<\frac{1}{\delta}$.
Hence, for all $x,x' \notin \widehat{X}$ with $d_Y(f(x),f(x')) < \frac{1}{\delta} - 2\delta$ or $d_X(x,x') < \frac{1}{\delta} - 2\delta$; 
\begin{align*}
d_Y(f(x),f(x')) &= d(\psi(f(x)),\psi(f(x'))) \\
& \leq d(\psi(f(x)),\varphi(x)) + d(\varphi(x),\varphi(x')) + d(\varphi(x'),\psi(f(x')))\\
& < \delta + d_X(x,x') + \delta = d_X(x,x') + 2\delta.
\end{align*}

\noindent If $f(x) \neq f(x')$ then,
\begin{align*}
d_X(x,x') & = d(\varphi(x), \varphi(x')) \\
& \leq d(\varphi(x),\psi(f(x))) + d(\psi(f(x)), \psi(f(x'))) + d(\psi(f(x')), \varphi(x'))\\
& < \delta + d_Y(f(x),f(x')) + \delta = d_Y(f(x),f(x')) + 2\delta.
\end{align*}

\noindent If $f(x) = f(x')$ then 
\begin{align*}
d_X(x,x') &= d(\varphi(x), \varphi(x'))\\
&\leq d(\varphi(x),\psi(f(x))) + d(\psi(f(x')), \varphi(x'))\\
&< 2\delta = d_Y(f(x),f(x')) + 2\delta.
\end{align*}
\end{proof}

\begin{lemma}
For all $E \subset Y$, $\mu(f^{-1}(E)) \leq \nu(E^{2\delta}) + 2\delta$ and $\nu(E) \leq \mu(f^{-1}(E)^{2\delta}) + 2\delta$
\end{lemma}
 
\begin{proof}
\noindent For all $E \subset Y$, 
\begin{align*}
\mu(f^{-1}(E)) &\leq \varphi_*(\mu)(\varphi(f^{-1}(E))) + \varepsilon : \ \ (\text{as $\varphi$ is an $\varepsilon$-embedding})\\
 &\leq \varphi_*(\mu)((\psi(E))^{\delta}) + \varepsilon : \ \ \left(\text{as $\varphi(f^{-1}(E)) \subset \psi(E)$}\right)\\
 &\leq \psi_*(\nu)((\psi(E))^{2\delta}) + \varepsilon + \delta \ : (\text{as $d_{\pi}(\varphi_*(\mu),\psi_*(\nu))<\delta$})\\
 &\leq \nu\left(\psi^{-1}\left((\psi(E))^{2\delta}\right)\right) + \varepsilon + \delta \\
\end{align*}
Observe that $x\in \psi^{-1}\left((\psi(E))^{2\delta}\right)$ implies that $\psi(x)\in (\psi(E))^{2\delta}$ that is, $d(\psi(x),\psi(E))< 2\delta$.
As $\psi$ is an $L$-isometric $\varepsilon$-embedding with $L > \frac{1}{\delta} > 2\delta$, $x\in E^{2\delta}$.  
Thus, $\psi^{-1}\left((\psi(E))^{2\delta}\right) \subset E^{2\delta}$.  Thus,
\begin{align*}
\mu(f^{-1}(E)) &\leq \nu\left(\psi^{-1}\left((\psi(E))^{2\delta}\right)\right) + \varepsilon + \delta \\
 &\leq \nu(E^{2\delta}) + \delta + \varepsilon \\
 &\leq \nu(E^{2\delta}) + 2\varepsilon.
\end{align*}

Similarly, $\forall E \subset Y$,
\begin{align*}
\nu(E) \leq \psi_*\left(\nu\right)\left(\left(\varphi\left(f^{-1}(E)\right)\right)^{\delta}\right) + \varepsilon \leq \varphi_*(\mu)\left(\left(\varphi\left(f^{-1}(E)\right)\right)^{2\delta}\right) + \varepsilon + \delta \leq \mu\left(\left(f^{-1}(E)\right)^{2\delta}\right) + 2\delta.
\end{align*}
\end{proof}

\begin{lemma}
$\nu(Y)\leq \nu(f(X\setminus \widehat{X})^{2\delta}) + 2\delta$
\end{lemma}

\begin{proof}
\begin{align*}
\nu(Y) &\leq \delta + \psi_*(\nu)(Z) = \delta + \psi_*(\nu)\left(Z\setminus \left(\varphi\left(X\setminus \widehat{X}\right)^{\delta}\right)\right) + \psi_*(\nu)\left(\phi\left(X\setminus \widehat{X}\right)^{\delta}\right)\\
 &\leq \delta + \varphi_*(\mu)\left(\left(Z\setminus \left(\varphi\left(X\setminus \widehat{X}\right)^{\delta}\right)\right)^{\delta}\right) + \delta + \psi_*(\nu)\left(\varphi\left(X\setminus \widehat{X}\right)^{\delta}\right)\\
 &\leq \delta + 0 + \delta + \psi_*(\nu)\left(\varphi\left(X\setminus \widehat{X}\right)^{\delta}\right) = \psi_*(\nu)\left(\varphi\left(X\setminus \widehat{X}\right)^{\delta}\right) + 2\delta\\
 &\leq \psi_*(\nu)\left(\psi\left(f\left(X\setminus \widehat{X}\right)\right)^{2\delta}\right) + 2\delta  \left\lbrace  \text{as $\varphi\left(X\setminus \widehat{X}\right) \subset \psi\left(f\left(X\setminus \widehat{X}\right)\right)^{\delta} $} \right\rbrace\\
 &\leq \nu\left(\psi^{-1}\left(\psi\left(f\left(X\setminus \widehat{X}\right)\right)^{2\delta}\right)\right) + 2\delta.
\end{align*}

\begin{lemma}
$\psi^{-1}\left(\psi\left(f\left(X\setminus \widehat{X}\right)\right)^{2\delta}\right) \subset f\left(X\setminus \widehat{X}\right)^{2\delta}$
\end{lemma}

\begin{proof}
\begin{align*}
x\in \psi^{-1}\left(\psi\left(f\left(X\setminus \widehat{X}\right)\right)^{2\delta}\right) &\implies \psi(x)\in \psi\left(f\left(X\setminus \widehat{X}\right)\right)^{2\delta}\\
&\implies \exists y\in f\left(X\setminus \widehat{X}\right) \ s.t \ d(\psi(x),\psi(y))< 2\delta < \frac{1}{\delta}\\
&\implies d_Y(x,y) = d(\psi(x),\psi(y))< 2\delta\\
&\implies x\in f\left(X\setminus \widehat{X}\right)^{2\delta}.   
\end{align*}
As $x\in \psi^{-1}(\psi(f(X\setminus \widehat{X}))^{2\delta})$ was arbitrary, $\psi^{-1}(\psi(f(X\setminus \widehat{X}))^{2\delta}) \subset f\left(X\setminus \widehat{X}\right)^{2\delta}$.
\end{proof}
So,
\begin{align*}
\nu(Y) &\leq \nu\left(\psi^{-1}\left(\psi\left(f\left(X\setminus \widehat{X}\right)\right)^{2\delta}\right)\right) + 2\delta\\
&\leq \nu\left(f\left(X\setminus \widehat{X}\right)^{2\delta}\right) + 2\delta.
\end{align*}

\end{proof}

\begin{lemma}
We have $\left( f^{-1}(E) \right)^{2\delta} \subset f^{-1}\left(E^{2\delta}\right)$.
\label{subsetcontrolonmeas}
\end{lemma}

\begin{proof}
 Let $x\in \left( f^{-1}(E) \right)^{2\delta}$.  Then, there exists $y \in X$ such that 
$d_X(x,y) < 2\delta < \frac{1}{\delta} - 2\delta$ and $f(y)\in E$.  So, $d_Y(f(x),f(y)) < d_X(x,y) + 2\delta$,
that is, $f(x)\in E^{2\delta}$.  Hence, $x\in f^{-1}\left(E^{2\delta}\right)$.  As $x\in \left( f^{-1}(E) \right)^{2\delta}$
was arbitrary $\left( f^{-1}(E) \right)^{2\delta} \subset f^{-1}\left(E^{2\delta}\right)$.  
\end{proof}

\begin{lemma}
If $\delta < \frac{1}{2}$, 
$\nu(E) \leq \mu\left(f^{-1}(E^{2\delta})\right) + 2\delta$.
\label{controlonmeas}
\end{lemma}

\begin{proof}
We already saw that $\nu(E) \leq \mu\left(\left(f^{-1}(E)\right)^{2\delta}\right) + 2\delta$.  But, by 
Lemma \ref{subsetcontrolonmeas} $\mu\left(\left(f^{-1}(E)\right)^{2\delta}\right) \leq \mu\left(f^{-1}\left(E^{2\delta}\right)\right)$.  So,
$\nu(E) \leq \mu\left(f^{-1}\left(E^{2\delta}\right)\right) + 2\delta$.
\end{proof}

Thus, we have completed the proof of Theorem \ref{quasiisomgivdistsmall} by proving all the necessary properties for the set $\widehat{X}$ and the function $f: (X \setminus \widehat{X}) \to Y$ we constructed. 
\end{proof}

From the proof of Theorem \ref{quasiisomgivdistsmall} the following lemma is clear.

\begin{lemma}
Let $d_{\rho}(X,Y) < \delta$.  Construct maps $f: \left(X \setminus \widehat{X}\right) \to Y$ and 
$g: \left(Y \setminus \widehat{Y}\right) \to X$ as in Theorem \ref{quasiisomgivdistsmall}.  Then for 
$x,y \in X \setminus \left(\widehat{X} \cup f^{-1}\left(\widehat{Y}\right)\right)$ with $d_X(x,y) < \frac{1}{\delta} - 4\delta$,  
$$d_X(x,y) -4\delta \leq d_X(g\circ f (x), g\circ f(y)) \leq d_X(x,y) + 4 \delta.$$
Further, $\mu\left(f^{-1}\left(\widehat{Y}\right)\right) \leq \nu\left(\widehat{Y}^{2\delta}\right) + 2\delta$.
\end{lemma}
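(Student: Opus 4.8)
The plan is to chain together the two-sided distance distortion estimates of Theorem~\ref{quasiisomgivdistsmall}, applied once to $f$ and once, with the roles of $X$ and $Y$ interchanged, to $g$. First I would record why the stated domain is the right one: since $x,y \notin \widehat{X}$ the values $f(x),f(y)$ are defined, and since $x,y \notin f^{-1}(\widehat{Y})$ we have $f(x),f(y) \in Y \setminus \widehat{Y}$, so that $g(f(x))$ and $g(f(y))$ are defined as well. This is exactly the reason the hypothesis excludes $\widehat{X} \cup f^{-1}(\widehat{Y})$, and no further argument is needed for well-definedness.

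Next I would push the estimate through the composite. Because $d_X(x,y) < \frac{1}{\delta} - 4\delta < \frac{1}{\delta} - 2\delta$, the distortion property of $f$ from Theorem~\ref{quasiisomgivdistsmall} applies and gives
$$
d_Y(f(x),f(y)) < d_X(x,y) + 2\delta
\quad\text{and}\quad
d_X(x,y) < d_Y(f(x),f(y)) + 2\delta .
$$
The first inequality also yields the key intermediate fact $d_Y(f(x),f(y)) < d_X(x,y) + 2\delta < \frac{1}{\delta} - 2\delta$, which certifies that the pair $f(x),f(y)$ falls below the threshold needed to invoke the distortion property of $g$. Applying that property (the version of Theorem~\ref{quasiisomgivdistsmall} with $X$ and $Y$ swapped) to $f(x),f(y)$ gives
$$
d_X(g\circ f(x), g\circ f(y)) < d_Y(f(x),f(y)) + 2\delta
\quad\text{and}\quad
d_Y(f(x),f(y)) < d_X(g\circ f(x), g\circ f(y)) + 2\delta .
$$
Combining these with the two $f$-inequalities gives the upper bound $d_X(g\circ f(x),g\circ f(y)) < d_X(x,y) + 4\delta$ and the lower bound $d_X(g\circ f(x),g\circ f(y)) > d_X(x,y) - 4\delta$, which is the asserted two-sided estimate.

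For the final inequality I would simply specialize the measure-comparison property $\mu(f^{-1}(E)) \leq \nu(E^{2\delta}) + 2\delta$ of Theorem~\ref{quasiisomgivdistsmall}, valid for all $E \subset Y$, to the set $E = \widehat{Y}$. The only genuine point requiring care, and what I expect to be the main obstacle, is the bookkeeping of the radius thresholds: at each step one must verify that the relevant pair of points lies strictly below the cutoff $\frac{1}{\delta} - 2\delta$ at which the $L$-isometric estimates are valid. This is precisely why the hypothesis is stated with the smaller cutoff $\frac{1}{\delta} - 4\delta$ rather than $\frac{1}{\delta} - 2\delta$; the extra $2\delta$ of slack is consumed exactly once, when the bound $d_Y(f(x),f(y)) < d_X(x,y) + 2\delta$ is used to keep $f(x),f(y)$ within range for the estimate on $g$.
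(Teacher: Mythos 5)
Your proposal is correct and is exactly the argument the paper has in mind: the paper dismisses this lemma with ``From the proof of Theorem~\ref{quasiisomgivdistsmall} the following lemma is clear,'' and your chaining of the two distortion estimates (with the threshold bookkeeping showing $d_Y(f(x),f(y)) < d_X(x,y) + 2\delta < \frac{1}{\delta} - 2\delta$, so that the estimate for $g$ applies to the pair $f(x), f(y) \in Y \setminus \widehat{Y}$) together with specializing $\mu(f^{-1}(E)) \leq \nu(E^{2\delta}) + 2\delta$ to $E = \widehat{Y}$ supplies precisely the omitted details. Your observation that the slack between $\frac{1}{\delta} - 4\delta$ and $\frac{1}{\delta} - 2\delta$ is consumed exactly once is the right explanation of the hypothesis.
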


The following lemma is clear.

\begin{lemma}
Let $d_{\rho}(X^n,X)$ converge to zero.  Construct maps $f^n: \left(X^n \setminus \widehat{X^n}\right) \to X$ and 
$g^n: \left(X \setminus \widehat{X}^n\right) \to X^n$ as in Theorem \ref{quasiisomgivdistsmall}.  Define $X' = X/ \thicksim$ 
where $\thicksim$ is defined as $x\thicksim y$ if $f^n \circ g^n (x) = f^n \circ g^n (y) $ for infinitely many $n$.  
Then $d_{\rho}(X^n, X')$ converges to zero. 
\label{nocollapse}
\end{lemma}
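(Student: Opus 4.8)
The plan is to reduce the statement to the single claim that the quotient map $q\colon X\to X'$ is, off a set of measure zero, a measure preserving isometry, so that $d_{\rho}(X,X')=0$; the conclusion then follows at once from the triangle inequality (Theorem \ref{triangleinequality}), since $d_{\rho}(X^n,X')\le d_{\rho}(X^n,X)+d_{\rho}(X,X')=d_{\rho}(X^n,X)\to 0$. Throughout I abbreviate $h^n:=f^n\circ g^n\colon X\to X$ and write $B^n:=\widehat{X}^n\cup (g^n)^{-1}(\widehat{X^n})$ for the exceptional set attached to $h^n$ by the preceding lemma; recall that on $X\setminus B^n$ the map $h^n$ is an approximate isometry with additive error $4\delta_n$, where $\delta_n\to 0$.

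First I would analyse the relation $\sim$ using this quasi-isometry estimate. Suppose $x\sim y$ with $x\neq y$, and let $N_{xy}$ be the infinite set of indices with $h^n(x)=h^n(y)$. If there were infinitely many $n\in N_{xy}$ with $x,y\notin B^n$, then for all large such $n$ one has $d_X(x,y)<\tfrac1{\delta_n}-4\delta_n$, and the preceding lemma gives $d_X(x,y)\le d_X(h^n(x),h^n(y))+4\delta_n=4\delta_n\to 0$, forcing $x=y$, a contradiction. Hence for every pair $x\sim y$ with $x\neq y$, at least one of $x,y$ lies in $C:=\limsup_n B^n$. In particular the restriction of $\sim$ to $X\setminus C$ is trivial, so $q$ is injective there.

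Next I would bound the exceptional set. The estimate $\mu(\widehat X^n)<\delta_n$, together with the measure inequality $\mu((g^n)^{-1}(\widehat{X^n}))\le \mu_{X^n}((\widehat{X^n})^{2\delta_n})+2\delta_n$ from the preceding lemma and the neighbourhood control of Lemma \ref{trickforlimmapbnlimsp} applied inside $X^n$, yields $\mu(B^n)\le c\,\delta_n\to 0$. To pass from this to $\mu(C)=0$ I would first replace the sequence by a subsequence along which $\sum_n\mu(B^n)<\infty$, so that Borel--Cantelli gives $\mu(\limsup_n B^n)=0$; one then checks that the set of points $\sim$-identified with a point of $C$ (its ``partners'') is also null, using that $h^n(y)\to y$ for $y\notin C$, so that each partner is a limit point of $h^n$ along $C$. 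Consequently the full collapse set $A$, the union of the non-trivial classes, has $\mu(A)=0$.

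With $\mu(A)=0$ in hand the pushforward $q_*\mu$ is well defined on $X'$, and $q\colon X\setminus A\to X'$ is a bijection onto a set of full measure; taking $Z=X'$, the identity on $X'$, and $q$ (with exceptional set $A$) as competing embeddings in Definition \ref{rhodfn} shows $d_{\rho}(X,X')\le d_\pi(q_*\mu,\mu_{X'})+\varepsilon=0$. I expect the hard part to be precisely the control of the collapse set: both the measure estimate $\mu(C)=0$, which forces one to pass to a summable subsequence and run a Borel--Cantelli argument, and, more delicately, the verification that the quotient semi-distance on $X'$ introduces no shortcuts between surviving points, i.e. that $q$ is genuinely isometric off $A$ rather than merely injective. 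Establishing that identified points, though possibly distinct, contribute no distance-decreasing chains is the crux on which the clean reduction through Theorem \ref{triangleinequality} rests.
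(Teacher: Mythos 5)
The paper gives no proof of this lemma at all --- it is introduced with ``The following lemma is clear'' --- so the comparison is with the short argument the authors evidently have in mind, and against that your proposal both misses the key simplification and, read on its own terms, contains steps that fail. The simplification: for the equation $f^n\circ g^n(x)=f^n\circ g^n(y)$ to hold at an index $n$, both sides must be \emph{defined}, so $x,y\notin \widehat{X}^n\cup (g^n)^{-1}\left(\widehat{X^n}\right)=B^n$ at \emph{every} witnessing index, not merely at infinitely many of them. Once both points are in the domain, the ``or'' form of the estimates in Theorem \ref{quasiisomgivdistsmall} applies unconditionally: since $d_X\left(f^n(g^n(x)),f^n(g^n(y))\right)=0<\frac{1}{\delta_n}-2\delta_n$, one gets $d_{X^n}(g^n(x),g^n(y))<2\delta_n$, and then $d_X(x,y)<4\delta_n$; note this also covers pairs with $d_X(x,y)=\infty$, which your appeal to the preceding lemma's hypothesis $d_X(x,y)<\frac{1}{\delta_n}-4\delta_n$ does not. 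Choosing $\delta_n\to 0$ (possible since $d_{\rho}(X^n,X)\to 0$) and letting $n$ run through the infinite witnessing set forces $d_X(x,y)=0$, i.e.\ $x=y$. Hence $\thicksim$ is the trivial relation, $X'=X$, and the conclusion is literally the hypothesis: no quotient semi-distance, no measure estimate, no Borel--Cantelli argument is needed. Your own first case already proves exactly this; you escape the conclusion only by tacitly permitting witnessing indices at which $h^n$ is undefined at $x$ or $y$.

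On that looser reading, under which nontrivial classes could live inside the exceptional sets, your argument has genuine gaps. First, $\thicksim$ quantifies over the full sequence, so passing to a subsequence with $\sum_n\mu(B^n)<\infty$ and applying Borel--Cantelli only makes the limsup \emph{along that subsequence} null; a pair $x\thicksim y$ may be witnessed entirely by indices you discarded, and $\mu(\limsup_n B^n)$ over the full sequence can be positive --- even full --- although $\mu(B^n)\to 0$ (small sets sweeping through the space). Second, the claim that $h^n(y)\to y$ for $y\notin C$, on which your ``partner'' analysis rests, is false: nothing in Theorem \ref{quasiisomgivdistsmall} pins $h^n$ near the identity, and $h^n$ can approximate an arbitrary isometry of $X$ (take $X^n=X$ a round circle with embeddings that rotate). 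Third, your bound on $\mu(B^n)$ invokes Lemma \ref{trickforlimmapbnlimsp} at the wrong radius: that lemma controls the $\delta_2$-neighbourhood of the exceptional set with $\delta_2=\frac{\delta-d_{\rho}(X,Y)}{4}<2\delta$, so it does not directly bound $\nu\left(\widehat{Y}^{2\delta}\right)$; this is repairable by re-running the construction at a smaller scale, but as written it is a gap. The ``no shortcuts'' issue for the quotient semi-distance, which you correctly flag as the crux and leave open, is likewise real under your reading --- and, like the rest of the machinery, it evaporates once one notices the domain constraint that trivialises $\thicksim$.
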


\subsection{Positivity of the distance}
As the definition of $d_{\rho}$ ignores, for example, isolated points of measure
zero, we do not expect $d_{\rho}((X_1,d_1,\mu_1),(X_2,d_2,\,u_2)) = 0$ to imply that
there is a measure preserving isometry between the two, but only that it is true
up to ignoring an appropriate class of sets with measure zero.

\begin{theorem}
Given two complete separable distance measure spaces $(X,d_X,\mu),(Y,d_Y,\nu)$, we have 
$d_{\rho}((X,d_X,\mu),(Y,d_Y,\nu)) = 0$ if and only if there are open sets
$U \subset X$ and $V \subset Y$, of zero measure, so that there is a
measure preserving isometry between  $(X \setminus U,d_X|_{X \setminus U},\mu)$ and 
$(Y \setminus V, d_Y|_{Y \setminus V},\nu)$.
\label{positivityofrho}
\end{theorem}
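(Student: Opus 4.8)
The plan is to prove the two implications separately, with the reverse direction being the substantial one. The forward (``if'') direction is immediate: given a measure-preserving isometry $\phi\colon(X\setminus U,\mu)\to(Y\setminus V,\nu)$ with $\mu(U)=\nu(V)=0$, take $Z=Y$, let $\iota_2=\mathrm{id}_Y$, and let $\iota_1\colon X\to Y$ agree with $\phi$ on $X\setminus U$ and be arbitrary on $U$. With exceptional set $U$ this is an $L$-isometric $0$-embedding for every $L$, and since $\phi$ is measure preserving and $U,V$ are null we have $(\iota_1)_*\mu=\nu=(\iota_2)_*\nu$, so $d_\pi=0$; feeding this into Definition \ref{rhodfn} gives $d_\rho(X,Y)\le 1/L$ for all $L$, i.e.\ $d_\rho(X,Y)=0$.

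For the converse, suppose $d_\rho(X,Y)=0$. Fix a summable sequence $\delta_n\downarrow 0$ and, for each $n$, choose an ambient space $Z_n$ realizing $d_\rho(X,Y)<\delta_n$; applying Theorem \ref{quasiisomgivdistsmall} in both directions using this single $Z_n$ produces exceptional sets $\widehat X^n,\widehat Y^n$ of measure $<\delta_n$ and quasi-isometries $f^n\colon X\setminus\widehat X^n\to Y$, $g^n\colon Y\setminus\widehat Y^n\to X$ with the stated distance- and measure-distortion bounds; because $f^n,g^n$ arise as nearest-point projections in the common $Z_n$, they additionally satisfy $d_X(g^n\!\circ\! f^n(x),x)<2\delta_n$ and $d_Y(f^n\!\circ\! g^n(y),y)<2\delta_n$ on their domains. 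Since $\sum_n\delta_n<\infty$, the set $\widehat X=\limsup_n(\widehat X^n\cup (f^n)^{-1}(\widehat Y^n))$ is $\mu$-null (and symmetrically $\widehat Y$ is $\nu$-null). I will take $U=X\setminus\mathrm{supp}(\mu)$ and $V=Y\setminus\mathrm{supp}(\nu)$, which are open and, by separability, null, so that $X\setminus U=\mathrm{supp}(\mu)$ and $Y\setminus V=\mathrm{supp}(\nu)$ are closed, hence complete.

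The main obstacle --- and the reason the support and completeness enter --- is extracting an honest limit of $f^n$ in the possibly non-compact $Y$. For $x\in\mathrm{supp}(\mu)$ every ball $B(x,\rho)$ has positive measure, the distance bound forces $f^n(B(x,\rho))\subset B(f^n(x),\rho+2\delta_n)$, and the measure estimate of Theorem \ref{quasiisomgivdistsmall} then gives $\nu(B(f^n(x),2\rho))\ge\tfrac12\mu(B(x,\rho))>0$ for all large $n$; were $\{f^n(x)\}_n$ not precompact it would contain an infinite $\varepsilon$-separated subset, producing infinitely many disjoint balls of fixed positive $\nu$-measure and contradicting $\nu(Y)<\infty$. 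Thus $\{f^n(x)\}_n$ is precompact for every $x\in\mathrm{supp}(\mu)$, and a diagonal argument over a countable set $\{x_i\}$ dense in $\mathrm{supp}(\mu)$ and disjoint from $\widehat X$ (such a set exists since $\widehat X$ is null while every nonempty relatively open subset of the support has positive measure) yields a subsequence with $f^{n_k}(x_i)\to y_i$. The distortion bounds pass to the limit to give $d_Y(y_i,y_j)=d_X(x_i,x_j)$ (a finite distance converges directly, and an infinite distance on one side forces one on the other via the two-sided estimate), so $x_i\mapsto y_i$ extends to an isometric embedding $F\colon\mathrm{supp}(\mu)\to Y$, and symmetrically to $G\colon\mathrm{supp}(\nu)\to X$ along the same subsequence. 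The near-identity estimates for $g^n\!\circ\! f^n$ and $f^n\!\circ\! g^n$, together with the approximate $1$-Lipschitz property of $g^{n_k}$, give $G\circ F=\mathrm{id}$ and $F\circ G=\mathrm{id}$, so $F$ is a bijective isometry onto its image.

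Finally, the measure estimates of Theorem \ref{quasiisomgivdistsmall} and Lemma \ref{controlonmeas} say exactly that $d_\pi((f^n)_*\mu,\nu)\le 2\delta_n\to 0$, while $f^{n_k}\to F$ pointwise on the full-measure set $\mathrm{supp}(\mu)\setminus\widehat X$ gives $(f^{n_k})_*\mu\to F_*\mu$ (dominated convergence against bounded continuous functions, hence Levy-Prokhorov convergence); comparing the two limits yields $F_*\mu=\nu$. Since $F$ isometrically embeds a complete space its image is closed and carries $\nu$, so $\mathrm{supp}(\nu)\subset F(\mathrm{supp}(\mu))$, and the symmetric statement for $G$ gives equality; hence $F\colon X\setminus U\to Y\setminus V$ is the desired measure-preserving isometry. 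I expect the precompactness step and the passage of measure preservation to the limit to be the only genuinely delicate points, the remainder being bookkeeping with estimates already established in this section.
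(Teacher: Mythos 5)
Your proof is correct, and its first half coincides with the paper's own: both extract the isometry as a limit of the quasi-isometries of Theorem \ref{quasiisomgivdistsmall} along a sequence $\delta_n \to 0$, establish pointwise precompactness of $\lbrace f^n(x)\rbrace_n$ by the disjoint-balls-versus-finite-total-measure argument, diagonalize over a countable dense set, and extend by density. The second half genuinely diverges. The paper works one-directionally: it only ever constructs $f$, takes $U$ to be the interior of the bad set $\underline{X}$, defines $V = Y \setminus f(X\setminus U)$, and then proves $\nu(f(X\setminus U)) = \nu(Y)$ together with the two one-sided inequalities $\mu(f^{-1}(C)) \le \nu(C)$ and $\mu(f^{-1}(C)) \ge \nu(C)$ for closed $C$, using regularity of the measures. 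You instead (i) take $U,V$ to be the complements of the supports, (ii) build $f^n$ and $g^n$ from the same ambient $Z_n$ and exploit the nearest-point construction to get the near-inverse bounds $d_X(g^n\circ f^n(x),x) < 2\delta_n$ (a correct observation: for $x\notin \widehat{X}^n$ one automatically has $f^n(x)\notin\widehat{Y}^n$, so the composition is defined and the triangle inequality in $Z_n$ gives the bound), yielding limit maps $F,G$ with $G\circ F=\mathrm{id}$ and $F\circ G=\mathrm{id}$, and (iii) obtain $F_*\mu=\nu$ by comparing the two Prokhorov limits of $(f^{n_k})_*\mu$. Your route buys two things: restricting to $\operatorname{supp}(\mu)$ makes the precompactness step airtight --- the paper's version silently needs $\mu(B(x,\varepsilon))>0$, which fails off the support --- and the weak-convergence argument for $F_*\mu=\nu$ replaces the paper's long chain of closed-set estimates, with bijectivity coming from the inverse identities rather than from measure-theoretic surjectivity. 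The costs are two pieces of bookkeeping you correctly flagged as delicate but did not fully discharge: you need $f^{n_k}(x)\to F(x)$ for $\mu$-a.e.\ $x$, whereas $F$ is defined off the dense set $S$ only by density, so this requires the approximate Lipschitz bound $d_Y(f^{n_k}(x),f^{n_k}(s))\le d_X(x,s)+2\delta_{n_k}$ plus a triangle inequality (one line, but not automatic); and to conclude $d_X(g^nf^n(x),x)<2\delta_n$ from $d(\varphi_n(x),\varphi_n(g^nf^n(x)))<2\delta_n$ you must also know both points avoid the measure-$\varepsilon_n$ exceptional set of the $L$-isometric $\varepsilon$-embedding $\varphi_n$, which adds one more exceptional-set family to the Borel--Cantelli argument and requires the same parameter-juggling the paper performs in Lemma \ref{trickforlimmapbnlimsp} and Proposition \ref{limmapbnlimsp}. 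Neither issue breaks the argument; both are repairable with the estimates already established in the section.
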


\begin{proof}
The proof of this theorem will take the whole of this subsection.  To summarize, for each $\frac{1}{n^2}$ we construct a quasi-isometry from a subset of $X$ to $Y$ as in Theorem \ref{quasiisomgivdistsmall} and take the limit as $n$ tends to infinity.  We will prove that such a limit exists and it is a measure preserving isometry from $X$ to $Y$.

For each $n$ the inequality $d_{\rho}((X,d_X,\mu),(Y,d_Y,\nu)) < \frac{1}{n^2}$ gives a map from 
$f_n: X\setminus X^n \to Y$ as in Theorem \ref{quasiisomgivdistsmall}.  Let $X_n = X\setminus X^n$.  
Then $(X_n,d|_{X_n} = d_n, \mu|_{X_n} = \mu_n)$ converges to $(X,d_X,\mu)$.  We shall construct a map $f:X \to Y$
which is the limit of the maps $f_n:X_n \to Y$.  If $X(k) = \cup_{i=k}^{\infty} X^i$, $\mu(X(k)) \leq \sum_{i=k}^{\infty} \frac{1}{i^2}$
which goes to zero as $k$ goes to infinity.  Thus $\mu(\cap_{k=1}^{\infty} X(k)) = 0$.  Denote $\cap_{k=1}^{\infty} X(k)$
by $\underline{X}$.  For any point $x\in X\setminus \underline{X}$, $x$ is in the domain of all but finitely many $n$'s.  

\begin{lemma}
Given $x\in X\setminus \underline{X}$, the infinite sequence $f_n(x)$ has a convergent subsequence. 
\end{lemma}
\begin{proof}
Assume the contrary.  Then there exists an $\varepsilon_1$ such that 
$$d_Y(f_n(x),f_m(x)) \geq \varepsilon_1$$
for all $n,m$.  Choose $\varepsilon < \min \left( \left\lbrace \left(n^2 - \frac{2}{n^2} \right),\left(\varepsilon_1 -  \frac{4}{n^2}\right) : n\in \mathbb{N} \right\rbrace \cap \left\lbrace x\in \mathbb{R} : x >0 \right\rbrace \right)$.
The terms $\frac{2}{n^2}$ and $\frac{4}{n^2}$ goes to zero as $n$ goes to infinity,  and the terms $\left(n^2 - \frac{2}{n^2} \right)$
and $\left(\varepsilon_1 -  \frac{4}{n^2}\right)$ increases with $n$.  Hence the minimum exists.  By using 
Theorem \ref{quasiisomgivdistsmall}, we have for all $n$ such that the terms $\left(n^2 - \frac{2}{n^2} \right)$ and
$\left(\varepsilon_1 -  \frac{4}{n^2}\right)$ are positive (i.e for all but finitely many $n$),
$$f_n(B(x,\varepsilon)) \subset B\left(f_n(x),\varepsilon + \frac{2}{n^2}\right).$$

\noindent Thus,
$$(f_n(B(x,\varepsilon)))^{\frac{2}{n^2}} \subset B\left(f_n(x),\varepsilon + \frac{2}{n^2}\right)^{\frac{2}{n^2}}.$$
So,
\begin{align*}
\nu(B\left(f_n(x),\varepsilon + \frac{2}{n^2}\right)^{\frac{2}{n^2}}) &\geq \nu((f_n(B(x,\varepsilon)))^{\frac{2}{n^2}})\\
&\geq \mu(B(x,\varepsilon)) - \frac{2}{n^2} \\
& \ \ \ \ \ \ \ \ (\text{apply Theorem \ref{quasiisomgivdistsmall}}).
\end{align*}
Furthermore, $(B(f_n(x),\varepsilon + \frac{2}{n^2}))^{\frac{2}{n^2}}$ are disjoint.  This contradicts with the assumption that $Y$ has finite measure.   
\end{proof}

Choose a countable dense subset $S = \lbrace x_1,x_2,...\rbrace \subset (X \setminus \underline{X})$.  Choose a subsequence $f_{n_{1,k}}$ of $f_n$ such that
$f_{n_{1,k}}(x_1)$ converges.  We choose a further subsequence $f_{n_{2,k}}$ of $f_{n_{1,k}}$
such that $f_{n_{2,k}}(x_2)$ converges. Observe that $f_{n_{2,k}}(x_1)$
continues to converge.
Iterating this process, we obtain subsequences $f_{n_{j,k}}$ so that
$f_{n_{j,k}}(x_l)$ converges for $l\leq j$. It follows that for the
diagonal sequence $f_{n_{k,k}}$ we have the corresponding convergence for
all points in $S$.  Replace $f_n$  by this diagonal subsequence.  Define 
$$f(x_i) = \lim_{n\to \infty}f_n(x_i)$$

\begin{lemma}
$d_Y(f(x_i),f(x_j)) \leq d_X(x_i,x_j).$
\label{fdisdec'}
\end{lemma}  

\begin{proof}
Given $x_i$ and $x_j$, if $d_Y(f(x_i),f(x_j)) = \infty = d_X(x_i,x_j)$, then it is obvious.  Otherwise, choose $n$
so large that $\min\lbrace d_Y(f(x_i),f(x_j)),d_X(x_i,x_j)\rbrace < n^2 - \frac{2}{n^2}$.  Then,
\begin{align*}
d_Y(f_n(x_i), f_n(x_j)) \leq d_X(x_i, x_j) + \frac{2}{n^2}.
\end{align*}
Thus, 
\begin{align*}
d_Y(f(x_i),f(x_j)) &= \lim_{n \to \infty} d_Y(f_n(x_i), f_n(x_j)) \\
&\leq \lim_{n \to \infty} d_X(x_i, x_j) + \frac{2}{n^2}.
\end{align*}
As $\frac{2}{n^2}$ tends to zero as $n$ tends to infinity we have the result.
\end{proof}

\begin{lemma}
$d_Y(f(x_i),f(x_j)) \geq d_X(x_i,x_j).$
\label{fdisinc'}
\end{lemma}  

\begin{proof}
Given $x_i$ and $x_j$, if $d_Y(f(x_i),f(x_j)) = \infty = d_X(x_i,x_j)$, then it is obvious.  Otherwise, choose $n$ so
large that $\min\lbrace d_Y(f(x_i),f(x_j)),d_X(x_i,x_j)\rbrace < n^2 - \frac{2}{n^2}$.  Then,

\begin{align*}
d_Y(f_n(x_i), f_n(x_j)) \geq  d_X(x_i, x_j) - \frac{2}{n^2}.
\end{align*}
Thus,

\begin{align*}
d_Y(f(x_i),f(x_j)) &= \lim_{n \to \infty} d_Y(f_n(x_i), f_n(x_j)) \\
&\geq \lim_{n \to \infty} d_X(x_i, x_j) - \frac{2}{n^2}.
\end{align*}
As $\frac{2}{n^2}$ tends to zero as $n$ tends to infinity we have the result.
\end{proof}

From Lemma \ref{fdisdec'} and Lemma \ref{fdisinc'} we have

\begin{lemma}
The map $f$ is an isometry on $S$.
\label{fisomonS1}
\end{lemma}

As $S$ is dense in $X \setminus \underline{X}$ given any point $X \setminus \underline{X}$ there exist a sequence
$s_n \in S$ such that $d_X(s_n,x)$ tends to $0$.  Define
$f(x)= \lim_{n\to \infty} f(s_n)$ where $s_n$ tends to $x$ as $n$ tends
to $\infty$.  
\subsubsection*{The map $f$ is well defined:} Let $\langle s_n \rangle_n$ and $\langle t_n \rangle_n$ be two
sequence which converge to $x$.  Then the sequence $s_1,t_1,s_2,t_2,...$ is
Cauchy and so is the sequence $f(s_1),f(t_1),
f(s_2),f(t_2),...$ by Lemma \ref{fisomonS1}.  Thus it converges
and has the same limit as $\langle f(s_n) \rangle_n$ and $\langle f(t_n) \rangle_n$.  Hence
$\lim_{n\to \infty} f(s_n) = \lim_{n\to \infty} f(t_n)$.

\subsubsection*{The map $f$ is an isometry:}Given two points $x,x' \in X \setminus \underline{X}$ consider sequences 
$\langle s_n \rangle_n \subset S$ and $\langle t_n \rangle_n \subset S$ such that $s_n$ converges to $x$ and $t_n$ 
converges to $x'$.  Then,
\begin{align*}
d_Y(f(x),f(x')) &\leq d_Y(f(x),f(s_n)) + d_Y(f(s_n),f(t_n)) + d_Y(f(t_n), f(x'))\\
&\leq d_Y(f(x),f(s_n)) + d_X(s_n,t_n) + d_Y(f(t_n), f(x'))
\end{align*}
Similarly, $d_Y(f(s_n),f(t_n)) \leq d_Y(f(x),f(s_n)) + d_Y(f(x),f(x')) + d_Y(f(t_n), f(x'))$ implies that
\begin{align*}
d_Y(f(x),f(x')) &\geq d_Y(f(s_n),f(t_n)) - (d_Y(f(x),f(s_n)) + d_Y(f(t_n), f(x'))\\
&= d_X(s_n,t_n) - (d_Y(f(x),f(s_n)) + d_Y(f(t_n), f(x')).
\end{align*} 

As  $d_Y(f(x),f(s_n))$, $d_Y(f(t_n), f(x'))$ tends to zero and $d_X(s_n,t_n)$ tends to $d_X(x,x')$ we have the result.

\subsubsection*{Extend $f$:}
Let $U$ be the interior of $\underline{X}$.  Clearly $\mu(U)\leq \mu(\underline{X}) = 0$. Extend $f$ to $X\setminus U$
as follows.  Given any point in $x\in X\setminus U$ there exists a sequence of points $x_n\in X\setminus \underline{X}$
such that $x_n$ converges to $x$.  Define $f(x) = \lim_{n\to \infty} f(x_n)$.  This is well defined as $f$ is an 
isometry on $X\setminus \underline{X}$.  $U$ is an open set.  So, $X\setminus U$ is closed.  Let $y_n\in f(X\setminus U)$
and $y_n$ converges to $y$.  If $y_n = f(x_n)$, $f$ is an isometry and $y_n$ converges imply that $x_n$ converges to some
point $x$.  This $x\in X\setminus U$ as $X\setminus U$ is closed.  So, $y\in f(X\setminus U)$.  Thus, $f(X\setminus U)$ 
is closed.  Let $V = Y\setminus (f(X\setminus U))$.  Then $V$ is open.

\subsubsection*{The map $f$ preserves measure:}
Given $\varepsilon > 0 $ there exists $N(\varepsilon)\in \mathbb{N}$ such that for all $n\geq N(\varepsilon)$, 
$\left(f^{-1}(E)\right) \subset f_n^{-1}(E^{\varepsilon})$.  Thus, for all $n\geq N(\varepsilon)$,  
$$\mu\left(f^{-1}(E)\right) \leq \mu\left(f_n^{-1}\left(E^{\varepsilon}\right)\right) \leq \nu\left(E^{\varepsilon + \frac{2}{n^2}}\right) + \frac{2}{n^2}.$$
Let $C$ be a closed set in $Y$.  We have for all $\varepsilon > 0$, $\mu\left(f^{-1}(C)\right) \leq \nu\left(C^{\varepsilon + \frac{2}{n^2}}\right) + \frac{2}{n^2}$.
Fix an $\varepsilon$ and choose $n$ so large that $\frac{2}{n^2} < \varepsilon$.  We have $\mu\left(f^{-1}(C)\right) \leq \nu\left(C^{2\varepsilon}\right) + \varepsilon$.
As $C$ is a closed set, given any open set $U$ containing $C$, there exists a number $\varepsilon$ such that $C^{\varepsilon} \subset U$.
Using the fact that $\nu(C) = \inf \left\lbrace \nu(U) : \text{$U$ is an open set containing $C$} \right\rbrace = \inf \left\lbrace \nu(C^{\varepsilon}): \varepsilon >0\right\rbrace$
we have, $\mu\left(f^{-1}(C)\right) \leq \nu(C)$.  
$\mu, \nu$ are regular measures and $f$ satisfies $\mu\left(f^{-1}(C)\right) \leq \nu(C)$ for all closed sets imply that $\mu\left(f^{-1}(E)\right) \leq \nu(E)$ for all sets.

\begin{lemma}
$\nu(f(X\setminus U)) = \nu(Y)$
\end{lemma}

\begin{proof}
\begin{align*}
\nu(Y) &\leq \nu\left(f_n \left(X\setminus X^n\right)^{\frac{2}{n^2}}\right) + \frac{2}{n^2}\\
&\leq \mu \left( f_n^{-1} \left(f_n \left(X\setminus X^n\right)^{\frac{4}{n^2}}\right)\right) + \frac{4}{n^2} \left\lbrace \text{by Lemma \ref{controlonmeas}} \right\rbrace
\end{align*}

\begin{lemma}
$f_n^{-1} \left(f_n \left(X\setminus X^n\right)^{\frac{4}{n^2}}\right) \subset \left( X\setminus X^n \right)^{\frac{4}{n^2}}$
\end{lemma}

\begin{proof}
\begin{align*}
x\in f_n^{-1} \left(f_n \left(X\setminus X^n\right)^{\frac{4}{n^2}}\right) &\implies f_n(x)\in f_n \left(X\setminus X^n\right)^{\frac{4}{n^2}}\\
&\implies \exists y\in X\setminus X^n \ s.t \ d_Y(f_n(x),f_n(y))<\frac{4}{n^2} < n^2 - \frac{2}{n^2} (\forall n\geq 2)
\end{align*}
Thus, $d_X(x,y) = d_Y(f_n(x),f_n(y)) < \frac{4}{n^2}$, that is, $x\in \left( X\setminus X^n \right)^{\frac{4}{n^2}}$.  
As $x\in f_n^{-1} \left(f_n \left(X\setminus X^n\right)^{\frac{4}{n^2}}\right)$ was arbitrary, 
$f_n^{-1} \left(f_n \left(X\setminus X^n\right)^{\frac{4}{n^2}}\right) \subset \left( X\setminus X^n \right)^{\frac{4}{n^2}}$.
\end{proof} 

So,
\begin{align*}
\nu(Y) &\leq \mu \left( f_n^{-1} \left(f_n \left(X\setminus X^n\right)^{\frac{4}{n^2}}\right)\right) + \frac{4}{n^2}\\
&\leq \mu\left(\left(X\setminus X^n\right)^{\frac{4}{n^2}}\right) + \frac{4}{n^2}\\
&\leq \mu(X) + \frac{4}{n^2}
\end{align*}
As the inequality $\nu(Y)\leq \mu(X) + \frac{4}{n^2}$ holds for every $n$, $\nu(Y)\leq \mu(X)$.  On the other hand the inequality $\mu(f^{-1}(E)) \leq \nu(E)$ gives, 
$$\nu(Y) \leq \mu(X) = \mu(X\setminus U) \leq \mu(f^{-1}(f(X\setminus U))) \leq \nu(f(X\setminus U)).$$
Also, $\nu(f(X\setminus U)) \leq \nu(Y)$ as $f(X\setminus U) \subset Y$.  So, $\nu(f(X\setminus U)) = \nu(Y)$.     
\end{proof}

\begin{lemma}
If $E$ is any set in $Y$, then,
$$\mu\left(f^{-1}\left(E^{2\varepsilon}\right) \right) \leq \mu\left( f^{-1}\left((E\cap (Y\setminus V))^{2\varepsilon}\right) \right) + \mu \left( f^{-1}\left( (E\cap V)^{2\varepsilon}\right) \right).$$ 
\end{lemma}

\begin{proof}
Now, $\nu(E) \leq \mu \left(f_n^{-1}\left(E^{\frac{2}{n^2}}\right)\right) + \frac{2}{n^2}$ by Lemma \ref{controlonmeas}.
Given $\varepsilon>0$ there exists a $N(\varepsilon)\in \mathbb{N}$ such that for all $n\geq N(\varepsilon)$, 
$f_n^{-1}\left(E^{\frac{2}{n^2}}\right) \subset f^{-1}\left(E^{\frac{2}{n^2} + \varepsilon}\right)$.  Thus, 
\begin{align*}
\nu(E) \leq \mu\left(f_n^{-1}\left(E^{\frac{2}{n^2}}\right)\right) + \frac{2}{n^2} &\leq \mu\left(f^{-1}\left(E^{\frac{2}{n^2} + \varepsilon}\right)\right) + \frac{2}{n^2}.
\end{align*}
Note that
\begin{align*}
f^{-1}\left(E^{\frac{2}{n^2} + \varepsilon}\right) &= f^{-1}\left(((E\cap (Y\setminus V))\cup (E\cap V))^{\frac{2}{n^2} + \varepsilon}\right)\\
 &= f^{-1}\left((E\cap (Y\setminus V))^{\frac{2}{n^2} + \varepsilon} \cup (E\cap V)^{\frac{2}{n^2} + \varepsilon}\right)\\
&\subset f^{-1}\left((E\cap (Y\setminus V))^{\frac{2}{n^2} + \varepsilon}\right)\cup f^{-1}\left( (E\cap V)^{\frac{2}{n^2} + \varepsilon}\right)
\end{align*} 
Thus, $\mu\left(f^{-1}\left(E^{\frac{2}{n^2} + \varepsilon}\right) \right) \leq \mu\left( f^{-1}\left((E\cap (Y\setminus V))^{\frac{2}{n^2} + \varepsilon}\right) \right) + \mu \left( f^{-1}\left( (E\cap V)^{\frac{2}{n^2} + \varepsilon}\right) \right)$.
Fix an $\varepsilon$ and choose $n$ so large that $\frac{2}{n^2} < \varepsilon$.  Then, 
$$\mu\left(f^{-1}\left(E^{2\varepsilon}\right) \right) \leq \mu\left( f^{-1}\left((E\cap (Y\setminus V))^{2\varepsilon}\right) \right) + \mu \left( f^{-1}\left( (E\cap V)^{2\varepsilon}\right) \right).$$
\end{proof}

Let $C$ be a closed set in $Y$.  As $C$ is a closed set and $V$ is an open set, $C\cap V$ is closed.  So, for 
sufficiently small $\varepsilon$, $(C\cap V)^{2\varepsilon} \subset V$.  So, 
$\mu \left( f^{-1}\left( (C\cap V)^{2\varepsilon}\right) \right) = 0$, that is, 
$\mu\left(f^{-1}\left(C^{2\varepsilon}\right) \right) \leq \mu\left( f^{-1}\left((C\cap (Y\setminus V))^{2\varepsilon}\right) \right)$.
Observe that,

\begin{lemma}
$f^{-1}\left((C\cap (Y\setminus V))^{2\varepsilon}\right) \subset \left( f^{-1}(C)\right)^{2\varepsilon}$
\end{lemma}

\begin{proof}
\begin{align*}
x\in f^{-1}\left((C\cap (Y\setminus V))^{2\varepsilon}\right) &\implies f(x)\in (C\cap (Y\setminus V))^{2\varepsilon} \\
&\implies f(x)\in (C\cap (f(X\setminus U)))^{2\varepsilon}\\
&\implies \exists y\in X\setminus U \ s.t \ f(y)\in C \text{ and } D(f(x),f(y)) < 2\varepsilon\\
&\implies x\in \left( f^{-1}(C)\right)^{2\varepsilon} \lbrace\text{$f$ is an isometry} \rbrace.    
\end{align*}
As $x\in f^{-1}\left((C\cap (Y\setminus V))^{2\varepsilon}\right)$ was arbitrary, $f^{-1}\left((C\cap (Y\setminus V))^{2\varepsilon}\right) \subset \left( f^{-1}(C)\right)^{2\varepsilon}$.
\end{proof}
  So, 
\begin{align*}
\nu(E) &\leq \mu\left(f^{-1}\left(C^{2\varepsilon}\right) \right) + 2\varepsilon\\
 &\leq \mu\left( f^{-1}\left((C\cap (Y\setminus V))^{2\varepsilon}\right) \right) + 2\varepsilon\\
&\leq \mu\left( \left(f^{-1}\left(C\right)\right)^{2\varepsilon} \right) + 2\varepsilon
\end{align*}
Note that $C$ is closed implies that $f^{-1}(C)$ is also closed.  Using the fact that 
$\mu(f^{-1}(C)) = \inf \left\lbrace \mu(U) : \text{$U$ is an open set containing 
$f^{-1}(C)$} \right\rbrace = \inf \left\lbrace\mu\left(\left(f^{-1}(C)\right)^{\varepsilon}\right): \varepsilon >0\right\rbrace$ we have, 
$$\mu(f^{-1}(C)) \geq \nu(C).$$ 

Hence, $\mu(f^{-1}(C)) = \nu(C)$ for every closed set $C$.  As $\mu, \nu$ are regular measures and $f$ preserves the measure of
every closed set, $f$ is measure preserving. 

Thus, the function $f:X\to Y$ is a measure preserving isometry.  So, we have completed the proof of Theorem \ref{positivityofrho}.    
\end{proof}

\subsection{Limit map between limit spaces}
\begin{prop}
Let $(X,d_X,\mu),(Y,d_{Y},\nu)$ be 
complete separable distance measure spaces.  Let $(X^n,d_{X^n},\mu^n), (Y^n,d_{Y^n},\nu^n)$ be such that, 
$d_{\rho}(X^n, X)$ converges to $0$ and $d_{\rho}(Y^n,Y)$ converges to $0$.  Given measure preserving isometries
$f^n: X^n \to Y^n$ we can construct a measure preserving isometry $f: X\setminus \underline{X} \to Y$ where 
$\underline{X}\subset X$ is a set of measure zero.
\label{limmapbnlimsp}
\end{prop}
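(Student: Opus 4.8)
The plan is to manufacture, for each $n$, a single approximate isometry from $X$ to $Y$ out of the three available maps, and then to run the same diagonal limiting argument as in the proof of Theorem \ref{positivityofrho}. Since $d_{\rho}(X^n,X)\to 0$ and $d_{\rho}(Y^n,Y)\to 0$, I would first pass to a subsequence and choose tolerances $\alpha_n\to 0$, $\beta_n\to 0$ with $\alpha_n>d_{\rho}(X,X^n)$, $\beta_n>d_{\rho}(Y^n,Y)$, the scale relation $8\alpha_n+d_{\rho}(Y^n,Y)\le\beta_n$, and $\sum_n(\alpha_n+\beta_n)<\infty$; all of this is possible because both distances tend to $0$. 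Applying Theorem \ref{quasiisomgivdistsmall} to $d_{\rho}(X,X^n)<\alpha_n$ gives $g_n:X\setminus\widehat{X}_n\to X^n$, and to $d_{\rho}(Y^n,Y)<\beta_n$ gives $h_n:Y^n\setminus\widehat{Y^n}\to Y$. Together with the given isometry $f^n$ I set
$$F_n=h_n\circ f^n\circ g_n:\ X\setminus\widehat{W}_n\to Y,\qquad \widehat{W}_n=\widehat{X}_n\cup g_n^{-1}\big((f^n)^{-1}(\widehat{Y^n})\big).$$

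The crucial estimate is that $\mu(\widehat{W}_n)\to 0$, and this is exactly where Lemma \ref{trickforlimmapbnlimsp} enters. Since $f^n$ is a measure-preserving isometry, one has $\big((f^n)^{-1}(\widehat{Y^n})\big)^{2\alpha_n}\subset (f^n)^{-1}\big((\widehat{Y^n})^{2\alpha_n}\big)$, so the measure inequality of Theorem \ref{quasiisomgivdistsmall} for $g_n$ gives
$$\mu\Big(g_n^{-1}\big((f^n)^{-1}(\widehat{Y^n})\big)\Big)\le \mu^n\Big((f^n)^{-1}\big((\widehat{Y^n})^{2\alpha_n}\big)\Big)+2\alpha_n=\nu^n\big((\widehat{Y^n})^{2\alpha_n}\big)+2\alpha_n.$$
By the choice $8\alpha_n+d_{\rho}(Y^n,Y)\le\beta_n$ we have $2\alpha_n\le(\beta_n-d_{\rho}(Y^n,Y))/4$, so Lemma \ref{trickforlimmapbnlimsp} applied to the pair $(Y^n,Y)$ bounds $\nu^n\big((\widehat{Y^n})^{2\alpha_n}\big)$ by a quantity strictly less than $\beta_n$. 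Hence $\mu(\widehat{W}_n)\le 3\alpha_n+\beta_n\to 0$, and by summability $\underline{X}:=\bigcap_{k}\bigcup_{i\ge k}\widehat{W}_i$ has measure zero while every $x\in X\setminus\underline{X}$ lies in the domain of all but finitely many $F_n$. Chaining the distance distortions (using that $f^n$ is exact, $g_n$ distorts by at most $2\alpha_n$ and $h_n$ by at most $2\beta_n$ on the appropriate range of distances) shows each $F_n$ is an approximate isometry with error tending to $0$, and chaining the two Lévy--Prokhorov-type inequalities for $g_n$ and $h_n$ with the measure-preservation of $f^n$ shows $\mu(F_n^{-1}(E))\le\nu(E^{c_n})+c_n$ and $\nu(E)\le\mu\big(F_n^{-1}(E^{c_n})\big)+c_n$ with $c_n\to 0$.

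From here the argument is identical to the proof of Theorem \ref{positivityofrho}, with $F_n$ in place of the maps $f_n$ there. For $x\in X\setminus\underline{X}$ the sequence $F_n(x)$ has a convergent subsequence, since otherwise the images of a fixed small ball about $x$ would be contained in disjoint balls of uniformly positive measure, contradicting $\nu(Y)<\infty$. A diagonal extraction over a countable dense set $S\subset X\setminus\underline{X}$ produces a subsequence along which $F_n$ converges pointwise on $S$ to a map that is an isometry on $S$ (letting the errors tend to $0$ in the two approximate-isometry inequalities), which then extends by density and completeness of $Y$ to an isometry $f$ on all of $X\setminus\underline{X}$. Finally, passing to the limit in the measure inequalities for $F_n$ yields $\mu(f^{-1}(C))\le\nu(C)$ and $\nu(C)\le\mu\big(f^{-1}(C^{2\varepsilon})\big)+2\varepsilon$ for closed $C$, whence $\mu(f^{-1}(C))=\nu(C)$ by regularity, extending to all Borel sets so that $f$ is measure preserving. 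I expect the main obstacle to be the measure control of the composite exceptional set $\widehat{W}_n$; once Lemma \ref{trickforlimmapbnlimsp} is invoked with the scale relation $8\alpha_n+d_{\rho}(Y^n,Y)\le\beta_n$, the remainder follows the template of Theorem \ref{positivityofrho}.
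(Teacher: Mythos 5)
Your proposal is correct and follows essentially the same route as the paper: your $F_n = h_n\circ f^n\circ g_n$ is precisely the paper's composition $\varphi^n\circ f^n\circ\psi^n$, your bound $\mu(\widehat{W}_n)\leq 3\alpha_n+\beta_n$ (using that $f^n$ is a measure-preserving isometry together with Lemma \ref{trickforlimmapbnlimsp}) mirrors the paper's estimate of $\mu(Y(k))$ and its definition of $\underline{X}$ as a limsup of bad sets, and the disjoint-balls compactness argument, diagonal extraction over a countable dense set, isometric extension, and regularity argument for measure preservation all coincide with the paper's adaptation of Theorem \ref{positivityofrho}. If anything, your explicit scale relation $8\alpha_n + d_{\rho}(Y^n,Y)\leq\beta_n$ is slightly more careful bookkeeping than the paper's choice $\delta_1^n < \frac{\frac{1}{n^2}-\delta_2^n}{4}$.
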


\begin{proof}
This proof is very similar to the proof of Proposition \ref{positivityofrho}.  Again, the proof of this theorem will take the whole of this subsection.  To summarize, we construct maps 
$\varphi^n : \left(Y^n \setminus \widetilde{Y^n}\right) \to Y$ and $\psi^n:\left(X \setminus \widehat{X}^n\right) \to X^n$ as in Theorem \ref{quasiisomgivdistsmall} and take the limit of $\varphi^n \circ f^n \circ \psi^n (x)$ as $n$ tends to infinity.  We will prove that such a limit exists, it is a measure preserving isometry from $X \setminus \underline{X}$ (for some set $\underline{X}$ we define) to $Y$, and $\underline{X}$ has measure zero.

As $d_{\rho}(Y^n,Y)$ converges to $0$,
without loss of generality, we can assume that $\delta_2^n = d_{\rho}(Y^n,Y) < \frac{1}{n^2}$.  
As $d_{\rho}(X^n, X)$ converges to $0$, without loss of generality, we can assume that 
$\delta_1^n = d_{\rho}(X^n,X) < \frac{\frac{1}{n^2} - \delta_2^n}{4} $.  First construct maps 
$\varphi^n : \left(Y^n \setminus \widetilde{Y^n}\right) \to Y$ and $\psi^n:\left(X \setminus \widehat{X}^n\right) \to X^n$, 
as in Theorem \ref{quasiisomgivdistsmall}. Let $X(k) = \cup_{i=k}^{\infty} \widehat{X}^i$, $Y(k) = \cup_{i=k}^{\infty} (\psi^n)^{-1}\left(\left(f^n\right)^{-1}\left( \widetilde{Y^n}\right)\right)$
and $\underline{X} = \left[\cap_{n=1}^{\infty} X(k)\right] \cup \left[\cap_{n=1}^{\infty} Y(k)\right]$. 


\begin{lemma}
$\mu(\underline{X}) = 0$.
\end{lemma}

\begin{proof}
Note that, $\mu(X(k)) \leq \sum_{i=k}^{\infty} \frac{1}{i^2}$ which goes to zero as $k$ goes to infinity.  Thus, the measure 
$\mu\left(\cap_{k=1}^{\infty} X(k)\right) = 0$.  On the other hand, the following computation shows that the measures 
$\mu(Y(k))$ goes to zero as $k$ goes to infinity.
\begin{align*}
\mu(Y(k)) &\leq \sum_{i=k}^{\infty} \mu\left((\psi^k)^{-1}\left((f^k)^{-1}\left( \widetilde{Y^k}\right)\right)\right)\\
&\leq \sum_{i=k}^{\infty}\mu^k \left( \left((f^k)^{-1}\left( \widetilde{Y^k}\right)\right)^{\frac{\frac{1}{k^2} - \delta_2^k}{4}} \right) + \frac{\frac{1}{k^2} - \delta_2^k}{4} \ \ \ \lbrace Theorem \ref{quasiisomgivdistsmall}\rbrace\\
&= \sum_{i=k}^{\infty}\mu^k \left( (f^k)^{-1}\left(\left( \widetilde{Y^k}\right)^{\frac{\frac{1}{k^2} - \delta_2^k}{4}}\right) \right) + \frac{\frac{1}{k^2} - \delta_2^k}{4} \ \ \ \lbrace\text{$f^k$ is an isometry}\rbrace\\
&= \sum_{i=k}^{\infty}\nu^k\left(\left( \widetilde{Y^k}\right)^{\frac{\frac{1}{k^2} - \delta_2^k}{4}}\right) + \frac{\frac{1}{k^2} - \delta_2^k}{4} \ \ \ \lbrace\text{$f$ is measure preserving}\rbrace\\
&\leq \sum_{i=k}^{\infty}\frac{\frac{1}{k^2}+ \delta_2^k}{2} + \frac{\frac{1}{k^2} - \delta_2^k}{4} \ \ \ \lbrace Lemma \ref{trickforlimmapbnlimsp}\rbrace\\
&= \sum_{i=k}^{\infty}\frac{\frac{3}{k^2}+ \delta_2^k}{4}\\
&\leq \sum_{i=k}^{\infty}\frac{1}{k^2}.
\end{align*}
goes to zero as $k$ goes to infinity.  Thus $\mu\left(\cap_{k=1}^{\infty} Y(k)\right) = 0$.
Hence $\mu(\underline{X}) = 0$. 
\end{proof}

\begin{lemma}
Given a point $x \in (X \setminus \underline{X})$, the infinite sequence
$\varphi^n \circ f^n \circ \psi^n (x)$ has a convergent subsequence. 
\end{lemma}

\begin{proof}
Assume the contrary.  Then there exists an $\varepsilon_1$ such that 
$$d_Y\left(\varphi^n \circ f^n \circ \psi^n (x),\varphi^m \circ f^m \circ \psi^m (x)\right) \geq \varepsilon_1$$
Choose $\varepsilon < \min \left( \left\lbrace\left(\frac{1}{\delta_1^n} - 2\delta_1^n\right),\left(\frac{1}{\delta_2^n} - 2\delta_1^n - 2\delta_2^n\right),\left(\varepsilon_1 - 4\delta_1^n - 4\delta_2^n\right) : n\in \mathbb{N} \right\rbrace \cap \left\lbrace x\in \mathbb{R} : x >0 \right\rbrace \right)$.
As $n$ goes to infinity, $\delta_i^n$ goes to zero and the terms $\left(\frac{1}{\delta_1^n} - 2\delta_1^n\right)$,$\left(\frac{1}{\delta_2^n} - 2\delta_1^n - 2\delta_2^n\right)$,$\left(\varepsilon_1 - 4\delta_1^n - 4\delta_2^n\right)$
becomes bigger and bigger.  Hence the minimum exists.  By using Theorem \ref{quasiisomgivdistsmall} twice, we have 
for all $n$ such that all the three terms 
$\left(\frac{1}{\delta_1^n} - 2\delta_1^n\right)$,$\left(\frac{1}{\delta_2^n} - 2\delta_1^n - 2\delta_2^n\right)$, $\left(\varepsilon_1 - 4\delta_1^n - 4\delta_2^n\right)$
are positive (i.e for all but finitely many $n$), 
$$\varphi^n \circ f^n \circ \psi^n(B(x,\varepsilon)) \subset B(\varphi^n \circ f^n \circ \psi^n(x),\varepsilon + 2 \delta_1^n + 2\delta_2^n).$$

\noindent Thus,
$$(\varphi^n \circ f^n \circ \psi^n(B(x,\varepsilon)))^{2\delta_1^n + 2 \delta_2^n} \subset (B(\varphi^n \circ f^n \circ \psi^n(x),\varepsilon + 2 \delta_1^n + 2\delta_2^n))^{2\delta_1^n + 2 \delta_2^n}.$$
So,
\begin{align*}
\nu((B(\varphi^n \circ f^n \circ \psi^n(x),\varepsilon + 2 \delta_1^n + 2\delta_2^n))^{2\delta_1^n + 2 \delta_2^n}) &\geq \nu((\varphi^n \circ f^n \circ \psi^n(B(x,\varepsilon)))^{2\delta_1^n + 2 \delta_2^n})\\
&\geq \mu(B(x,\varepsilon)) - 2\delta_1^n - 2\delta_2^n \\
& \ \ \ \ \ \ \ \ (\text{apply Theorem \ref{quasiisomgivdistsmall} twice}).
\end{align*}
Furthermore, $(B(\varphi^n \circ f^n \circ \psi^n(x),\varepsilon + 2 \delta_1^n + 2\delta_2^n))^{2\delta_1^n + 2 \delta_2^n}$
are disjoint.  This contradicts with the assumption that $Y$ has finite measure.   
\end{proof}

Choose a countable dense subset $S = \lbrace x_1,x_2,...\rbrace \subset (X \setminus \underline{X})$.  Choose a subsequence
$\varphi^{n_{1,k}} \circ f^{n_{1,k}} \circ \psi^{n_{1,k}}$ of $\varphi^n \circ f^n \circ \psi^n$ such that
$\varphi^{n_{1,k}} \circ f^{n_{1,k}} \circ \psi^{n_{1,k}}(x_1)$ converges.  We choose a further subsequence 
$\varphi^{n_{2,k}} \circ f^{n_{2,k}} \circ \psi^{n_{2,k}}$ of 
$\varphi^{n_{1,k}} \circ f^{n_{1,k}} \circ \psi^{n_{1,k}}$
such that $\varphi^{n_{2,k}} \circ f^{n_{2,k}} \circ \psi^{n_{2,k}}(x_2)$ converges. Observe that 
$\varphi^{n_{2,k}} \circ f^{n_{2,k}} \circ \psi^{n_{2,k}}(x_1)$ continues to converge.
Iterating this process, we obtain subsequences $\varphi^{n_{j,k}} \circ f^{n_{j,k}} \circ \psi^{n_{j,k}}$ so that
$\varphi^{n_{j,k}} \circ f^{n_{j,k}} \circ \psi^{n_{j,k}}(x_l)$ converges for $l\leq j$. It follows that for the
diagonal sequence $\varphi^{n_{k,k}} \circ f^{n_{k,k}} \circ \psi^{n_{k,k}}$ we have the corresponding convergence for
all points in $S$.  Replace $\varphi^n \circ f^n \circ \psi^n$  by this diagonal subsequence.  Define 
$$f(x_i) = \lim_{n\to \infty} \varphi^n \circ f^n \circ \psi^n (x_i)$$

\begin{lemma}
$d_Y(f(x_i),f(x_j)) \leq d_X(x_i,x_j).$
\label{fdisdec}
\end{lemma}  

\begin{proof}
Given $x_i$ and $x_j$, if $d_Y(f(x_i),f(x_j)) = \infty = d_X(x_i,x_j)$, then it is obvious.  Otherwise, choose $n$ so
large that $\min\lbrace d_Y(f(x_i),f(x_j)),d_X(x_i,x_j)\rbrace < \min\left\lbrace\frac{1}{\delta_1^n},\frac{1}{\delta_2^n}\right\rbrace - 2\delta_1^n - 2\delta_2^n$.
Then,
\begin{align*}
d_Y(\varphi^n \circ f^n \circ \psi^n(x_i), \varphi^n \circ f^n \circ \psi^n(x_j)) &\leq  d_{Y^n}(f^n \circ \psi^n(x_i), f^n \circ \psi^n(x_j)) + 2 \delta_2^n \\
&=   d_{X^n}(\psi^n(x_i), \psi^n(x_j)) + 2\delta_2^n\\
&\leq  d_X(x_i, x_j) + 2\delta_1^n + 2\delta_2^n.
\end{align*}
Thus, 
\begin{align*}
d_Y(f(x_i),f(x_j)) &= \lim_{n \to \infty} d_Y(\varphi^n \circ f^n \circ \psi^n(x_i), \varphi^n \circ f^n \circ \psi^n(x_j)) \\
&\leq \lim_{n \to \infty} d_X(x_i, x_j) + 2\delta_1^n + 2\delta_2^n.
\end{align*}
As $\delta_i^n$ tends to zero as $n$ tends to infinity we have the result.
\end{proof}

\begin{lemma}
$d_Y(f(x_i),f(x_j)) \geq d_X(x_i,x_j).$
\label{fdisinc}
\end{lemma}  

\begin{proof}
Given $x_i$ and $x_j$, if $d_Y(f(x_i),f(x_j)) = \infty = d_X(x_i,x_j)$, then it is obvious.  Otherwise, choose $n$ so
large that $\min\lbrace d_Y(f(x_i),f(x_j)),d_X(x_i,x_j)\rbrace < \min\left\lbrace\frac{1}{\delta_1^n},\frac{1}{\delta_2^n}\right\rbrace - 2\delta_1^n - 2\delta_2^n$.
Then,

\begin{align*}
d_Y(\varphi^n \circ f^n \circ \psi^n(x_i), \varphi^n \circ f^n \circ \psi^n(x_j)) &\geq  d_{Y^n}(f^n \circ \psi^n(x_i), f^n \circ \psi^n(x_j)) - 2 \delta_2^n \\
&=  d_{X^n}(\psi^n(x_i), \psi^n(x_j)) - 2\delta_2^n\\
&\geq d_X(x_i, x_j) - 2\delta_1^n - 2\delta_2^n.
\end{align*}
Thus,

\begin{align*}
d_Y(f(x_i),f(x_j)) &= \lim_{n \to \infty} d_Y(\varphi^n \circ f^n \circ \psi^n(x_i), \varphi^n \circ f^n \circ \psi^n(x_j)) \\
&\geq \lim_{n \to \infty} d_X(x_i, x_j) - 2\delta_1^n - 2\delta_2^n.
\end{align*}
As $\delta_i^n$ tends to zero as $n$ tends to infinity we have the result.
\end{proof}

From Lemma \ref{fdisdec} and Lemma \ref{fdisinc} we have

\begin{lemma}
The map $f$ is an isometry on $S$.
\label{fisomonS}
\end{lemma}

As $S$ is dense in $X \setminus \underline{X}$ given any point $X \setminus \underline{X}$ there exist a sequence 
$s_n \in S$ such that $d(s_n,x)$ tends to $0$.  Define
$f(x)=\lim_{n\to \infty}f(s_n)$ where $s_n$ tends to $x$ as $n$ tends
to $\infty$.  
\subsubsection*{The map $f$ is well defined:} Let $\langle s_n \rangle_n$ and $\langle t_n \rangle_n$ be two
sequence which converge to $x$.  Then the sequence $s_1,t_1,s_2,t_2,...$ is
Cauchy and so is the sequence $f(s_1),f(t_1),
f(s_2),f(t_2),...$ by Lemma \ref{fisomonS}.  Thus it converges
and has the same limit as $\langle f(s_n) \rangle_n$ and $\langle f(t_n) \rangle_n$.  Hence
$\lim_{n\to \infty} f(s_n) = \lim_{n\to \infty} f(t_n)$.

\subsubsection*{The map $f$ is an isometry:}Given two points $x,x' \in X \setminus \underline{X}$ consider sequences 
$\langle s_n \rangle_n \subset S$ and $\langle t_n \rangle_n \subset S$ such that $s_n$ converges to $x$ and $t_n$ 
converges to $x'$.  Then,
\begin{align*}
d_Y(f(x),f(x')) &\leq d_Y(f(x),f(s_n)) + d_Y(f(s_n),f(t_n)) + d_Y(f(t_n), f(x'))\\
&\leq d_Y(f(x),f(s_n)) + d_X(s_n,t_n) + d_Y(f(t_n), f(x'))
\end{align*}
Similarly, $d_Y(f(s_n),f(t_n)) \leq d_Y(f(x),f(s_n)) + d_Y(f(x),f(x')) + d_Y(f(t_n), f(x'))$ implies that
\begin{align*}
d_Y(f(x),f(x')) &\geq d_Y(f(s_n),f(t_n)) - (d_Y(f(x),f(s_n)) + d_Y(f(t_n), f(x'))\\
&= d_X(s_n,t_n) - (d_Y(f(x),f(s_n)) + d_Y(f(t_n), f(x')).
\end{align*} 

As  $d_Y(f(x),f(s_n))$, $d_Y(f(t_n), f(x'))$ tends to zero and $d_X(s_n,t_n)$ tends to $d_X(x,x')$ we have the result.

\subsubsection*{The map $f$ preserves measure:}

\begin{lemma}
If $A \subset Y$, $\mu\left(f^{-1}(A)\right) \leq \nu\left(A^{\varepsilon + 2\delta_1^n + 2\delta_2^n}\right) + 2\delta_1^n + 2\delta_2^n$. 
\end{lemma}

\begin{proof}
If $E \subset Y$
\begin{align*}
\mu\left((\psi^n)^{-1}\circ (f^n)^{-1}\circ (\varphi^n)^{-1}(E)\right) &\leq \mu^n\left(((\psi^n)^{-1}\circ (f^n)^{-1}(E))^{2\delta_2^n}\right) + 2\delta_2^n\\
&\leq \mu^n\left((f^n)^{-1}\left(\left((\varphi^n)^{-1}(E)\right)^{2\delta_2^n}\right)\right) + 2\delta_2^n \\
&\ \ \ \ \ (\text{as $f^n$ is an isometry})\\
&\leq \nu^n \left( \left( (\varphi^n)^{-1}(E) \right)^{2\delta_2^n}\right) + 2\delta_2^n\\
& \ \ \ \ \ (\text{as $f^n$ is measure preserving})\\
\end{align*}

Observe that,

\begin{lemma}
$\left((\varphi^n)^{-1}(E)\right)^{2\delta_2^n} \subset \left(\varphi^n\right)^{-1}\left(E^{2\delta_2^n}\right)$.
\end{lemma}

\begin{proof}
$x\in \left((\varphi^n)^{-1}(E)\right)^{2\delta_2^n}$ means that $d_{Y^n}(x,(\varphi^n)^{-1}(E))< 2\delta_2^n$,
that is, there exists $y\in X$ such that $\varphi^n(y)\in E$ and $d_{Y^n}(x,y)< 2\delta_2^n$.  If $n$ is large enough
so that $\delta_2^n <\frac{1}{2}$ then, $d_{Y^n}(x,y)< 2\delta_2^n < \frac{1}{\delta_2^n} - 2\delta_2^n$.  So, 
$d_Y(\varphi^n(x),\varphi^n(y))  < d_{Y^n}(x,y) + 2\delta_2^n$.  Thus, 
$x\in \left(\varphi^n\right)^{-1}\left(E^{2\delta_2^n}\right)$.  As 
$x\in \left((\varphi^n)^{-1}(E)\right)^{2\delta_2^n}$ was arbitrary, 
$\left((\varphi^n)^{-1}(E)\right)^{2\delta_2^n} \subset \left(\varphi^n\right)^{-1}\left(E^{2\delta_2^n}\right)$.
\end{proof}
 
Thus,
\begin{align*}
\mu\left((\psi^n)^{-1}\circ (f^n)^{-1}\circ (\varphi^n)^{-1}(E)\right) 
&\leq \nu^n \left( \left( (\varphi^n)^{-1}(E) \right)^{2\delta_2^n}\right) + 2\delta_2^n\\
&\leq \nu^n\left(\left(\varphi^n\right)^{-1}\left(E^{2\delta_2^n}\right)\right) + 2\delta_2^n\\
&\leq \nu\left(E^{2\delta_1^n + 2\delta_2^n}\right) + 2\delta_1^n + 2\delta_2^n.
\end{align*}

Let $A \subset Y$ be arbitrary, given $\varepsilon >0$ there exists $N(\varepsilon)$ such that, $f^{-1}(A)$ is 
contained in $(\psi^n)^{-1}\circ (f^n)^{-1}\circ (\varphi^n)^{-1}\left(A^{\varepsilon}\right)$ for all 
$n \geq N(\varepsilon)$.  Thus,
$$\mu(f^{-1}(A)) \leq \mu\left( (\psi^n)^{-1}\circ (f^n)^{-1}\circ (\varphi^n)^{-1}\left(A^{\varepsilon}\right) \right) $$
Replacing $E$ by $A^\varepsilon$ in the previous computation we get
\begin{align*}
\mu(f^{-1}(A)) &\leq \mu\left( (\psi^n)^{-1}\circ (f^n)^{-1}\circ (\varphi^n)^{-1}\left(A^{\varepsilon}\right) \right)\\
&\leq \nu\left(A^{\varepsilon + 2\delta_1^n + 2\delta_2^n}\right) + 2\delta_1^n + 2\delta_2^n.
\end{align*}  
\end{proof}

\begin{lemma}
If $C$ be a closed set in $Y$, $\mu(f^{-1}(C)) \leq \nu(C)$.
\end{lemma}

\begin{proof}
Let $C$ be a closed set in $Y$.  We have for all $\varepsilon> 0$, 
$\mu(f^{-1}(C)) \leq \nu\left(C^{\varepsilon + 2\delta_1^n + 2\delta_2^n}\right) + 2\delta_1^n + 2\delta_2^n$.  Fix 
an $\varepsilon$ and choose $n$ so large that $2\delta_1^n + 2\delta_2^n < \varepsilon$.  We have 
$\mu(f^{-1}(C)) \leq \nu\left(C^{2\varepsilon}\right) + \varepsilon$.   As $C$ is a closed set, given any open set 
$U$ containing $C$, there exists a number $\varepsilon$ such that $C^{\varepsilon} \subset U$.  Using the fact that 
$\mu(C) = \inf \left\lbrace \mu(U) : \text{$U$ is an open set containing $C$} \right\rbrace = \inf \left\lbrace\mu(C^{\varepsilon}): \varepsilon >0\right\rbrace$
we have, $\mu(f^{-1}(C)) \leq \nu(C)$.
\end{proof}

Similarly,
\begin{lemma}
If $E \subset Y$, $\nu(E) \leq \mu \left(f^{-1}\left(E^{\varepsilon + 2\delta_1^n + 2\delta_2^n}\right)\right) + 2\delta_1^n + 2\delta_2^n$. 
\end{lemma}

\begin{proof}
If $E \subset Y$,
\begin{align*}
\nu(E) &\leq \nu^n\left( \left( \phi^n \right)^{-1} \left( E^{2\delta_2^n} \right) \right) + 2\delta_2^n\\
& \ \ \ \ \ (\text{by Lemma \ref{controlonmeas}})\\
&\leq \mu^n \left( \left( f^n \right)^{-1} \circ \left( \varphi^n \right)^{-1} \left( E^{2\delta_2^n} \right) \right) + 2\delta_2^n\\
& \ \ \ \ \ (\text{as $f$ is measure preserving})\\
&\leq \mu \left( \left( \psi^n \right)^{-1} \left( \left( \left( f^n \right)^{-1} \circ \left( \varphi^n \right)^{-1} \left( E^{2\delta_2^n} \right) \right)^{2\delta_1^n} \right) \right) + 2\delta_1^n + 2\delta_2^n\\
& \ \ \ \ \ (\text{by Lemma \ref{controlonmeas}})\\
&\leq \mu \left( \left( \psi^n \right)^{-1} \circ \left( f^n \right)^{-1} \left( \left( \left( \varphi^n \right)^{-1} \left( E^{2\delta_2^n} \right) \right)^{2\delta_1^n} \right) \right) + 2\delta_1^n + 2\delta_2^n\\
& \ \ \ \ \ (\text{as $f$ is an isometry})\\ 
\end{align*}  

Observe that,
\begin{lemma}
$\left(\left(\varphi^n\right)^{-1}\left(E^{2\delta_2^n}\right)\right)^{2\delta_1^n} \subset \left(\varphi^n\right)^{-1}\left(E^{2\delta_2^n + 2\delta_1^n}\right)$.
\end{lemma}

\begin{proof}
A point $x\in \left(\left(\varphi^n\right)^{-1}\left(E^{2\delta_2^n}\right)\right)^{2\delta_1^n}$ means that 
$d_{Y^n}\left(x,\left(\varphi^n\right)^{-1}\left(E^{2\delta_2^n}\right)\right)< 2\delta_1^n$, that is, there exists $y\in Y^n$ 
such that $\varphi^n(y)\in E^{2\delta_2^n}$ and $d_{Y^n}(x,y)< 2\delta_1^n$.  If $n$ is large enough so that 
$\delta_1^n <\frac{1}{2}$ then, $d_{Y^n}(x,y)< 2\delta_1^n < \frac{1}{\delta_1^n} - 2\delta_1^n$.  So, 
$d_Y(\varphi^n(x),\varphi^n(y))  < d_{Y^n}(x,y) + 2\delta_1^n$.  Thus, 
$x\in \left(\varphi^n\right)^{-1}\left(\left(E^{2\delta_2^n}\right)^{2\delta_1^n}\right) \subset \left(\varphi^n\right)^{-1}\left(E^{2\delta_2^n + 2\delta_1^n}\right) $.
As $x\in \left(\left(\varphi^n\right)^{-1}\left(E^{2\delta_2^n}\right)\right)^{2\delta_1^n}$ was arbitrary, we have 
$\left(\left(\varphi^n\right)^{-1}\left(E^{2\delta_2^n}\right)\right)^{2\delta_1^n} \subset \left(\varphi^n\right)^{-1}\left(E^{2\delta_2^n + 2\delta_1^n}\right)$.
\end{proof}   
Thus,
\begin{align*}
\nu(E) &\leq \mu \left( \left( \psi^n \right)^{-1} \circ \left( f^n \right)^{-1} \left( \left( \left( \varphi^n \right)^{-1} \left( E^{2\delta_2^n} \right) \right)^{2\delta_1^n} \right) \right) + 2\delta_1^n + 2\delta_2^n\\
&\leq \mu \left( \left( \psi^n \right)^{-1} \circ \left( f^n \right)^{-1} \circ \left( \varphi^n \right)^{-1} \left( E^{2\delta_1^n + 2\delta_2^n} \right) \right) + 2\delta_1^n + 2\delta_2^n.
\end{align*} 
Let $A \subset Y$ be arbitrary, given $\varepsilon >0$ there exists $N(\varepsilon)$ such that, 
$(\psi^n)^{-1}\circ (f^n)^{-1}\circ (\varphi^n)^{-1}\left( A \right)$ is contained in 
$f^{-1}\left(A^{\varepsilon}\right)$   for all $n \geq N(\varepsilon)$.  Thus,
$$\mu\left( (\psi^n)^{-1}\circ (f^n)^{-1}\circ (\varphi^n)^{-1}\left(A \right) \right) \leq \mu(f^{-1}(A^{\varepsilon})).$$
Using this inequality for $A = E^{2\delta_1^n + 2\delta_2^n}$ we have,
\begin{align*}
\nu(E) &\leq \mu \left( \left( \psi^n \right)^{-1} \circ \left( f^n \right)^{-1} \circ \left( \varphi^n \right)^{-1} \left( E^{2\delta_1^n + 2\delta_2^n} \right) \right) + 2\delta_1^n + 2\delta_2^n\\
&\leq \mu \left(f^{-1}\left(E^{\varepsilon + 2\delta_1^n + 2\delta_2^n}\right)\right) + 2\delta_1^n + 2\delta_2^n.
\end{align*}
\end{proof}

\begin{lemma}
If $C$ be a closed set in $Y$, $\mu(f^{-1}(C)) \geq \nu(C)$. 
\end{lemma}

\begin{proof}
Let $C$ be a closed set in $Y$.  We have for all $\varepsilon> 0$, $\nu(C) \leq \mu \left( f^{-1}\left(C^{\varepsilon + 2\delta_1^n + 2\delta_2^n} \right) \right) + 2\delta_1^n + 2\delta_2^n$. 
 Fix an $\varepsilon$ and choose $n$ so large that $2\delta_1^n + 2\delta_2^n < \varepsilon$.  
Then, $\nu(C) \leq \mu \left( f^{-1}\left(C^{2\varepsilon} \right) \right) + \varepsilon$.   As $C$ is a closed set, given
 any open set $U$ containing $C$, there exists a number $\varepsilon$ such that $C^{\varepsilon} \subset U$.  Using 
 the fact that $\mu(C) = \inf \left\lbrace \mu(U) : \text{$U$ is an open set containing $C$} \right\rbrace = \inf \left\lbrace\mu(C^{\varepsilon}): \varepsilon >0\right\rbrace$
 we have, $\mu\left(f^{-1}(C)\right) \geq \nu(C)$.
\end{proof}

Thus, for every closed set $C$, $\mu\left(f^{-1}(C)\right) = \nu(C)$.  As $\mu, \nu$ are regular measures and $f$ preserves the 
measure of every closed set, $f$ is measure preserving.  

Thus we have completed the proof of Theorem \ref{limmapbnlimsp} by showing that the set $\underline{X}$ has zero measure and $f:X\setminus \underline{X} \to Y$ is a measure preserving isometry. 

\end{proof}  

\subsection{Approximating a distance measure space by a finite distance measure
space}
In this subsection we will prove that any complete separable distance measure space
$(X,d,\mu)$, with finite measure, can be approximated by a finite distance
measure space, i.e., by $(Y,d',\nu)$ where $Y$ is a finite set.

\begin{theorem}
If $X$ is a complete separable distance (metric) space and $\mu$ a finite
measure on $X$. Then given $\varepsilon > 0$, there exist a compact set
$K_{\varepsilon}$ such that $\mu((K_{\varepsilon})^C)\leq \varepsilon$.
\label{cslem}
\end{theorem}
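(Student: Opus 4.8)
The plan is to recognize this as a version of Ulam's theorem — the tightness (inner regularity by compact sets) of a finite Borel measure on a Polish space — adapted to the distance-space setting. Since a distance space decomposes as a disjoint union of its equivalence classes under $\sim$, and each such class is closed (Theorem \ref{simclosed}) and therefore a complete separable metric space in its own right (completeness passing to closed subsets of the complete space $X$, separability by Lemma \ref{subsetseparable}), I would first reduce to the metric case and then run the classical Ulam construction on each class.

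First I would handle the distance-space bookkeeping. Because $X$ is separable it has only countably many equivalence classes $Y_1, Y_2, \dots$ under $\sim$ (by the characterization of separable distance spaces). Since $\mu$ is finite and the $Y_i$ are disjoint with $\sum_i \mu(Y_i) = \mu(X) < \infty$, I can choose $M$ so that $\sum_{i > M} \mu(Y_i) < \varepsilon/2$. It then suffices to produce, for each $i \le M$, a compact $K_i \subset Y_i$ with $\mu(Y_i \setminus K_i) < \varepsilon/(2M)$: the union $K_{\varepsilon} = \bigcup_{i \le M} K_i$ meets only finitely many equivalence classes and has compact intersection with each, so it is compact in $X$ by the characterization of compact subsets of a distance space, while $\mu(X \setminus K_{\varepsilon}) \le \sum_{i>M}\mu(Y_i) + \sum_{i \le M}\mu(Y_i \setminus K_i) < \varepsilon$.

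The core of the argument is then the metric case applied to a single complete separable metric space $Y = Y_i$ of finite measure, with target tolerance $\eta = \varepsilon/(2M)$. I would fix a countable dense set $\{z_n\} \subset Y$. For each $k \in \mathbb{N}$ the closed balls $\overline{B(z_n, 1/k)}$, $n \ge 1$, cover $Y$, so by continuity of measure from below there is $N_k$ with $\mu\left(Y \setminus \bigcup_{n \le N_k} \overline{B(z_n,1/k)}\right) < \eta/2^k$. Setting $K_i = \bigcap_{k} \bigcup_{n \le N_k}\overline{B(z_n,1/k)}$ gives a closed set which for every $k$ admits a finite $1/k$-net, hence is totally bounded; since $Y$ is complete, $K_i$ is compact. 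Finally $\mu(Y \setminus K_i) \le \sum_k \eta/2^k = \eta$, as required.

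The only genuinely substantive step is this last, classical one — converting total boundedness together with completeness into compactness, and controlling the measure of the complement by a convergent geometric series; the distance-space reduction is routine once the structural theorems on equivalence classes and on compactness in distance spaces are in hand. The single point I would take care to record explicitly is that each equivalence class is complete, which is exactly where closedness in the complete space $X$ (Theorem \ref{simclosed}) is used.
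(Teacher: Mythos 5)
Your proof is correct, and its core is the same Ulam construction the paper uses, but you package it differently. The paper does not decompose $X$ into its equivalence classes at all: it fixes a countable dense set $\{x_i\}$ in the whole distance space, notes that for each $k$ the balls $B_{\frac{1}{k}}(x_i)$ cover $X$ (the presence of infinite distances is harmless here), chooses $N(k)$ so that $U_k=\bigcup_{i=1}^{N(k)}B_{\frac{1}{k}}(x_i)$ misses at most $\varepsilon/2^k$ of the measure, and takes $K_\varepsilon$ to be the closure of $\bigcap_k U_k$, which is totally bounded and hence compact by completeness. Your route instead first invokes the structural theorems (countably many classes, each open, closed, and separable), truncates to finitely many classes carrying all but $\varepsilon/2$ of the measure, and runs the classical metric-space Ulam argument on each class separately. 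What your reduction buys is that the only compactness fact you need is the standard one for complete \emph{metric} spaces -- completeness of each class coming from Theorem \ref{simclosed}, exactly as you flag -- whereas the paper's direct proof tacitly uses that ``totally bounded plus complete implies compact'' also holds in distance spaces (true, since a totally bounded set meets only finitely many classes and the usual Cauchy-subsequence argument goes through, but the paper leaves this unremarked). What the paper's route buys is brevity and the observation that the classical proof needs no modification whatsoever in the distance-space setting; your finite-selection step over the classes, while perfectly sound, is logically superfluous. One cosmetic difference: you intersect finite unions of closed balls, the paper intersects finite unions of open balls and then takes the closure; both yield a closed totally bounded set with the same geometric-series measure bound.
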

 
\begin{proof}
Let $\lbrace x_i\rbrace_{i\in \mathbb{N}}$ be a countable dense set.  Then for every $k\in
\mathbb{N}$,  $\cup_{i=1}^{\infty} B_{\frac{1}{k}}(x_i) = X$.  Thus for each $k
\in \mathbb{N}$ you can find $N(k)$ such that $U_k = \cup_{i=1}^{N(k)}
B_{\frac{1}{k}}(x_i)$ has measure at least $M- \frac{\varepsilon}{2^k}$ (where
$M= \mu(X)$).  The set $U_{\varepsilon} = \cap_{1}^{\infty}U_k$ is a totally bounded set as 
$U_{\varepsilon} \subset \cup_{i=1}^{N(k)} B_{\delta}(x_i)$ if, $\delta > \frac{1}{k}$.  Hence its
closure, $K_{\varepsilon}$, is compact.  Also, $\mu\left((K_{\varepsilon})^C\right) \leq \mu\left(\cup_{1}^{\infty}U_k^C\right) =
\sum_1^{\infty} \mu\left(U_k^C\right) \leq \sum_1^{\infty} \frac{\varepsilon}{2^k} =
\varepsilon $.      
\end{proof}    

\begin{theorem}
Given a complete separable distance (metric) measure space $(X,d,\mu)$ such that $\mu$ is finite,
we can find another measure  $\mu_{\varepsilon}$ such that $\mu_{\varepsilon}$
is supported on a finite set and $d_{\pi}(\mu, \mu_{\varepsilon}) \leq \varepsilon$.
\label{approxbymeaswhfinitesupp}
\end{theorem}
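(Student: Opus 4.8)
The plan is to reduce to a compact set via Theorem~\ref{cslem}, chop that compact set into finitely many pieces of small diameter, collapse each piece to a single point, and then control the resulting displacement in the Levy--Prokhorov metric. First I would apply Theorem~\ref{cslem} with $\varepsilon/2$ to obtain a compact set $K = K_{\varepsilon/2} \subset X$ with $\mu(K^C) \leq \varepsilon/2$. Since $K$ is compact it is totally bounded, so it is covered by finitely many balls $B(x_1,\varepsilon/4),\dots,B(x_N,\varepsilon/4)$. I disjointify these in the usual way: set $A_1 = B(x_1,\varepsilon/4)\cap K$ and $A_i = \bigl(B(x_i,\varepsilon/4)\cap K\bigr)\setminus \bigcup_{j<i} A_j$ for $i\geq 2$. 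The sets $A_i$ are Borel, pairwise disjoint, cover $K$, and each is contained in a ball of radius $\varepsilon/4$; in particular any two points of $A_i$ are within $\varepsilon/2$ of each other. Because each ball has finite radius it lies in a single equivalence class of $\sim$, so the construction respects the component structure of the distance space.

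Next, for each nonempty $A_i$ I pick a point $y_i \in A_i$ and define the finitely supported measure
$$\mu_{\varepsilon} = \sum_{i=1}^N \mu(A_i)\,\delta_{y_i},$$
where $\delta_{y_i}$ denotes unit mass at $y_i$; equivalently $\mu_{\varepsilon} = T_*(\mu|_K)$ for the Borel map $T\colon K \to \{y_1,\dots,y_N\}$ that sends every point of $A_i$ to $y_i$. By construction $d(a,T(a)) < \varepsilon/2$ for every $a \in K$, and $\mu_{\varepsilon}$, being supported on a finite set, is automatically a finite regular Borel measure.

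The \emph{key estimate} is that collapsing by $T$ moves the measure by at most $\varepsilon/2$ in $d_{\pi}$. Writing $r = \varepsilon/2$: for any Borel $A$, each $a \in A\cap K$ satisfies $d(a,T(a)) < r$, so $T(a)\in A^{r}$ and hence $A\cap K \subset T^{-1}(A^{r})$, giving $\mu|_K(A) \leq \mu_{\varepsilon}(A^{r}) + r$; conversely $T^{-1}(A)\subset A^{r}\cap K$, giving $\mu_{\varepsilon}(A) \leq \mu|_K(A^{r}) + r$. Thus $d_{\pi}(\mu|_K,\mu_{\varepsilon}) \leq \varepsilon/2$. Separately, since $\mu = \mu|_K + \mu|_{K^C}$ with $\mu|_{K^C}(X)\leq \varepsilon/2$, for every Borel $A$ we have $\mu(A) \leq \mu|_K(A) + \varepsilon/2 \leq \mu|_K(A^{\varepsilon/2}) + \varepsilon/2$ and trivially $\mu|_K(A)\leq \mu(A^{\varepsilon/2}) + \varepsilon/2$, so $d_{\pi}(\mu,\mu|_K)\leq \varepsilon/2$. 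Combining the two bounds by the triangle inequality for $d_{\pi}$ yields $d_{\pi}(\mu,\mu_{\varepsilon}) \leq \varepsilon/2 + \varepsilon/2 = \varepsilon$, as required.

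The only genuine obstacle is the displacement estimate together with the bookkeeping of the discarded mass $\mu|_{K^C}$; everything else is a routine reduction handed to us by Theorem~\ref{cslem} and total boundedness. I would expect to spend a line or two carefully verifying the neighbourhood inclusions $A\cap K \subset T^{-1}(A^{r})$ and $T^{-1}(A)\subset A^{r}\cap K$, since these are exactly the points at which the bound $d(a,T(a)) < \varepsilon/2$ is used, and to record that the restriction $\mu|_K$ is a legitimate finite Borel measure so that the triangle inequality for $d_{\pi}$ applies.
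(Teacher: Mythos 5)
Your proposal is correct and takes essentially the same route as the paper: both pass to a compact core via Theorem~\ref{cslem}, disjointify a finite cover of it by small balls, and collapse the mass of each piece onto a single point, then verify the two Levy--Prokhorov inequalities. The only difference is bookkeeping --- the paper places atoms at the ball centres with radius-$\varepsilon$ balls and absorbs the discarded mass $\mu(K_{\varepsilon}^C)\leq\varepsilon$ directly into one of the two inequalities, whereas you insert the intermediate measure $\mu|_K$ and combine two $\varepsilon/2$ bounds by the triangle inequality for $d_{\pi}$, which is a cosmetic rearrangement of the same estimate.
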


\begin{proof}
Using Theorem \ref{cslem} we get a compact set $K_{\varepsilon}$ such that
$\mu\left(K_{\varepsilon}^C\right)\leq \varepsilon$.  Consider the open cover
$\lbrace B(x,\varepsilon) \ | \ x\in K_{\varepsilon}\rbrace$ of $K_{\varepsilon}$.  This has a
finite sub cover as, $K_{\varepsilon}$ is compact.  Take the centres of the
corresponding balls to get a finite set $X_{\varepsilon}$.  Let $X_{\varepsilon}
= \lbrace x_1,...,x_n\rbrace$.  Define the atomic measure $\mu_\varepsilon$ inductively.
Define
$$\mu_\varepsilon(x_1)=\mu\left(B_{\varepsilon}(x_1)\right)$$  
Given $\mu_\varepsilon(x_i), \forall i<k$ define 
$$\mu_\varepsilon(x_{k})= \mu\left(B_{\varepsilon}(x_k)\setminus \cup_{i=1}^{k-1}
B_{\varepsilon}(x_i)\right)$$

\subsubsection*{1. $\mu_\varepsilon(E)\leq
\mu(E^{\varepsilon})+\varepsilon$}
$$\mu_\varepsilon(E) = \mu_\varepsilon(E\cap X_{\varepsilon}) \leq \mu(E^{\varepsilon})\leq
\mu(E^{\varepsilon})+\varepsilon$$

The first equality follows from the definition of
$\mu_{\varepsilon}$.  The second inequality follows because for each $x_i$, 
$\mu_{\varepsilon}(x_i) \leq \mu(\lbrace x_i\rbrace^{\varepsilon})$.   

\subsubsection*{2. $\mu(E)\leq \mu_\varepsilon(E^{\varepsilon}) +
\varepsilon$}

The set $E^{\varepsilon}$ contains all $x_i$ such that $B(x_i,\varepsilon) \cap E \neq \emptyset$. So, 
$\mu_\varepsilon(E^{\varepsilon}) \geq \mu(\cup_{i \ s.t \ B(x_i,\varepsilon) \cap E \neq \emptyset} B(x_i,\varepsilon))$.
As $K_{\varepsilon} \cap E \subset \cup_{i \ s.t \ B(x_i,\varepsilon) \cap E \neq \emptyset} B(x_i,\varepsilon)$, $\mu\left(\cup_{i \ s.t \ B(x_i,\varepsilon) \cap E \neq \emptyset} B(x_i,\varepsilon)\right) \geq \mu(K_{\varepsilon} \cap E))$ 
which is greater than or equal to $\mu(E) - \varepsilon$ as, $\mu((K_{\varepsilon})^C)\leq \varepsilon$.

Thus we have proved that $d_{\pi}(\mu_{\varepsilon},\mu) \leq \varepsilon$.  

\end{proof}

\begin{theorem}
Given a complete separable distance (metric) measure space $(X,d,\mu)$ such that $\mu$ is finite,
we can find another finite distance (metric) measure, i.e., has only finitely many points, such that, 
$(X_{\varepsilon},d',\mu_\varepsilon)$ such that 
$$d_{\rho}((X,d,\mu), (X_{\varepsilon},d',\mu_\varepsilon)) \leq \varepsilon.$$
\label{approxdmsbyfinitedms}
\end{theorem}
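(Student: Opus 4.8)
The plan is to reduce the statement to the two preceding theorems, so that essentially no new analysis is required. First I would invoke Theorem~\ref{approxbymeaswhfinitesupp}: given $\varepsilon > 0$ it produces a measure $\mu_\varepsilon$ on $X$, supported on a finite set $X_\varepsilon = \{x_1,\dots,x_n\} \subset X$, with $d_\pi(\mu,\mu_\varepsilon) \leq \varepsilon$. I would then declare the approximating finite distance measure space to be the triple $(X_\varepsilon, d', \mu_\varepsilon)$, where $d' = d|_{X_\varepsilon \times X_\varepsilon}$ is the restriction of the distance and $\mu_\varepsilon$ is regarded intrinsically as an atomic measure on the finite set $X_\varepsilon$. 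With this choice the entire content of the theorem becomes the single estimate $d_\rho((X,d,\mu),(X_\varepsilon,d',\mu_\varepsilon)) \leq \varepsilon$.

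To bound $d_\rho$ I would work in the single ambient space $Z = X$. The identity map $\iota_1 = \mathrm{id}\colon X \to X$ and the inclusion $\iota_2\colon X_\varepsilon \hookrightarrow X$ are both genuine isometric embeddings, precisely because $d' = d|_{X_\varepsilon}$; in particular each is an $L$-isometric $0$-embedding for \emph{every} $L > 0$, no exceptional set of positive measure being needed. Their pushforwards, as measures on $X$, are $(\iota_1)_*\mu = \mu$ and $(\iota_2)_*\mu_\varepsilon = \mu_\varepsilon$, so feeding these embeddings into Definition~\ref{rhodfn} yields
$$d_\rho((X,d,\mu),(X_\varepsilon,d',\mu_\varepsilon)) \leq d_\pi(\mu,\mu_\varepsilon) + \frac{1}{L} + 0$$
for every $L > 0$. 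Since the maps are honest isometries, $L$ may be taken arbitrarily large, so passing to the infimum over $L$ gives $d_\rho \leq d_\pi(\mu,\mu_\varepsilon) \leq \varepsilon$. Equivalently, one can apply Lemma~\ref{pirho} directly to the isometric inclusion $X_\varepsilon \hookrightarrow X$ together with the symmetry of $d_\rho$, obtaining the same bound in a single line; Lemma~\ref{piinclusionnicebehaviour} confirms along the way that the Levy--Prokhorov distance is unchanged under this inclusion, so the intrinsic finite space really does carry the same data.

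I do not expect a serious obstacle: all the genuine work — the tightness argument isolating a compact set of almost full measure (Theorem~\ref{cslem}) and the discretisation of $\mu$ by a finite cover of balls (Theorem~\ref{approxbymeaswhfinitesupp}) — has already been done. The one point that warrants care is the bookkeeping that identifies the finitely-supported measure $\mu_\varepsilon$ on $X$ with a bona fide finite distance measure space and verifies that moving between the two costs nothing in $d_\rho$; this is exactly the role of Lemmas~\ref{piinclusionnicebehaviour} and~\ref{pirho}. It is worth emphasising that the two penalty terms $\frac{1}{L}$ and $\varepsilon$ built into Definition~\ref{rhodfn} introduce no loss here precisely because the embeddings used are exact isometries defined on the whole of each space, so both penalties can be driven to zero in the infimum.
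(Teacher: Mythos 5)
Your proposal is correct and is essentially the paper's own argument: the paper likewise takes $X_\varepsilon$ and $\mu_\varepsilon$ from Theorem~\ref{approxbymeaswhfinitesupp}, sets $d' = d$, and concludes the bound, with your write-up merely making explicit the justification (the isometric inclusion into $Z = X$, i.e.\ Lemma~\ref{pirho}, and driving the $\frac{1}{L}$ penalty to zero) that the paper leaves implicit. No gaps.
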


\begin{proof}
Choose $X_{\varepsilon}$ and $\mu_{\varepsilon}$ as in the proof of theorem
\ref{approxbymeaswhfinitesupp} and let $d' = d$.  Then we have that  
$$d_{\rho}((X,d,\mu), (X_{\varepsilon},d',\mu_\varepsilon)) \leq \varepsilon.$$
\end{proof}

\subsection{Space of finite distance measure spaces with bounded total measure
and bounded number of points is compact}
  
\begin{theorem}
Let $\mathcal{F}(N,M)$ be the space of finite distance measure spaces with total measure less than or equal to $M$ 
and number of points less than or equal to $N$.  Then, $\mathcal{F}(N,M)$ is compact.
\label{spacefdmscompact}
\end{theorem}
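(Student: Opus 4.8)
The plan is to prove that $(\mathcal{F}(N,M),d_\rho)$ is \emph{sequentially} compact; since $d_\rho$ induces a pseudometric topology, this is equivalent to compactness. So I would start from an arbitrary sequence $(X^{(n)},d^{(n)},\mu^{(n)})$ in $\mathcal{F}(N,M)$ and extract a subsequence converging, in $d_\rho$, to a limit lying in $\mathcal{F}(N,M)$. The idea is to encode each space by finitely many numbers. Since every element has at most $N$ points, after passing to a subsequence I may assume all $X^{(n)}$ have the same number of points $k\le N$, labelled $1,\dots,k$, and then each space is determined by the distances $d^{(n)}_{ij}=d^{(n)}(i,j)\in[0,\infty]$ for $i<j$ together with the weights $m^{(n)}_i=\mu^{(n)}(i)\in[0,M]$. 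Both $[0,\infty]$ (compact, via $t\mapsto t/(1+t)$) and $[0,M]$ are sequentially compact and there are finitely many pairs, so after a further diagonal subsequence I may assume $d^{(n)}_{ij}\to d_{ij}\in[0,\infty]$ and $m^{(n)}_i\to m_i\in[0,M]$ for all $i,j$. The triangle inequality is a closed condition on $[0,\infty]$, so $(d_{ij})$ again satisfies it; declaring $i\approx j$ when $d_{ij}=0$ gives an equivalence relation (transitivity from the triangle inequality), and the quotient $X=\{1,\dots,k\}/\!\approx$ with the induced distance $d_X$ and weights $\mu([i])=\sum_{j\approx i}m_j$ is a genuine finite distance measure space with at most $k\le N$ points and total mass $\lim_n\sum_i m^{(n)}_i\le M$. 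Thus $(X,d_X,\mu)\in\mathcal{F}(N,M)$.

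The heart of the matter is to show $d_\rho\big((X^{(n)},d^{(n)},\mu^{(n)}),(X,d_X,\mu)\big)\to 0$. Fix $\delta>0$. For all large $n$ I have, for every pair, $|d^{(n)}_{ij}-d_{ij}|<\delta$ whenever $d_{ij}<\infty$, and $d^{(n)}_{ij}>1/\delta$ whenever $d_{ij}=\infty$, and in addition $\sum_i|m^{(n)}_i-m_i|<\delta$. I would then build a common target $Z$ by gluing $X^{(n)}$ and $X$ along the correspondence $I:i\mapsto[i]$ using the generalised maximal metric $d_I^{\delta}$ of Section \ref{maxmet}; its compatibility hypothesis $d^{(n)}(i,j)<\delta$ whenever $I(i)=I(j)$ holds for large $n$, since then $d_{ij}=0$ forces $d^{(n)}_{ij}\to 0$.

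With $L=1/\delta$ and $\iota_1,\iota_2$ the two inclusions into $(Z,d_I^{\delta})$, I claim both are $L$-isometric embeddings (with empty exceptional set, so $\varepsilon=0$ in Definition \ref{Lisomepsemb}). On finite pairs the two distances match within $\delta$, so the chain analysis of Theorem \ref{inclmaxmet} yields $d_Z(i,j)=d^{(n)}_{ij}$ and $d_Z([i],[j])=d_X([i],[j])$ exactly; the only way gluing could shorten a distance below $L$ is a detour through the other space, costing at least $d_X([i],[j])+2\delta$ (resp.\ $d^{(n)}_{ij}+2\delta$), and such a detour is short only when $d_{ij}<\infty$, in which case the distance is already below $L$ on both sides, so no spurious shortcut appears and pairs with $d_{ij}=\infty$ remain at distance $>L$. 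Since corresponding points lie within $\delta$ in $Z$ and $\sum_i|m^{(n)}_i-m_i|<\delta$, a routine Lévy--Prokhorov estimate gives $d_\pi\big((\iota_1)_*\mu^{(n)},(\iota_2)_*\mu\big)\le 2\delta$; hence by Definition \ref{rhodfn}, $d_\rho(X^{(n)},X)\le 2\delta+1/L+0=3\delta$. Letting $\delta\to 0$ (and $n\to\infty$ accordingly) gives the convergence, and sequential compactness of $\mathcal{F}(N,M)$ follows.

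I expect the genuine obstacle to be precisely the bookkeeping in the previous paragraph: verifying that the inclusions into $(Z,d_I^{\delta})$ really are $L$-isometric when infinite distances and collapsed ($d_{ij}=0$) pairs occur simultaneously. Theorem \ref{inclmaxmet} as stated requires $|d_{X^{(n)}}(i,j)-d_X(I(i),I(j))|\le\delta$ for \emph{all} pairs, which fails when one distance is finite and the other is $\infty$; so I would need the localized version of that argument in which only finite pairs must be controlled, combined with the key observation that the limit distances $d_X$ are themselves limits of the $d^{(n)}$ — which is exactly what forbids a short detour through $X$ between points that are far apart in $X^{(n)}$. Once this refined $L$-isometry is established, the measure estimate and the passage $\delta\to 0$ are routine.
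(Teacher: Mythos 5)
Your proposal is correct and follows essentially the same route as the paper: pass to a subsequence of constant cardinality, use compactness of $[0,\infty]$ and $[0,M]$ to extract limiting distances and weights by a diagonal argument, quotient by the pairs with limit distance zero, and take the induced distance and the summed weights as the limit space. The only difference is one of detail: where the paper simply declares the convergence $d_{\rho}(X^{(n_k)},Y)\to 0$ to be clear, you carry out the verification via the glued space $\left(Z,d_I^{\delta}\right)$ together with the localized $L$-isometry analysis handling collapsed and infinite pairs, which is precisely the bookkeeping the paper omits.
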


\begin{proof}
We will show compactness by showing that any sequence has a convergent subsequence.  Let 
$\langle (X_n,d_n,\mu_n)\rangle_n$ be a sequence in $\mathcal{F}(N,M)$.  Then, there are infinitely many 
$X_n$ with same cardinality.  So, without loss of generality we can assume that all $X_n$ have same cardinality.  
So, let $X_n = \lbrace x_1,...,x_n\rbrace = X$.  The sequence $\langle d_n(x_i,x_j) \rangle_n$ is contained in $[0,\infty]$ hence
, has a convergent subsequence.  Choose a subsequence, $(X_{n_k},d_{n_k},\mu_{n_k})$, of the $(X_n,d_n,\mu_n)$ such 
that, $d_{n_k}(x_i,x_j)$ converges as $k$ tends to infinity, for all $i,j$.  

Define $Y$ to be the set $X/\sim$, where $\sim$ is the equivalence relation defined as, $x\sim y$ if $d_{n_k}(x,y)$ 
converge to zero.  Define a distance $d$ on $Y$ as $d([x],[y]) = \lim_{k\to \infty} d_{n_k}(x,y)$. Finally define a 
measure $\mu$ on $Y$ as $\mu([x]) = \lim_{k\to \infty} \left( \sum_{y\sim x} \mu_{n_k}(x) \right)$.

It is clear that $(X_{n_k},d_{n_k},\mu_{n_k})$ converges to $(Y,d,\mu))$ as $k$ tends to infinity.  As the sequence
$(X_n,d_n,\mu_n)$ was arbitrary, $\mathcal{F}(N,M)$ is compact.        
\end{proof}

\subsection{Completeness of the space of distance measure spaces}

\begin{lemma}
Let $(X_n,d_n,\mu_n)$ be a Cauchy sequence.  For all $m$, let $(X_n^m,d_n^m,\mu_n^m)$
be a finite distance measure space such that
$d_{\rho}((X_n^m,d_n^m,\mu_n^m),(X_n,d_n,\mu_n)) \leq \frac{1}{2^m}$.  Then
$(X_n^m,d_n^m,\mu_n^m)$ has uniformly bounded total measure for all $m$.
\label{cauchyboundedmeasure}
\end{lemma}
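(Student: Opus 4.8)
The plan is to reduce everything to Lemma~\ref{total-measures-close-when-the-metric-measure-spaces-are}, which bounds the difference of total measures by the $d_{\rho}$-distance. Since the statement asks for a single constant bounding $\mu_n^m(X_n^m)$ simultaneously over all $n$ and $m$, I would obtain the bound in two stages: first bound the total measures $\mu_n(X_n)$ of the original Cauchy sequence, then transfer that bound to the finite approximants.

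First I would show that the sequence of total measures $\langle \mu_n(X_n) \rangle_n$ is bounded. Since $\langle (X_n,d_n,\mu_n) \rangle_n$ is Cauchy with respect to $d_{\rho}$, there is an $N$ such that $d_{\rho}((X_n,d_n,\mu_n),(X_{n'},d_{n'},\mu_{n'})) < 1$ for all $n,n' \geq N$. Applying Lemma~\ref{total-measures-close-when-the-metric-measure-spaces-are} then gives $\vert \mu_n(X_n) - \mu_N(X_N)\vert < 1$, so $\mu_n(X_n) < \mu_N(X_N) + 1$ for every $n \geq N$. The finitely many remaining terms $\mu_1(X_1),\dots,\mu_{N-1}(X_{N-1})$ are each finite, so setting
$$B = \max\{\mu_1(X_1),\dots,\mu_{N-1}(X_{N-1}),\ \mu_N(X_N) + 1\}$$
yields $\mu_n(X_n) \leq B$ for all $n$.

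Next I would transfer this bound to the finite approximants. By hypothesis $d_{\rho}((X_n^m,d_n^m,\mu_n^m),(X_n,d_n,\mu_n)) \leq \frac{1}{2^m}$, so a second application of Lemma~\ref{total-measures-close-when-the-metric-measure-spaces-are} gives $\vert \mu_n^m(X_n^m) - \mu_n(X_n)\vert \leq \frac{1}{2^m} \leq \frac{1}{2}$. Combining this with the previous step,
$$\mu_n^m(X_n^m) \leq \mu_n(X_n) + \frac{1}{2} \leq B + \frac{1}{2}$$
for all $n$ and all $m$, which is the desired uniform bound.

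There is no serious obstacle here; the only point requiring care is the bookkeeping observation that a Cauchy sequence in $d_{\rho}$ pushes forward, via Lemma~\ref{total-measures-close-when-the-metric-measure-spaces-are}, to a Cauchy—and hence bounded—sequence of real total masses. Once that is in place the uniform bound is immediate, with no dependence on $m$ beyond the harmless additive $\frac{1}{2}$.
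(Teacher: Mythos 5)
Your proof is correct, and it takes a slightly different --- and in fact more complete --- route than the paper. The paper fixes $m$ and observes that, since $\langle (X_n,d_n,\mu_n)\rangle_n$ is Cauchy, the approximating sequence $\langle (X_n^m,d_n^m,\mu_n^m)\rangle_n$ is Cauchy in $n$; it then applies Lemma~\ref{total-measures-close-when-the-metric-measure-spaces-are} within that sequence to obtain $\mu_n^m(X_n^m) \leq \mu_{N(\varepsilon)}^m\left(X_{N(\varepsilon)}^m\right) + \varepsilon$ for $n \geq N(\varepsilon)$. As written, that bound still depends on $m$ through the comparison point $\mu_{N(\varepsilon)}^m\left(X_{N(\varepsilon)}^m\right)$ --- and, strictly speaking, the approximant sequence is only Cauchy up to an additive error of $2^{-m+1}$ coming from the two approximation steps --- so to conclude uniformly in $m$ one must in any case compare back to the original spaces via the hypothesis $d_{\rho}(X_n^m, X_n) \leq \frac{1}{2^m}$, which the paper leaves implicit. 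You do precisely this comparison explicitly: you first apply Lemma~\ref{total-measures-close-when-the-metric-measure-spaces-are} to the original Cauchy sequence to produce a single constant $B$ with $\mu_n(X_n) \leq B$ for all $n$ (correctly handling the finitely many initial terms by a maximum, each being finite since the measures are finite by definition), and then transfer the bound with the harmless additive $\frac{1}{2^m} \leq \frac{1}{2}$, yielding $\mu_n^m(X_n^m) \leq B + \frac{1}{2}$ independent of both $n$ and $m$. Both arguments rest on the same key lemma, so the difference is one of bookkeeping --- but your bookkeeping is the version that actually delivers the uniform bound the statement asserts.
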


\begin{proof}
The proof of this theorem will take this entire subsection.  To summarize, we will first approximate each $X_n$ by a finite space.  Giving us a Cauchy sequence of finite distance measure spaces.  Take its limit.  We will do this for better and better approximations (closer than $\frac{1}{2^m}$) and will take the limit of the sequence of limiting finite spaces, as $m$ tends to infinity.  Finally, we will complete this space to obtain the space $X$ and show that $X_n$ converge to $X$.    

As the sequence $\langle (X_n,d_n,\mu_n) \rangle_n$ is Cauchy, so is the sequence 
$\langle (X_n^m,d_n^m,\mu_n^m) \rangle_n$.  Hence, given $\varepsilon > 0$, there exists $N(\varepsilon)$, such that
$d_{\rho}((X_n^m,d_n^m,\mu_n^m), (X_{N(\varepsilon)}^m,d_{N(\varepsilon)}^m,\mu_{N(\varepsilon)}^m)) < \varepsilon$ 
for all $n \geq N(\varepsilon)$.  Then, by Lemma \ref{total-measures-close-when-the-metric-measure-spaces-are}, 
$\mu_n^m(X_n^m) \leq \mu_{N(\varepsilon)}^m(X_{N(\varepsilon)}^m) +  \varepsilon$
\end{proof}

\begin{theorem}
Let $(X,d)$ be a separable distance space and $\mathcal{P}(X)$ the space of all borel measures on $X$.  $(X,d)$ is complete if and only if
$(\mathcal{P}(X),d_{\pi})$ is complete.
\end{theorem}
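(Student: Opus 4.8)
The plan is to prove the two implications separately, using throughout the structural fact (established earlier in the excerpt) that a separable distance space $(X,d)$ is a countable disjoint union of separable metric components, namely the equivalence classes under $\sim$; since any Cauchy sequence is eventually at pairwise finite distance and hence eventually lies within a single component, completeness of $(X,d)$ is equivalent to completeness of each component. Two elementary facts will be used repeatedly: first, total mass is $1$-Lipschitz for $d_\pi$, since testing the defining inequalities on $A=X$ gives $|\mu(X)-\nu(X)|\le d_\pi(\mu,\nu)$; and second, for Dirac measures one computes directly from the definition that $d_\pi(\delta_x,\delta_y)=\min\{d(x,y),1\}$ (the worst test set is $A=\{x\}$), so that $x\mapsto \delta_x$ embeds $X$ into $\mathcal{P}(X)$ and is a genuine isometry on pairs at distance less than $1$.

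For the implication ``$\mathcal{P}(X)$ complete $\Rightarrow X$ complete'', I would start with a Cauchy sequence $\langle x_n\rangle$ in $X$. Because distances are eventually less than $1$, the formula above shows $\langle \delta_{x_n}\rangle$ is Cauchy in $(\mathcal{P}(X),d_\pi)$, hence converges to some finite measure $\mu$, and the Lipschitz property of total mass forces $\mu(X)=1$. The heart of this direction is to show $\mu$ is a Dirac measure. From $d_\pi(\delta_{x_n},\mu)\to 0$ and the test set $A=\{x_n\}$ one gets $\mu(B(x_n,\epsilon_n))\ge 1-\epsilon_n$ with $\epsilon_n\to 0$, so for each $k$ we may choose a point $p_k$ with $\mu(B(p_k,2^{-k}))>1-2^{-k}$. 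Setting $C_m=\bigcap_{k\ge m}\overline{B(p_k,2^{-k+1})}$, a Borel--Cantelli-type estimate gives $\mu(X\setminus C_m)\le 2^{-m+1}$, while the triangle inequality forces $\mathrm{diam}\,C_m=0$; hence each $C_m$ is a single point carrying mass at least $1-2^{-m+1}$, and for $m\ge 3$ this is the unique atom of mass exceeding $1/2$, so these points coincide with a single $x\in X$ with $\mu(\{x\})=1$. Thus $\mu=\delta_x$, and $d_\pi(\delta_{x_n},\delta_x)=\min\{d(x_n,x),1\}\to 0$ yields $x_n\to x$, proving completeness.

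For the converse ``$X$ complete $\Rightarrow \mathcal{P}(X)$ complete'', I would take a Cauchy sequence $\langle \mu_n\rangle$ in $d_\pi$, whose masses $\mu_n(X)$ converge, say to $M$. The main step is to prove that $\langle \mu_n\rangle$ is \emph{tight}: for every $\eta>0$ there is a compact $K$ with $\mu_n(X\setminus K)<\eta$ for all $n$. For this, fix a countable dense set (which meets every component) and, at each scale $k$, cover $X$ by the balls $B(x_i,1/k)$. Using continuity of measure to handle the finitely many early terms and the Cauchy condition (through an inequality $\mu_N(A)\le \mu_n(A^\eta)+\eta$) to control the tail uniformly, one finds a finite index $I_k$ with $\mu_n\bigl(\bigcup_{i\le I_k}B(x_i,2/k)\bigr)>M-\eta 2^{-k}$ for all $n$. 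The set $K=\bigcap_k \overline{\bigcup_{i\le I_k}B(x_i,2/k)}$ is then totally bounded, and since $X$ is complete it is compact, with $\mu_n(X\setminus K)\le \eta$ for all $n$. With tightness in hand, Prokhorov's theorem provides a subsequence $\mu_{n_j}$ converging weakly to a finite Borel measure $\mu$; since $d_\pi$ metrizes weak convergence on a separable space, $\mu_{n_j}\to \mu$ in $d_\pi$, and a Cauchy sequence with a convergent subsequence converges, so $\mu_n\to \mu\in\mathcal{P}(X)$.

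The main obstacle is precisely this tightness/relative-compactness step: extracting a uniform compact set requires combining completeness (to upgrade ``totally bounded'' to ``compact''), separability (to cover by countably many balls), and the Cauchy condition (to make the estimates uniform in $n$), and then invoking Prokhorov's theorem together with the fact that $d_\pi$ metrizes weak convergence. In the distance-space setting these last two facts are not quite off-the-shelf, but since each candidate compact set meets only finitely many components and is metric, I would reduce them to the classical metric-space statements applied component-wise; alternatively, I would replace Prokhorov's theorem by the paper's own finite-approximation results (Theorem~\ref{approxbymeaswhfinitesupp} and Theorem~\ref{spacefdmscompact}), building the limit directly on the compact sets $K$ and diagonalising as $\eta\to 0$.
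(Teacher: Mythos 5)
Your proof is correct, but it takes a genuinely different route from the paper's. The paper disposes of this theorem in one line: it applies the known metric-space result (completeness of $X$ is equivalent to completeness of the space of finite Borel measures under the Levy--Prokhorov metric, as in \cite{parthasarathy} or \cite{gaans}) to each equivalence class under $\sim$, relying on the earlier structure theory that a separable distance space is a countable disjoint union of clopen separable metric components. You instead re-prove the classical theorem from scratch: the Dirac embedding with the exact formula $d_\pi(\delta_x,\delta_y)=\min\{d(x,y),1\}$ and the nested-ball extraction of a unit atom give necessity without circularly using completeness of $X$ (your sets $C_m$ are nonempty because they have positive measure, have zero diameter by the triangle inequality, and the mass-$>\frac12$ argument pins down a single atom --- this is sound); and tightness plus Prokhorov's theorem gives sufficiency. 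What the paper's route buys is brevity, though it silently suppresses an easy reassembly step: restriction to a component is $1$-Lipschitz for $d_\pi$ because $A^{\varepsilon}$ never crosses components (inter-component distances are infinite, so $U^{\varepsilon}=U$ for any union $U$ of components), and Cauchyness forces all but finitely many components to carry uniformly small mass, so the component-wise limits reassemble. What your route buys is self-containedness: your estimates are phrased directly in terms of $d$, the component decomposition enters only marginally (your compact sets automatically meet finitely many components), and your tightness construction runs parallel to the paper's own Theorem~\ref{cslem}. The only soft spots are the two imported facts you flag yourself --- Prokhorov's theorem and metrization of weak convergence by $d_\pi$ for finite, not necessarily probability, measures --- and each of your proposed repairs works: component-wise reduction to the classical statements, normalization via Lemmas~\ref{mscal<1} and~\ref{mscal>1} (legitimate since the total masses converge, the case of limit mass zero being trivial), or substituting the paper's finite-approximation machinery (Theorems~\ref{approxbymeaswhfinitesupp} and~\ref{spacefdmscompact}) for Prokhorov altogether.
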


\begin{proof}
Apply the corresponding theorem for metric spaces to each equivalence class of
$\sim$.  See \cite{parthasarathy} or \cite{gaans} for more details.
\end{proof}

Let $\mathfrak{X}$ the quotient of the set of all complete separable distance measure spaces with
the equivalence relation $X\sim Y$ if and only if $d_{\rho}(X,Y)=0$.

\begin{theorem}
$\mathfrak{X}$ is complete.
\label{completenessunderrho} 
\end{theorem}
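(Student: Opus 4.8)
The plan is to show that every Cauchy sequence $\langle (X_n,d_n,\mu_n)\rangle_n$ in $\mathfrak{X}$ converges. First I would pass to a subsequence so that the sequence is rapidly Cauchy, say $d_{\rho}(X_n,X_{n+1}) < 1/2^n$; since a Cauchy sequence converges as soon as one of its subsequences does, this costs nothing. Following the summary preceding Lemma \ref{cauchyboundedmeasure}, the strategy is to replace each $X_n$ by a finite distance measure space, extract a limit at each fixed approximation scale, then let the scale tend to zero and complete the resulting space.

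Concretely, for each $n$ and $m$ I would use Theorem \ref{approxdmsbyfinitedms} to choose a finite distance measure space $(X_n^m,d_n^m,\mu_n^m)$ with $d_{\rho}(X_n^m,X_n)\leq 1/2^m$. Fixing $m$, the triangle inequality (Theorem \ref{triangleinequality}) shows $\langle (X_n^m,d_n^m,\mu_n^m)\rangle_n$ is Cauchy, and Lemma \ref{cauchyboundedmeasure} provides a uniform bound $M$ on the total measures. To invoke the compactness theorem \ref{spacefdmscompact} I also need a uniform bound on the number of points; this I would arrange by the following device: picking $N=N(m)$ with $d_{\rho}(X_n,X_N) < 1/2^m$ for all $n\geq N$, a single finite approximation $Y^m$ of $X_N$ satisfies $d_{\rho}(X_n,Y^m) < 3/2^m$ for all $n\geq N$, so at each fixed $m$ the approximations can be taken from a single $\mathcal{F}(N_m,M)$. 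Thus each scale produces a finite space $Y^m$ with $d_{\rho}(Y^m,X_n)\leq 3/2^m$ for all large $n$, and the triangle inequality makes $\langle Y^m\rangle_m$ itself Cauchy.

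This reduces the problem to constructing the limit of a Cauchy sequence of finite distance measure spaces $\langle Y^m\rangle_m$, whose number of points will in general grow with $m$. Here I would reuse the machinery of the positivity and limit-map subsections: applying Theorem \ref{quasiisomgivdistsmall} I obtain, for each $m$, an almost measure preserving quasi-isometry $\phi_m : Y^m\setminus\widehat{Y^m}\to Y^{m+1}$, and I would build the underlying set of the candidate limit $X$ as equivalence classes of coherent sequences $\langle y_m\rangle$ with $y_{m+1}=\phi_m(y_m)$ off a negligible set, exactly as limit points are produced in the proofs of Theorem \ref{positivityofrho} and Proposition \ref{limmapbnlimsp}. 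The distance is defined as the limit of the $d_{Y^m}$-distances (whose existence and triangle inequality follow from the distortion estimates of Theorem \ref{quasiisomgivdistsmall}), the measure as the limit of the pushforward measures (controlled by Lemma \ref{total-measures-close-when-the-metric-measure-spaces-are} together with the measure estimates of Theorem \ref{quasiisomgivdistsmall}), and I would then pass to the metric completion to obtain a complete space; separability is automatic, since $X$ is built from a countable family of finite sets together with their limit points.

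Finally I would verify that $X=(X,d_X,\mu)$ is the limit: given $\varepsilon>0$, choosing $m$ with $3/2^m<\varepsilon$ and $n$ large, the triangle inequality gives $d_{\rho}(X_n,X)\leq d_{\rho}(X_n,Y^m)+d_{\rho}(Y^m,X)$, and both terms are small (the first by the bound above, the second because $Y^m\to X$ by construction), whence $X_n\to X$ in $\mathfrak{X}$. The main obstacle is the middle step: rigorously constructing the limit of the Cauchy sequence of finite spaces and checking that the limiting distance really is a distance (triangle inequality and non-degeneracy up to measure zero) and that the limiting measure is a finite regular Borel measure satisfying the pushforward estimates needed for $d_{\rho}(Y^m,X)\to 0$. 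This is precisely where the two limits, over $n$ and over $m$, must be controlled simultaneously, and where the quasi-isometry and measure-comparison estimates of Theorem \ref{quasiisomgivdistsmall} carry the real weight.
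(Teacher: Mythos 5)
Your strategy is correct, and it departs from the paper at the decisive organizational step. The paper keeps the whole doubly-indexed family $X_n^m$: for each fixed scale $m$ it takes a limit over $n$ using compactness of $\mathcal{F}(N,M)$ (Theorem \ref{spacefdmscompact}), which is exactly why it must prove the long auxiliary lemma that the approximations $X_n^m$ can be chosen with cardinality bounded uniformly in $n$; it also arranges $X_n^{m-1}\subset X_n^m$, so that the resulting tower $X_{\infty}^1\hookrightarrow X_{\infty}^2\hookrightarrow\cdots$ consists of honest isometric embeddings, making the direct limit and completion painless, with the remaining analytic work being that the pushed-forward measures $\nu^m$ are $d_{\pi}$-Cauchy. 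Your tail trick --- approximating the single term $X_{N(m)}$ at scale $2^{-m}$ and noting that this approximates the entire tail at scale $3\cdot 2^{-m}$ --- collapses the double limit to one Cauchy sequence $\langle Y^m\rangle_m$ of finite spaces, so you never need the uniform-cardinality lemma or Theorem \ref{spacefdmscompact} at all; that is a genuine simplification, and your concluding triangle-inequality verification is correspondingly shorter than the paper's two uniformity lemmas. The price is that your tower maps $\phi_m:Y^m\setminus\widehat{Y^m}\to Y^{m+1}$ from Theorem \ref{quasiisomgivdistsmall} are only quasi-isometric off small sets, so the limit must be built by hand from coherent trajectories rather than as a direct limit of isometries. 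Your programme does go through with the paper's tools --- indeed, since the $Y^m$ are finite and trajectories are deterministic, no diagonal subsequence extraction is needed and separability is free, which makes this step easier than the two proofs you cite --- but be aware of the two places the real work hides: (i) bounding the measure of starting points whose trajectory eventually enters some $\widehat{Y^k}$ requires pulling exceptional sets back through the maps $\phi_m$, and since $\mu\left(\left(\widehat{Y^k}\right)^{r}\right)$ is not controlled by $\mu\left(\widehat{Y^k}\right)$, mere summability of the $\mu\left(\widehat{Y^k}\right)$ is not enough --- you need the slack device of Lemma \ref{trickforlimmapbnlimsp}, exactly as in the proof of Proposition \ref{limmapbnlimsp}; and (ii) to convert the limiting trajectory map $h_m:Y^m\to X$ (defined off a small set) into the bound $d_{\rho}(Y^m,X)\to 0$ you must realize both spaces in a common ambient space, i.e.\ glue $Y^m$ to $X$ along $h_m$ with allowance $\eta_m=\sum_{k\geq m}2\delta_k$ via the maximal metric of Section \ref{maxmet}; the paper's Theorems \ref{inclmaxmet} and \ref{inclmxmetlisom} cover exact $L$-isometric maps or maps with globally bounded distortion, so you need the routine hybrid statement for maps whose distortion is bounded only at scales below a threshold. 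With those two points supplied, your argument is a correct and arguably leaner proof of completeness than the paper's.
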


\begin{proof}
Consider a Cauchy sequence $(X_n,d_n,\mu_n)$.  Using Theorem
\ref{approxdmsbyfinitedms} for each $n$, we obtain a sequence
$(X_n^m,d_n^m,\mu_n^m)$ such that $d_{\rho}((X_n^m,d_n^m,\mu_n^m),(X_n,d_n,\mu_n))
\leq \frac{1}{2^m}$. Without loss of generality we can assume that $X_n^m \supset X_n^{m-1}$ as otherwise we can 
replace $X_n^m$ by $X_n^m \cup X_n^{m-1}$ and then define  $\mu_n^{k+1}$ as in the proof of Theorem \ref{approxbymeaswhfinitesupp}
on this set. By Lemma \ref{cauchyboundedmeasure} for each $m$ we have that, $(X_n^m,d_n^m,\mu_n^m)$
has bounded total measure.  We will prove that we can choose $X_n^m$, such that there is a uniform bound (not dependent on $n$) on the cardinality as well.  


\begin{lemma}
For each $m$, we can choose $X_n^m$, such that there is an upper bound on the 
cardinality of $X_n^m$  which does not depend on $n$.
\end{lemma}

\begin{proof}
Choose an $N(m)$ such that $d_{\rho}((X_n,d_n,\mu_n),(X_k,d_k,\mu_k)) < \frac{1}{2^{m+3}}$, for all $n \geq N(m)$.  
Thus, there exists a metric space $(Z,d)$ and $L$-isometric $\varepsilon$-embeddings $f_1: (X_{N(m)},d_{N(m)}) \to (Z,d)$ 
and $f_2: (X_k,d_k) \to (Z,d)$ such that the push-forward measures $\nu_{N(m)} = (f_1)_*(\mu_{N(m)})$ and 
$\nu_k = (f_2)_*(\mu_k)$ satisfy $d_{\pi}(\nu_{N(m)},\nu_k) + \frac{1}{L} + \varepsilon \leq \frac{1}{2^{m+3}} $.
So, $d_{\pi}(\nu_{N(m)},\nu_k) \leq \frac{1}{2^{m+3}} $.

Let $X_{N(m)}^{m+2} = \lbrace x_1,...,x_M\rbrace$.  For each $i$ such that 
$f_2(X_k) \cap B\left(f_1(x_i), \frac{1}{2^{m+2}}\right) \neq \emptyset$, let $z_i \in X_k$ be an arbitrary point such that
$f_2(z_i) \in B\left(f_1(x_i), \frac{1}{2^{m+2}}\right)$ and  define 
$$\widehat{X_k^m} = \left\lbrace z_i \ | \ i \ s.t \ f_2(X_k) \cap B\left(f_1(x_i), \frac{1}{2^{m+2}}\right) \neq \emptyset\right\rbrace.$$


\begin{lemma}
$\mu_k\left(\left(\widehat{X_k^m}\right)^\frac{1}{2^m}\right) \geq \mu_k(X_k) - \frac{1}{2^m}$.
\end{lemma}

\begin{proof}
The proof is a series of simple computations.  First observe that,
\begin{lemma}
$\left(f_2 \left(\widehat{X_k^m}\right)\right)^{\frac{1}{2^m}} \cap f_2(X_k) \subset f_2\left(\left(\widehat{X_k^m}\right)^{\frac{1}{2^m}}\right)$.
\end{lemma}

\begin{proof}
If 
$y \in \left(f_2\left(\widehat{X_k^m}\right)\right)^{\frac{1}{2^m}} \cap f_2(X_k)$ then there exists $x\in X_k$ such that $y = f_2(x)$ and 
there exists $x' \in \widehat{X_k^m}$ such that $\frac{1}{2^m} > d(f_2(x'),y) = d(f_2(x'),f_2(x))$.  As 
$L > \frac{1}{2^m}$, $d_k(x,x') = d(f_2(x),f_2(x')) < \frac{1}{2^m}$.  So, $y\in f_2\left(\left(\widehat{X_k^m}\right)^{\frac{1}{2^m}}\right)$.
As $y\in \left(f_2 \left(\widehat{X_k^m}\right)\right)^{\frac{1}{2^m}} \cap f_2(X_k)$ was arbitrary, 
$\left(f_2 \left(\widehat{X_k^m}\right)\right)^{\frac{1}{2^m}} \cap f_2(X_k) \subset f_2\left(\left(\widehat{X_k^m}\right)^{\frac{1}{2^m}}\right)$.
\end{proof}

Thus, we have, $\mu_k\left(\left(\widehat{X_k^m}\right)^{\frac{1}{2^m}}\right) \geq \nu_k\left(f_2\left(\left(\widehat{X_k^m}\right)^{\frac{1}{2^m}}\right)\right) \geq \nu_k\left(\left(f_2\left(\widehat{X_k^m}\right)\right)^{\frac{1}{2^m}} \cap f_2(X_k)\right)$.  
Further,  $\nu_k\left(\left(f_2\left(\widehat{X_k^m}\right)\right)^{\frac{1}{2^m}} \cap f_2(X_k)\right) = \nu_k\left(\left(f_2\left(\widehat{X_k^m}\right)\right)^{\frac{1}{2^m}}\right)$
as $\nu_k(E) = \nu_k\left(E \cap f_2(X_k)\right)$.

\begin{lemma}
Given a set $A \subset Z$ and real numbers $a$ and $b$, $(A^a)^b \subset A^{a + b}$.
\end{lemma}

\begin{proof}
Given a set $A$ and real numbers $a$ and $b$, $x\in (A^a)^b$ implies $\exists x_1 \in A^a$ such that $d(x,x_1) < b$ 
which implies that $\exists x_2\in A$ such that $d(x_1,x_2) < a$.  Thus, 
$d(x,x_2) \leq d(x,x_1) + d(x_1,x_2) \leq a + b$.  So, $x\in A^{a+b}$.  As, $x\in (A^a)^b$ was arbitrary, we have 
$(A^a)^b \subset A^{a + b}$.
\end{proof}
  Hence, 
$\nu_k\left(\left(\left(f_2\left(\widehat{X_k^m}\right)\right)^{\frac{1}{2^{m+1}}}\right)^{\frac{1}{2^{m+1}}}\right) \leq \nu_k\left(\left(f_2\left(\widehat{X_k^m}\right)\right)^{\frac{1}{2^m}}\right) \leq \mu_k\left(\left(\widehat{X_k^m}\right)^{\frac{1}{2^m}}\right)$.

\begin{lemma}
$\left(f_1\left(X_{N(m)}^{m+2}\right)\right)^{\frac{1}{2^{m+2}}} \cap f_2(X_k) \subset \left(f_2\left(\widehat{X_k^m}\right)\right)^{\frac{1}{2^{m+1}}}$.
\end{lemma}

\begin{proof}
If $z \in \left(f_1\left(X_{N(m)}^{m+2}\right)\right)^{\frac{1}{2^{m+2}}} \cap f_2(X_k)$ there exists an $i$ such that 
$d(z,f_1(x_i)) < \frac{1}{2^{m+2}}$.  So, $f_2(X_k) \cap B\left(f_1(x_i), \frac{1}{2^{m+2}}\right) \neq \emptyset$.  The 
distance $d(z, f_2(z_i)) \leq d(z, f_1(x_i)) + d(f_1(x_i),f_2(z_i)) \leq \frac{1}{2^{m+2}} + \frac{1}{2^{m+2}} = \frac{1}{2^{m+1}}$.
So, $z \in \left(f_2\left(\widehat{X_k^m}\right)\right)^{\frac{1}{2^{m+1}}}$.  As, $z \in \left(f_1\left(X_{N(m)}^{m+2}\right)\right)^{\frac{1}{2^{m+2}}} \cap f_2(X_k)$
was arbitrary, $\left(f_1\left(X_{N(m)}^{m+2}\right)\right)^{\frac{1}{2^{m+2}}} \cap f_2(X_k) \subset \left(f_2\left(\widehat{X_k^m}\right)\right)^{\frac{1}{2^{m+1}}}$.
\end{proof}

\noindent Therefore,
\begin{align*}
\nu_k\left(\left(\left(f_2\left(\widehat{X_k^m}\right)\right)^{\frac{1}{2^{m+1}}}\right)^{\frac{1}{2^{m+1}}}\right) &\geq \nu_k\left(\left(\left(f_1\left(X_{N(m)}^{m+2}\right)\right)^{\frac{1}{2^{m+2}}}\right)^{\frac{1}{2^{m+1}}}\right)\\
&\geq \nu_{N(m)}\left(\left(f_1\left(X_{N(m)}^{m+2}\right)\right)^{\frac{1}{2^{m+2}}}\right) - \frac{1}{2^{m+1}} \ \ \left( \text{use } d_{\pi}(\nu_N,\nu_k) \leq \frac{1}{2^{m+3}} \leq \frac{1}{2^{m+1}} \right)\\
&\geq \nu_{N(m)}\left(f_1\left(\left(X_{N(m)}^{m+2}\right)^{\frac{1}{2^{m+2}}}\right)\right) - \frac{1}{2^{m+1}} \\
&\ \ \ \ \ \left( f_1\left(\left(X_{N(m)}^{m+2}\right)^{\frac{1}{2^{m+2}}}\right) \subset \left(f_1\left(X_{N(m)}^{m+2}\right)\right)^{\frac{1}{2^{m+2}}}  \text{as $f_1$ is $L$-isometric} \right)\\
&\geq \mu_{N(m)}\left(\left(X_{N(m)}^{m+2}\right)^{\frac{1}{2^{m+2}}}\right) - \frac{1}{2^{m+3}} - \frac{1}{2^{m+1}} \\
&\ \ \ \ \ \left( \text{use $f_1$ is an $\varepsilon$-embedding where } \varepsilon \leq \frac{1}{2^{m+3}} \right)\\
&\geq \mu_{N(m)}(X_{N(m)}) -\frac{1}{2^{m+2}} - \frac{1}{2^{m+3}} - \frac{1}{2^{m+1}}\\
&\geq \mu_k(X_k) -\frac{1}{2^{m+3}} -\frac{1}{2^{m+3}} - \frac{1}{2^{m+2}} - \frac{1}{2^{m+1}}\\
& \ \ \ \ \ \left( \text{use Lemma \ref{total-measures-close-when-the-metric-measure-spaces-are} and } d_{\rho}(X_{N(m)},X_k) \leq \frac{1}{2^{m+3}} \right)\\
&\geq \mu_k(X_k) - \frac{1}{2^m}  
\end{align*}
Thus, we have proved $\mu_k\left(\left(\widehat{X_k^m}\right)^\frac{1}{2^m}\right) \geq \mu_k(X_k) - \frac{1}{2^m}$.
\end{proof}

Redefine $X_n^m$ to be $\widehat{X_n^m} \cup X_n^{m-1}$.  Then, $\left\vert{X_n^m}\right\vert \leq \left\vert{\widehat{X_n^m}}\right\vert + \left\vert{X_n^{m-1}}\right\vert$.
So, for $n > \max\lbrace N(m), N(m-1)\rbrace$, $\left\vert{X_n^m}\right\vert \leq \left\vert{X_{N(m)}^{m+2}}\right\vert + \left\vert{X_{N(m-1)}^{m+1}}\right\vert $.
Thus, we have constructed $X_n^m$ with bounded cardinality.  
\end{proof}

Hence by Theorem \ref{spacefdmscompact}, for each $m$ we have that $(X_n^m,d_n^m,\mu_n^m)$
converges as $n$ tends to infinity.  Denote the limit of $(X_n^m,d_n^m,\mu_n^m)$
as $n$ tends to $\infty$ by $(X_{\infty}^m,d_{\infty}^m,\mu_{\infty}^m)$.  As $X_n^m \subset X_n^{m+1}$, we
have a sequence of isometric embeddings 
$$(X_{\infty}^1,d_{\infty}^1)\hookrightarrow
(X_{\infty}^2,d_{\infty}^2)\hookrightarrow...$$
consider the direct limit of this sequence to get a distance measure space
$(X^{\infty}_{\infty},d_{\infty}^{\infty})$.  Let $(X,d)$ denote the completion
of  $(X^{\infty}_{\infty},d_{\infty}^{\infty})$.  We have isometric embeddings
from $(X_{\infty}^m,d_{\infty}^m) \hookrightarrow
(X^{\infty}_{\infty},d_{\infty}^{\infty}) \hookrightarrow X$, for each $m$. 
Call the isometric embedding $(X_{\infty}^m,d_{\infty}^m) \hookrightarrow X$,
$\psi_m$.  Push forward $\mu_{\infty}^m$ to $X$ by $\psi_m$ and call it $\nu^m$.

\begin{lemma}
The sequence $\nu^n$ is Cauchy.
\end{lemma}

\begin{proof}
Assume without loss of generality that $n<m$.  Then, we have inclusions
$\psi_{n}^{m}:(X_{\infty}^n,d_{\infty}^n) \hookrightarrow
(X_{\infty}^m,d_{\infty}^m)$.
Thus, by Lemma \ref{piinclusionnicebehaviour} we have,
$$d_{\pi}^{(X,d)}(\nu_n,\nu_m) = d_{\pi}^{(X_{\infty}^m,d_{\infty}^m)}((\psi_n^m)_*(\mu_{\infty}^n),\mu_{\infty}^m)$$
The sets $X_k^m$ and $X_k^{n}$ are subsets of $X_k$.  Viewing $\mu_k^m$ and $\mu_k^{n}$ as measures on $X_k$, we have
$$d_{\pi}^{(X_{\infty}^m,d_{\infty}^m)}((\psi_n^m)_*(\mu_{\infty}^n),\mu_{\infty}^m) = \lim_{k \to \infty} d_{\pi}^{(X_k,d_k)}(\mu_k^m,\mu_k^n)$$
\subsubsection*{Claim: $d_{\pi}^{(X_k,d_k)}(\mu_k^m,\mu_k^{m+1}) \leq \frac{1}{2^m} + \frac{1}{2^{m+1}}$}
The measure $\mu_k^m$ and $\mu_k^{m+1}$ are atomic measure.  So, it is enough to check for singleton sets.  

If $x\in X_k^m$, then $\mu_k^m(\lbrace x\rbrace) \geq \mu_k^{m+1}(\lbrace x\rbrace)$.  On the other hand 
$\mu_k^m(\lbrace x\rbrace) \leq \mu_k\left(B\left(x,\frac{1}{2^m}\right)\right)$ and 
$B\left(x,\frac{1}{2^m}\right) \subset \left(\cup_{y\in X_k^{m+1} \ s.t \ d_k(x,y) < \frac{1}{2^m} + \frac{1}{2^{m+1}}} B\left(y,\frac{1}{2^{m+1}}\right)\right) \cup \left(\left(X_k^{m+1}\right)^{\frac{1}{2^{m+1}}}\right)^C$.
Thus,
$$\mu_k^m(\lbrace x\rbrace) \leq \mu_k^{m+1}\left(\lbrace x\rbrace^{\frac{1}{2^m}+ \frac{1}{2^{m+1}}}\right) + \frac{1}{2^{m+1}}. $$

Similarly, if $x\in X_n^{m+1}\setminus X_n^m$, then $\mu_k^{m+1}(\lbrace x\rbrace) \geq \mu_k^{m}(\lbrace x\rbrace)$.  On the other hand
$\mu_k^{m+1}(\lbrace x\rbrace) \leq \mu_k\left(B\left(x,\frac{1}{2^{m+1}}\right)\right)$ and
$B\left(x,\frac{1}{2^{m+1}}\right) \subset \left(\cup_{y\in X_k^{m} \ s.t \ d_k(x,y) < \frac{1}{2^{m}} + \frac{1}{2^{m+1}}} B\left(y,\frac{1}{2^{m}}\right)\right) \cup \left(\left(X_k^{m}\right)^{\frac{1}{2^{m}}}\right)^C$.  Thus,
$$\mu_k^{m+1}(\lbrace x\rbrace) \leq \mu_k^{m}\left(\lbrace x\rbrace^{\frac{1}{2^m}+ \frac{1}{2^{m+1}}}\right) + \frac{1}{2^{m}}. $$ 

For all other points $x$, $\mu_n^m(\lbrace x\rbrace) = 0 = \mu_n^{m+1}(\lbrace x\rbrace)$.  Thus we have the claim.  Hence,
$$d_{\pi}^{(X_k,d_k)}(\mu_k^m,\mu_k^{n}) \leq \sum_{i=n}^m \left( \frac{1}{2^i} +  \frac{1}{2^{i+1}} \right).$$
So, we have,
$$d_{\pi}^{(X,d)}(\nu_n,\nu_m) \leq \sum_{i=n}^m \left( \frac{1}{2^i} +  \frac{1}{2^{i+1}} \right).$$ 
Therefore, $\nu^n$ is Cauchy.

\end{proof}

Further, $\nu^n$ converges because, $(X,d)$ is complete implies
$(\mathcal{P}(X),d_{\pi})$ is complete.  Call the limit $\mu$.  It is clear from the definition of $\mu$ that, 
$(X_{\infty}^m,d_{\infty}^m,\mu_{\infty}^m)$ converges to $(X,d,\mu)$ as $m$ goes to infinity.

We claim that,
$(X_n,d_n,\mu_n)$ converges to $(X,d,\mu)$.  We will first prove that the
convergence of the sequence $(X_n^m,d_n^m,\mu_n^m)$, as $n$ tends to $\infty$,
to $(X_{\infty}^m,d_{\infty}^m,\mu_{\infty}^m)$ is uniform, in some sense, with
respect to $m$.

\begin{lemma}
The sequence $(X_{\infty}^n, d_{\infty}^n, \mu_{\infty}^n)$ is Cauchy.  In fact,
$$d_{\rho}((X_{\infty}^n, d_{\infty}^n, \mu_{\infty}^n), (X_{\infty}^m,
d_{\infty}^m, \mu_{\infty}^m)) \leq \frac{1}{2^m} + \frac{1}{2^n}.$$
\end{lemma}

\begin{proof}
For all $\varepsilon>0$ we can find an $N$ such that 
$$d_{\rho}((X_{\infty}^n, d_{\infty}^n, \mu_{\infty}^n), (X_k^n, d_k^n, \mu_k^n))
\leq \varepsilon, \ \forall k \geq N.$$ 
and
$$d_{\rho}((X_{\infty}^m, d_{\infty}^m, \mu_{\infty}^m), (X_k^m, d_k^m, \mu_k^m))
\leq \varepsilon, \ \forall k \geq N. $$  But,
$d_{\rho}((X_k^i,d_k^i,\mu_k^i),(X_k,d_k,\mu_k)) \leq \frac{1}{2^i}$.  Thus,
\begin{align*}
d_{\rho}((X_{\infty}^n, d_{\infty}^n, \mu_{\infty}^n), (X_{\infty}^m, d_{\infty}^m,
\mu_{\infty}^m)) &\leq d_{\rho}((X_{\infty}^n, d_{\infty}^n, \mu_{\infty}^n),
(X_k^n, d_k^n, \mu_k^n))\\
& \ \ \ \ \ \ + d_{\rho}((X_k^n,d_k^n,\mu_k^n),(X_k,d_k,\mu_k))\\
& \ \ \ \ \ \ + d_{\rho}((X_k^m,d_k^m,\mu_k^m),(X_k,d_k,\mu_k)) \\
& \ \ \ \ \ \ + d_{\rho}((X_{\infty}^m, d_{\infty}^m, \mu_{\infty}^m), (X_k^m,
d_k^m, \mu_k^m))
\end{align*}
Hence, $d_{\rho}((X_{\infty}^n, d_{\infty}^n, \mu_{\infty}^n), (X_{\infty}^m,
d_{\infty}^m, \mu_{\infty}^m)) \leq \varepsilon + \frac{1}{2^m} + \frac{1}{2^n} +
\varepsilon = \frac{1}{2^m} + \frac{1}{2^n} + 2\varepsilon $.  As, $\varepsilon >0$
was arbitrary we have the result.  
\end{proof}

\begin{lemma}
$$d_{\rho}((X_n^m,d_n^m,\mu_n^m),(X_{\infty}^m, d_{\infty}^m ,\mu_{\infty}^m)) \leq
d_{\rho}((X_n^{m'},d_n^{m'},\mu_n^{m'}),(X_{\infty}^{m'},d_{\infty}^{m'},\mu_{\infty
}^{m'})) + 2(\frac{1}{2^m} +\frac{1}{2^{m'}}).$$   
\end{lemma}
 
 \begin{proof}
 We have already proved that
 $$d_{\rho}((X_{\infty}^{m'}, d_{\infty}^{m'}, \mu_{\infty}^{m'}), (X_{\infty}^m,
d_{\infty}^m, \mu_{\infty}^m)) \leq \frac{1}{2^m} + \frac{1}{2^{m'}}.$$
 Similarly,
\begin{align*}
 d_{\rho}((X_{n}^{m'}, d_{n}^{m'}, \mu_{n}^{m'}), (X_{n}^m, d_{n}^m, \mu_{n}^m))
&\leq d_{\rho}((X_{n}^{m'}, d_{n}^{m'}, \mu_{n}^{m'}), (X_n,d_n,\mu_n))\\ 
&\ \ \ \ \ + d_{\rho}( (X_n,d_n,\mu_n), (X_{n}^m, d_{n}^m, \mu_{n}^m))\\
&\leq \frac{1}{2^m} + \frac{1}{2^{m'}}
\end{align*}   
Thus,
 \begin{align*}
 d_{\rho}((X_n^m,d_n^m,\mu_n^m),(X_{\infty}^m, d_{\infty}^m ,\mu_{\infty}^m)) &\leq
d_{\rho}((X_n^m,d_n^m,\mu_n^m), (X_n^{m'},d_n^{m'},\mu_n^{m'}))\\
 & \ \ \ \ \ \ + d_{\rho}((X_n^{m'},d_n^{m'},\mu_n^{m'}),
(X_{\infty}^{m'},d_{\infty}^{m'},\mu_{\infty}^{m'}))\\
 & \ \ \ \ \ \ + d_{\rho}((X_{\infty}^{m'}, d_{\infty}^{m'}, \mu_{\infty}^{m'}),
(X_{\infty}^m, d_{\infty}^m, \mu_{\infty}^m))\\
&\leq \left(\frac{1}{2^m} + \frac{1}{2^{m'}}\right) + d_{\rho}((X_n^{m'},d_n^{m'},\mu_n^{m'}),
(X_{\infty}^{m'},d_{\infty}^{m'},\mu_{\infty}^{m'})) \\
&\ \ \ \ \ \ + \left(\frac{1}{2^m} + \frac{1}{2^{m'}}\right)
 \end{align*}
\end{proof}
 
 Now to prove $(X_n,d_n,\mu_n)$ converges to $(X,d,\mu)$.  Given $\varepsilon
>0$, choose $N$ such that for all $n\geq N$ we have,
 $$d_{\rho}((X_n^m,d_n^m,\mu_n^m),(X_{\infty}^m, d_{\infty}^m ,\mu_{\infty}^m)) \leq
\frac{\varepsilon}{3}$$
 and choose $m$ large enough that $\frac{1}{2^m} \leq \frac{\varepsilon}{3}$ and 
 $$d_{\rho}\left((X_{\infty}^m, d_{\infty}^m, \mu_{\infty}^m), (X,d,\mu)\right) \leq \frac{\varepsilon}{3}.$$
 Then for $n\geq N$ and $m$ as described above, we have;
 \begin{align*}
 d_{\rho}((X_n,d_n,\mu_n), (X,d,\mu)) &\leq d_{\rho}((X_n,d_n,\mu_n),
(X_n^m,d_n^m,\mu_n^m))\\
 & \ \ \ \ \ \ + d_{\rho}((X_n^m,d_n^m,\mu_n^m),(X_{\infty}^m, d_{\infty}^m
,\mu_{\infty}^m))\\
 & \ \ \ \ \ \ + d_{\rho}((X_{\infty}^m, d_{\infty}^m, \mu_{\infty}^m), (X,d,\mu))\\
 &\leq \frac{1}{2^m} + \frac{\varepsilon}{3} + \frac{\varepsilon}{3} \leq
\varepsilon
 \end{align*}
Thus we have completed the proof of Theorem \ref{completenessunderrho} by proving that $(X_n,d_n,\mu_n)$ converges to $(X,d,\mu)$.

\end{proof}

\subsection{A pre-compactness theorem}
Now, we will prove a theorem analogous to the Gromov's compactness theorem for
metric spaces, for distance measure spaces.    

\begin{theorem}
Let $\mathfrak{X}_{\varepsilon,M}$ be a collection of complete separable distance measure spaces with
the property that $\mu(X)\leq M$ for all $X\in \mathfrak{X}_{\varepsilon,M}$.  Suppose, given $\varepsilon > 0$ there exists $N(\varepsilon)$ such that,
for all $(X,d,\mu) \in \mathfrak{X}_{\varepsilon,M}$ there exists a set
$S_{X,\ \varepsilon}$ such that $\vert S_{X,\ \varepsilon} \vert \leq
N(\varepsilon)$ and $\mu((S_{X,\ \varepsilon})^{\varepsilon})^C)\leq \varepsilon$. Then, $\mathcal{\mathfrak{X}_{\varepsilon,M}}$ is
totally bounded and hence pre-compact.
\label{main}
\end{theorem}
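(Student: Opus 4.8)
The plan is to reduce the problem to the compactness of the space $\mathcal{F}(N,M)$ of finite distance measure spaces, established in Theorem \ref{spacefdmscompact}. Since the ambient space $\mathfrak{X}$ is complete by Theorem \ref{completenessunderrho}, and a totally bounded subset of a complete metric space is pre-compact, it suffices to show that $\mathfrak{X}_{\varepsilon,M}$ is totally bounded. Concretely, I will produce, for each prescribed $\eta>0$, a finite $\eta$-net for $\mathfrak{X}_{\varepsilon,M}$ under $d_{\rho}$.

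Fix $\eta>0$ and set $\varepsilon=\eta/2$. Applying the hypothesis at this $\varepsilon$ furnishes a uniform bound $N=N(\varepsilon)$ so that every $(X,d,\mu)\in\mathfrak{X}_{\varepsilon,M}$ carries a finite set $S_{X,\varepsilon}=\{s_1,\dots,s_k\}$ with $k\leq N$ and $\mu((S_{X,\varepsilon}^{\varepsilon})^{C})\leq\varepsilon$. From this set I build a finite distance measure space exactly as in the proof of Theorem \ref{approxbymeaswhfinitesupp}: equip $S_{X,\varepsilon}$ with the restricted distance $d|_{S_{X,\varepsilon}}$ and the atomic measure $\nu$ defined inductively by $\nu(s_1)=\mu(B_\varepsilon(s_1))$ and $\nu(s_i)=\mu(B_\varepsilon(s_i)\setminus\cup_{j<i}B_\varepsilon(s_j))$. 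Then $\nu(S_{X,\varepsilon})=\mu(\cup_i B_\varepsilon(s_i))\leq\mu(X)\leq M$ and $|S_{X,\varepsilon}|\leq N$, so the resulting space $S_X:=(S_{X,\varepsilon},d|_{S_{X,\varepsilon}},\nu)$ lies in $\mathcal{F}(N,M)$.

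The crux is the estimate $d_{\rho}(X,S_X)\leq\varepsilon$. Using the genuine isometric inclusion $S_{X,\varepsilon}\hookrightarrow X$ together with the identity on $X$ (both isometric embeddings into $Z=X$, so the exceptional-set parameter is $0$ and $\frac{1}{L}$ may be driven to $0$ by taking $L\to\infty$), the estimate reduces to the Lévy–Prokhorov bound $d_{\pi}(\mu,\nu)\leq\varepsilon$, with $\mu$ and $\nu$ viewed as measures on $X$. The two defining inequalities are verified precisely as in Theorem \ref{approxbymeaswhfinitesupp}: the bound $\nu(E)\leq\mu(E^{\varepsilon})+\varepsilon$ uses $\nu(s_i)\leq\mu(\{s_i\}^{\varepsilon})$ and the disjointness built into the inductive definition, while $\mu(E)\leq\nu(E^{\varepsilon})+\varepsilon$ uses that $S_{X,\varepsilon}^{\varepsilon}\cap E$ is covered by those balls $B_\varepsilon(s_i)$ meeting $E$, together with $\mu((S_{X,\varepsilon}^{\varepsilon})^{C})\leq\varepsilon$ to account for the missing mass. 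This is the \emph{only} step requiring real work, and it is where the almost-density assumption enters; indeed, the sole property of the finite set used in Theorems \ref{approxbymeaswhfinitesupp}--\ref{approxdmsbyfinitedms} is that its $\varepsilon$-neighbourhood has complement of measure at most $\varepsilon$, so those theorems apply verbatim to $S_{X,\varepsilon}$. The genuinely new ingredient supplied by the hypothesis is that the cardinality bound $N(\varepsilon)$ is \emph{uniform} across the family, which is exactly what guarantees that all the spaces $S_X$ land in a single compact space $\mathcal{F}(N,M)$.

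Finally, I assemble the net. By Theorem \ref{spacefdmscompact} the space $\mathcal{F}(N,M)$ is compact, hence totally bounded, so it admits a finite $\varepsilon$-net $\{F_1,\dots,F_p\}\subset\mathcal{F}(N,M)$. Given any $X\in\mathfrak{X}_{\varepsilon,M}$, choose $F_i$ with $d_{\rho}(S_X,F_i)\leq\varepsilon$; the triangle inequality (Theorem \ref{triangleinequality}) then gives
$$d_{\rho}(X,F_i)\leq d_{\rho}(X,S_X)+d_{\rho}(S_X,F_i)\leq\varepsilon+\varepsilon=\eta.$$
Thus $\{F_1,\dots,F_p\}$ is a finite $\eta$-net for $\mathfrak{X}_{\varepsilon,M}$. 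As $\eta>0$ was arbitrary, $\mathfrak{X}_{\varepsilon,M}$ is totally bounded, and since $\mathfrak{X}$ is complete by Theorem \ref{completenessunderrho}, it is pre-compact.
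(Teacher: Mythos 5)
Your proposal is correct and follows essentially the same route as the paper's own (very terse) proof: approximate each $X\in\mathfrak{X}_{\varepsilon,M}$ by a finite distance measure space lying in $\mathcal{F}(N,M)$ via the construction of Theorems \ref{approxbymeaswhfinitesupp}--\ref{approxdmsbyfinitedms}, invoke compactness of $\mathcal{F}(N,M)$ (Theorem \ref{spacefdmscompact}) to extract a finite net, and conclude by the triangle inequality. Your write-up merely makes explicit two points the paper leaves implicit --- that the hypothesis's uniform bound $N(\varepsilon)$ is what forces all approximants into a single space $\mathcal{F}(N,M)$, and that $d_{\rho}(X,S_X)\leq\varepsilon$ follows from honest isometric embeddings into $Z=X$ with $L\to\infty$ --- which is a faithful elaboration, not a different argument.
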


\begin{proof}
The space of finite distance measure spaces
with bounded area and bounded number of points is totally bounded, as it is compact.  Thus given
any $\varepsilon$ we can choose a finite cover of $\frac{\varepsilon}{2}$ balls
for the same.  Thus $\varepsilon$ balls (centred at the same points) gives a
$\varepsilon$ sized cover for  $\mathfrak{X}_{\varepsilon,M}$, as, given any $X\in
\mathfrak{X}_{\varepsilon,M}$ you can choose a finite distance measure space which is at
an $\frac{\varepsilon}{2}$ distance from $S$.  Thus
$\mathfrak{X}_{\varepsilon,M}$ is totally bounded. 
\end{proof}

\begin{rmk}
Consider the sequence $(\{x\}, d(x,x) = 0, \mu_n(x) =n)$.  This sequence has no convergent subsequence.  This illustrates that the condition of bounded area is necessary.  Scaling the Riemannian metric on a fixed surface gives us a similar example with Riemann surfaces.  
\end{rmk}

\section{Deligne-Mumford compactification as completion}\label{CH: DMCompactificationasCompletion}

\subsection{Cusp curves}
Let $S$ be a closed surface and let $\left(\gamma_i\right)_{i\in I}$ be a possibly empty
family of finitely many smooth simple closed and pairwise disjoint loops in $S$.  Let 
$\widehat{S}$ be the surface obtained from $S \setminus \cup_i \gamma_i$ by the one point
compactification at each end.  By $s_k'$ and $s_k''$, $k\in I$, we denote the two points
of $\widehat{S}$ added to $S \setminus \cup_i \gamma_i$ at the two ends which arise from
removing $\gamma_k$.  We now identify $s_k'$ and $s_k''$ for $k \in I$.  In this way we
obtain a compact topological space $\overline{S}$.  Hence we can obtain $\overline{S}$ from
$S$ by collapsing each loop $\gamma_i$ to a point.  By $\alpha: \widehat{S} \to \overline{S}$
we denote the canonical projection.  The points $\overline{s_k}:= \alpha(s_k')$ are called
singular points of the singular surface $\overline{S}$.  We denote the set of singular points on $\overline{S}$ by $si(\overline{S})$.  Via $\alpha$ the subset 
$\overline{S}\setminus \left\lbrace \overline{s_i} \ \ \vert \ i\in I \right\rbrace \subset \overline{S}$
inherits a differentiable structure.  A complex structure $\overline{j}$ on the singular
surface $\overline{S}$ is a complex structure on $\widehat{S}$.   

\subsection{The space of cusp curves equipped with $d_{\rho}$ }

Given a Riemann surface $S$, the metric on $S$ defines an area form on $S$ viz. 
$$\sigma_g (v,w) = \left[g(v,v).g(v,w)-g^2(v,w)\right]^{\frac{1}{2}}.$$
Then, $\mu_g(E)= \int_E \sigma_g$ gives a measure, making $(S,g,\mu_g)$ a metric
measure space.  Thus the generalised GHLP metric gives a (pseudo)metric on the
space of Riemann surfaces.

\begin{theorem}
The generalised Gromov-Hausdorff-Levi-Prokhorov distance, $d_{\rho}$, is a metric on the space of Riemann surfaces.
\end{theorem}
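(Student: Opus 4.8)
The plan is to verify the metric axioms in turn, with essentially all of the work concentrated in the point-separation (positivity) axiom. Non-negativity is built into the definition of $d_\rho$, and symmetry is immediate, since an $L$-isometric $\varepsilon$-embedding of the pair $(X_1,X_2)$ into a space $Z$ is simultaneously one for $(X_2,X_1)$ and the Levy-Prokhorov distance $d_\pi$ is symmetric. The triangle inequality is precisely Theorem \ref{triangleinequality} applied to the metric measure spaces $(S_i,g_i,\mu_{g_i})$. Thus the only axiom requiring genuine argument is that $d_\rho(S_1,S_2)=0$ holds if and only if $S_1$ and $S_2$ represent the same point of the space of Riemann surfaces.

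For the easy direction, if $S_1$ and $S_2$ are biholomorphic, then the biholomorphism preserves the conformal class; by uniqueness of the hyperbolic metric within a conformal class (uniformization) it carries $g_1$ to $g_2$ and is therefore an isometry, and it pulls $\mu_{g_2}$ back to $\mu_{g_1}$ since the area form $\sigma_g$ is determined by $g$. Taking $Z=S_2$, this map as $\iota_1$, arbitrarily large $L$ and $\varepsilon=0$ forces $d_\rho(S_1,S_2)=0$.

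For the hard direction, suppose $d_\rho((S_1,g_1,\mu_{g_1}),(S_2,g_2,\mu_{g_2}))=0$. Each $(S_i,g_i)$ is a connected compact hyperbolic surface, hence complete and separable, so as a distance space it is a single metric space, and $\mu_{g_i}$ is a finite regular Borel measure; the hypotheses of Theorem \ref{positivityofrho} are therefore satisfied. That theorem produces open sets $U_1\subset S_1$ and $U_2\subset S_2$ of measure zero together with a measure-preserving isometry $\phi:(S_1\setminus U_1)\to(S_2\setminus U_2)$. Here I invoke the crucial geometric input that the area measure $\mu_{g_i}$ of a Riemann surface has full support: every nonempty open set has positive measure, because the area form is nowhere vanishing. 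Consequently the open measure-zero sets $U_1$ and $U_2$ must be empty, so $\phi$ is a measure-preserving isometry of the whole surfaces.

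It remains to upgrade $\phi$ from a metric isometry to a biholomorphism, and this last passage is the main obstacle. By the Myers-Steenrod theorem a surjective distance-preserving map between Riemannian manifolds is automatically a smooth Riemannian isometry, so $\phi$ is a smooth isometry of the hyperbolic surfaces; being an isometry it preserves angles, hence is conformal for the underlying complex structures, and therefore holomorphic (after fixing orientations), exhibiting $S_1$ and $S_2$ as the same point of moduli space. The delicate part is thus the chain of upgrades: from $d_\rho=0$ one obtains a priori only a measurable metric identification away from exceptional open sets, and one must combine the full-support property of $\mu_g$ (to kill those exceptional sets in Theorem \ref{positivityofrho}), the regularity theory of isometries (Myers-Steenrod), and the rigidity of the hyperbolic metric within its conformal class to conclude a genuine biholomorphism.
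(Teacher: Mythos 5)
Your proof is correct and takes essentially the same route as the paper: the paper's argument likewise reduces everything to Theorem \ref{positivityofrho} together with the observation that $\mu_g(U)>0$ for every nonempty open $U$, which forces the exceptional measure-zero open sets to be empty. The only difference is that you additionally spell out the routine axioms and the Myers--Steenrod/conformality upgrade from metric isometry to biholomorphism, steps the paper leaves implicit.
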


\begin{proof}
Two distance measure spaces $(X_1,d_1,\mu_1),(X_2,d_2,\,u_2)$ are at zero
distance from each other if and only if there are open sets $U_1 \subset X_1$
and $U_2 \subset X_2$, of zero measure, such that there is a measure preserving
isometry between  $(X_1 \setminus U_1,d_1,\mu_1)$ and 
$(X_2 \setminus U_2, d_2,\mu_2)$. But, given a Riemannian metric $g$ on a surface $S$, 
$\mu_g(U) > 0$ for all open sets $U \subset S$.  Thus the result.
\end{proof}

Given a cusp curve, associated to it is the hyperbolic surface $S \setminus si(S)$. 
Given two cusp curves $S_1$ and $S_2$ we define the $d_{\rho}(S_1,S_2) := d_{\rho}(S_1 \setminus
si(S_1), S_2 \setminus si(S_2))$.  This gives a metric on the space of all cusp
curves.

\subsection{Fenchel Nielson co-ordinates and $d_{\rho}$}

In this section we show that the topology generated by $d_{\rho}$ is the same as
that given by the Fenchel Nielson co-ordinates.

\begin{dfn}[$q$-quasi-isometry]
A homeomorphism $\varphi: X \to Y$ between two metric spaces $X$ and $Y$ is a $q$-quasi-isometry ($q \geq 1$) or quasi-isometry if 
$$\frac{1}{q} d_X(x,y) \leq d_Y(\varphi(x),\varphi(y)) \leq q d_X(x,y)$$
for all $x,y \in X$.
\end{dfn}

\begin{lemma}
If $R$ and $S$ are q-quasi-isometric Riemann surfaces then, 
$$d_{\rho}(R,S) \leq max\left\lbrace \left(1 - \frac{1}{q^2}\right)\mu_R(R),\left(1 - \frac{1}{q^2}\right)\mu_S(S), diam(S) \left(q - 1\right) \right\rbrace =:
\varepsilon .$$ 
\end{lemma}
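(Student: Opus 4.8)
The plan is to realize both surfaces isometrically inside a single gluing space built from the quasi-isometry $\varphi$, and then estimate the Lévy--Prokhorov distance of the two area measures there. Set $\delta := \mathrm{diam}(S)(q-1)$. First I would check that $\varphi$ distorts distances additively by at most $\delta$: from $\frac1q d_R(x,y)\le d_S(\varphi(x),\varphi(y))\le q\,d_R(x,y)$ together with $d_R(x,y)\le q\,\mathrm{diam}(S)$ and $d_S(\varphi(x),\varphi(y))\le\mathrm{diam}(S)$, both differences $d_S(\varphi(x),\varphi(y))-d_R(x,y)$ and $d_R(x,y)-d_S(\varphi(x),\varphi(y))$ are bounded by $(q-1)\mathrm{diam}(S)=\delta$, so $|d_R(x,y)-d_S(\varphi(x),\varphi(y))|\le\delta$ for all $x,y\in R$.

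Next I would form the glued distance space $Z=R\cup S$ with the maximal metric $d_\varphi^{\delta}$ of Section~\ref{maxmet}, taking $I=\varphi$. Since the additive-distortion bound above is exactly the hypothesis of Theorem~\ref{inclmaxmet}, the inclusions $i_R\colon R\hookrightarrow Z$ and $i_S\colon S\hookrightarrow Z$ are isometric embeddings, and by construction $d_Z(i_R(x),i_S(\varphi(x)))\le\delta$. An isometric embedding is an $L$-isometric $0$-embedding for every $L$, so these maps are admissible in Definition~\ref{rhodfn} with arbitrarily large $L$ and with $\varepsilon=0$; letting $L\to\infty$ gives $d_\rho(R,S)\le d_\pi\big((i_R)_*\mu_R,(i_S)_*\mu_S\big)$. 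Writing $\mu=(i_R)_*\mu_R$ and $\nu=(i_S)_*\mu_S$, it then remains to show $d_\pi(\mu,\nu)\le\varepsilon$.

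To estimate $d_\pi(\mu,\nu)$ I would fix a Borel set $A\subset Z$ and any $\varepsilon'>\varepsilon$ (so $\varepsilon'>\delta$) and verify the two Lévy--Prokhorov inequalities. Because $d_Z(x,\varphi(x))\le\delta<\varepsilon'$, every $\varphi(x)$ with $x\in A\cap R$ lies in $A^{\varepsilon'}$, so $\varphi(A\cap R)\subset A^{\varepsilon'}\cap S$; symmetrically $\varphi^{-1}(A\cap S)\subset A^{\varepsilon'}\cap R$. Here the key quantitative input is the area-distortion estimate $\tfrac1{q^2}\mu_R(B)\le\mu_S(\varphi(B))\le q^2\mu_R(B)$ for Borel $B\subset R$. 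Granting this, $\nu(A^{\varepsilon'})\ge\mu_S(\varphi(A\cap R))\ge q^{-2}\mu_R(A\cap R)=q^{-2}\mu(A)$, whence $\mu(A)-\nu(A^{\varepsilon'})\le(1-q^{-2})\mu_R(R)\le\varepsilon$; the mirror computation with $\varphi^{-1}$ gives $\nu(A)-\mu(A^{\varepsilon'})\le(1-q^{-2})\mu_S(S)\le\varepsilon$. Thus $\mu(A)\le\nu(A^{\varepsilon'})+\varepsilon'$ and $\nu(A)\le\mu(A^{\varepsilon'})+\varepsilon'$ for every $\varepsilon'>\varepsilon$, so $d_\pi(\mu,\nu)\le\varepsilon$ and therefore $d_\rho(R,S)\le\varepsilon$.

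I expect the main obstacle to be the area-distortion step, that is, passing from the \emph{global} distance inequality defining a $q$-quasi-isometry to a genuine bound on the areas of image sets. I would handle it by noting that on length spaces the global bound forces the pointwise Lipschitz constants of $\varphi$ and $\varphi^{-1}$ to be at most $q$, so $\varphi$ is $q$-bi-Lipschitz, and then invoking the standard fact that a bi-Lipschitz homeomorphism multiplies two-dimensional Hausdorff measure by a factor lying in $[q^{-2},q^{2}]$; since the Riemannian area measures $\mu_R,\mu_S$ coincide with two-dimensional Hausdorff measure, this yields the required envelope. A minor point to dispatch along the way is the strict-versus-nonstrict inequality in the definition of $A^{\varepsilon}$, which is precisely why I would run the Lévy--Prokhorov verification with $\varepsilon'>\varepsilon$ and only then pass to the infimum.
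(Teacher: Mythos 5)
Your proposal is correct and takes essentially the same route as the paper's proof: bound the additive distortion of $\varphi$ by $\mathrm{diam}(S)(q-1)$, glue $R$ and $S$ along $\varphi$ with the maximal metric $d_{\varphi}^{\delta}$ so that Theorem~\ref{inclmaxmet} makes both inclusions isometric ($0$-embeddings with $L$ arbitrary), and verify the two Levy--Prokhorov inequalities for each $\varepsilon' > \varepsilon$ via the envelope $\frac{1}{q^2}\mu_R(E) \leq \mu_S(\varphi(E)) \leq q^2 \mu_R(E)$. The only (minor) divergence is in justifying that envelope: the paper asserts the pointwise comparison $\frac{1}{q^2} g \leq \varphi^* g' \leq q^2 g$ of Riemannian metrics, whereas you derive the same bound from bi-Lipschitz distortion of two-dimensional Hausdorff measure, which is if anything more careful about the regularity of $\varphi$.
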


\begin{proof}
Let $\varphi: R \to S$ be a q-quasi-isometry.  Then, $\frac{1}{q^2} \times g|_x \leq g'|_{\varphi(x)} \leq q^2 \times g|x $
where, $g$ and $g'$ denotes the Riemannian metric on $R$ and $S$ respectively.  So, 
$\frac{1}{q^2} \times \sigma_{g}|_x \leq \sigma_{g'}|_{\varphi(x)}\leq q^2 \times \sigma_g|_x$.  Thus, denoting the 
measures on $R$ and
$S$ described above by $\mu_R$ and $\mu_S$ we have 
$$\frac{1}{q^2} \times \mu_S( \varphi(E)) \leq \mu_R(E) \leq q^2 \times \mu_S(
\varphi(E)). $$
Define $Z := R\sqcup S$ and let the metric on $Z$ be the maximal metric 
$d_{\varphi}^{\varepsilon}$ defined in Section \ref{maxmet}.  As $\varphi$ is a 
q-quasi-isometry,
\begin{align*}
d_R(x,y) - d_S(\varphi(x),\varphi(y)) \leq d_R(x,y) - \frac{1}{q} d_R(x,y) = d_R(x,y)\left(1 - \frac{1}{q}\right)  
\end{align*}
and 
\begin{align*}
d_S(\varphi(x),\varphi(y)) - d_R(x,y) \leq q d_R(x,y) - d_R(x,y) = d_R(x,y) (q-1).
\end{align*}
That is,
$$d_R(x,y) (1-q)\leq d_R(x,y) - d_S(\varphi(x),\varphi(y)) \leq d_R(x,y)\left(1 - \frac{1}{q}\right) \leq d_R(x,y) (q - 1)  $$
Thus, if $\varepsilon \geq diam(S) \left(q - 1\right)$,
then by Theorem \ref{inclmaxmet} the inclusion $R\hookrightarrow Z$ and 
$S\hookrightarrow Z$ are isometric $0$-embeddings.  Thus the inclusions from 
$R$ and $S$ to $Z$ will give the required result because, given $E \subset Z$,
if $\varepsilon' > \varepsilon$;
\begin{align*}
\mu_R(E^{\varepsilon'}) + \varepsilon' &\geq \mu_R((E\cap S)^{\varepsilon'}) + \varepsilon' \geq \mu_R(\phi^{-1}(E \cap S)) + \varepsilon'\\
&\geq \frac{1}{q^2} \mu_S(\varphi \circ \varphi^{-1}(E \cap S)) + \varepsilon'\\
&\geq \frac{1}{q^2} \mu_S(E \cap S) + \varepsilon'\\
&= \frac{1}{q^2} \mu_S(E) + \varepsilon'
\geq \mu_S(E).
\end{align*}
and,
\begin{align*}
\mu_S(E^{\varepsilon'}) + \varepsilon' &\geq \mu_S((E\cap R)^{\varepsilon'}) + \varepsilon' \geq \mu_S(\phi(E \cap R)) + \varepsilon'\\
&\geq \frac{1}{q^2} \mu_R(E \cap R) + \varepsilon'\\
&= \frac{1}{q^2} \mu_R(E) + \varepsilon' \geq \mu_R(E).
\end{align*}
As $\varepsilon' > \varepsilon$ was arbitrary, we have the result.
\end{proof}

Chapter $3$ of \cite{buser} describes construction of quasi-isometries between
two non degenerate pair of pants and quasi isometry while gluing pair of pants. 
So, all that remains is the case of degenerate pair of pants.

\begin{lemma}
Let $H_1, H_2$ be the hexagons with sides $(b_1,b_2,b_3)$ and $(b_1,0,b_3)$. 
Then the distance $d_{\rho}(H_1, H_2)$ is small if $b_2$ is small
\end{lemma}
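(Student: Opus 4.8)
The plan is to work directly with the definition of $d_{\rho}$ (Definition \ref{rhodfn}) via $L$-isometric $\varepsilon$-embeddings, rather than with the quasi-isometry lemma above. That lemma is unavailable here: as $b_2 \to 0$ two sides of $H_1$ grow without bound and become asymptotic, so the region near the vanishing side opens into a cusp and $\mathrm{diam}(H_2) = \infty$, making the bound $\mathrm{diam}(S)(q-1)$ useless. The whole point is that $d_{\rho}$ was built to regard this degeneration as small: I will exhibit, for each small $b_2$, a common distance space $Z$ and embeddings of $H_1$ and $H_2$ into it that are $L$-isometric $\varepsilon$-embeddings with $\varepsilon$, $1/L$, and the distance $d_{\pi}$ between the pushforward measures all tending to $0$ as $b_2 \to 0$.

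First I would record the geometric input. Both hexagons have area $\pi$ by Gauss-Bonnet. Fix $\varepsilon > 0$ and choose a cusp (horoball) neighbourhood $C \subset H_2$ of the ideal vertex with $\mu(C) < \varepsilon$; its complement $K = H_2 \setminus C$ is compact. Since the hyperbolic structure of a right-angled hexagon depends continuously on its side lengths away from the degenerating side, there is, for $b_2$ small, an embedding $\Phi : K \to H_1$ onto the complement of a small tip region $N_1 \subset H_1$ which nearly preserves distances, $|d_{H_1}(\Phi(x),\Phi(y)) - d_{H_2}(x,y)| \le \eta$ for all $x,y \in K$, and nearly preserves area, with $\eta = \eta(b_2) \to 0$. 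Because the total areas agree and $\Phi$ almost preserves area, the leftover tip $N_1 = H_1 \setminus \Phi(K)$ also satisfies $\mu(N_1) < 2\varepsilon$ once $b_2$ is small. That the degenerating region really has small measure is the crucial point: in the hexagon the side $b_2$ opens into a single developing cusp, whose deep part has area tending to $0$, exactly as for a genuine cusp $\{y \ge y_0\}$ of area $1/y_0$.

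Next I would glue. Let $Z = H_1 \cup H_2$ carry the maximal metric $d_{\Phi}^{\eta}$ of Section \ref{maxmet}, obtained by identifying $x \in K \subset H_2$ with $\Phi(x) \in H_1$. Since $|d_{H_1}(\Phi(x),\Phi(y)) - d_{H_2}(x,y)| \le \eta$ on $K$, Theorem \ref{inclmaxmet} together with Theorem \ref{inclmxmetlisom} shows that the inclusions $\iota_1 : H_1 \to Z$ and $\iota_2 : H_2 \to Z$ are $L$-isometric on the glued thick parts, where $L$ may be taken as large as we like provided $b_2$ is correspondingly small: distances up to any fixed bound are reproduced up to $\eta$, and the only genuinely distorted distances are those running through the cusp, which exceed $L$ in both spaces. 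Discarding the sets $N_1 \subset H_1$ and $C \subset H_2$, each of measure $< 2\varepsilon$, makes $\iota_1$ and $\iota_2$ into $L$-isometric $2\varepsilon$-embeddings.

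Finally I would estimate $d_{\pi}$. On $Z$ the pushforward measures $(\iota_1)_*\mu_1$ and $(\iota_2)_*\mu_2$ agree on the common image of the thick part up to the $\eta$-identification and differ only on the discarded cusp and tip of measure $< 2\varepsilon$, so $d_{\pi}((\iota_1)_*\mu_1,(\iota_2)_*\mu_2) \le \eta + 2\varepsilon$. Plugging into Definition \ref{rhodfn} gives $d_{\rho}(H_1,H_2) \le (\eta + 2\varepsilon) + 1/L + 2\varepsilon$, and all terms can be made small by first choosing $\varepsilon$ small and $L$ large, then taking $b_2$ small enough that $\eta(b_2)$ is small and the above constructions are valid. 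The main obstacle is the first, geometric step: making precise the near-isometry $\Phi$ and the bound $\mu(N_1) < 2\varepsilon$, i.e. the quantitative statement that as one alternate side of a right-angled hexagon shrinks to $0$ the hexagon converges, away from a vanishing-area tip, to the cusped hexagon. This is classical Teichm\"uller-theoretic continuity (continuity of the hexagon coordinates and the collar-cusp comparison), but converting it into honest $L$-isometric $\varepsilon$-embeddings is where the real work lies.
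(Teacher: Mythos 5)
Your route is genuinely different from the paper's, and it is workable in outline, but you have reconstructed heavy machinery for a situation the paper resolves with one observation: both hexagons are \emph{convex} subsets of the hyperbolic plane, so they can be realised simultaneously in a single copy of $\mathbb{H}$, overlapping along their common sides, and then the inclusions are honest isometric embeddings --- no gluing, no truncation, no $L$-isometric $\varepsilon$-embedding bookkeeping at all ($\varepsilon=0$, $L$ arbitrary). With $Z=\mathbb{H}$ the Levy--Prokhorov distance between the two area measures is bounded by the area of the symmetric difference $(H_1\cup H_2)\setminus(H_1\cap H_2)$, so $d_{\rho}(H_1,H_2)$ is bounded by that area. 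The paper then computes this area in closed form: the symmetric difference decomposes into four triangles, each with an ideal vertex, whose Gauss--Bonnet areas sum to $\pi-2\theta-\beta$; as $b_2\to 0$ one has $\beta\to 0$ and $\theta\to\frac{\pi}{2}$, so the bound tends to zero. Your diagnosis of why the preceding quasi-isometry lemma is unusable (no homeomorphism between a compact and a non-compact space, and $\mathrm{diam}(H_2)=\infty$) is correct and is exactly why the paper handles the degenerate hexagons by this separate, explicit argument.

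The substantive problem with your proposal is the step you yourself flag at the end: the near-isometry $\Phi:K\to H_1$ with $|d_{H_1}(\Phi(x),\Phi(y))-d_{H_2}(x,y)|\le\eta(b_2)$ and the area bound $\mu(N_1)<2\varepsilon$ are not proved, only attributed to ``classical Teichm\"uller-theoretic continuity.'' But that quantitative convergence statement \emph{is} the entire content of the lemma; everything else in your argument (the maximal metric $d_{\Phi}^{\eta}$, Theorem \ref{inclmaxmet}, the $d_{\pi}$ estimate) is routine once $\Phi$ exists. So as written the proposal is a reduction of the lemma to an unproved geometric input, not a proof. Note also that the cleanest way to actually produce $\Phi$ with the stated bounds is to place both hexagons in one copy of $\mathbb{H}$ sharing sides and compare them there --- at which point your truncation-and-gluing scaffolding becomes unnecessary and you land precisely on the paper's symmetric-difference argument. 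A minor inefficiency in the same direction: once Theorem \ref{inclmaxmet} applies with $\delta=\eta$, the two inclusions into $Z$ are already genuine isometric embeddings of all of $H_1$ and $H_2$; discarding $C$ and $N_1$ is only needed inside the $d_{\pi}$ estimate, so the $L$-isometric $\varepsilon$-embedding framework you invoke does no real work here.
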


\begin{proof}
The two hexagons can be embedded in $\mathbb{H}$ as shown below

\begin{center}
\includegraphics[scale=.49]{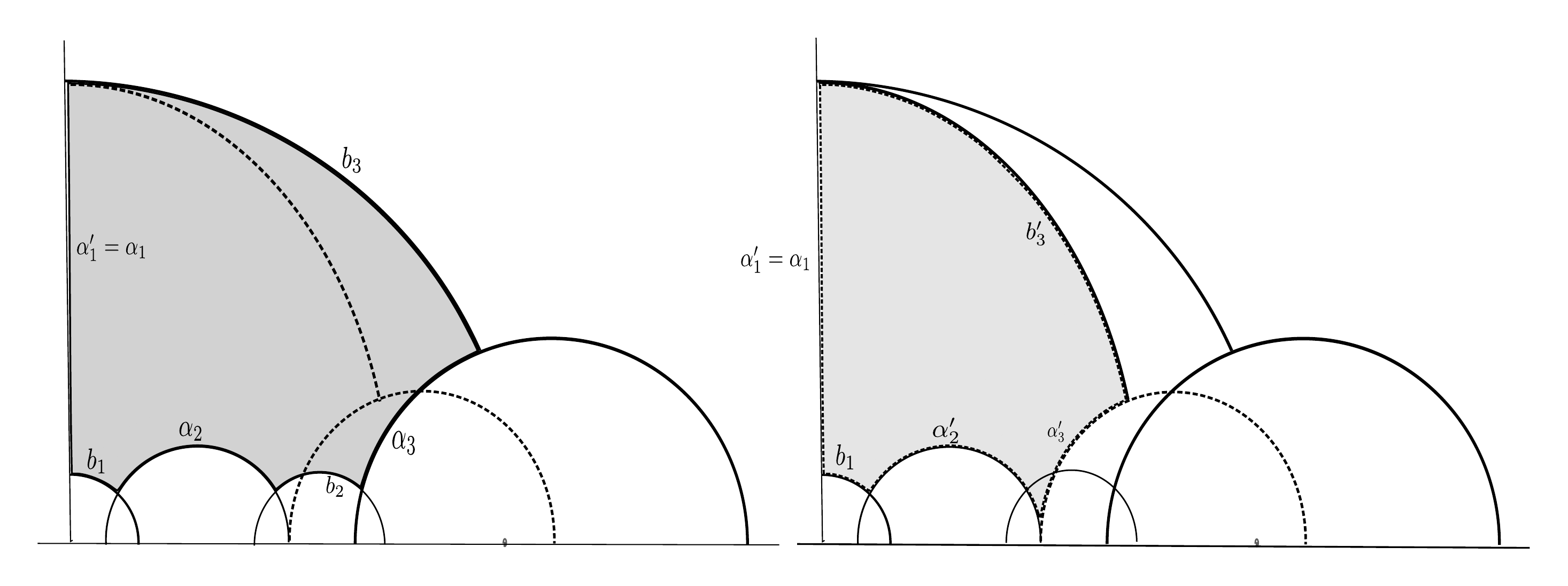}
\end{center}

The hexagon shaded in the picture on the left is $H_1$ and on the right is
$H_2$.  The distance $d_{\rho}(H_1,H_2)$ is clearly less than the area of $(H_1\cup H_2) - (H_1
\cap H_2)$, the symmetric difference. 

\begin{center}
\includegraphics[scale=.5]{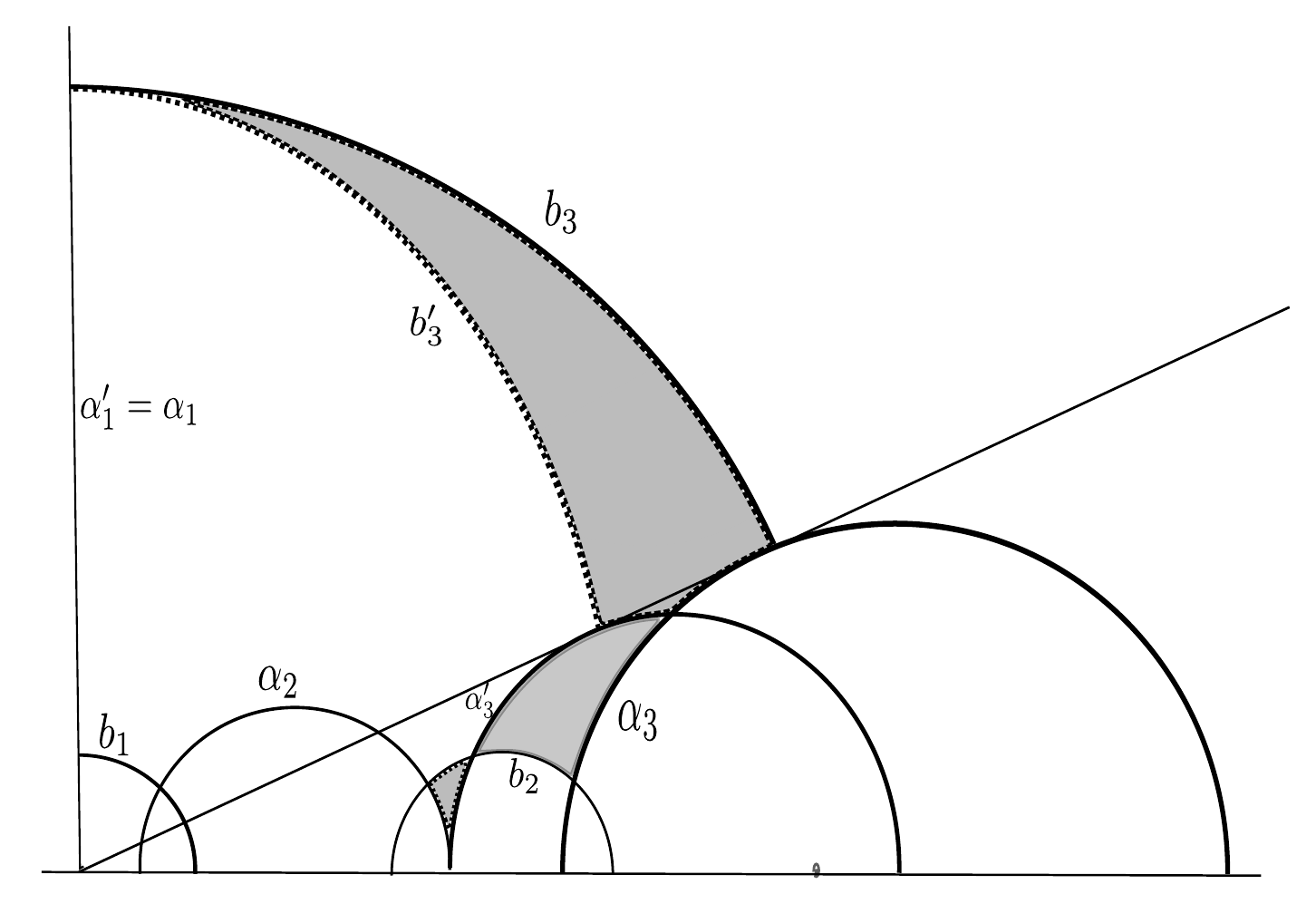}
\end{center}

Using the notations as in the figure below we will now compute this area of the
symmetric difference.

\begin{center}
\includegraphics[scale=.8]{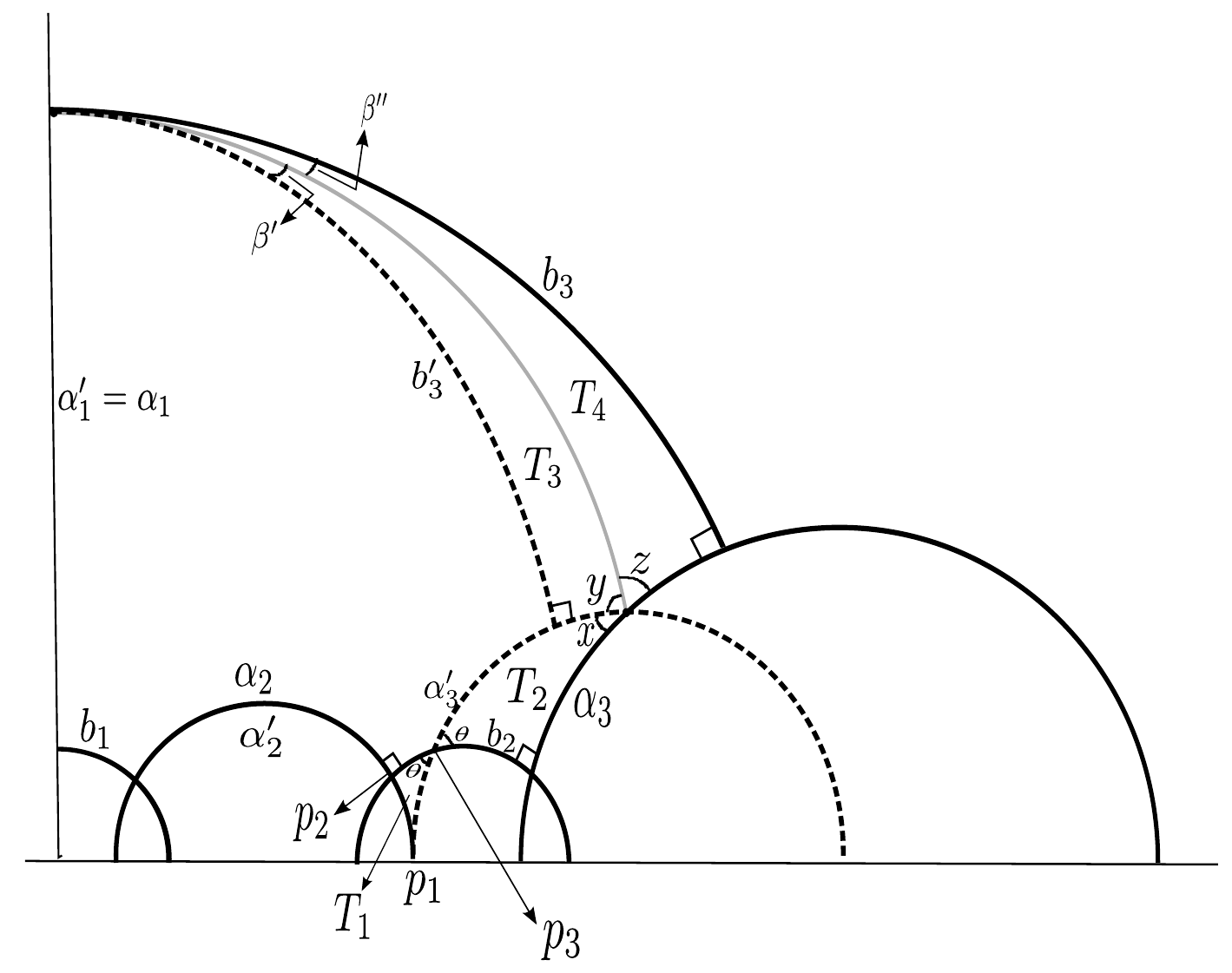}
\end{center}

\begin{align*}
area((H_1\cup H_2) - (H_1 \cap H_2)) &= area(T_1) + area(T_2)+ area(T_3) +
area(T_4)\\
area(T_1) &= \pi - \frac{\pi}{2} - \theta\\
area(T_2) &= \pi - \frac{\pi}{2} - \theta - x\\
area(T_3) &= \pi - \frac{\pi}{2} - \beta' - y\\
area(T_4) &= \pi - \frac{\pi}{2} - \beta'' - z \\
area((H_1\cup H_2) - (H_1 \cap H_2)) &= 2\pi - 2 \theta - (x + y + z) - (\beta'
+ \beta'')\\
&= 2\pi - 2 \theta - \pi - \beta\\
&= \pi - 2 \theta - \beta
\end{align*}

As $b_2$ tends to zero, $\beta$ tends to zero and $\theta$ tends to
$\frac{\pi}{2}$.  It will be slightly easier to see the fact that $\theta$ tends
to $\frac{\pi}{2}$ if, we take a map which takes $p_1$ to $\infty$.  See the
picture below.

\begin{center}
\includegraphics[scale=.35]{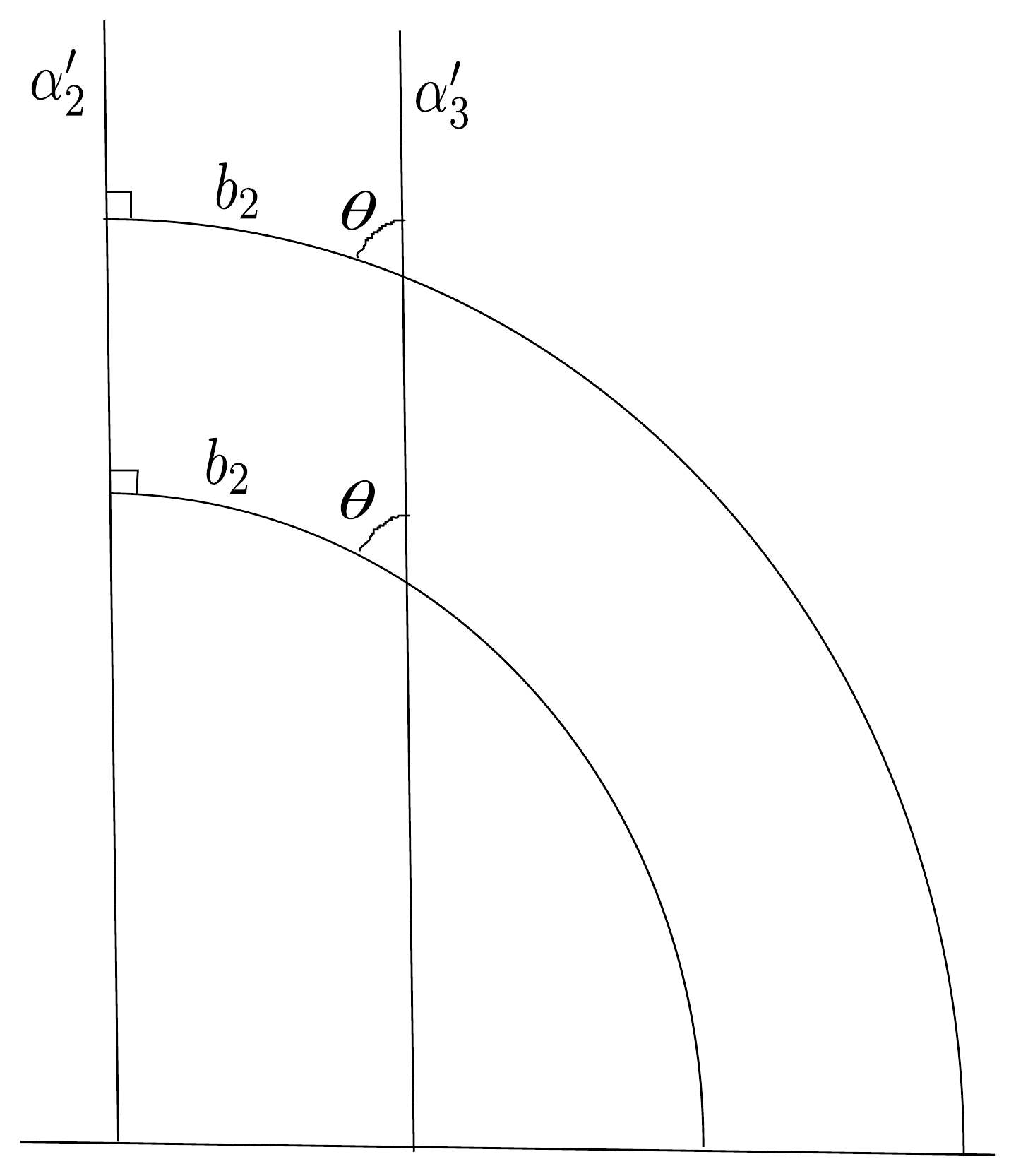}
\end{center}

\end{proof}

\begin{lemma}
Let $H_1, H_2$ be the hexagons with sides $(0,b_2,b_3)$ and $(0,0,b_3)$.  Then
$d_{\rho}(H_1, H_2)$ is small if $b_2$ is small.
\end{lemma}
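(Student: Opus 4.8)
The plan is to proceed exactly as in the previous lemma, the only new feature being that the cuff of length $2b_1$ has already been collapsed to a cusp. First I would realise both degenerate hexagons as geodesically convex regions in the upper half-plane model of $\mathbb{H}$, arranged so that they overlap on as large a common region as possible. Concretely, using the normalisation from the previous proof I would send the ideal vertex arising from the side $b_1=0$ to $\infty$; then $H_1$ (with sides $(0,b_2,b_3)$) and $H_2$ (with sides $(0,0,b_3)$) can be positioned to share the geodesic side of length $b_3$ together with the two sides running asymptotically into the cusp at $\infty$, so that they differ only in a bounded region near the second cuff, which in $H_2$ is pinched to a cusp.

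Both $H_1$ and $H_2$ are convex, so the inclusions $H_i \hookrightarrow \mathbb{H}$ are isometric embeddings into the single distance space $Z=\mathbb{H}$, and the push-forward measures are the hyperbolic area measures $\mu_{H_1},\mu_{H_2}$ supported on the two regions. Exactly as in the previous lemma, for any Borel set $A$ one has $\mu_{H_1}(A)=\mathrm{area}(A\cap H_1)\le \mathrm{area}(A\cap H_2)+\mathrm{area}(H_1\setminus H_2)\le \mu_{H_2}(A)+\mathrm{area}(H_1\triangle H_2)$, and symmetrically, so $d_{\pi}(\mu_{H_1},\mu_{H_2})$ is at most the area of the symmetric difference $(H_1\cup H_2)\setminus(H_1\cap H_2)$. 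Taking $\varepsilon=0$ and letting $L\to\infty$ in Definition \ref{rhodfn} then yields
$$d_{\rho}(H_1,H_2)\le \mathrm{area}\big((H_1\cup H_2)\setminus(H_1\cap H_2)\big).$$
By Gauss--Bonnet each of these (possibly degenerate) right-angled hexagons has area exactly $\pi$, so the two pieces $H_1\setminus H_2$ and $H_2\setminus H_1$ have equal area and it suffices to bound one of them.

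It remains to show that this symmetric-difference area tends to $0$ as $b_2\to 0$. Since the two figures are arranged to coincide near the cusp at $\infty$ and along the common side $b_3$, the region where they differ lies in a fixed bounded neighbourhood of the degenerating cuff, and its area is an angle-defect quantity computed by Gauss--Bonnet in the same way as the expression $\pi-2\theta-\beta$ of the previous lemma: as $b_2\to 0$ the pinching angle tends to $0$ and the relevant base angle tends to $\tfrac{\pi}{2}$, so the defect, and hence $d_{\rho}(H_1,H_2)$, tends to $0$. The one point requiring care --- and the main obstacle --- is the presence of the ideal vertices: one must verify that the symmetric difference does not leak into the cusps but stays in a compact part of $\mathbb{H}$, so that its area is finite and is genuinely governed by the vanishing angle defect. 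Placing the $b_1=0$ cusp at $\infty$ and checking that both regions share the same two asymptotic sides there makes this transparent, reducing the estimate to the compact computation already carried out in the non-degenerate case.
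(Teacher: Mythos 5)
Your proposal is correct and takes essentially the same approach as the paper: the paper's proof is literally ``the same calculations as before with a slightly modified picture,'' i.e.\ embed both degenerate hexagons in $\mathbb{H}$ overlapping as much as possible, bound $d_{\rho}(H_1,H_2)$ by the area of the symmetric difference, and compute that area as the angle defect $\pi - 2\theta - \beta$, which tends to $0$ since $\theta \to \frac{\pi}{2}$ and $\beta \to 0$ as $b_2 \to 0$. One small correction to your cautionary remark: the symmetric difference need \emph{not} stay in a compact part of $\mathbb{H}$ --- the two sides asymptotic to the cusp at $\infty$ share only the ideal endpoint, not the geodesic itself, so a thin strip of the difference does run into the cusp --- but such a strip between asymptotic geodesics has finite area, and this is exactly what the Gauss--Bonnet angle-defect computation (valid for polygons with ideal vertices) already accounts for, so your estimate and conclusion stand.
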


\begin{proof}
The same calculations as before would work with the slightly modified picture
given below.
\begin{center}
\includegraphics[scale=.8]{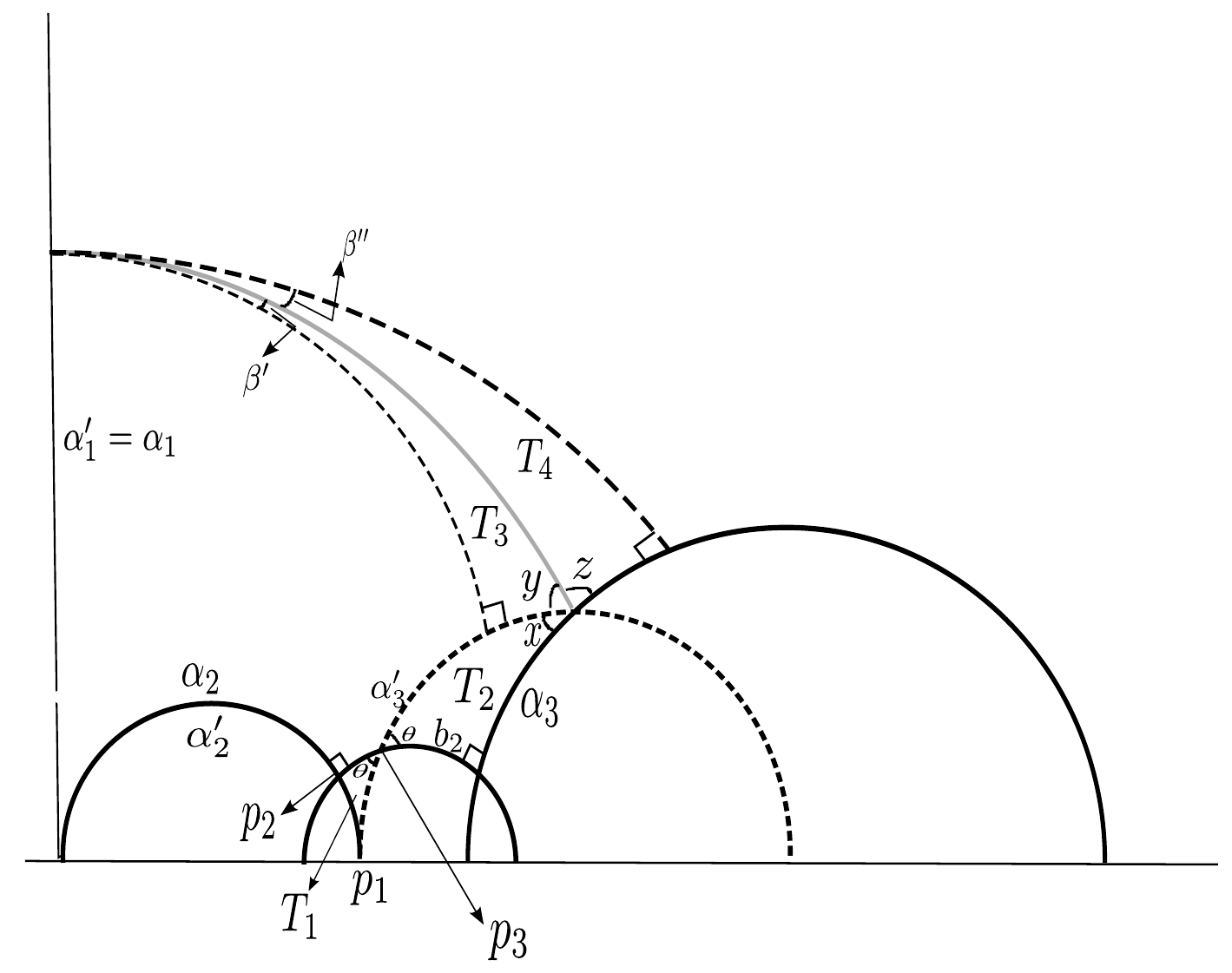}
\end{center}

\end{proof}

\begin{lemma}
Let $H_1, H_2$ be the hexagons with sides $(0,0,b_3)$ and $(0,0,0)$.  Then
$d_{\rho}(H_1, H_2)$ is small if $b_3$ is small.
\end{lemma}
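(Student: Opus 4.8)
The plan is to follow the template of the two preceding lemmas: realise both ``hexagons'' as convex subsets of the hyperbolic plane $\mathbb{H}$, bound $d_{\rho}(H_1,H_2)$ by the area of their symmetric difference, and show that this area tends to $0$ as $b_3 \to 0$. Geometrically, the positive side $b_3$ records a boundary geodesic that has not yet degenerated into a cusp: the (degenerate) hexagon $H_1 = (0,0,b_3)$ is a convex quadrilateral with two ideal vertices, two right angles, and one finite side of length $b_3$, while $H_2 = (0,0,0)$ is an ideal triangle. By Gauss--Bonnet both have area exactly $\pi$ (for a hyperbolic geodesic $n$-gon with interior angles $\theta_i$ the area is $(n-2)\pi - \sum_i \theta_i$, giving $2\pi - (\tfrac{\pi}{2}+\tfrac{\pi}{2}) = \pi$ for $H_1$ and $\pi - 0 = \pi$ for $H_2$).

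First I would place both figures in $\mathbb{H}$ so that they share the complete geodesic joining the two ideal vertices common to $H_1$ and $H_2$, lying on the same side of it. As in the previous lemmas, the inclusions $\iota_i$ of $H_i$ into $Z := H_1 \cup H_2 \subset \mathbb{H}$ are honest isometric embeddings, since a convex subset of $\mathbb{H}$ carries the restricted metric as its intrinsic metric; hence each is an $L$-isometric $\varepsilon$-embedding with $\varepsilon = 0$ and $L$ arbitrarily large. Taking the infimum in Definition~\ref{rhodfn} over $L \to \infty$ and writing $\mu_1,\mu_2$ for the two area measures gives
$$d_{\rho}(H_1,H_2) \leq d_{\pi}\big((\iota_1)_*\mu_1,(\iota_2)_*\mu_2\big) \leq area\big((H_1 \cup H_2) - (H_1 \cap H_2)\big),$$
the last step because the two measures agree on $H_1 \cap H_2$, so any discrepancy $\mu_i(A) - \mu_j(A^{\varepsilon})$ is bounded by the area carried on the symmetric difference.

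It then remains to show that $area\big((H_1 \cup H_2) - (H_1 \cap H_2)\big) \to 0$ as $b_3 \to 0$. Since both areas equal $\pi$, we have $area\big((H_1 \cup H_2) - (H_1 \cap H_2)\big) = 2\pi - 2\,area(H_1 \cap H_2)$, so it suffices to see that the overlap area tends to $\pi$, i.e.\ that $H_1$ converges to $H_2$. As $b_3 \to 0$ the finite side of $H_1$ (the common perpendicular, of length $b_3$, of the two non-shared seams) recedes towards the third ideal vertex $V_3$ of the limiting triangle, and $H_1$ converges to $H_2$; the symmetric difference is then contained in a shrinking neighbourhood of $V_3$. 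Combining this with the displayed inequality yields $d_{\rho}(H_1,H_2) \to 0$ as $b_3 \to 0$, which is the assertion.

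The main obstacle is precisely this last area estimate near the degenerating ideal vertex. The excised region splits into the cusp region of the triangle cut off beyond the cross-cut of length $b_3$, together with two thin slivers between the seams of $H_1$ and the (asymptotic) sides of the triangle through $V_3$, which cannot be made to coincide with the seams exactly. The cusp piece lies beyond a cross-cut that recedes into the cusp as $b_3 \to 0$, and the area of a cusp neighbourhood beyond such a cross-cut tends to $0$ as the cut approaches the ideal vertex; the slivers pinch off at $V_3$ and are controlled in the same way. Making this quantitative---for instance by fixing the placement $V_1,V_2,V_3 = -1,\infty,1$ and integrating the area form $y^{-2}\,dx\,dy$ over the excised region---is routine once the configuration is pinned down, and I would relegate it to an explicit picture as in the preceding lemmas.
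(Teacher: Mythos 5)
Your proposal is correct and follows essentially the same route as the paper: realise both degenerate hexagons in $\mathbb{H}$, bound $d_{\rho}(H_1,H_2)$ by the area of the symmetric difference, and show that area vanishes as $b_3 \to 0$. The only cosmetic difference is in the last step, where the paper computes the symmetric difference exactly as an angle deficit $\pi - 2\theta - \beta$ with $\theta \to \frac{\pi}{2}$ and $\beta \to 0$, while you deduce it from Gauss--Bonnet (both regions have area $\pi$) together with the geometric convergence of $H_1$ to the ideal triangle near the degenerating vertex --- both arguments rest on the same picture and are equally rigorous.
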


\begin{proof}
Again we compute the symmetric difference as in the previous two cases.
\begin{center}
\includegraphics[scale=.48]{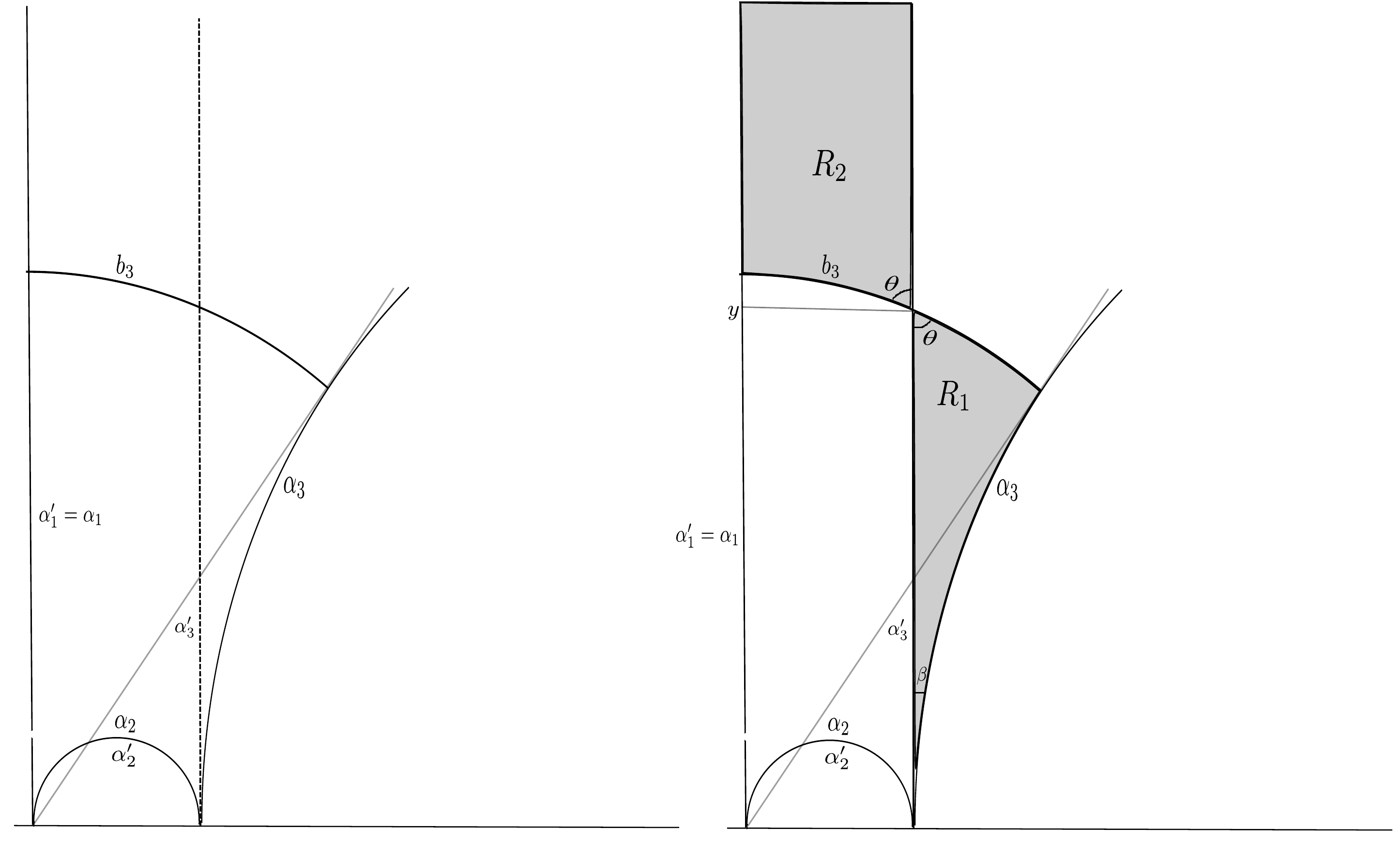}
\end{center}

\begin{align*}
area((H_1\cup H_2) - (H_1 \cap H_2)) &= area(R_1) + area(R_2)\\
area(R_1) &= \pi - \frac{\pi}{2} - \theta - \beta\\
area(R_2) &= \pi - \frac{\pi}{2} - \theta \\
area((H_1\cup H_2) - (H_1 \cap H_2)) &= \pi - 2\theta - \beta.
\end{align*}

As $\theta \to \frac{\pi}{2}$ and $\beta \to 0$, we have the
result.
\end{proof}

\begin{lemma}
If $d_{\pi}^{\mathbb{H}}(H_1,H_2) = \varepsilon$ then the pair of paints $P_1$ and
$P_2$, formed from $H_1$ and $H_2$, satisfy $d_{\rho}(P_1,P_2) = \varepsilon$.
\end{lemma}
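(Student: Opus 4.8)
The plan is to realize each pair of pants $P_i$ as the metric double of the hexagon $H_i$ across its three seam geodesics, and then transport the hypothesis $d_\pi^{\mathbb H}(H_1,H_2)=\varepsilon$ through this doubling. First I would fix the isometric embeddings $H_1,H_2\hookrightarrow\mathbb H$ that witness $d_\pi^{\mathbb H}(H_1,H_2)=\varepsilon$, placed (as in the preceding lemmas) so that the two hexagons share their seam sides. I would then build a single target distance space $Z$ by doubling $\mathbb H$ across the common seam geodesics, using the gluing and maximal-metric construction of Section \ref{maxmet}; write $\sigma\colon Z\to Z$ for the resulting reflection involution, whose fixed set is the seam. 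Because the seams of $H_1$ and $H_2$ are the same geodesics, both doubles $P_1,P_2$ embed into $Z$, one hexagon half landing in each copy of $\mathbb H$.

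Theorems \ref{inclmaxmet} and \ref{inclmxmetlisom} guarantee that these inclusions are genuine isometric embeddings (indeed $L$-isometric for every $L$), so the defining infimum for $d_\rho$ may be evaluated on $Z$ with $L=\infty$ and empty exceptional set; hence $d_\rho(P_1,P_2)\le d_\pi^{Z}(\hat\mu_1,\hat\mu_2)$, where $\hat\mu_i$ is the area measure of $P_i$ pushed into $Z$ (cf. Lemma \ref{pirho} and Lemma \ref{piinclusionnicebehaviour}). The heart of the argument is the estimate $d_\pi^{Z}(\hat\mu_1,\hat\mu_2)\le\varepsilon$. For a Borel set $A\subset Z$ I would split $A$ along the two copies of $\mathbb H$, identify each piece with a subset of $\mathbb H$, and feed it into the hypothesis $\mu_1(B)\le\mu_2(B^\varepsilon)+\varepsilon$. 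The two measures $\hat\mu_1,\hat\mu_2$ are $\sigma$-invariant and the seam carries zero area, so I would exploit this symmetry to keep the additive tolerance from being counted on each half separately: the reflection identifies the contributions of the two copies, letting a single application of the $\mathbb H$-estimate control both. Matching the neighbourhood $A^\varepsilon$ taken in $Z$ with the $\varepsilon$-neighbourhoods taken in $\mathbb H$ — in particular handling points whose nearest approach to $A$ crosses a seam — is the delicate bookkeeping, and I expect this seam/symmetry accounting (which is exactly what must be controlled to land on the constant $\varepsilon$ rather than a doubled constant) to be the main obstacle.

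Finally, for the reverse inequality $d_\rho(P_1,P_2)\ge\varepsilon$ I would argue the other way: the reflection $\sigma$ exhibits $H_i$ as a fundamental domain of $P_i$, so any $L$-isometric $\varepsilon'$-embedding of the $P_i$ into a common space restricts to one of the $H_i$, and the induced measures are the hexagon area measures; this gives $d_\pi^{\mathbb H}(H_1,H_2)\le d_\rho(P_1,P_2)$, which combines with the upper bound to yield equality. Throughout, the only nonroutine point is the compatibility of neighbourhoods and measures across the seam; everything else is a direct application of the gluing results of Section \ref{maxmet} together with the behaviour of $d_\pi$ under isometric embeddings recorded in Lemmas \ref{piinclusionnicebehaviour} and \ref{pirho}.
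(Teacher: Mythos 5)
Your construction is in essence the paper's, but with two defects in the upper bound. The paper takes $Z=\mathbb{H}\times\{1,2\}$ glued by the identity map $I(x,1)=(x,2)$ on $E$, the union of \emph{all} edges of $H_1$ and $H_2$, observes that $I$ is an isometry, and invokes Theorem \ref{inclmaxmet} (with $\delta=0$) to conclude that the canonical embeddings of $P_1,P_2$ into $(Z,d_I^{0})$ are isometric, whence $d_{\pi}^{Z}(P_1,P_2)=\varepsilon$. First, you instead double $\mathbb{H}$ ``across the common seam geodesics,'' assuming the hexagons can be ``placed so that the two hexagons share their seam sides.'' For hexagons with different side data this is impossible in general, and nothing in the hypothesis provides it; the paper needs no such coincidence, because its gluing map is the identity on a subset of a single copy of $\mathbb{H}$, so the two pants embed into the same $Z$ regardless of how $H_1$ and $H_2$ sit relative to one another. (Gluing along the union of the seams of \emph{both} hexagons, without assuming they coincide, would repair your version.) Second, and more seriously, you never complete the central estimate: you correctly note that splitting a Borel set $A\subset Z$ into its two sheets and applying $\mu_1(B)\le\mu_2(B^{\varepsilon})+\varepsilon$ on each sheet only yields $\hat{\mu}_1(A)\le\hat{\mu}_2(A^{\varepsilon})+2\varepsilon$, i.e.\ $d_{\pi}^{Z}\le 2\varepsilon$, and you defer the removal of the factor $2$ to an unspecified ``seam/symmetry accounting'' that you yourself call the main obstacle. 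That is an honest flag, but it means the proposal does not actually prove the stated bound; your $\sigma$-equivariance idea is the right ingredient, but it must be carried out (e.g.\ by producing an equivariant matching), not merely invoked. This is exactly the step the paper's one-line proof leaves implicit, so your proposal stops precisely where the published argument is terse.

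Your reverse inequality also fails as sketched. Restricting an $L$-isometric $\varepsilon'$-embedding of $P_i$ to the hexagon half shows at best that $H_1,H_2$ admit embeddings into a common space, giving something like $d_{\rho}(H_1,H_2)\le d_{\rho}(P_1,P_2)$ (modulo the $\frac{1}{L}+\varepsilon'$ terms). But the hypothesis $d_{\pi}^{\mathbb{H}}(H_1,H_2)=\varepsilon$ concerns one fixed pair of embeddings into $\mathbb{H}$ and therefore only \emph{upper}-bounds the infimum defining $d_{\rho}(H_1,H_2)$, which may be strictly smaller than $\varepsilon$; the chain of inequalities runs in the wrong direction and cannot produce $d_{\rho}(P_1,P_2)\ge\varepsilon$. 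Note that the paper's own proof likewise establishes only the upper-bound direction, $d_{\rho}(P_1,P_2)\le d_{\pi}^{Z}(P_1,P_2)$ via Lemma \ref{pirho} and Theorem \ref{inclmaxmet}, and that the upper bound is all that is used in the subsequent comparison of $d_{\rho}$ with the Fenchel--Nielsen topology; so the correct fix is to prove (or settle for) the inequality rather than manufacture a lower bound that the hypothesis cannot support.
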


\begin{proof}
Let $Z := \mathbb{H} \times \lbrace 1,2\rbrace$, $E$ the union of all edges of
$H_1$ and $H_2$, and $I:E\times\{1\}\to E\times \{2\}$ be $I(x,1) = (x,2)$.
Then $I$ is an isometry.  So, by Theorem \ref{inclmaxmet} the canonical
embedding of $P_j$ into $(Z, d_I^0)$ are isometries and $d_{\pi}^{Z}(P_1,P_2) = \varepsilon$.  Hence the result.
\end{proof}


Thus we have,

\begin{theorem}
The topology generated by $d_{\rho}$ is the same as that given by the Fenchel
Nielson co-ordinates.
\end{theorem}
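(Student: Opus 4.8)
The plan is to show the two topologies agree by proving that a sequence of (cusp) curves converges in one if and only if it converges in the other; since both topologies are first countable (indeed metrizable, as the Fenchel--Nielsen topology is Euclidean in coordinates and $d_{\rho}$ is a metric here), this suffices. Thus I would fix a sequence $S_n$ and a cusp curve $S_\infty$ and establish the equivalence ``$S_n \to S_\infty$ in Fenchel--Nielsen coordinates $\iff d_{\rho}(S_n,S_\infty)\to 0$'', treating the two implications separately.

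For the forward implication (Fenchel--Nielsen convergence implies $d_{\rho}$ convergence), I would fix a pants decomposition of $S_\infty$ adapted to its nodes and use the convergence of the length and twist parameters to build, piece by piece, a comparison between the corresponding pair of pants of $S_n$ and of $S_\infty$. On each pair of pants whose boundary lengths stay bounded away from $0$, Chapter~3 of \cite{buser} provides a $q_n$-quasi-isometry with $q_n\to 1$, so the quasi-isometry lemma above gives that the $d_{\rho}$-distance between these pieces tends to $0$. On each pair of pants where a boundary length tends to $0$ (a pinching node), the three degenerate-hexagon lemmas and the ensuing pair-of-pants lemma show that the piece of $S_n$ is $d_{\rho}$-close to the limiting degenerate piece. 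I would then assemble these local estimates into a global one using the gluing construction of Section~\ref{maxmet}, Theorem~\ref{inclmaxmet}, and the triangle inequality (Theorem~\ref{triangleinequality}), exactly as in the proof of the triangle inequality: realize $S_n$ and $S_\infty$ in a common distance space by gluing the piecewise comparison spaces along the (approximately isometric) pants curves, and bound $d_{\rho}(S_n,S_\infty)$ by the sum of the per-piece contributions plus the gluing error.

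For the reverse implication ($d_{\rho}$ convergence implies Fenchel--Nielsen convergence) I would argue by contradiction using compactness, which lets me avoid reconstructing coordinates directly from $d_{\rho}$. Suppose $d_{\rho}(S_n,S_\infty)\to 0$ but $S_n\not\to S_\infty$ in Fenchel--Nielsen coordinates; then some subsequence stays bounded away from $S_\infty$. By compactness of the Deligne--Mumford compactification in the Fenchel--Nielsen topology (a classical fact, legitimate to invoke here since it is established independently of $d_{\rho}$), this subsequence has a further subsequence $S_{n_k}$ converging in Fenchel--Nielsen coordinates to some cusp curve $S'\neq S_\infty$. By the forward implication already proved, $d_{\rho}(S_{n_k},S')\to 0$; combined with $d_{\rho}(S_{n_k},S_\infty)\to 0$ and the triangle inequality this forces $d_{\rho}(S',S_\infty)=0$. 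But $d_{\rho}$ is a metric on the space of cusp curves (it is positive on distinct hyperbolic surfaces because open sets have positive area, via Theorem~\ref{positivityofrho}), so $S'=S_\infty$, a contradiction.

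The main obstacle I expect is the globalization step in the forward direction: the preceding lemmas control each pair of pants only in isolation, and assembling them requires care that the gluing maps along the pants curves are genuinely (approximately) isometric and that the errors add rather than compound. This is where I would lean most heavily on Theorem~\ref{inclmaxmet} and the bookkeeping from the triangle-inequality proof, and where the twist parameters enter: for nodes being pinched the twist washes out in the limit, while for curves of bounded length the convergence of twists must be fed into the quasi-isometry estimate on the adjacent thick pieces, so that the glued comparison map remains an $L_n$-isometric $\varepsilon_n$-embedding with $L_n\to\infty$ and $\varepsilon_n\to 0$.
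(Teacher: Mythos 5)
Your proposal is correct in outline and, in the one direction the paper actually argues, follows the paper's route exactly: the paper's ``proof'' consists of the quasi-isometry lemma (bounding $d_{\rho}(R,S)$ for $q$-quasi-isometric surfaces in terms of $q-1$, the areas and the diameter), the appeal to Chapter~3 of \cite{buser} for quasi-isometries between non-degenerate pairs of pants and for gluing, the three hexagon lemmas bounding $d_{\rho}$ by the hyperbolic area of the symmetric difference of hexagons embedded in $\mathbb{H}$, and the lemma transferring the hexagon estimate to the doubled pairs of pants via Theorem~\ref{inclmaxmet}; after these lemmas the paper simply writes ``Thus we have'' and states the theorem. You reproduce this forward direction faithfully, and the globalization step you flag as the main obstacle --- assembling the per-pants comparisons with the maximal-metric gluing of Section~\ref{maxmet}, Theorem~\ref{inclmaxmet} and the bookkeeping of Theorem~\ref{triangleinequality}, with twist convergence fed into the quasi-isometries on the thick pieces --- is precisely the step the paper leaves implicit; since the number of pants is determined by the genus, the errors accumulate over a bounded number of gluings and your plan goes through. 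Where you genuinely diverge is the converse implication: the paper offers no argument whatsoever that $d_{\rho}$-convergence forces Fenchel--Nielsen convergence, whereas you supply one by contradiction, using compactness of the Deligne--Mumford compactification in the Fenchel--Nielsen topology to extract a subsequential limit $S'$, the already-proved forward direction plus the triangle inequality to get $d_{\rho}(S',S_{\infty})=0$, and then positivity (Theorem~\ref{positivityofrho} together with the fact that nonempty open subsets of a hyperbolic surface have positive area) to conclude $S'=S_{\infty}$. This compactness trick is the efficient way to obtain the reverse inclusion of topologies without reconstructing length and twist parameters from $d_{\rho}$ directly, at the mild cost of invoking the classical compactness of the Deligne--Mumford space, which is legitimate since that fact is established independently of $d_{\rho}$. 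In short, your proposal matches the paper's approach where the paper has one, and it fills in the direction the paper omits.
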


\section{Laminations and Riemann surface laminations}

\subsection{Laminations}
We will recall some of the key definitions related to laminations.  The exposition here is based on \cite{candelfol},
\cite{lyubich} and the article on Riemann surface laminations in \cite{ghys} by Etienne Ghys.

Roughly speaking, a lamination $\mathcal{L}$ is a topological space which is decomposed into smooth immersed 
sub-manifolds(called ``leaves") nicely organised in a local product structure.
  
A formal definition goes as follows.  A $d$-dimensional product lamination is a topological space of the form 
$U^d \times T$, where $U^d$ is a domain in $\mathbb{R}^d$ called a \textit{local leaf} or \textit{plaque}, and 
$T$ a topological space called \textit{regular transversal}.

A morphism between two product laminations is a continuous map that maps local leaves to local leaves.  It is also 
called a laminar map. 

\begin{dfn}[Laminations]
A $d$-dimensional lamination $\mathcal{L}$ (or briefly $d$-lamination) is a topological space $X$ endowed with the 
following structure: For any point in $x \in X$ there is a neighbourhood $U$ containing $x$ and a homeomorphism 
$\phi: U \to U^d \times T$ onto a $d$-dimensional product lamination $U^d \times T$ such that the transition maps 
$\phi_2\circ \phi_1^{-1}|_{\phi_1(U_1 \cap U_2)}:\phi_1(U_1 \cap U_2) \to \phi_2(U_1 \cap U_2)$ between the product 
laminations are laminar.  These homeomorphisms are called local charts.  The corresponding neighbourhoods $U$ are 
called flow boxes. The sets $\phi^{-1}(U^d\times \{t\rbrace)$ are called local leaves or plaques of $\mathcal{L}$.  
The sets $\phi^{-1}(\{x\rbrace\times T)$ are called regular transversals.
\end{dfn}

\begin{dfn}[Global leaf]
Any lamination $\mathcal{L}$ is decomposed into disjoint union of global leaves in the following way:  two points $x$
and $y$ belong to the same global leaf if there is a sequence of local leaves $L_0,L_1,...,L_k$ such that 
$x \in L_0$, $y \in L_k$ and $L_i \cap L_{i+1} \neq \emptyset$ for $i = 0,1,...,k-1$.  The global leaf passing through
$x$ will be denoted by $L(x)$.
\end{dfn}

\begin{dfn}
A \emph{transversal} is a Borel subset which intersects each leaf in at-most countably many points.
\end{dfn}

\subsection{Riemann surface lamination} A Riemann surface lamination is a locally compact, separable,
 metrisable space $M$ with an open cover by flow boxes $\{U_i\rbrace_{i\in I}$ and homeomorphisms $\phi_i:U_i \to D_i \times T_i$,
with $D_i$ an open set in $\mathbb{C}$ and $T_i$ a metric space, such that the coordinate changes in $U_i \cap U_j$ are of the form 
$$\phi_j \circ \phi_i^{-1}(z,t) = (\lambda_{ji}(z,t),\tau_{ji}(t))$$
where the map $z\to \lambda_{ji}(z,t)$ is holomorphic for each $t$.

A map $f: M \to N$ of Riemann surface laminations is holomorphic if it is continuous and maps each leaf of $M$ holomorphically to a leaf of $N$.

\subsubsection{Invariant transverse measures} A transverse measure for $M$ is a measure on the $\sigma$-ring of transversals 
which restrict to a $\sigma$-finite measure on each transversal and such that each compact regular transversal has finite 
mass.  It is called invariant if it is invariant by the holonomy transformations acting on transversals.

\subsection{Hyperbolic Riemann surface lamination}  A Riemann surface lamination is called a hyperbolic Riemann surface lamination if all 
leaves are hyperbolic Riemann surfaces.  By Theorem 4.3 in \cite{candeluniform} and Proposition 5.7 in the article on Riemann surface 
laminations in \cite{ghys} we know that this matches with the general definition.

\section{Compact subsets of the space of Riemann surface laminations}

To study  Riemann surface laminations using the same techniques, there are some hurdles.  Firstly, a lamination 
structure is just a topological structure and not a metric structure, i.e., the transition maps in the transverse 
directions are just homeomorphisms.  So, If we want to have a natural metric structure, we need to make the 
definition stricter.  A natural class of maps between metric spaces are Lipschitz maps.  So we defined

\begin{dfn}
\label{LLipschitzRSL}
An \emph{$L$-Lipschitz Riemann surface lamination} is a Riemann surface lamination where the coordinate changes 
$\tau_{ji}(t)$ are $L$-Lipschitz for all $i,j$.
\end{dfn}

Now we can use the leaf-wise metric and the transverse metric to define a metric on a
Riemann surface lamination.

\begin{dfn}[Metric on L-Lipschitz Riemann Surface Laminations]\label{metriconLLipschitzRSL} 
To obtain a metric, we use a Kobayashi type
 construction, namely, we consider the maximal metric on $M$ such that, the maps 
 $\phi_i^{-1}: D_i \times T_i \to U_i$ are distance decreasing functions.  
 
\end{dfn}

\subsection{Measure on a Riemann surface lamination}\label{SEC: measonRSL} The second hurdle was that a Riemann 
surface lamination does not come equipped with
a measure.  But, given an invariant transverse measure $\nu_i$ on $T_i$, on each $D_i \times T_i$ there is a natural
measure given by 
$$\int_{T_i} \left(\int _{D_i \times \lbrace t\rbrace} \sigma_i \right) d\nu_i(t).$$
where, $\sigma_i$ is the hyperbolic area measure on $D_i$.

\begin{lemma}
If $E \subset U_i \cap U_j$ then,
$$\int_{T_i} \left(\int _{D_i \times \lbrace t\rbrace} \chi_{\phi_i(E)} \cdot \sigma_i \right) d\nu_i(t) = \int_{T_j} \left(\int _{D_j \times \lbrace t\rbrace} \chi_{\phi_j(E)} \cdot \sigma_j \right) d\nu_j(t).$$ 
\label{measindepchart} 
\end{lemma}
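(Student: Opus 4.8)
The plan is to exhibit both sides as the integral of the leaf-wise hyperbolic area form against the invariant transverse measure, computed in two different charts, and to show that the transition map $\Phi := \phi_j \circ \phi_i^{-1}$ carries one computation to the other. First I would record that, on $\phi_i(U_i \cap U_j)$, the transition map has the product form $\Phi(z,t) = (\lambda_{ji}(z,t), \tau_{ji}(t))$ with $z \mapsto \lambda_{ji}(z,t)$ holomorphic for each $t$, and that $\phi_j(E) = \Phi(\phi_i(E))$. Since $\Phi$ is a homeomorphism onto its image and, for each fixed $t$, its first coordinate is a locally injective holomorphic map in $z$, the transverse factor $\tau_{ji}$ must itself be a homeomorphism onto its image; this is precisely the holonomy transformation associated to the overlap $U_i \cap U_j$. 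Treating both sides as iterated integrals (transversal on the outside, plaque on the inside), it then suffices to match the inner and outer integrals separately.

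For the inner integral, fix $t \in T_i$ and set $s = \tau_{ji}(t)$. The slice map $\lambda_{ji}(\cdot, t) : D_i \to D_j$ is a biholomorphism onto its image identifying the plaque $D_i \times \{t\}$ with the plaque $D_j \times \{s\}$; both are coordinate patches of one and the same global leaf $L$. Because the leaves are hyperbolic Riemann surfaces, $L$ carries its intrinsic (uniformization) hyperbolic metric, whose area form does not depend on the coordinate used to describe it. Writing the hyperbolic densities as $\rho_i(z)^2 |dz|^2$ and $\rho_j(w)^2 |dw|^2$, the conformal transformation law $\rho_i(z) = \rho_j(\lambda_{ji}(z,t)) \, |\partial_z \lambda_{ji}(z,t)|$ together with the holomorphic Jacobian $|\partial_z \lambda_{ji}(z,t)|^2$ of $\lambda_{ji}(\cdot,t)$ gives, by the change-of-variables formula,
\begin{equation*}
\int_{D_j \times \{s\}} \chi_{\phi_j(E)} \cdot \sigma_j = \int_{D_i \times \{t\}} \chi_{\phi_i(E)} \cdot \sigma_i .
\end{equation*}
Equivalently, $\lambda_{ji}(\cdot,t)$ is a hyperbolic isometry, hence preserves the hyperbolic area measure.

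For the outer integral, I would use that $\nu_i$ and $\nu_j$ are \emph{invariant} transverse measures and that $\tau_{ji}$ is the holonomy homeomorphism between the relevant transversals, so that $(\tau_{ji})_* \nu_i = \nu_j$ on the projection of $U_i \cap U_j$. Substituting $s = \tau_{ji}(t)$ and inserting the slice identity above then yields
\begin{equation*}
\int_{T_j} \left( \int_{D_j \times \{s\}} \chi_{\phi_j(E)} \cdot \sigma_j \right) d\nu_j(s) = \int_{T_i} \left( \int_{D_i \times \{t\}} \chi_{\phi_i(E)} \cdot \sigma_i \right) d\nu_i(t),
\end{equation*}
which is the assertion. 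Measurability of the integrands, needed to apply Fubini's theorem and the change of variables, follows from continuity of $\Phi$, $\lambda_{ji}$, $\tau_{ji}$ together with Borel measurability of $E$.

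The step I expect to be the main obstacle is the inner identity: it is only correct when $\sigma_i$ is understood as the restriction of the \emph{leaf-wise} hyperbolic area form (not the hyperbolic area of $D_i$ viewed as a planar domain), so that the essential content is that a biholomorphism between two coordinate patches of a fixed hyperbolic Riemann surface is a hyperbolic isometry and therefore area-preserving. The outer step carries a smaller subtlety, namely verifying that $\tau_{ji}$ is genuinely a homeomorphism of transversals so that holonomy-invariance of $\nu$ applies; this is where the product form of $\Phi$ and the injectivity of $\Phi$ are used.
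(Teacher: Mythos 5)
Your proposal is correct and follows essentially the same route as the paper's proof: the paper likewise observes that the leafwise transition maps are biholomorphisms, hence hyperbolic isometries and area-preserving (so $(\phi_j\circ\phi_i^{-1})_*(\sigma_i)=\sigma_j$), invokes holonomy invariance to get $\nu_j=(\gamma_{ij})_*(\nu_i)$, and concludes by ``standard chain rule arguments,'' which are exactly the Fubini and change-of-variables steps you spell out. Your added remark that $\sigma_i$ must be read as the restriction of the leafwise hyperbolic area form (not the intrinsic Poincar\'e area of the planar domain $D_i$) is a correct and worthwhile clarification of a point the paper leaves implicit.
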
 

\begin{proof}
Observe that, $\phi_j \circ \phi_i^{-1}|_{\phi_i(U_i \cap U_j)}$ and hence $\phi_j \circ \phi_i^{-1}|_{\phi_i(E)}$ are 
biholomorphisms.  Thus they are isometries and area-preserving maps, i.e., 
$(\phi_j \circ \phi_i^{-1})_*(\sigma_i) = \sigma_j$.  Also, the transverse measure is holonomy invariant implies that
$\nu_j = (\gamma_{ij})_*(\nu_i)$ where $\gamma_{ij}$ is the holonomy co-cycle corresponding to the charts $U_i$ and 
$U_j$.  Thus, standard chain rule arguments give us the result. 
\end{proof}  

\begin{dfn}
\label{measonRSL}
If the lamination structure is given by charts $\phi_i: U_i \to D_i \times T_i$ then, for $E \subset M$ define $\mu$ as 
$$\mu(E) = \sum_{i} \int_{T_i} \left(\int _{D_i \times \lbrace t\rbrace} \chi_{\phi_i(E \setminus (\cup_{j=1}^{i-1} U_j)} \cdot \sigma_i \right) d\nu_i(t) $$
\end{dfn}

\noindent Lemma \ref{measindepchart} tells us that this definition is independent of the ordering of the charts.

Now we want to prove an analogue of the Bers' compactness theorem for Riemann surface laminations.  To this end we 
will define a notion of injectivity radius.  Injectivity radius, in general, tells us to what extent is the given 
space canonical.  So for Riemann surface laminations it should be a measure of the extent to which it is a product 
lamination.  Hence the following definition. 

\begin{dfn}[Injectivity radius for L-Lipschitz hyperbolic Riemann surface laminations]\label{injradRSL}
Given an L-Lipschitz
Riemann surface lamination $(M, \mathcal{L})$, we say the injectivity radius at a point $x$ is greater than $r$ if
there exists an injective map $\varphi: B(0,r)(\subset \mathbb{D}) \times T \to M$ such that:
\begin{enumerate}
\item The map $\varphi^{-1}|_{image(\varphi)}$ is a compatible chart. 
\item The point $x = \varphi(0,t)$ for some $t\in T$.
\item The map $\varphi$ is an isometry under the metric defined earlier.
\item The ball $B(x,r)\subset M$ is contained in the image of $\varphi$.
\end{enumerate} 
\end{dfn}

\subsection{A topology for the space of Riemann surface laminations}
\label{topRSL}

Let $(X_n,\mathcal{L}_n)$ be a sequence of $L$-Lipschitz Riemann surface laminations with invariant transverse measure
$\mu_n$ and $(X,\mathcal{L})$ an $L$-Lipschitz Riemann surface laminations  with invariant transverse measure $\mu$.
Let $d_n$ be the induced metric on $(X_n,\mathcal{L}_n)$ and $d$ be the induced metric on $(X,\mathcal{L})$.  We say
the sequence $(X_n,\mathcal{L}_n,\mu_n)$ converge to $(X,\mathcal{L},\mu)$ if 
\begin{itemize}
\item The sequence $d_{\rho}((X_n,d_n,\mu_n),(X,d,\mu))$ converges to zero.
\item There exist a system of compatible charts on $(X,\mathcal{L})$ such that every local parametrisation
     $\psi: B(0,r) \times T \to X$ is the limit of some local parametrisations $\psi_n: B(0,r) \times T_n \to X_n$, 
     with $T_n \to T$, described in Proposition \ref{limmapbnlimsp}. 
\end{itemize}

\subsection{Analogue of Bers' compactness theorem}

Let $\mathcal{X}(L,r,A)$ be the collection of all L-Lipschitz hyperbolic Riemann surface
laminations equipped with an invariant transverse measure such that,
\begin{enumerate}
\item Injectivity radius is greater than or equal to $r$.
\item The measure $\mu(M)\leq A$  for all $M \in \mathcal{X}(L,r,A)$. \label{prop2X(L,r,A)}
\item Given $\varepsilon > 0$ there exists $B(\varepsilon) > 0$ such that, for all $m\in M \in \mathcal{X}(L,r,A)$ 
      and for all regular transversals $T$ passing through $m$, the transversal measure of 
      $B(m,\varepsilon) \cap T \geq B(\varepsilon)$.\label{prop3X(L,r,A)} 
\end{enumerate} 

\begin{lemma}
Given a distance measure space $(X,d,\mu)$ with
$\mu(X)\leq A$ and the property, given $\varepsilon > 0$ there exists a $B(\varepsilon) > 0$ such that 
$\mu(B(x,\varepsilon)) \geq B(\varepsilon)$ for all $x\in X$, there exists a set $S_{\varepsilon}$ such that 
$\mu(((S_{\varepsilon})^{\varepsilon})^C) \leq \varepsilon$ and the cardinality of $S_{\varepsilon}$ is less 
than or equal to $\frac{A}{B(\frac{\varepsilon}{2})}$.
\label{netandball}
\end{lemma}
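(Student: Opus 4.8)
The plan is to construct $S_\varepsilon$ by a greedy packing procedure and to control its cardinality through the ball-measure lower bound. First I would build a sequence of points $x_1, x_2, \ldots$ as follows. Having chosen $x_1,\ldots,x_k$ and written $S_k = \{x_1,\ldots,x_k\}$, if $\mu\big(((S_k)^{\varepsilon})^C\big) > \varepsilon$ then this complement has measure exceeding $\varepsilon>0$, hence is nonempty, so I may select a point $x_{k+1} \in ((S_k)^{\varepsilon})^C$; otherwise I halt and set $S_\varepsilon = S_k$. By the definition of the $\varepsilon$-neighbourhood, $x_{k+1}$ lying outside $(S_k)^{\varepsilon}$ means $d(x_{k+1}, x_i) \geq \varepsilon$ for every $i \leq k$, so an easy induction shows that the selected points are pairwise $\varepsilon$-separated.

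The key step is the cardinality estimate, which comes from a packing argument. Because the points are pairwise $\varepsilon$-separated, the open balls $B(x_i, \varepsilon/2)$ are pairwise disjoint: if some $z$ belonged to $B(x_i,\varepsilon/2)\cap B(x_j,\varepsilon/2)$, then the triangle inequality would give $d(x_i,x_j) \leq d(x_i,z)+d(z,x_j) < \varepsilon$, contradicting separation. By hypothesis each of these balls satisfies $\mu(B(x_i,\varepsilon/2)) \geq B(\varepsilon/2)$, so if the procedure has produced $n$ points then
$$n\, B(\varepsilon/2) \leq \mu\Big(\textstyle\bigcup_{i} B(x_i,\varepsilon/2)\Big) \leq \mu(X) \leq A,$$
whence $n \leq A/B(\varepsilon/2)$.

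This inequality forces the procedure to terminate: it can never produce more than $A/B(\varepsilon/2)$ points, since any larger collection of $\varepsilon$-separated points would yield disjoint half-radius balls of total measure exceeding $A$. Consequently the process halts at some finite stage $k$, and at that stage the stopping rule gives exactly $\mu\big(((S_\varepsilon)^{\varepsilon})^C\big) \leq \varepsilon$, while $\lvert S_\varepsilon\rvert = k \leq A/B(\varepsilon/2)$, which is the asserted bound.

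The argument is largely routine, and I expect the only delicate points to be (i) the observation that a set of measure greater than $\varepsilon$ is nonempty, so a new point can always be chosen while the stopping condition fails, and (ii) the disjointness of the balls $B(x_i,\varepsilon/2)$, which is precisely what converts the total-measure bound $A$ into a bound on the number of points. I regard (ii), the packing estimate, as the heart of the proof; termination and the final cardinality bound then follow immediately.
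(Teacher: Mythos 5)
Your proposal is correct and follows essentially the same route as the paper: a greedy selection of pairwise $\varepsilon$-separated points whose disjoint $\frac{\varepsilon}{2}$-balls, each of measure at least $B\left(\frac{\varepsilon}{2}\right)$, bound the count by $\frac{A}{B(\frac{\varepsilon}{2})}$. The only cosmetic difference is your stopping rule ($\mu\left(\left(\left(S_k\right)^{\varepsilon}\right)^C\right)\leq\varepsilon$) versus the paper's, which runs the process until the $\varepsilon$-balls cover all of $X$ and derives the same bound by contradiction; both yield the stated conclusion.
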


\begin{proof}
Choose an arbitrary point $x_1$.  If $B(x_1,\varepsilon) \nsupset X$, then choose $x_2$ to be an arbitrary point in 
$X \setminus B(x_1,\varepsilon)$.  Given that we have choose $x_1,...,x_k$, if 
$X \nsubset \cup_{i=1}^k B(x_i,\varepsilon)$, choose $x_{k+1}$ to be an arbitrary point in 
$X \setminus \cup_{i=1}^k B(x_i,\varepsilon)$.  Let, $N$ be the smallest integer strictly greater than 
$\frac{A}{B(\frac{\varepsilon}{2})}$.  We will prove that $X \subset \cup_{i=1}^m B(x_i,\varepsilon)$ for some 
$m < N$ using proof by contradiction. 
 
Assume that $X \nsubset \cup_{i=1}^m B(x_i,\varepsilon)$ for any $m < N$.  Note that, 
$B(x_i,\frac{\varepsilon}{2}) \cap B(x_j,\frac{\varepsilon}{2}) = \emptyset$.  Therefore, 
$\mu(X) \geq \sum_{i=1}^N \mu(B(x_i,\frac{\varepsilon}{2})) \geq N \times B(\frac{\varepsilon}{2}) > A$, which 
contradicts with the assumption that $\mu(X) \leq A$.  Thus, $X \subset \cup_{i=1}^m B(x_i,\varepsilon)$ for some 
$m < N$, say $m_0$. 
 
Thus, $S_{\varepsilon} = \lbrace x_1,...,x_{m_0}\rbrace$ satisfies the conditions we wanted.   
\end{proof}

\begin{lemma}
Properties \ref{prop2X(L,r,A)} and \ref{prop3X(L,r,A)} together implies the property that given $\varepsilon >0$ 
there exists $N(\varepsilon)$ such that, for all $M \in \mathcal{X}(L,r,A)$ there exists a set $S_{M,\varepsilon}$ 
such that $\vert S_{M,\varepsilon} \vert \leq N(\varepsilon)$ and 
$\mu((M_{S,\varepsilon})^{\varepsilon})^C)\leq \varepsilon$.
\label{netandball2}
\end{lemma}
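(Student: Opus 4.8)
The plan is to deduce this from Lemma \ref{netandball}. That lemma already converts the two hypotheses ``total measure at most $A$'' and ``every $\varepsilon$-ball has measure at least $B(\varepsilon)$'' into the existence of a set $S_\varepsilon$ with $\mu\bigl(((S_\varepsilon)^\varepsilon)^C\bigr)\le\varepsilon$ and $|S_\varepsilon|\le A/B(\varepsilon/2)$. So it suffices to check that each $M\in\mathcal{X}(L,r,A)$, viewed as the distance measure space $(M,d,\mu)$ of Definitions \ref{metriconLLipschitzRSL} and \ref{measonRSL}, satisfies these hypotheses with a ball-measure lower bound $B'(\varepsilon)$ that is \emph{uniform} over the whole collection. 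Granting this, Lemma \ref{netandball} produces a set $S_{M,\varepsilon}$ with $\mu\bigl(((S_{M,\varepsilon})^\varepsilon)^C\bigr)\le\varepsilon$ and $|S_{M,\varepsilon}|\le A/B'(\varepsilon/2)=:N(\varepsilon)$, and since $N(\varepsilon)$ does not depend on $M$ we are done.

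The first hypothesis is immediate: Property \ref{prop2X(L,r,A)} gives $\mu(M)\le A$. The work is in the second, namely producing for each $\varepsilon>0$ a constant $B'(\varepsilon)>0$, independent of $M$ and of $x\in M$, with $\mu(B(x,\varepsilon))\ge B'(\varepsilon)$. I would first treat the case $\varepsilon\le r$. Since the injectivity radius at $x$ is at least $r$ (the first condition defining $\mathcal{X}(L,r,A)$), Definition \ref{injradRSL} supplies an isometric product chart $\varphi\colon B(0,r)\times T\to M$ with $\varphi(0,t_0)=x$ whose image contains $B(x,r)\supset B(x,\varepsilon)$. On each plaque the distance $d$ is dominated by the leafwise hyperbolic metric, because $d$ is the maximal metric making the charts distance-decreasing (Definition \ref{metriconLLipschitzRSL}); hence for every $t$ with $\varphi(0,t)\in B(x,\varepsilon/2)$ the leafwise hyperbolic disk of radius $\varepsilon/2$ about $\varphi(0,t)$ lies inside $B(x,\varepsilon)$, and its hyperbolic area is a universal constant $c(\varepsilon/2)>0$ depending only on $\varepsilon$.

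Integrating this leafwise lower bound against the transverse measure $\nu$, as in Definition \ref{measonRSL} (which within a single flow box reduces to the transverse integral of leafwise hyperbolic area), and using Property \ref{prop3X(L,r,A)} to bound the transverse measure of $B(x,\varepsilon/2)\cap T$ from below, gives
\[
\mu\bigl(B(x,\varepsilon)\bigr)\ \ge\ c(\varepsilon/2)\,\nu\bigl(B(x,\varepsilon/2)\cap T\bigr)\ \ge\ c(\varepsilon/2)\,B(\varepsilon/2)\ =:\ B'(\varepsilon).
\]
For $\varepsilon>r$, monotonicity of $\mu$ yields $\mu(B(x,\varepsilon))\ge\mu(B(x,r))\ge B'(r)>0$, so one may simply take $B'(\varepsilon)=B'(r)$ there. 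This establishes the uniform ball-measure bound and completes the reduction to Lemma \ref{netandball}.

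The main obstacle is the middle step: correctly relating $\mu(B(x,\varepsilon))$, defined by the transverse integral of leafwise hyperbolic area, to the transverse-measure lower bound of Property \ref{prop3X(L,r,A)}. This requires using the injectivity radius to linearise the neighbourhood as an isometric product, and using the Kobayashi-type maximal metric to guarantee that each plaque through a nearby transverse point contributes at least the hyperbolic area of a fixed disk. Keeping both the per-leaf constant $c(\varepsilon/2)$ and the transverse bound $B(\varepsilon/2)$ independent of $M$ and of $x$ is exactly what makes $N(\varepsilon)$ uniform over $\mathcal{X}(L,r,A)$.
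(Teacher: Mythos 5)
Your proposal is correct and follows essentially the same route as the paper: both reduce to Lemma \ref{netandball} by using the injectivity-radius product chart to show $B(x,\varepsilon)\supset \varphi_x\bigl(B_{\mathbb{H}}(0,\frac{\varepsilon}{2})\times B_T(t,\frac{\varepsilon}{2})\bigr)$, so that $\mu(B(x,\varepsilon))$ is bounded below by the hyperbolic area of the $\frac{\varepsilon}{2}$-disk times the transverse bound $B(\frac{\varepsilon}{2})$ from Property \ref{prop3X(L,r,A)}, uniformly in $M$ and $x$. Your explicit handling of the case $\varepsilon>r$ by monotonicity is a small point of care the paper's terser proof leaves implicit, but it is not a different argument.
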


\begin{proof}
Note that given a chart $\varphi_x: B(0,r)(\subset \mathbb{D}) \times T \to M$ with $x = \varphi_x(0,t)$,  
$B(x,\varepsilon) \supset \varphi_x(B_{\mathbb{H}}(0,\frac{\varepsilon}{2})\times B_T(t,\frac{\varepsilon}{2}))$.  
Property \ref{prop3X(L,r,A)} implies that $\mu(B_T(t,\frac{\varepsilon}{2})) \geq B(\frac{\varepsilon}{2})$, i.e., 
$\mu(B(x,\varepsilon))$ is greater than or equal to the hyperbolic area of $B_{\mathbb{H}}(0,\frac{\varepsilon}{2})$ 
times $B(\frac{\varepsilon}{2})$.  So, by Lemma \ref{netandball} we have the result.
\end{proof}

\begin{theorem}
The space $\mathcal{X}(L,r,A)$ is compact under the topology described in Section \ref{topRSL}.
\label{bersforlam}
\end{theorem}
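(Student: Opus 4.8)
The plan is to prove sequential compactness in two stages: first establish pre-compactness of the underlying distance measure spaces in the $d_\rho$-metric, and then upgrade a $d_\rho$-convergent subsequence to a convergent sequence in the finer topology of Section~\ref{topRSL} by reconstructing the lamination structure on the limit and showing that the limit again lies in $\mathcal{X}(L,r,A)$.

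First I would invoke Lemma~\ref{netandball2}, which shows that properties~\ref{prop2X(L,r,A)} and \ref{prop3X(L,r,A)} together furnish exactly the hypothesis of the pre-compactness theorem: given $\varepsilon>0$ there is an $N(\varepsilon)$ so that every $M\in\mathcal{X}(L,r,A)$ admits a set $S_{M,\varepsilon}$ of at most $N(\varepsilon)$ points whose $\varepsilon$-neighbourhood has complement of measure at most $\varepsilon$. Together with the uniform bound $\mu(M)\le A$ of property~\ref{prop2X(L,r,A)}, Theorem~\ref{main} then gives that the family of underlying distance measure spaces is totally bounded, hence pre-compact, under $d_\rho$. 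Thus, given any sequence $(X_n,\mathcal{L}_n,\mu_n)$ in $\mathcal{X}(L,r,A)$, I pass to a subsequence along which $d_\rho((X_n,d_n,\mu_n),(X,d,\mu))\to 0$ for some complete separable distance measure space $(X,d,\mu)$; by Lemma~\ref{total-measures-close-when-the-metric-measure-spaces-are} the total measures converge, so $\mu(X)\le A$ and property~\ref{prop2X(L,r,A)} persists in the limit.

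Next I would equip $X$ with a lamination structure. Fix a countable dense set $\{x_k\}\subset X$. Pulling each $x_k$ back through the quasi-isometries of Theorem~\ref{quasiisomgivdistsmall} produces points $x_k^n\in X_n$; since the injectivity radius of $X_n$ is at least $r$, Definition~\ref{injradRSL} supplies isometric charts $\varphi_{x_k^n}\colon B(0,r)\times T_k^n\to X_n$ whose images contain $B(x_k^n,r)$. The transversals $T_k^n$ carry transverse measures that are bounded below by property~\ref{prop3X(L,r,A)} and above by property~\ref{prop2X(L,r,A)}, so they themselves form a pre-compact family; after a diagonal extraction over $k$ I may assume $T_k^n\to T_k$. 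Treating $B(0,r)\times T_k^n$ and $X_n$ as two sequences converging to $B(0,r)\times T_k$ and $X$ respectively, and the isometric charts as the maps $f^n$, Proposition~\ref{limmapbnlimsp} yields in the limit a measure-preserving isometry $\psi_k\colon B(0,r)\times T_k\to X$, defined off a null set. These $\psi_k$ are the candidate local parametrisations of $X$, and by construction each is the limit of the $\varphi_{x_k^n}$ in the sense demanded by the second clause of the convergence in Section~\ref{topRSL}.

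Finally I would verify that the $\psi_k$ assemble into an $L$-Lipschitz hyperbolic Riemann surface lamination in $\mathcal{X}(L,r,A)$. The transition maps $\psi_l^{-1}\circ\psi_k$ are limits of the transition maps on $X_n$, so their leaf-direction components are locally uniform limits of holomorphic maps, hence holomorphic by Montel's theorem, while the transverse components are pointwise limits of $L$-Lipschitz maps, hence $L$-Lipschitz; the limiting charts are therefore laminar and the leaves, being isometric to pieces of $\mathbb{D}$, are hyperbolic. Because each $\psi_k$ is an isometry on $B(0,r)\times T_k$ with image containing $B(x_k,r)$, the injectivity radius of $X$ is at least $r$, and via Definition~\ref{measonRSL} the limit measure $\mu$ is recovered from the transverse measures on the $T_k$; the lower bound of property~\ref{prop3X(L,r,A)} then passes to the limit from $T_k^n\to T_k$ together with the Levy--Prokhorov convergence of the transverse measures. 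Since both clauses of the topology of Section~\ref{topRSL} now hold, the subsequence converges to $(X,\mathcal{L},\mu)\in\mathcal{X}(L,r,A)$, giving compactness. The main obstacle is this last paragraph: one must check that the extracted charts overlap consistently, so that the transition maps are genuinely defined and laminar in the limit, and that the transverse-measure lower bound of property~\ref{prop3X(L,r,A)} survives the passage to the limit transversals; both steps require the uniform estimates of Theorem~\ref{quasiisomgivdistsmall} rather than mere subsequential extraction.
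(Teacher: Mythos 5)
Your proposal is correct and follows essentially the same route as the paper's proof: pre-compactness of the underlying distance measure spaces via Lemma~\ref{netandball2} and Theorem~\ref{main}, reconstruction of limit charts by extracting convergent transversals $T^n \to T$ (using the transverse-measure bounds and Lemma~\ref{netandball}) and applying Proposition~\ref{limmapbnlimsp}, and verification that the limit transition maps are laminar via Theorem~\ref{quasiisomgivdistsmall}, with Montel's theorem for the leafwise components and limiting $2\delta^n$-estimates for the $L$-Lipschitz transverse components. The only cosmetic difference is organizational: you diagonalize over a countable dense set of base points, whereas the paper discards a null set of non-approximable points (Lemma~\ref{cvgsalmost}) and builds charts around every remaining point.
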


\begin{proof}
Given a sequence of L-Lipschitz hyperbolic Riemann surface laminations $M^n$ in the space $\mathcal{X}(L,r,A)$, compactness 
theorem of metric measure spaces and Lemma \ref{netandball2} implies that $M^n$ converge to a metric measure space 
$M$.  We will show that $M \in \mathcal{X}(L,r,A)$ and $M^n$ converges to $M$ in the topology described in 
\ref{topRSL}.

Denote the measure on $M^n$ by $\mu^n$ and the measure on $M$ by $\mu$.  Note that $d_{\rho}(M^n, M) \to 0$ implies 
there exists a sequence of metric spaces $(Z^n,d^n)$ and $L(n)$-isometric $\varepsilon(n)$-embeddings 
$f^n:M^n \to Z^n$ and $g^n: M \to Z^n$ such that $d_{\pi}((f^n)_*(\mu^n), (g^n)_*(\mu)) + \frac{1}{L(n)} + \varepsilon(n)$
converges to zero as $n$ tends to infinity.

\begin{lemma}
The set of all points $x\in M$ such that there exists no sequence $x^n \in M^n$ with $d_{Z^n}(f^n(x^n),g^n(x))$ 
converging to zero as $n$ tends to infinity is a set of measure zero.
\label{cvgsalmost}
\end{lemma}   

\begin{proof}
Call this set $E$ and assume the contrary, i.e., $\mu(E) > 0$.  There exists no sequence $x^n \in M^n$ with 
$d_{Z^n}(f^n(x^n),g^n(x))$ converging to zero as $n$ tends to infinity, implies, there exists an $\varepsilon > 0$ 
such that $d_{Z^n}(f^n(M^n),g^n(E)) >\varepsilon$ for all $n$.  This along with $\mu(E) > 0$ contradicts with the 
fact that $d_{\pi}(f^n(\mu^n), g^n(\mu))$ converges to zero as $n$ tends to infinity.  
\end{proof}

So, by throwing this set away, without loss of generality we can assume that the set is empty.  Thus, given a point 
$x\in M$ there exists points $x^n \in M^n$ such that $d_{Z^n}(f^n(x^n),g^n(x))$ converges to zero as $n$ tends to 
infinity.  As injectivity radius of $M^n$ is greater than $r$, for each point in $M^n$ we can find an injective map 
$\varphi_x : B(0,r)(\subset \mathbb{D}) \times T(x) \to M$ such that:
\begin{enumerate}
\item The map $\varphi^{-1}|_{image(\varphi)}$ is a compatible chart. 
\item The point $x = \varphi(0,t)$ for some $t\in T$.
\item The map $\varphi$ is an isometry.
\item The ball $B(x,r)\subset M$ is contained in the image of $\varphi$.
\end{enumerate}
such that the coordinate changes in $U_i \cap U_j$ are of the form 
$$\varphi_x \circ \varphi_y^{-1}(z,t) = (\lambda_{xy}(z,t),\tau_{xy}(t))$$
where the map $z\to \lambda_{xy}(z,t)$ is holomorphic for each $t$ and the map $\tau_{xy}(t)$ is L-Lipschitz.  
Let $\varphi^n: B(0,r)(\subset \mathbb{D}) \times T^n \to M^n$ be such a chart about $x^n$.  We shall use these maps
to construct a chart $\varphi$ around $x$.  First note that,

\begin{lemma}
The sequence $T^n$ has a subsequence which converges, say to $T$, as $n$ tends to infinity.
\end{lemma}

\begin{proof}
Note that, 
$$\mu^n(T^n) \leq \frac{\mu^n(M^n)}{Area(B(0,r) \subset \mathbb{D})} \leq \frac{A}{Area(B(0,r) \subset \mathbb{D})}.$$
So, property (3) of elements of $\mathcal{X}(L,r,A)$ and Lemma \ref{netandball} implies that $\lbrace T^n \ | \forall n \rbrace$
is a compact subset of the space of metric measure spaces.   
\end{proof} 

So, without loss of generality, we can assume $T_n$ converges.  Now use Proposition \ref{limmapbnlimsp}, to construct
a measure preserving isometry $\varphi: B(0,r) \times T \to M$.  

\subsubsection*{Transition maps are well-behaved:} Let $x,y$ be two points in $M$ and 
$\varphi: B(0,r) \times S \to M,\psi: B(0,r) \times T \to M$ be local parametrisations created as explained before 
such that
$im(\varphi) \cap im(\psi) \neq \emptyset$.   We will prove that $\psi^{-1} \circ \varphi : \varphi^{-1}(\psi(B(0,r) \times T)) \to \psi^{-1}(\varphi(B(0,r)\times S))$
satisfies all conditions to make $M$ an $L$-Lipschitz Riemann surface lamination.

Let $x^n , y^n\in M^n$ be such that $d_{Z^n}(f^n(x^n),g^n(x))$, $d_{Z^n}(f^n(y^n),g^n(y))$ converges to zero as 
$n$ tends to infinity.  Let $\varphi^n: B(0,r) \times S(x^n) \to M^n$ and
$\psi^n: B(0,r) \times T(y^n) \to M^n$ be charts around $x^n$ and $y^n$ respectively.  As 
$im(\varphi) \cap im(\psi) \neq \emptyset$, $im(\varphi^n) \cap im(\psi^n) \neq \emptyset$ for large
enough $n$.  We have seen before that $d_{\rho}(S(x^n),S), d_{\rho}(T(y^n),T)$ can be assumed to be converging
to zero.  Construct $h^n: S \setminus \widehat{S} \to S(x^n)$ and $j^n: T(y^n)\setminus \widehat{T(y^n)} \to T$ as
in Theorem \ref{quasiisomgivdistsmall}.  Then,
\begin{align*}
\psi^{-1} \circ \varphi(z,t) &= \lim_{n\to \infty} (id \times j^n) \circ ((\psi^n)^{-1} \circ \varphi^n) \circ (id \times h^n)(z,t)\\
&= \lim_{n\to \infty} (id \times j^n) \circ ((\psi^n)^{-1} \circ \varphi^n)(z,h^n(t))\\
&= \lim_{n\to \infty} (id \times j^n)(\lambda^n(z,h^n(t)),\tau^n(h^n(t)))\\
&= \lim_{n\to \infty}(\lambda^n(z,h^n(t)),j^n(\tau^n(h^n(t))))\\
&= (\lim_{n\to \infty}\lambda^n(z,h^n(t)), \lim_{n\to \infty} j^n(\tau^n(h^n(t)))).
\end{align*}
It can be proved as in the proof of Proposition \ref{limmapbnlimsp} that given 
$\lambda^n:B(0,r) \times S(x^n) \to B(0,r)$ and $\tau^n: S(x^n) \to T(x^n)$ there exists 
$\lambda: B(0,r) \times S \to B(0,r)$ and $\tau: S \to T$ such that 
$\lim_{n\to \infty}\lambda^n(z,h^n(t)) = \lambda(z,t)$ and $\lim_{n\to \infty} j^n(\tau^n(h^n(t)))) = \tau(t)$.  To 
do this just note that to prove that the function is well defined we only needed that Cauchy sequences goes to Cauchy
sequences.  As Lipschitz maps satisfy this we are done.  We have $\lambda$ for a fixed $t$ is holomorphic by Montel's
theorem.  

Define $\delta_3^n = d_{\rho}(S(x^n),S)$ and $\delta_4^n = d_{\rho}(T(y^n),T)$.  Given $t,s\in T$, choose $n$ so 
large that $d(s,t) \leq \min \left\lbrace \left( \frac{1}{\delta_3^n} - 2\delta_3^n \right), \left( \frac{1}{\delta_3^n} - 2\delta_3^n - 2\delta_4^n \right) \right\rbrace$.  Then,
\begin{align*}
d(j^n(\tau^n(h^n(t))), j^n(\tau^n(h^n(s)))) &\leq  d(\tau^n(h^n(t)), \tau^n(h^n(s))) + 2\delta_4^n (\text{Theorem \ref{quasiisomgivdistsmall}})\\
&\leq L\times d(h^n(t), h^n(s)) + 2\delta_4^n (\tau^n \text{is $L$-Lipshcitz})\\
&\leq L \times d(t,s) + 2\delta_3^n + 2\delta_4^n (\text{Theorem \ref{quasiisomgivdistsmall}}).
\end{align*}  
As $\delta_3^n$ and $\delta_4^n$ tends to zero as $n$ tends to infinity, this shows that $\tau$ is $L$- Lipschitz. 

\end{proof}

\begin{theorem}\label{main2}
Let $\mathcal{X}(L,A)$ be the collection of all L-Lipschitz hyperbolic Riemann surface
laminations equipped with an invariant transverse measure such that
\begin{enumerate}
\item The measure $\mu(M)\leq A$  for all $M \in \mathcal{X}(L,A)$.
\item Given $\varepsilon$ there exists $B(\varepsilon) > 0$ such that, for all $m\in M \in \mathcal{X}(L,A)$ and 
      for all transversals $T$ passing through $m$, the transversal measure of 
      $B(m,\varepsilon) \cap T \geq B(\varepsilon)$. 
\item If $M(r)$ denotes the subset of $M$ where injectivity radius is less than or equal to $r$ then $\mu(M(r))$ goes
      uniformly to zero for all $M\in \mathcal{X}(L,A) $ as $r$ goes to zero. \label{3main2}
\end{enumerate} 
Then, $\mathcal{X}(L,A)$ is compact under the topology described in Section \ref{topRSL}.   
\end{theorem}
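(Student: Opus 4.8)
The plan is to follow the two–stage pattern of Theorem \ref{bersforlam}: first establish pre-compactness at the level of distance measure spaces via Theorem \ref{main}, and then upgrade the resulting metric–measure limit to an $L$-Lipschitz hyperbolic Riemann surface lamination. The essential new point, compared with Theorem \ref{bersforlam}, is that we no longer have a uniform lower bound on the injectivity radius; its role is now played by hypothesis \ref{3main2}, which confines the ``thin'' part to small measure. Accordingly, it suffices to take an arbitrary sequence $M^n \in \mathcal{X}(L,A)$ and to produce a subsequence converging, in the sense of Section \ref{topRSL}, to some $M \in \mathcal{X}(L,A)$.

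First I would verify that conditions (1)--(3) imply the hypothesis of the pre-compactness theorem \ref{main}, namely a uniform bound $N(\varepsilon)$ on the size of $\varepsilon$-almost $\varepsilon$-dense finite sets. Given $\varepsilon > 0$, use \ref{3main2} to choose $r = r(\varepsilon)$, independent of the lamination, so that $\mu(M(r)) \leq \varepsilon$ for every $M \in \mathcal{X}(L,A)$. On the thick part $M \setminus M(r)$ the injectivity radius exceeds $r$, so by condition (2) every ball $B(x,\tfrac{\varepsilon}{2})$ centred in the thick part has measure at least the hyperbolic area of $B_{\mathbb{H}}(0,\tfrac{\varepsilon}{2})$ times $B(\tfrac{\varepsilon}{2})$, a uniform positive constant. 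The packing argument of Lemma \ref{netandball} (applied to the thick part, exactly as in Lemma \ref{netandball2}) then yields an $\varepsilon$-net of the thick part of cardinality at most $N(\varepsilon) := A / \big(\mathrm{Area}\,B_{\mathbb{H}}(0,\tfrac{\varepsilon}{2}) \cdot B(\tfrac{\varepsilon}{2})\big)$, whose $\varepsilon$-neighbourhood omits only $M(r)$, a set of measure $\leq \varepsilon$. Thus the hypothesis of Theorem \ref{main} holds, $\mathcal{X}(L,A)$ is pre-compact among distance measure spaces, and by completeness (Theorem \ref{completenessunderrho}) a subsequence of $M^n$ converges to a complete separable distance measure space $(M,d,\mu)$ with $\mu(M) \leq A$.

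Next I would equip $(M,d,\mu)$ with a lamination structure, reusing the construction in the proof of Theorem \ref{bersforlam}. Writing $f^n: M^n \to Z^n$ and $g^n: M \to Z^n$ for the embeddings realizing $d_{\rho}(M^n,M) \to 0$, Lemma \ref{cvgsalmost} lets me assume every $x \in M$ is approximated by points $x^n \in M^n$. For $x$ in the thick part $M \setminus M(r)$ I would select approximants $x^n$ whose injectivity radius is bounded below, pull back the charts $\varphi^n : B(0,r)\times T^n \to M^n$, pass to a convergent subsequence of the transversals $T^n$ (compact by condition (2) and Lemma \ref{netandball}), and apply Proposition \ref{limmapbnlimsp} to obtain a chart $\varphi : B(0,r)\times T \to M$; the transition maps are holomorphic leaf-wise by Montel's theorem and $L$-Lipschitz transversally by the same Lipschitz-stability estimate (via Theorem \ref{quasiisomgivdistsmall}) used in Theorem \ref{bersforlam}. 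Since $\mu(M(r)) \to 0$ as $r \to 0$, these charts cover $\mu$-almost all of $M$, so $M$ is an $L$-Lipschitz hyperbolic Riemann surface lamination with an invariant transverse measure, and the second clause of the topology in Section \ref{topRSL} holds. Conditions (1) and (2) pass to $M$ by the convergence and by lower semicontinuity of ball-measures under $d_{\rho}$, while condition \ref{3main2} for $M$ follows from the uniform estimate $\mu^n(M^n(r)) \to 0$ together with lower semicontinuity of the injectivity radius.

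The hard part will be the interaction between injectivity radius and $d_{\rho}$-convergence: I must guarantee that a thick point $x \in M$ is approximated by points $x^n \in M^n$ whose injectivity radii do not collapse, so that charts of a definite size are available to pass to the limit, and dually that the thin part of the limit is controlled by the thin parts of the approximants. This is precisely where hypothesis \ref{3main2} is indispensable — the introduction records that the naive Margulis-type statement fails for laminations, so without \ref{3main2} the limiting object could acquire a thin part of positive measure and fail to be a hyperbolic lamination of the required type. Establishing the requisite semicontinuity of the injectivity radius, and the mutual compatibility of the charts built around different thick points, is the main technical content beyond the already-proved Theorem \ref{bersforlam}.
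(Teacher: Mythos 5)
Your opening reduction is sound: conditions (2) and (3) do combine, via the packing argument of Lemmas \ref{netandball} and \ref{netandball2} applied to the thick part, to give the uniform almost-dense nets required by Theorem \ref{main} (with the minor correction that when $r(\varepsilon)<\varepsilon/2$ the ball-measure lower bound involves $\min(r(\varepsilon),\varepsilon/2)$, not $\varepsilon/2$), so a subsequence converges to a complete separable distance measure space $(M,d,\mu)$. The gap is in your upgrade step, and it is the step you yourself flag: to build charts on $M$ you must "select approximants $x^n$ whose injectivity radius is bounded below" for $x$ in "the thick part $M\setminus M(r)$" — but $M(r)$ is undefined until $M$ already carries a lamination structure, so your organization is circular, and the semicontinuity of injectivity radius under $d_{\rho}$-convergence that would break the circle is never established (it is not a consequence of Theorem \ref{quasiisomgivdistsmall}, since injectivity radius here is a chart-level notion, not a metric invariant, and the paper proves no such statement). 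Separately, the inference "these charts cover $\mu$-almost all of $M$, so $M$ is an $L$-Lipschitz hyperbolic Riemann surface lamination" is invalid as written: a measure-zero uncovered residue can be topologically essential, and Section \ref{nomargulis} exhibits exactly this phenomenon — thin parts converging to an interval $[-l,l]$, which admits no lamination structure. At best you may excise the uncovered set (legitimate, since $d_{\rho}$ is blind to measure-zero sets), but you must say so explicitly and then re-address completeness and the clause of Section \ref{topRSL} on the excised space.

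The paper's proof avoids both difficulties by truncating before taking limits rather than after: for each $r$ it considers the sequence $M^n\setminus M^n(r)$, whose injectivity radius is bounded below, applies Theorem \ref{bersforlam} to extract a laminated limit $M_r$ (diagonalizing over $r\to 0$), uses the inclusions $M_{r_1}\hookrightarrow M_{r_2}$ for $r_1\geq r_2$ to form the direct limit $M$, and notes that every point of $M$ lies in some $M_r$ and hence carries a chart by construction — so no semicontinuity of injectivity radius is ever needed, and the thin-part "garbage" never enters $M$ at all. Convergence then follows from the estimate $d_{\rho}(M^n,M)\leq d_{\rho}(M^n,M^n\setminus M^n(r))+d_{\rho}(M^n\setminus M^n(r),M_r)+d_{\rho}(M_r,M)$, where hypothesis \ref{3main2} makes the first term small uniformly in $n$ (deleting $M^n(r)$ moves the space by at most about $\mu^n(M^n(r))$ in $d_{\rho}$), the second goes to zero as $n\to\infty$, and the third as $r\to 0$. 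If you want to salvage your limit-first route, the fix is essentially to rediscover this structure: define the $r$-thick part of $M$ as the set of points approximable by points of $M^n\setminus M^n(r)$, run Proposition \ref{limmapbnlimsp} there, and discard the complement — which reconstructs the paper's $M_r$ and its direct limit.
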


\begin{rmk}
Condition \ref{3main2} is required for the limit to have the structure of a lamination.  This was redundant in the 
case of Riemann surfaces because of Margulis lemma.  In Section \ref{nomargulis} we have showed that it is not 
redundant for Riemann surface laminations.
\end{rmk}

\begin{proof}
Given a sequence of L-Lipschitz hyperbolic Riemann surface laminations $M^n$ in the space $\mathcal{X}(L,A)$, consider the 
sequence $M^n \setminus M^n(r) $.  Now this sequence has a convergent subsequence by previous theorem.  Without loss 
of generality we can assume that $M^n \setminus M^n(r) $ converges and the limit is $M_r$.  If $r_1\geq r_2$ then we 
have an inclusion from $M_{r_1}\hookrightarrow M_{r_2}$.  Consider the direct limit of the sequence $M_{r}$ as $r$ 
tends to zero and call it $M$.  Observe that $M$ is a lamination, as every point in $M$ belongs to $M_r$ for some $r$ so, use 
the chart around that point in $M_r$.  Furthermore,
\begin{align*}
d_{\rho}(M^n, M) \leq d_{\rho}(M^n, M^n \setminus M^n(r)) + d_{\rho}(M^n \setminus M^n(r), M_r) + d_{\rho}(M_r,M)
\end{align*} 
The first term goes to zero as $r$ tends to zero, the second term goes to zero as $n$ tends to infinity and the third
term goes to zero as $r$ tends to zero.  Thus $M^n$ converges to $M$ as $n$ tends to infinity.
\end{proof}

\noindent We will now show that $\mathcal{X}(L,A)$ is far from being empty.  Below are some examples.

\subsection{Finite covers}\label{Finite covers}
Let $S$ be a hyperbolic Riemann surface and let $p:M \to S$ be an N-sheeted covering.  By definition of covering 
given any point, $p \in S$, there exists an evenly covered neighbourhood, $U$.  Thus, we have a map 
$\phi: U \times \lbrace 1,2,...N\rbrace \to p^{-1}(U)$ such that $\phi|_{U \times \lbrace i\rbrace}$ is a biholomorphism.  We will define a
metric and measure on each transversal, i.e., $p^{-1}(x_0)$ for all $x_0 \in S$, so that $M \in \mathcal{X}(L,r,A)$ for
some $L,r,A$.  Define the measure on $T$, $\nu$, as follows: 
$$\nu(\lbrace i\rbrace) = \frac{1}{N}$$
Define the metric on $p^{-1}(x_0)$ as follows:
\begin{align*}
d(m_1,m_2) = \frac{1}{N}
\end{align*} 
\begin{rmk}
Note that this Riemann surface lamination is $1$-Lipschitz.
\end{rmk}

\begin{lemma}
Injectivity radius of the Riemann surface lamination $M$ at a point $m$ is greater than or equal to injectivity radius
of the Riemann surface $S$ at the point $p(m)$.
\label{coverinjectivity}
\end{lemma}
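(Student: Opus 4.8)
The plan is to lift the structure of a metric ball in $S$ through the covering $p$ and to check that this lift realizes the defining chart of Definition \ref{injradRSL}. Fix $m \in M$, write $x_0 = p(m)$, and set $\rho = \mathrm{inj}_S(x_0)$. It suffices to show that for every $r < \rho$ the injectivity radius of $M$ at $m$ is greater than $r$ (i.e.\ that a chart as in Definition \ref{injradRSL} exists at level $r$); letting $r \uparrow \rho$ then yields $\mathrm{inj}_M(m) \geq \rho$, which is the assertion.

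First I would record the two elementary facts that make the construction possible. Since $r < \rho$, the ball $B(x_0, r) \subset S$ is isometric, via an inverse chart $\iota : B(0,r)(\subset \mathbb{D}) \to B(x_0,r)$ with $\iota(0) = x_0$, to the hyperbolic ball of radius $r$; in particular it is an embedded, geodesically convex topological disc, hence connected, locally path-connected and simply connected. A simply connected, locally path-connected open subset of the base of a covering is evenly covered, so $p^{-1}(B(x_0,r)) = \bigsqcup_{i} V_i$ with each $p|_{V_i} : V_i \to B(x_0,r)$ a biholomorphism, and therefore a leafwise isometry. I identify the transversal $T = p^{-1}(x_0)$ with the index set $\{1,\dots,N\}$, with $m$ corresponding to some $i_0$.

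Next I would define $\varphi : B(0,r) \times T \to M$ by $\varphi(z,i) = (p|_{V_i})^{-1}(\iota(z))$. Injectivity is immediate (the $V_i$ are disjoint and $\iota$ is injective); condition (2) of Definition \ref{injradRSL} holds with $t = i_0$, since $\varphi(0,i_0) = m$; and condition (1) holds because $\varphi$ is by construction the composite of the even-covering trivialization with the holomorphic chart $\iota$, so $\varphi^{-1}$ is a compatible lamination chart onto the flow box $p^{-1}(B(x_0,r))$. For condition (4) I would use that $p$ is $1$-Lipschitz: on each flow box $p$ is the leaf projection (a leafwise isometry) followed by collapsing of the transversal, hence distance non-increasing there, and since $d_M$ is the maximal metric making the charts $\phi_i^{-1}$ distance decreasing (Definition \ref{metriconLLipschitzRSL}) it is a length metric, so $d_S(p(a),p(b)) \leq d_M(a,b)$ globally. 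Consequently $d_M(m,m') < r$ forces $d_S(x_0, p(m')) < r$, i.e.\ $p(m') \in B(x_0,r)$ and $m' \in p^{-1}(B(x_0,r)) = \mathrm{image}(\varphi)$, so $B(m,r)$ lies in the image.

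The main obstacle is condition (3), that $\varphi$ be an isometry onto its image, where the image carries the metric \emph{induced from the global} $d_M$. The inequality $d_M(\varphi(z,i),\varphi(w,j)) \leq d_{\mathbb{D}}(z,w) + d_T(i,j)$ is routine: travel along sheet $i$ over the $S$-geodesic from $\iota(z)$ to $\iota(w)$, which stays inside the convex ball $B(x_0,r)$ and lifts isometrically, and then make one transverse jump of cost $d_T(i,j)$. The reverse inequality is the delicate point, since a priori a path could shortcut through the rest of $M$. Here I would combine two lower bounds: the $1$-Lipschitz projection $p$ gives $d_M(\varphi(z,i),\varphi(w,j)) \geq d_S(\iota(z),\iota(w)) = d_{\mathbb{D}}(z,w)$, with equality because we are within the injectivity radius, controlling the leaf component; and the transverse separation, coming from the fact that over the simply connected set $B(x_0,r)$ the $V_i$ are distinct plaques that cannot be joined by a leaf path inside the flow box, so any path that changes sheets must expend at least $d_T(i,j)$ transversally, while a path leaving $p^{-1}(B(x_0,r))$ is only longer. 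Making the additivity of these two contributions precise — i.e.\ verifying that the induced metric on the flow box really is the $\ell^1$-type product metric $d_{\mathbb{D}} + d_T$ and admits no cheaper excursion — is the crux. Once it is in place, $\varphi$ is an isometric chart satisfying all four conditions, so $\mathrm{inj}_M(m) > r$ for every $r < \rho$, and the lemma follows.
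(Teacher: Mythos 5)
Your construction is the same as the paper's: the paper's entire proof consists of the observation that for $r = \mathrm{injrad}_{p(m)}(S)$ the ball $B(p(m),r)$ is evenly covered, and your lift $\varphi(z,i) = (p|_{V_i})^{-1}(\iota(z))$ is exactly the chart that even covering produces, with your verifications of conditions (1), (2) and (4) of Definition \ref{injradRSL} (the Lipschitz argument for (4) is fine) spelling out what the paper leaves implicit. But your write-up stops short at precisely the point you yourself flag: you never verify condition (3), announcing instead that the additivity of the flow-box metric ``is the crux'' and proceeding conditionally (``once it is in place''). That is a genuine gap, and worse, the strict statement you would need is false. Your assertion that ``a path leaving $p^{-1}(B(x_0,r))$ is only longer'' fails near the boundary of the ball: if the injectivity radius $\rho$ at $x_0 = p(m)$ is realized by a geodesic loop $\gamma$ of length $2\rho$ through $x_0$, take $z,w$ along the direction of $\gamma$ at distance $r-\epsilon$ from $0$ on opposite sides; the arc of $\gamma$ past the cut point has length $2(\rho - r) + 2\epsilon$ and lifts to a leafwise path in $M$ joining $\varphi(z,i)$ to $\varphi(w,j)$ (same or adjacent sheet, depending on whether $[\gamma]$ lifts closed), so $d_M(\varphi(z,i),\varphi(w,j)) \leq 2(\rho - r) + 2\epsilon$, whereas your $\ell^1$ formula predicts roughly $2(r-\epsilon) + d_T(i,j)$. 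For $r$ close to $\rho$ these are incompatible. Note that the same phenomenon already occurs for $N = 1$: even the trivial cover's chart $B(0,r) \to B(x_0,r) \subset S$ fails to be a global isometry onto its image with the induced metric for $r$ near $\rho$, because of shortcuts around the cut locus.

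The repair is interpretive rather than technical. Condition (3) cannot mean ``isometry onto the image with the metric induced from $d_M$'' --- otherwise a closed hyperbolic surface, viewed as a trivial lamination, would have lamination injectivity radius strictly smaller than its Riemannian injectivity radius at many points, defeating the purpose of the definition. It must be read in the sense compatible with the maximal-metric construction of Definition \ref{metriconLLipschitzRSL}: $\varphi$ is leafwise isometric and realizes the prescribed transverse metric, i.e.\ it is a compatible isometric chart (a local isometry). Under that reading, even covering instantly gives everything, since $p|_{V_i}: V_i \to B(x_0,r)$ is a leafwise isometry and the transversal $p^{-1}(x_0)$ carries the metric defined in Section \ref{Finite covers}; this is why the paper's proof is one line. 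Your attempt to prove the stronger global statement, with exact additivity $d_M = d_{\mathbb{D}} + d_T$ on the flow box, is what created the crux you could not close --- and could not have closed, because it is not true.
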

 
\begin{proof}
If  $r = \text{injrad}_{p(m)}(S)$, then $B(p(m), r)$ is evenly covered.  Thus, we have the result.
\end{proof}

\begin{lemma}
If we denote the area measure on $S$ by $\mu$ and the measure induced on $M$ by the lamination structure as 
$\overline{\mu}$, then $\overline{\mu}(p^{-1}(U)) = \mu(U)$.  Hence $\overline{\mu}(M) = \mu(S)$
\label{covervolbound}
\end{lemma}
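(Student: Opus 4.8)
The plan is to reduce the claim to a single evenly covered open set and then assemble the global identity by a partition argument. First I would fix an evenly covered open set $U \subset S$ together with the chart $\phi: U \times \{1,\dots,N\} \to p^{-1}(U)$, so that $p \circ \phi(z,i) = z$ and each $\phi(\cdot,i)$ is a biholomorphism onto the $i$-th sheet. For any Borel set $E \subset U$ one has $\phi^{-1}(p^{-1}(E)) = E \times \{1,\dots,N\}$, so $p^{-1}(E)$ lies in the single flow box $p^{-1}(U)$ and the sum in Definition \ref{measonRSL} reduces to one term, namely
\begin{align*}
\overline{\mu}(p^{-1}(E)) = \int_T \left(\int_{U \times \{t\}} \chi_E \cdot \sigma\right) d\nu(t) = \sum_{i=1}^N \nu(\{i\}) \int_{U \times \{i\}} \chi_E \cdot \sigma,
\end{align*}
where $\sigma$ is the leafwise hyperbolic area.

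The crucial step is to identify $\int_{U \times \{i\}} \chi_E \cdot \sigma$ with $\mu(E)$. Since $S$ is hyperbolic and $p:M \to S$ is a covering, $M$ inherits its hyperbolic structure as a cover of $S$, so that $p$ is a local isometry for the two Poincar\'e metrics; equivalently, each biholomorphism $\phi(\cdot,i)$ preserves the hyperbolic area form. Hence the leafwise hyperbolic area of the plaque $\phi(E \times \{i\})$ equals the hyperbolic area $\mu(E)$ of $E$ in $S$, for every sheet $i$. Substituting $\nu(\{i\}) = \tfrac{1}{N}$ gives
\begin{align*}
\overline{\mu}(p^{-1}(E)) = \sum_{i=1}^N \frac{1}{N}\, \mu(E) = \mu(E),
\end{align*}
which in particular yields $\overline{\mu}(p^{-1}(U)) = \mu(U)$.

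For the global identity $\overline{\mu}(M) = \mu(S)$, I would cover $S$ by countably many evenly covered open sets $\{U_k\}$ (possible since $S$ is second countable) and pass to the disjoint Borel sets $V_k := U_k \setminus \bigcup_{j<k} U_j \subset U_k$, which partition $S$. Then $\{p^{-1}(V_k)\}$ partitions $M$, and by the first part $\overline{\mu}(p^{-1}(V_k)) = \mu(V_k)$; Lemma \ref{measindepchart} guarantees that $\overline{\mu}$ is independent of the chart bookkeeping used here. Countable additivity then gives $\overline{\mu}(M) = \sum_k \overline{\mu}(p^{-1}(V_k)) = \sum_k \mu(V_k) = \mu(S)$.

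The main obstacle is the middle step: justifying carefully that the leafwise hyperbolic area of a plaque agrees with the hyperbolic area downstairs, that is, that the covering $p$ is a local isometry of the Poincar\'e metrics so that the biholomorphism defining each sheet is area-preserving. Once this is in place, the weighting by $\nu(\{i\}) = \tfrac{1}{N}$ exactly cancels the $N$ sheets, and the remaining steps are routine measure-theoretic bookkeeping.
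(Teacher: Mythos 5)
Your proof is correct, and in fact the paper states Lemma \ref{covervolbound} without any proof at all, treating it as immediate from Definition \ref{measonRSL} together with the choices $\nu(\{i\}) = \frac{1}{N}$; your computation is precisely the intended justification, with the transverse weight $\frac{1}{N}$ cancelling the $N$ sheets and the countable disjointification handling the global statement. The one step you flag as the main obstacle is standard and easily closed: the pullback $p^*g$ of the complete hyperbolic metric $g$ on $S$ is itself a complete metric of curvature $-1$ on $M$, so by uniqueness of the hyperbolic metric in a conformal class it \emph{is} the hyperbolic metric of $M$; hence $p$ is a local isometry, each biholomorphism $\phi(\cdot,i)$ preserves the hyperbolic area form, and the leafwise area of the plaque over $E$ equals $\mu(E)$, as you claim.
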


\begin{lemma}
$\nu(B(t,\varepsilon)) \geq \min\lbrace 1,\varepsilon\rbrace$
\end{lemma}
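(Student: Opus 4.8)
The plan is to exploit the completely explicit description of the transversal $T = p^{-1}(x_0)$: it is a finite set of $N$ points in which every pair of distinct points is at distance exactly $\frac{1}{N}$, and each point carries $\nu$-mass $\frac{1}{N}$, so that $\nu(T) = 1$. Since the balls generating the topology are open, $B(t,\varepsilon) = \{s \in T : d(s,t) < \varepsilon\}$, the whole argument reduces to comparing the radius $\varepsilon$ with the single relevant scale $\frac{1}{N}$. I would therefore split into two cases according to this comparison.

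First, when $\varepsilon \leq \frac{1}{N}$, I would observe that any $s \neq t$ satisfies $d(s,t) = \frac{1}{N} \geq \varepsilon$ and hence lies outside the open ball, so that $B(t,\varepsilon) = \{t\}$ and $\nu(B(t,\varepsilon)) = \frac{1}{N}$. Because $\varepsilon \leq \frac{1}{N} \leq 1$ (as $N \geq 1$), we have $\min\{1,\varepsilon\} = \varepsilon \leq \frac{1}{N}$, which yields the claimed inequality $\nu(B(t,\varepsilon)) = \frac{1}{N} \geq \min\{1,\varepsilon\}$ in this case.

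Second, when $\varepsilon > \frac{1}{N}$, every point of $T$ is within distance $\frac{1}{N} < \varepsilon$ of $t$, so $B(t,\varepsilon) = T$ and $\nu(B(t,\varepsilon)) = N \cdot \frac{1}{N} = 1$; since $\min\{1,\varepsilon\} \leq 1$ by definition, the inequality $\nu(B(t,\varepsilon)) = 1 \geq \min\{1,\varepsilon\}$ is again immediate. Combining the two cases covers all $\varepsilon > 0$ and proves the lemma.

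There is no genuine obstacle in this argument; the only point requiring a little care is the convention that the balls are open, which governs the behaviour at the threshold $\varepsilon = \frac{1}{N}$. I place this threshold value in the first case, where the ball collapses to the singleton $\{t\}$, so that the two cases partition all positive $\varepsilon$ with neither overlap nor gap.
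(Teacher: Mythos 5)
Your proof is correct and takes essentially the same route as the paper's: a two-case comparison of $\varepsilon$ with the single scale $\frac{1}{N}$, the ball being either the singleton $\lbrace t\rbrace$ of mass $\frac{1}{N}\geq\varepsilon$ or all of $T$ with mass $1$. You are in fact slightly more careful than the paper at the threshold $\varepsilon=\frac{1}{N}$, which the paper places in the second case even though under its open-ball convention $B\left(t,\frac{1}{N}\right)=\lbrace t\rbrace$; the inequality holds either way, since $\frac{1}{N}\geq\min\left\lbrace 1,\frac{1}{N}\right\rbrace$.
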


\begin{proof}
If $\varepsilon \lneq \frac{1}{N}$ then $\nu(B(t,\varepsilon)) = \frac{1}{N} \gneq \varepsilon$.  If 
$\varepsilon \geq \frac{1}{N}$ then, $\nu(B(t,\varepsilon)) = 1$.  Thus, 
$\nu(B(t,\varepsilon)) \geq \min(1,\varepsilon)$.
\end{proof}

\noindent Also, as the measure of every point is same under the measure we defined, it is invariant under holonomy 
transformations.  Thus we have,
\begin{theorem}
The space of finite covers of a Riemann surface $S$ is a subset of the space $\mathcal{X}(1,\text{injrad}(S),Area(S))$.
\end{theorem}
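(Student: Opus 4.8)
The plan is simply to confirm that the lamination $M$ carries all the required structure and then verify, one at a time, the three defining conditions of $\mathcal{X}(1,\text{injrad}(S),Area(S))$, assembling the lemmas already established in this subsection. First I would record that $M$ is a genuine $1$-Lipschitz hyperbolic Riemann surface lamination with the stated invariant transverse measure: the maps $\phi: U \times \{1,\dots,N\} \to p^{-1}(U)$ exhibit $M$ as a lamination whose transversals are the fibres $p^{-1}(x_0)$ endowed with the discrete metric $d(m_1,m_2) = \tfrac{1}{N}$ and the measure $\nu(\{i\}) = \tfrac{1}{N}$. Its global leaves are the connected components of $M$, each a connected finite covering of the hyperbolic surface $S$ and hence itself hyperbolic, so $M$ is a hyperbolic Riemann surface lamination. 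Since the transverse coordinate changes merely permute a finite set carrying the metric $\tfrac{1}{N}$, they are trivially $1$-Lipschitz, as noted in the remark, giving $L = 1$; and because every point of every fibre has equal mass $\tfrac{1}{N}$, the transverse measure is preserved by the holonomy permutations, so it is indeed invariant.

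Next I would check the three conditions in turn. Condition (1), that the injectivity radius of $M$ is at least $r = \text{injrad}(S)$, is exactly Lemma~\ref{coverinjectivity}, which yields $\text{injrad}_m(M) \geq \text{injrad}_{p(m)}(S) \geq \text{injrad}(S)$ at every point $m$. Condition (2), that $\mu(M) \leq A = Area(S)$, follows from Lemma~\ref{covervolbound}, which in fact gives the equality $\overline{\mu}(M) = \mu(S) = Area(S)$.

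For condition (3) I would combine the preceding lemma, asserting $\nu(B(t,\varepsilon)) \geq \min\{1,\varepsilon\}$, with the observation used in the proof of Lemma~\ref{netandball2} that a metric ball $B(m,\varepsilon)$ in the total space contains the image of a product $B_{\mathbb{H}}(0,\tfrac{\varepsilon}{2}) \times B_T(t,\tfrac{\varepsilon}{2})$. Thus, for the transversal $T$ through $m = \varphi(0,t)$, the intersection $B(m,\varepsilon) \cap T$ contains the transverse ball $B_T(t,\tfrac{\varepsilon}{2})$, so its transverse measure is at least $\nu(B_T(t,\tfrac{\varepsilon}{2})) \geq \min\{1,\tfrac{\varepsilon}{2}\}$, and one may take $B(\varepsilon) = \min\{1,\tfrac{\varepsilon}{2}\}$. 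The only point deserving genuine care here — everything else being bookkeeping — is that this lower bound be uniform over all finite covers, that is, independent of the number of sheets $N$; this is precisely what the $\min\{1,\varepsilon\}$ estimate secures, since the naive bound $\tfrac{1}{N}$ coming from the single fibre point $m$ would degenerate as $N \to \infty$. Combining these three verifications shows that every finite cover of $S$ lies in $\mathcal{X}(1,\text{injrad}(S),Area(S))$, which completes the proof.
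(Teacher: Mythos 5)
Your proposal is correct and takes essentially the same route as the paper, whose ``proof'' is exactly the assembly you perform: Lemma~\ref{coverinjectivity} for the injectivity radius bound, Lemma~\ref{covervolbound} for $\overline{\mu}(M)=Area(S)\leq A$, the estimate $\nu(B(t,\varepsilon))\geq \min\{1,\varepsilon\}$ (uniform in the number of sheets $N$, as you rightly stress) for the transverse-measure condition, together with the remarks that the lamination is $1$-Lipschitz and that equal point masses make $\nu$ holonomy invariant. Your only addition is to spell out the containment $B_T\left(t,\tfrac{\varepsilon}{2}\right)\subset B(m,\varepsilon)\cap T$ via the product chart, a step the paper leaves implicit.
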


\begin{rmk}
Note that if the cover is connected, there is only one leaf for this foliation.  Thus, given a finite cover $X$ of 
$S$, $X$ with this lamination structure is isomorphic to $X$ as a surface (considered as trivial lamination).
\end{rmk}

\begin{rmk}
Even though there is only one leaf, $X$ with this lamination structure is different from the surface $X$ 
(considered as trivial lamination) as $L$-Lipschitz Riemann surface laminations.
\end{rmk}

\begin{rmk}
Furthermore, we get interesting laminations as limits of these laminations.  This will be the next example.
\end{rmk}

\subsection{Solenoid}\label{Solenoid}
Given a sequence $X_0,X_1,X_2,...$ of surfaces, and regular coverings $f_{n,n-1}: X_n \rightarrow X_{n-1}$ for all 
$n\in \mathbb{N}$, the inverse limit of this sequence, $X_{\infty}$ will be called a Solenoidal space.  Recall the 
definition of an inverse limit:

\noindent \textbf{Inverse Limit:}
Let $X_0,X_1,X_2,...$ be countable sequence of topological spaces, and suppose that for each $n>0$ there exists a 
function $f_{n,n-1}: X_n \rightarrow X_{n-1}$, called bonding maps.  The sequence of spaces and mappings, 
$\lbrace X_n,f_{n,n-1}\rbrace$, is called an inverse limit sequence and may be represented by the diagram

\begin{center}
\xymatrix{
...\ar[r]^{f_{n+1,n}} &X_n \ar[r]^{f_{n,n-1}} &X_{n-1} \ar[r]^{f_{n-1,n-2}} &...\ar[r]^{f_{1,0}}&X_0}
\end{center}

Clearly if $n>m$ then there exists a map $f_{n,m}:X_n\rightarrow X_m$ given by the composition 
$f_{n,m}=f_{m+1,m}\circ f_{m+2,m+1}\circ...\circ f_{n-1,n-2}\circ f_{n,n-1}$.

Consider a sequence $(x_0,x_1,...,x_n,...)$ such that $x_n \in X_n$ and $f_{n+1}(x_{n+1})=x_n$ for all $n\geq 0$.  
Such a sequence can be identified with a point in the product space $\Pi_{n=0}^{\infty} X_n$.  The set of all such 
sequences is thus a subset of $\Pi_{n=0}^{\infty} X_n$ and hence has a topology as a subspace.  This topological 
space is the inverse limit space of the sequence $\lbrace X_n, f_{n,n-1}\rbrace$.  We will denote it by $X_{\infty}$ and let 
$f_n: X_{\infty} \to X_n$ be the canonical projection.

 Given a point $(x_0,x_1,...) \in X_{\infty}$ we have, as described earlier, local parametrisations 
 $\varphi_n: B(0,r) \times T_n \to X_n$ where $r \leq \text{injrad}(X_0)$ with 
 $x_n = \varphi(0,t) \in Image(\varphi_n)$.  Similarly, if $\pi_n$ is the canonical projection from 
 $B(0,r) \times T_n \to T_n$, the sequence 
 $\lbrace T_n,h_{n,n-1}: = \pi_{n-1}((\phi_{n-1})^{-1}\circ f_{n,n-1} \circ \phi_n (0,t)) \rbrace$ is an inverse limit sequence.
 Let $T_{\infty}$ be the inverse limit of the sequence $\lbrace T_n,h_{n,n-1}\rbrace$.
 
Denote the map, $(x,t) \mapsto (x,h_n(t))$, from  $B(0,r) \times T_{\infty}$ to $B(0,r) \times T_{n}$ by $g_n$.  
Corresponding to the maps $\varphi_n \circ g_n: B(0,r) \times T_{\infty} \to X_n$ there exists a unique map 
$\varphi_{\infty}: B(0,r) \times T_{\infty} \to X_{\infty}$ around the point $(x_0,x_1,...) \in X_{\infty}$.  
More precisely, $\varphi_{\infty}: (x,t)\mapsto (\varphi_0\circ g_0(x,t), \varphi_1 \circ g_1(x,t),...)$.  
We will show that this map is injective.  
\begin{align*}
\varphi_{\infty}(x,s) = \varphi_{\infty}(y,s) &\iff \varphi_n \circ g_n(x,t) = \varphi_n \circ g_n(y,s), \ \forall n. \\
&\iff \varphi_n(x,h_n(t)) = \varphi_n(y,h_n(s)), \ \forall n.\\
&\iff x=y \text{ and } h_n(s)=h_n(t), \ \forall n. \ \ (\varphi_n \text{ is injective})\\
&\iff x=y \text{ and } s=t
\end{align*}

By similar analysis we can show that transition maps behave well.  Thus, $X_{\infty}$ has a lamination structure.  

\begin{dfn}[Measure on $T_n$]
Define a measure $\nu_n$ on $T_n$ recursively as 
$$\nu_n(t) = \frac{\nu_{n-1}(h_{n,n-1}(t))}{\deg(h_{n,n-1})}, \ \forall \ t \in T_n. $$
\end{dfn}
As every point in $T_n$ has the same measure, this measure is clearly holonomy invariant.

\begin{dfn}[Measure on $T_{\infty}$]
Define $\nu_{\infty}$ on $T_{\infty}$ as
$$\nu_{\infty}(E) = \lim_{n \to \infty}\nu_n(h_n(E)).$$
\end{dfn}
\noindent The sequence $\nu_n(h_n(E))$ is a decreasing sequence bounded below by zero.  Hence, the limit exists.  
Moreover, as each $\nu_n$ is holonomy invariant, the limit $\nu_{\infty}$ is also holonomy invariant.  

\noindent \textbf{Metric on $T_n$:}
From the inverse limit sequence $\lbrace T_n,h_{n,n-1}\rbrace$ we can produce a rooted tree.  The vertices of the tree are the 
elements of $\sqcup_{n \in \mathbb{N}} T_n$.  There is an edge between two vertices if they belong to consecutive 
$T_n$'s and the bonding map takes the element in $T_n$ to the element in $T_{n-1}$.  Assign a length of 
$\log_2 (\deg(h_{n,n-1}))$ to an edge connecting an element in $T_n$ with an element in $T_{n-1}$.  This way we get a
rooted tree with the lone element in $T_0$ being the root.  See figure below.    

\begin{center}
\label{gromov_product}
\includegraphics[scale=.68]{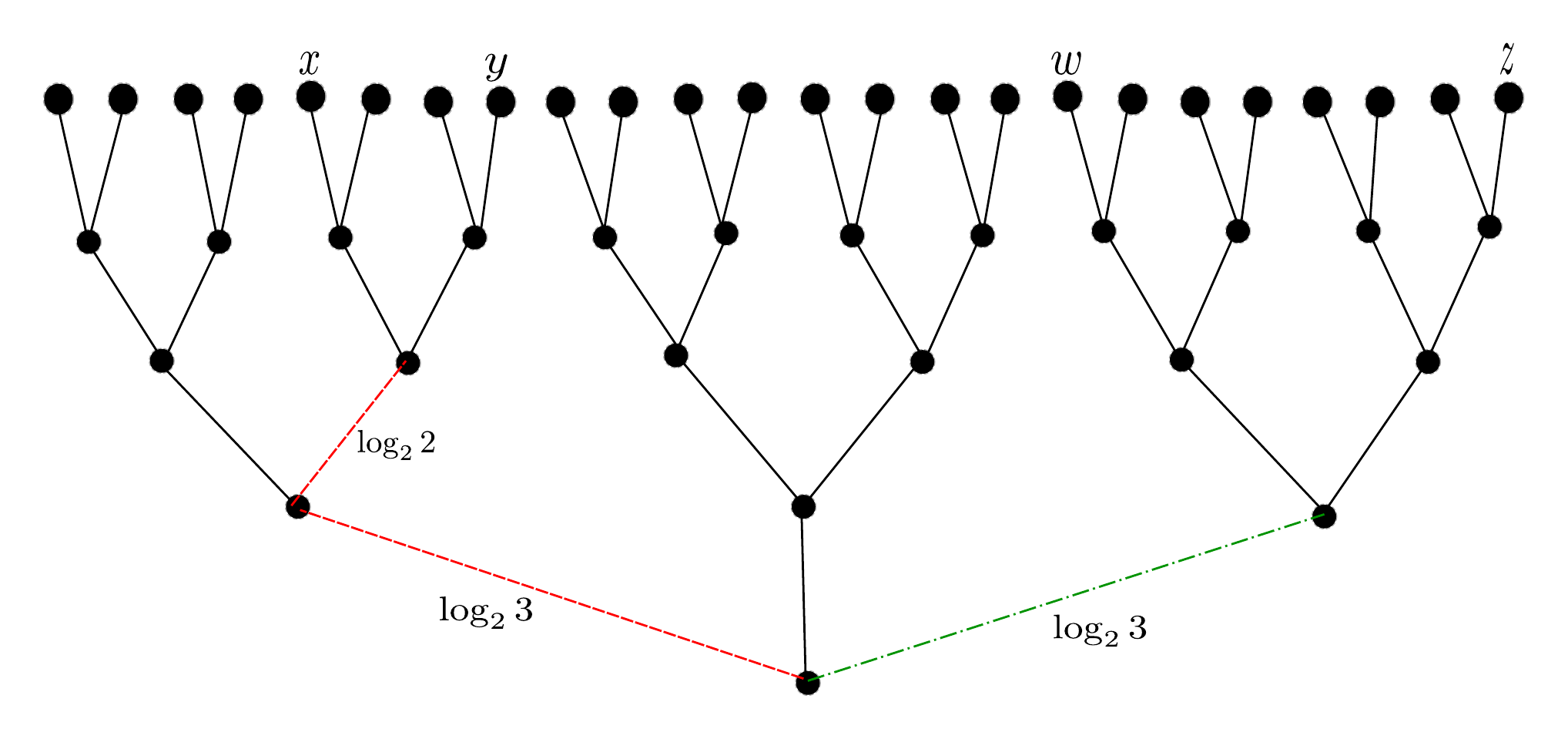}
\end{center}

\begin{dfn}[Gromov product]
Given two points, $x_1 \in T_{n_1}$ and $x_2 \in T_{n_2}$, define 
\[x\curlyvee y = \max\lbrace m \  | \ h_{n_1,m}(x_1) = h_{n_2,m}(x_2)\rbrace.\]
Note that $ 0 \leq m \leq \min(n_1,n_2)$.  The Gromov product, $(x_1|x_2)$, is defined as the length of the unique 
path from $h_{n_1,(x\curlyvee y)}(x_1) = h_{n_2,(x\curlyvee y)}(x_2)$ to the root.
\end{dfn}

\begin{dfn}[Metric on $T_n$]
The distance, $d_{T_n}$, is defined as
\begin{align*}
d_{T_n}(x,y) &= 2^{-(x|y)} \text{ if } x\neq y,\ \forall x,y \in T_n \\ 
d_{T_n}(x,x) &= 0
\end{align*} 
\end{dfn}

\noindent \textbf{Triangle inequality:} Given points $x,y,z \in T_n$,  if $h_{n,m}(x) = h_{n,m}(y)$, then 
$h_{n,m'}(x) = h_{n,m'}(y)$ for all $m' \leq m$.  Similarly for any other combination of two points.  Let $m$ be the
largest number such that $h_{n,m}(x) =  h_{n,m}(y)$ and let $k$ be the largest number such that 
$h_{n,k}(y) =  h_{n,k}(z)$.  Then 
$h_{n,\min(m,k)}(x) = h_{n,\min(m,k)}(y) = h_{n,\min(m,k)}(z)$, i.e., $x\curlyvee z \geq \min(x\curlyvee y, y\curlyvee z)$ which implies that $(x|z) \geq \min((x|y),(y|z))$.  Hence, $2^{-(x|z)} \leq \max(2^{-(x|y)},2^{-(y|z)})$, i.e., $d_{T_n}(x,z) \leq \max(d_{T_n}(x,y),d_{T_n}(y,z)) \leq d_{T_n}(x,y) + d_{T_n}(y,z)$.

\begin{dfn}[Metric on $T_{\infty}$]
Define a metric on $T_{\infty}$ as 
$$d_{\infty}(s,t) := \lim_{n \to \infty} d_{T_n}(h_n(s),h_n(t)).$$
\end{dfn}

\noindent Note that the bonding maps $h_{n,n-1}$ are distance decreasing.  Thus, 
$0 \leq d_{T_n}(h_n(s),h_n(t)) \leq d_{T_{n+1}}(h_{n+1}(s),h_{n+1}(t)) \leq...$.  On the other hand it is bounded 
above by $1$.  Hence the above limit exists.

\noindent If $h_n(s) \neq h_n(t)$ then, $d_{\infty}(s,t) = d_{T_n}(h_n(s),h_n(t))$. Otherwise 
$$d_{\infty}(s,t)\leq \frac{1}{\deg(h_{1,0})\times \deg(h_{2,1})\times ... \times \deg(h_{n,n-1})}.$$ 
and $d_{T_n}(h_n(s),h_n(t)) =  0$.  Thus,
$$d_{\infty}(s,t) -  d_{T_n}(h_n(s),h_n(t))\leq \frac{1}{\deg(h_{1,0})\times \deg(h_{2,1})\times ... \times \deg(h_{n,n-1})} =: \delta_n .$$ 
It is clear that $\delta_n$ tends to zero as $n$ tends to infinity.  Define 
$Z_n = T_n \sqcup T_{\infty}$ with metric on $Z_n$ defined as 
$d_{Z_n} = d_{h_n}^{\delta_n}$.  By Theorem \ref{inclmaxmet} the inclusions $T_n \hookrightarrow Z_n$ and
$T_{\infty} \hookrightarrow Z_n$ are isometries.  The Prokhorov distance between the push-forward measures is less than or equal to $\delta_n$ 
($\nu_n(E) \leq \nu_{\infty}(E^{\delta_n + \varepsilon})$ and $\nu_{\infty}(E) \leq \nu_n(E^{\delta_n + \varepsilon})$
for all $\varepsilon > 0$).  Thus, $d_{\rho}(T_n,T_{\infty})$ converges to zero as $n$ tends to infinity.

Use the metric on $T_{n}$ along with the local parametrisations for $X_{n}$ to construct a metric on $X_{n}$.  More 
precisely, choose the maximal metric such that all the local parametrisations are distance decreasing.  Similarly, 
use the metric on $T_{\infty}$ along with the local parametrisations for $X_{\infty}$ to construct a metric on 
$X_{\infty}$.  More precisely, choose the maximal metric such that all the local parametrisations are distance 
decreasing.  Then, it can be proved that $d_{X_{\infty}}(x,y) := \lim_{n \to \infty} d_{X_n}(f_n(x),f_n(y))$.  
Thus, as before define $Z_n = X_n \sqcup X_{\infty}$ and define the metric on $Z_n$ as
$d_{Z_n} = d_{h_n}^{\delta_n}$.  As before, the inclusions $X_n \hookrightarrow Z_n$ and $X_{\infty} \hookrightarrow Z_n$ are isometries by Theorem \ref{inclmaxmet} and the Prokhorov distance between the push-forward measures is less than or equal to $\delta_n$.  Thus, 
$d_{\rho}(X_n,X_{\infty})$ converges to zero as $n$ tends to infinity.

\begin{lemma}
Given $0 < \varepsilon < \frac{1}{deg(h_{1,0})}$, let $n(\varepsilon)$ be the smallest number, $n$,  such that 
$$\zeta_n:= \frac{1}{\deg(h_{1,0})\times \deg(h_{2,1})\times ... \times \deg(h_{n,n-1})} < \varepsilon.$$  Then,
$$\nu_n(B(t,\varepsilon)) \geq \frac{\varepsilon}{\deg(h_{n(\varepsilon),n(\varepsilon) - 1})}.$$
\end{lemma}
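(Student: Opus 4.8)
The plan is to reduce everything to the single observation that the tree metric on $T_n$ has the explicit form $d_{T_n}(x,y) = \zeta_{x\curlyvee y}$, after which the ball $B(t,\varepsilon)$ becomes a fiber of a bonding map and its measure can simply be read off. First I would unwind the recursive definition of $\nu_n$: since $\nu_0$ assigns mass $1$ to the single vertex of $T_0$ and $\nu_n(t) = \nu_{n-1}(h_{n,n-1}(t))/\deg(h_{n,n-1})$, induction shows every point $t\in T_n$ carries the equal mass
$$\nu_n(\{t\}) = \frac{1}{\deg(h_{1,0})\cdots\deg(h_{n,n-1})} = \zeta_n.$$

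Next I would compute the distance. The Gromov product $(x|y)$ is the length of the path from the common ancestor at level $m := x\curlyvee y$ up to the root, and since the edge joining level $k$ to level $k-1$ has length $\log_2(\deg(h_{k,k-1}))$, this path has total length $\sum_{k=1}^m \log_2(\deg(h_{k,k-1})) = \log_2(1/\zeta_m)$. Hence
$$d_{T_n}(x,y) = 2^{-(x|y)} = \zeta_{x\curlyvee y},$$
so the metric is precisely the ultrametric that reads off the meeting level. Because $\zeta_m$ is non-increasing in $m$, the minimality of $n(\varepsilon)$ gives $\zeta_m < \varepsilon \iff m \geq n(\varepsilon)$, and therefore the condition $d_{T_n}(t,y) = \zeta_{t\curlyvee y} < \varepsilon$ is equivalent to $t\curlyvee y \geq n(\varepsilon)$, i.e.\ to $h_{n,n(\varepsilon)}(y) = h_{n,n(\varepsilon)}(t)$.

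I would then identify $B(t,\varepsilon)$ as a fiber. For $n \geq n(\varepsilon)$ the above characterisation says $B(t,\varepsilon)$ is exactly the fiber of $h_{n,n(\varepsilon)}$ through $t$; since the bonding maps are regular coverings, this fiber has uniform cardinality $\prod_{k=n(\varepsilon)+1}^n \deg(h_{k,k-1})$. Multiplying by the per-point mass $\zeta_n$ makes the product telescope,
$$\nu_n(B(t,\varepsilon)) = \zeta_n \prod_{k=n(\varepsilon)+1}^n \deg(h_{k,k-1}) = \zeta_{n(\varepsilon)},$$
while for $n < n(\varepsilon)$ the same reasoning forces $B(t,\varepsilon) = \{t\}$ and hence $\nu_n(B(t,\varepsilon)) = \zeta_n \geq \zeta_{n(\varepsilon)}$. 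In either case $\nu_n(B(t,\varepsilon)) \geq \zeta_{n(\varepsilon)}$.

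Finally I would invoke minimality once more: as $n(\varepsilon)$ is the least index with $\zeta_{n(\varepsilon)} < \varepsilon$, we have $\zeta_{n(\varepsilon)-1} \geq \varepsilon$, the index being legitimate because the hypothesis $\varepsilon < 1/\deg(h_{1,0}) = \zeta_1$ forces $n(\varepsilon) \geq 2$. The recursion $\zeta_{n(\varepsilon)} = \zeta_{n(\varepsilon)-1}/\deg(h_{n(\varepsilon),n(\varepsilon)-1})$ then gives
$$\nu_n(B(t,\varepsilon)) \geq \zeta_{n(\varepsilon)} = \frac{\zeta_{n(\varepsilon)-1}}{\deg(h_{n(\varepsilon),n(\varepsilon)-1})} \geq \frac{\varepsilon}{\deg(h_{n(\varepsilon),n(\varepsilon)-1})},$$
which is the assertion. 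The only genuinely delicate step is the second one—correctly converting the Gromov-product/tree length into the power of the degree product and using regularity of the coverings to guarantee uniform fiber sizes; once $d_{T_n} = \zeta_{x\curlyvee y}$ is in hand, the remainder is a telescoping computation together with the defining inequality for $n(\varepsilon)$.
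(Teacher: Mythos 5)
Your proof is correct and follows the same strategy as the paper's: identify $B(t,\varepsilon)$ with the fiber of $h_{n,n(\varepsilon)}$ through $t$, compute its mass as $\zeta_{n(\varepsilon)}$ via the recursion defining $\nu_n$, and convert the minimality inequality $\zeta_{n(\varepsilon)-1}\geq\varepsilon$ into the stated bound. Your explicit ultrametric formula $d_{T_n}(x,y)=\zeta_{x\curlyvee y}$ is only implicit in the paper, and making it explicit pays off in the boundary case: for $n< n(\varepsilon)$ the paper asserts that $B(t,\varepsilon)$ contains all of $T_n$ (hence has measure $1$), but since distinct points of $T_n$ are then at distance $\zeta_m$ with $m\leq n-1\leq n(\varepsilon)-1$, hence at distance at least $\varepsilon$, the open ball is in fact the singleton $\lbrace t\rbrace$, exactly as you say; your mass $\zeta_n\geq\varepsilon$ still dominates $\varepsilon/\deg(h_{n(\varepsilon),n(\varepsilon)-1})$, so the lemma survives, and your version of this case is the correct one. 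You also rightly use the non-strict $\zeta_{n(\varepsilon)-1}\geq\varepsilon$ where the paper claims strict inequality (minimality of $n(\varepsilon)$ only gives non-strict), and you justify $n(\varepsilon)\geq 2$ from the hypothesis $\varepsilon<\zeta_1$, so the index $n(\varepsilon)-1$ is legitimate. One small remark: constancy of the fiber cardinality of $h_{n,n(\varepsilon)}$ needs only that the coverings have well-defined finite degree, not regularity; and in fact you never need to count fiber points at all, since the telescoping identity $\nu_n\left(h_{n,n(\varepsilon)}^{-1}(u)\right)=\nu_{n(\varepsilon)}(u)$ follows directly from iterating the recursive definition of $\nu_n$, which is how the paper phrases it.
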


\begin{proof}
Note that $\zeta_{n(\varepsilon)-1} > \varepsilon$, i.e., $\deg(h_{n(\varepsilon),n(\varepsilon) - 1}).\zeta_{n(\varepsilon)} > \varepsilon$.
If $n >n(\varepsilon)$ then, $B(t,\varepsilon)$ contains all $s$ such that 
$h_{n,n(\varepsilon)}(s) = h_{n,n(\varepsilon)}(t)$.  Thus, by definition of $\nu_n$, 
$$\nu_n(B(t,\varepsilon)) =  \nu_{n(\varepsilon)}(h_{n,n(\varepsilon)}(t)) = \zeta_{n(\varepsilon)} > \frac{\varepsilon}{\deg(h_{n(\varepsilon),n(\varepsilon) - 1})}.$$
If $n \leq n(\varepsilon)$, $B(t,\varepsilon)$ contains all points in $T_n$.  So $\mu(B(t,\varepsilon)) = 1$.  Hence the result.
\end{proof}
\noindent Also, If $\varepsilon \geq \frac{1}{deg(h_{1,0})}$, then $B(t,\varepsilon) = T_n$.  As $n(\varepsilon)$ 
depends only on $\varepsilon$ and does not depend on $n$, given $\varepsilon$, there exists a uniform lower bound on
the the volume of $\varepsilon$-ball in $T_n$.    Thus this along with Lemma \ref{covervolbound} and 
Lemma \ref{coverinjectivity} gives us 

\begin{lemma}
$\lbrace X_n: n\in \mathbb{N}\rbrace \subset \mathcal{X}(1,\text{injrad}(X_0),Area(X_0))$
\end{lemma}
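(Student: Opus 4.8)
The plan is to verify directly that each $X_n$ satisfies the three defining conditions of $\mathcal{X}(L,r,A)$ with $L=1$, $r=\text{injrad}(X_0)$ and $A=\text{Area}(X_0)$, exploiting that $X_n$ is the $N_n$-sheeted cover of $X_0$ obtained by composing the regular coverings $h_{k,k-1}$, where $N_n = \deg(h_{1,0})\cdots\deg(h_{n,n-1})$. The three conditions will be handled by, respectively, Lemma \ref{coverinjectivity}, Lemma \ref{covervolbound}, and the preceding ball-measure lemma, together with a short remark establishing the $1$-Lipschitz property.

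First I would dispose of the area bound. By unwinding the recursion $\nu_n(t)=\nu_{n-1}(h_{n,n-1}(t))/\deg(h_{n,n-1})$ starting from the point mass $\nu_0=1$, every point of $T_n$ receives the common mass $1/N_n$, so $\nu_n$ is precisely the uniform probability transverse measure attached to the finite cover $X_n\to X_0$. Hence Lemma \ref{covervolbound} applies verbatim and gives $\mu(X_n)=\text{Area}(X_0)=A$, which is condition (2). The injectivity-radius bound is equally immediate: for any $m\in X_n$ the ball $B(f_{n,0}(m),\text{injrad}(X_0))\subset X_0$ is evenly covered, so Lemma \ref{coverinjectivity} yields $\text{injrad}_m(X_n)\geq\text{injrad}_{f_{n,0}(m)}(X_0)\geq\text{injrad}(X_0)=r$, which is condition (1).

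The only condition that genuinely uses the tree metric on $T_n$ is the lower bound on the transverse measure of small balls. Here I would feed in the preceding lemma: for $0<\varepsilon<1/\deg(h_{1,0})$ it gives $\nu_n(B(t,\varepsilon))\geq \varepsilon/\deg(h_{n(\varepsilon),n(\varepsilon)-1})$, while for $\varepsilon\geq 1/\deg(h_{1,0})$ one has $B(t,\varepsilon)=T_n$ and $\nu_n(B(t,\varepsilon))=1$. The crucial point is that $n(\varepsilon)$ depends only on $\varepsilon$ and not on $n$, so
$$B(\varepsilon):=\min\left\{1,\ \frac{\varepsilon}{\deg(h_{n(\varepsilon),n(\varepsilon)-1})}\right\}>0$$
is a single lower bound valid simultaneously for all $n$. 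To convert this transverse estimate into condition (3) I would observe that a regular transversal through $m$ is a copy of $T_n$ on which the ambient Kobayashi-type maximal metric restricts to $d_{T_n}$, so that $B(m,\varepsilon)\cap T$ contains the transverse ball $B_{T_n}(t,\varepsilon)$, whose transverse measure is therefore at least $B(\varepsilon)$.

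Finally I would record the $1$-Lipschitz property. The transverse transition maps of $X_n$ are the monodromy permutations of the fibre $T_n$ under $\pi_1(X_0)$; since these permutations are equivariant for all the bonding maps $h_{k,k-1}$ of the tower, they preserve Gromov products and hence act by isometries of $(T_n,d_{T_n})$, so they are $1$-Lipschitz and $L=1$. The main obstacle, I expect, is the third condition rather than the first two: one must be careful that the lower bound $B(\varepsilon)$ is truly uniform in $n$ — which is exactly what the dependence of $n(\varepsilon)$ on $\varepsilon$ alone secures — and that the restriction of the maximal metric to a transversal really coincides with $d_{T_n}$, so that balls in $X_n$ capture the required transverse mass.
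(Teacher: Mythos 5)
Your proposal is correct and takes essentially the same route as the paper: the paper likewise deduces the lemma by combining the area identity of Lemma \ref{covervolbound}, the injectivity radius comparison of Lemma \ref{coverinjectivity}, and the uniform lower bound on $\nu_n(B(t,\varepsilon))$ from the immediately preceding lemma, with the key observation (which you also make) that $n(\varepsilon)$ depends only on $\varepsilon$ and not on $n$, so the bound $B(\varepsilon)$ is uniform over the tower. Your extra remarks --- that $\nu_n$ unwinds to the uniform probability measure $1/N_n$ on the fibre, that monodromy commutes with the bonding maps and hence preserves Gromov products (giving $L=1$), and that the maximal metric is distance decreasing into transversals so $B(m,\varepsilon)\cap T \supseteq B_{T_n}(t,\varepsilon)$ --- merely make explicit steps the paper leaves implicit.
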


\begin{theorem}
$X_{\infty} \in \mathcal{X}(1,\text{injrad}(X_0),Area(X_0))$
\end{theorem}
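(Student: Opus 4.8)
The plan is to verify, one at a time, the three defining conditions of $\mathcal{X}(1,\text{injrad}(X_0),Area(X_0))$ for $X_\infty$, assembling the pieces already produced in the solenoid construction. First I would record that $X_\infty$ really is a $1$-Lipschitz hyperbolic Riemann surface lamination carrying an invariant transverse measure: the lamination structure and the injectivity of the charts $\varphi_\infty$ were already established, the transition maps were shown to behave well, the measure $\nu_\infty$ was shown to be holonomy invariant, and the leaves, being coverings of the hyperbolic surface $X_0$, are themselves hyperbolic. The Lipschitz constant remains $1$ because the bonding maps $h_{n,n-1}$ are distance decreasing and $\nu_\infty$ is defined as a limit of the $1$-Lipschitz data.

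For the measure bound (condition (1)), I would use that $d_{\rho}(X_n,X_\infty)\to 0$ together with Lemma \ref{total-measures-close-when-the-metric-measure-spaces-are}, which forces $|\mu_n(X_n)-\mu_\infty(X_\infty)|\to 0$. By Lemma \ref{covervolbound} each $X_n$ has total measure $Area(X_0)$, so $\mu_\infty(X_\infty)=Area(X_0)\le Area(X_0)$. For the injectivity radius (condition (2)), I would take an arbitrary point of $X_\infty$ and use the chart $\varphi_\infty\colon B(0,r)\times T_\infty\to X_\infty$ built with $r=\text{injrad}(X_0)$. By Lemma \ref{coverinjectivity} every $X_n$ has injectivity radius at least $\text{injrad}(X_0)$; the limiting $\varphi_\infty$ was shown to be injective and a compatible chart, and it is a measure preserving isometry onto its image by the construction through Proposition \ref{limmapbnlimsp}, with $B(x,r)$ contained in its image. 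Hence the injectivity radius of $X_\infty$ is at least $\text{injrad}(X_0)$.

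For the transversal measure lower bound (condition (3)), I would repeat the computation of the lemma preceding this theorem with $T_\infty$ in place of $T_n$: for $\varepsilon$ with $n(\varepsilon)$ defined, the ball $B(t,\varepsilon)\subset T_\infty$ contains every $s$ with $h_{n(\varepsilon)}(s)=h_{n(\varepsilon)}(t)$, so by the definition of $\nu_\infty$ one obtains $\nu_\infty(B(t,\varepsilon))\ge \zeta_{n(\varepsilon)}>\varepsilon/\deg(h_{n(\varepsilon),n(\varepsilon)-1})$, the same uniform bound established for the $T_n$. Combining this with the fixed hyperbolic area of a leafwise $\frac{\varepsilon}{2}$-ball (exactly as in the passage deriving the covers example) gives the required uniform $B(\varepsilon)$.

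I expect the injectivity radius step to be the main obstacle, since one must confirm that $\varphi_\infty$ is genuinely an isometric, compatible chart whose image contains $B(x,r)$ \emph{uniformly} over all points, rather than merely an injective continuous map. Alternatively, and more cheaply, once the convergence is packaged one can invoke Theorem \ref{bersforlam}: since $\{X_n\}\subset\mathcal{X}(1,\text{injrad}(X_0),Area(X_0))$, this space is compact, and $X_n\to X_\infty$ in the topology of Section \ref{topRSL} (both the $d_{\rho}$-convergence and the chart-convergence having been verified in the solenoid construction), so the limit $X_\infty$ must lie in the compact, hence closed, space $\mathcal{X}(1,\text{injrad}(X_0),Area(X_0))$.
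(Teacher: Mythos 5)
Your closing ``cheaper'' alternative is, word for word, the paper's entire proof: the paper simply notes that $\lbrace X_n \rbrace \subset \mathcal{X}(1,\text{injrad}(X_0),Area(X_0))$, that this space is compact by Theorem \ref{bersforlam}, and that $X_n \to X_\infty$, and concludes. Your main argument --- verifying the three defining conditions directly --- is a genuinely different and more informative route. The measure bound via Lemma \ref{total-measures-close-when-the-metric-measure-spaces-are} and Lemma \ref{covervolbound} is clean, and your transfer of the $\zeta_{n(\varepsilon)}$ computation to $T_\infty$ is sound: if $h_{n(\varepsilon)}(s)=h_{n(\varepsilon)}(t)$ then $d_\infty(s,t)\leq \zeta_{n(\varepsilon)}<\varepsilon$, and the cylinder set $\lbrace s : h_{n(\varepsilon)}(s)=h_{n(\varepsilon)}(t)\rbrace$ has $\nu_n(h_n(\cdot))=\zeta_{n(\varepsilon)}$ for every $n\geq n(\varepsilon)$, so $\nu_\infty$ of it is exactly $\zeta_{n(\varepsilon)}$. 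You are also right to flag the injectivity-radius condition as the real obstacle of the direct route: the solenoid section establishes injectivity of $\varphi_\infty$ and good transition maps, but not explicitly that $\varphi_\infty$ is an isometry for the induced metric with $B(x,r)$ contained in its image; the paper sidesteps precisely this by outsourcing all chart construction on the limit to the proof of Theorem \ref{bersforlam}. What the paper's route buys is brevity; what your direct route buys is an explicit identification of the limiting data ($\mu_\infty(X_\infty)=Area(X_0)$, the uniform transversal bound $\zeta_{n(\varepsilon)}$) rather than mere membership. One small caution on the compactness shortcut, which applies equally to the paper: Theorem \ref{bersforlam} gives sequential compactness (some subsequential limit lies in the space), so ``compact, hence closed'' implicitly uses uniqueness of limits, i.e.\ that $d_{\rho}$-limits are well defined up to the equivalence of Theorem \ref{positivityofrho}; at the paper's level of rigor this is accepted silently, and your proposal is correct at the same level.
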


\begin{proof}
As $\lbrace X_n: n\in \mathbb{N}\rbrace \subset \mathcal{X}(1,\text{injrad}(X_0),Area(X_0))$ and 
$\mathcal{X}(1,\text{injrad}(X_0),Area(X_0))$ is compact, $X_{\infty} \in \mathcal{X}(1,\text{injrad}(X_0),Area(X_0))$.
\end{proof} 

\begin{theorem}
The Riemann surface lamination $X_{\infty}$ has more than one leaf.
\end{theorem}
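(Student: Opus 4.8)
The plan is to argue by a cardinality comparison. I will show that the regular transversal $T_\infty$, viewed as a subset of $X_\infty$ via the chart $\varphi_\infty$, is uncountable, whereas any single global leaf can meet $T_\infty$ in only countably many points. These two facts are incompatible with $X_\infty$ consisting of a single leaf, so $X_\infty$ must have more than one leaf.

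First I would record why each global leaf meets the transversal in a countable set. A global leaf $L \subset X_\infty$ is a connected surface, and since $X_\infty$ is separable and metrizable (it is a closed subspace of $\prod_n X_n$), the leaf $L$ is second countable. Inside a flow box $\varphi_\infty : B(0,r)\times T_\infty \to X_\infty$ the plaques $\varphi_\infty(B(0,r)\times\{t\})$ are pairwise disjoint and open in the leaf topology, so $L$ meets $\varphi_\infty(\{0\}\times T_\infty)$ in a subset that is discrete in $L$; second countability then forces that subset to be countable. Equivalently, $L\cap T_\infty$ is a single orbit of the holonomy action of the finitely generated, hence countable, group $\pi_1(X_0)$ on the fibre $T_\infty$, and is therefore countable. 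This is precisely the defining property of a transversal recalled earlier, so $T_\infty$ meets every leaf in an at most countable set.

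Next I would establish that $T_\infty$ is uncountable. By construction $T_\infty$ is the inverse limit of the finite sets $T_n$ under the surjective bonding maps $h_{n,n-1}$, with $|T_n| = \deg(h_{1,0})\cdots\deg(h_{n,n-1})$. The quantity $\delta_n = 1/|T_n|$ was already shown to tend to $0$, so $|T_n|\to\infty$. An inverse limit of nonempty finite sets with surjective bonding maps whose cardinalities are unbounded is a nonempty compact, totally disconnected, perfect space, i.e.\ a Cantor set, and in particular uncountable. The same conclusion is visible directly from the ultrametric $d_\infty(s,t)=\lim_n d_{T_n}(h_n(s),h_n(t))$ constructed above, whose associated rooted tree has infinitely many branch points.

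Finally, suppose for contradiction that $X_\infty$ consisted of a single leaf $L$. Then $T_\infty \subset X_\infty = L$, so $T_\infty = L\cap T_\infty$ would be countable, contradicting the previous step. Hence $X_\infty$ has at least two, and in fact uncountably many, leaves. The main obstacle is the countability step: one must check carefully that a leaf, which is only an immersed surface, is genuinely second countable and meets the transversal in a discrete, hence countable, set. The remaining ingredient is the standing assumption (already invoked in proving $\delta_n\to 0$) that infinitely many of the coverings $f_{n,n-1}$ have degree at least two; without it $X_\infty$ reduces to a single surface and the statement fails.
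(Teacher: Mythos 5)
Your proposal takes exactly the paper's route: the paper's own proof combines the same two ingredients, namely that a transversal meets each leaf in a set that is discrete in the leaf-wise metric and hence countable (using second countability of the leaf, separability being preserved under inverse limits), and that $T_\infty$ is a Cantor set, hence uncountable. One step of yours does need repair, however: the general lemma you invoke --- that an inverse limit of nonempty finite sets under surjective bonding maps with unbounded cardinalities is perfect, hence a Cantor set --- is false as stated. Take $T_n = \{0,1,\dots,n\}$ with $h_{n,n-1}(k) = \min(k,n-1)$: the bonding maps are surjective and $|T_n| \to \infty$, yet the inverse limit consists of the eventually constant threads together with the single thread $x_n = n$, so it is countable; moreover the associated rooted tree has infinitely many branch points, so your fallback remark about branch points does not rescue the claim either. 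What saves the argument in the solenoid setting is the structure you did not explicitly use: the bonding maps $h_{n,n-1}$ are restrictions of \emph{regular coverings} to fibers, so every point of $T_{n-1}$ has exactly $\deg(h_{n,n-1})$ preimages. Hence, provided infinitely many of the degrees are at least $2$ (the same standing assumption needed for $\delta_n \to 0$, as you correctly note), \emph{every} thread branches at infinitely many levels, which makes $T_\infty$ perfect, and compactness plus total disconnectedness then gives the Cantor set. With that one-line fix --- consonant with the paper's own hedge that the conclusion holds ``for a generic inverse sequence'' --- your proof is complete, and it is essentially the paper's proof with the countability step worked out in more detail.
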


\begin{proof}
The intersection of a transversal with a leaf is discrete on the leaf with respect to the leaf wise metric.  The 
second countability of the leaf (separability is preserved under inverse limit) implies that the number of 
intersections of any transversal with a leaf is countable.  For a generic inverse sequence 
$\lbrace T_n,h_{n,n-1}\rbrace$, $T_{\infty}$ is the cantor set hence, uncountable.  Thus, such a lamination will have more than
one leaf.  
\end{proof}

\subsection{Suspension}\label{Suspension}
In this section we will closely follow section 3.1 of \cite{candelfol}. Let $S$ be a closed hyperbolic surface and
$p: \mathbb{H} \to S$ be the universal covering map.  We adopt the convention that the group $\Gamma$ of covering 
transformations acts from the right:
\begin{align*}
&\mathbb{H} \times \Gamma \to S\\
&(z,\gamma) \mapsto z.\gamma =  \gamma^{-1}(z)
\end{align*}
Fix a metric space $F$ together with a choice of base-point $z_0 \in p^{-1}(x_0)$, and let
$$h:\Gamma \to Isom(F)$$
be a group homomorphism.  The choice of base-point fixes an identification $\Gamma = \pi_1(S,x_0)$.  We define the
``diagonal" action of $\Gamma$ on $\mathbb{H} \times F$ to be the left action.
\begin{align*}
&\Gamma \times (\mathbb{H} \times F) \to \mathbb{H} \times F\\
&(\gamma,(z,y)) \mapsto (z.\gamma^{-1}, \gamma.y)
\end{align*} 
where $\gamma.y = h(\gamma)(y)$.  The quotient space 
$$M = \Gamma \setminus (\mathbb{H} \times F) = \mathbb{H} \times_{\Gamma} F$$
has a Riemann surface lamination structure.  The projection $\mathbb{H} \times F \to S$ defined by 
$$(z,y) \mapsto p(z)$$
respects the group action, hence passes to a well-defined submersion
$$\pi : M\to S.$$
Let $x\in S$ choose a neighbourhood $U$ of $x$ that is evenly covered by $p$.  That is,
$$p^{-1}(U) = \coprod_{i\in \mathcal{I}}U_i$$
and $p|U_i: U_i \to U$ is a biholomorphism, for all $i \in \mathcal{I}$.  The family $\lbrace U_i \times F\rbrace$ consists of
disjoint open subsets of $\mathbb{H}\times F$ that are permuted simply transitively by the diagonal action of 
$\Gamma$.  Thus, the equivalence relation defined by the action of $\Gamma$ makes no internal identification in any 
of the sets $U_i \times F$ and the quotient projections carries each one onto $\pi^{-1}(U)$.  The inverse $\varphi_i$
of this gives a commutative diagram
\[
\xymatrix{
\pi^{-1}(U) \ar[r]^{\varphi_i} \ar[d]^{\pi} &U_i \times F\ar[d]^{p} \ar[r]^{p \times id} &U \times F \ar[d]^{p_1}\\
U \ar[r]^{id} &U \ar[r]^{id} &U} 
\]
where $p_1$ is the projection onto the first factor.  This gives a Riemann surface lamination structure to $M$.  As 
$h$ maps to $Isom(F)$, we have a $1$-Lipschitz Riemann surface lamination.  As in the previous example the injectivity
radius of $M$, as a Riemann surface lamination, is greater than or equal to the injectivity radius of $S$.  Choose a 
finite measure $\mu$ on $F$, invariant under isometries of $F$ and satisfying the condition, given $\varepsilon > 0$ 
there exists $B(\varepsilon) > 0$ such that, for all $t\in T$, $\mu(B(t,\varepsilon)) \geq B(\varepsilon)$.  Then, 
$M \in \mathcal{X}(1,\text{injrad}(S),Area(S) \times \mu(F))$.

\subsection{Sequence of laminations with no convergent subsequence}\label{nomargulis}

Let $S$ be a genus two surface such that, the thin part is isometric to the hyperbolic annulus $T(\lambda)$.  For 
each $n\in \mathbb{N}$, we will define an $L$-Lipschitz Riemann surface lamination structure on this surface as follows.     

\subsubsection*{Definition of $\mathcal{L}_n$:} Given $0 < \varepsilon < C_m$, the Margulis constant, every point 
in the thick part has a neighbourhood that is isometric to the standard ball of radius $\varepsilon$ in $\mathbb{H}$.
Construct such charts around points in the thick part for $\varepsilon < \frac{C_m}{2}$.  Thus, if we define a 
compatible chart for the thin part as well, we get a lamination structure on $S$.  

The canonical $\mathbb{Z}/n\mathbb{Z}$ action on $S^1$ induces a group action on the annulus.  Let 
$p: T(\lambda) \to T(\lambda)/\left(\mathbb{Z}/n\mathbb{Z}\right)$ be the quotient map induced by this group action.
So as in Section \ref{Finite covers} we get a lamination structure on $T(\lambda)$.  More precisely, given any point
in $T(\lambda)/\left(\mathbb{Z}/n\mathbb{Z}\right)$ there exists an evenly covered neighbourhood $U$ and a map 
$U\times \lbrace 1,...,n\rbrace \to p^{-1}(U)$. 
We define a metric and measure on each transversal.  The metric on $T_n := \lbrace 1,2,...,n\rbrace$ is the discrete metric
\begin{align*}
d_n(i,j) = \frac{C_m}{n}
\end{align*}

Define the measure on $T_n$ as 
$$\nu_n(\lbrace i\rbrace) = 1$$

Use an isometry, $\psi$, from $T(\lambda)$ to the thin part of $S$ to get a lamination on the thin part.  It is easy
to see that this lamination structure is compatible to the one we defined on the thick part.  So we have a lamination
on $S$.  This lamination is $1$-Lipschitz.  Denote the induced distance on $S$ by $D_n$ and the induced measure by $\mu_n$.  

Let $E(C_m)$ be the thin part of $S$.  Note that for every point in $E(C_m)$, the injectivity radius in $(S,\mathcal{L}_n)$
converges to zero as $n$ tends to infinity.  On the other hand, if $\sigma_g$ is the area form on 
$T(\lambda)$, $\mu_n(E(C_m)) = \int_{T(\lambda)} \sigma_g$ for $E$ subset of the thin part, for all 
$n \in \mathbb{N}$.  Thus the measure of $\mu_n(E(C_m))$ is equal to the hyperbolic area of $T(\lambda)$.  So, the 
measure of the part of $(S,\mathcal{L}_n,\mu_n)$ with injectivity less than $r$, does not converge to zero uniformly 
as $r$ tends to zero.  Thus, there is no theorem analogous to the Margulis lemma for surface laminations.  We will 
also show that this sequence does not have any convergent subsequence.  

Let $E\left(\frac{C_m}{2}\right)$ be the subset of $S$ with injectivity radius less than or equal to $\frac{C_m}{2}$.  Fixing a 
normal to the shortest geodesic, the distance from the shortest geodesic gives a map, $f$, from $E\left(\frac{C_m}{2}\right)$ to
an interval $[-l,l]$.  Define a measure, $\nu$, on $[-l,l]$ as $\nu = \lim_{n\to \infty} f_*(\mu_n)$.  We will show
that $\left(E\left(\frac{C_m}{2}\right),D_n,\mu_n\right)$ converges to $([-l,l],\vert x-y \vert,\nu)$.

Let $Z_n = E\left(\frac{C_m}{2}\right) \cup [-l,l]$ and the distance on $Z_n$ be defined as
$d_{Z_n} = d_{f}^{\frac{C_m}{n}}$.  Notice that the inclusions $E\left(\frac{C_m}{2}\right) \hookrightarrow Z_n$
and $[-l,l] \hookrightarrow Z_n$ are isometries by Theorem \ref{inclmaxmet}.  Abusing
notation we will denote the push forwards of $\mu_n$ and $\nu$ to $Z_n$ by $\mu_n$ and 
$\nu$ itself.  Then,
$d_{\pi}^{Z_n}(\mu_n,\nu) \leq \frac{C_m}{n} + d_{\pi}^{[-l,l]}(f_*(\mu_n),\nu)$.
Thus, the sequence 
$d_{\rho}\left(\left(E\left(\frac{C_m}{2}\right),D_n,\mu_n\right),\left([-l,l],\vert x-y \vert,\nu\right)\right)$ converges to zero.  But, $[-l.l]$ cannot have 
any Riemann surface lamination structure as, the Hausdorff dimension of an interval is $1$, whereas, the Hausdorff 
dimension of a Riemann surface lamination is greater than or equal to $2$.

\bibliographystyle{alpha} \bibliography{paper}

\end{document}